\newtheorem{theorem}{Theorem}[section]
\newtheorem*{theorem*}{Theorem}
\newtheorem{corollary}[theorem]{Corollary}
\newtheorem{lemma}[theorem]{Lemma}
\newtheorem{rem}[theorem]{Remark}
\newtheorem{proposition}[theorem]{Proposition}
\newtheorem{fact}[theorem]{Fact}
\newtheorem*{fact*}{Fact}
\newtheorem{question}{Question}[section]
\theoremstyle{definition}
\newcommand{\ee}{\varepsilon}
\newcommand{\nn}{\mathbb{N}}
\begin{document}
\title{Three and a half asymptotic properties}

\begin{abstract} We define and discuss transfinite asymptotic notions of smoothability, type, and equal norm type. We prove distinctness of these notions for a proper class of ordinals and that each class is an ideal.  We also extend some results from \cite{GKL} to operators and ordinals greater than zero regarding the equivalence of equal norm asymptotic type and uniform renormings with power type smoothness.    Finally, we discuss an extension of a non-linear result for quasi-reflexive, asymptotically $p$-smoothable Banach spaces to quasi-reflexive Banach spaces with asymptotic equal norm type $p$.

\end{abstract}

\author{R.M. Causey}
\address{Department of Mathematics, Miami University, Oxford, OH 45056, USA}
\email{causeyrm@miamioh.edu}

\thanks{2010 \textit{Mathematics Subject Classification}. Primary: 46B03, 46B06.}
\thanks{\textit{Key words}: Factorization, Asplund operators, Szlenk index,  operator ideals.}

\maketitle

\section{Introduction}

An important theorem in Banach space theory is Enflo's \cite{Enflo} result that every super-reflexive Banach space is isomorphic to a uniformly convex Banach space.  Pisier \cite{Pisier} later proved that any super-reflexive Banach space is isomorphic to a uniformly convex Banach space the modulus of convexity (and by duality, smoothness) of which has some power type. Furthermore, Pisier defines for each $1<p\leqslant 2$ some isomorphic property regarding infinite sequences of Walsh-Paley martingales which exactly characterizes whether a given Banach space admits an equivalent norm which is smooth with power type $p$.  A somewhat more tractable notion is that of Haar type, which involves an inequality similar to that given by Pisier, but which is only assumed to hold uniformly for finite sequences of Walsh-Paley martingales, as opposed to infinite sequences. Yet another distinct notion is that of equal norm type.  Tzafriri \cite{Tzafriri} showed that for every $1<p\leqslant 2$, there exists a Banach space with equal norm Rademacher type $p$, but without Rademacher type $p$.

Analogously, Godefroy, Kalton, and Lancien \cite{GKL} defined the Szlenk power type $\textsf{p}(X)$ of a given Banach space $X$ with $Sz(X)=\omega$.     There it was argued, using previous work of Lancien \cite{Lancien}, that if $Sz(X)=\omega$, $1\leqslant \textsf{p}(X)<\infty$.  A sharp renorming theorem from \cite{GKL} was that if $1/p+1/\textsf{p}(X)=1$, then $p$ is the supremum of those $r\in (1,\infty)$ such that $X$ admits an equivalent $r$-asymptotically uniformly smooth norm. In \cite{C3}, for every ordinal $\xi$, the $\xi$-Szlenk power type $\textsf{p}_\xi(A)$ of an operator $A:X\to Y$ was defined, and the renorming theorem from \cite{GKL} was generalized: The operator $A$ admits an equivalent power type $\xi$-asymptotically uniformly smooth norm if and only if $\textsf{p}_\xi(A)<\infty$, and if $q=\max\{1, \textsf{p}_\xi(A)\}$ and $1/p+1/q=1$, then $p$ is the supremum of those $r\in (1, \infty)$ such that $A$ admits an equivalent, $\xi$-$r$-asymptotically uniformly smooth norm.    The question arises as to when this supremum is attained.   That is, if $\textsf{p}_\xi(A)\in [1, \infty)$ and $1/p+1/\textsf{p}_\xi(A)=1$, does $A$ admit an equivalent, $\xi$-$p$-asymptotically uniformly smooth norm? Already in \cite{GKL}, this question was answered negatively in the $\xi=0$ case, so it is natural to seek for each ordinal $\xi$ and $1<p\leqslant \infty$ some isomorphic property which characterizes $\xi$-$p$-asymptotic uniform smoothability.    This was accomplished in \cite{C4}, giving a transfinite, operator, asymptotic analogue of Pisier's infinite Walsh-Paley martingale property which characterizes $p$-smoothability.  Given the existence of intermediate properties between $p$-smoothability and $r$-smoothability for all $1<r<p$ (namely, having Haar type $p$ and equal norm Haar type $p$), one may ask whether there are analogous asymptotic, transfinite properties, and whether or not they are distinct.  It is the primary goal of this work to answer this question.  For every ordinal $\xi$ and every $1<p\leqslant \infty$, we will define four classes of operators $\mathfrak{T}_{\xi,p}$, $\mathfrak{A}_{\xi,p}$, $\mathfrak{N}_{\xi,p}$, and $\mathfrak{P}_{\xi,p}$.    Roughly speaking, these classes are defined in terms of properties which are transfinite, asymptotic analogues of the notions of $p$-smoothability, Haar type $p$, equal norm Haar type $p$, and $r$-smoothability for each $1<r<p$.    Our primary result is the following.  In what follows, $\mathfrak{D}_\xi$ denotes the class of operators with Szlenk index not exceeding $\omega^\xi$.

\begin{theorem} For any ordinal $\xi$ and $1<p<\infty$,  $$\mathfrak{D}_\xi\subsetneq  \mathfrak{T}_{\xi,\infty}\subset \mathfrak{A}_{\xi,\infty}=\mathfrak{N}_{\xi,\infty}\subset\mathfrak{P}_{\xi,\infty}\subsetneq \mathfrak{T}_{\xi,p}\subset \mathfrak{A}_{\xi,p}\subset \mathfrak{N}_{\xi,p}\subset \mathfrak{P}_{\xi,p}\subsetneq \mathfrak{D}_{\xi+1}.$$

Furthermore, if $\xi$ has countable cofinality, and in particular if $\xi$ is countable, $$\mathfrak{T}_{\xi,p}\subsetneq \mathfrak{A}_{\xi,p}\subsetneq \mathfrak{N}_{\xi,p}\subsetneq \mathfrak{P}_{\xi,p}$$ and $$\mathfrak{T}_{\xi,\infty}\subsetneq \mathfrak{A}_{\xi,\infty}=\mathfrak{N}_{\xi,\infty}\subsetneq \mathfrak{P}_{\xi,\infty}.$$

\end{theorem}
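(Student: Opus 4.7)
The theorem splits into a chain of inclusions plus a list of strict separations; the plan is to treat the inclusions uniformly from the definitions together with the $\xi$-Szlenk renorming theorem of \cite{GKL,C3}, and then to exhibit a counterexample for each strict containment.

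For the inclusions, the chain $\mathfrak{T}_{\xi,p}\subset \mathfrak{A}_{\xi,p}\subset \mathfrak{N}_{\xi,p}\subset \mathfrak{P}_{\xi,p}$ (and its $p=\infty$ variant) is immediate from comparing the defining inequalities: an infinite-sequence estimate specialises to a uniform finite-sequence estimate, arbitrary finite sequences dominate equal norm ones, and a power-$p$ estimate forces the weaker power-$r$ estimate for every $1<r<p$. The flanking inclusions $\mathfrak{D}_\xi\subset \mathfrak{T}_{\xi,\infty}$ and $\mathfrak{P}_{\xi,p}\subset \mathfrak{D}_{\xi+1}$ can be read off from the renorming theorem: any $A\in \mathfrak{D}_\xi$ admits a $\xi$-asymptotically uniformly flat norm and hence lies in $\mathfrak{T}_{\xi,\infty}$, while any $A\in \mathfrak{P}_{\xi,p}$ satisfies $\textsf{p}_\xi(A)<\infty$ and therefore $Sz(A)\leqslant \omega^{\xi+1}$. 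The equality $\mathfrak{A}_{\xi,\infty}=\mathfrak{N}_{\xi,\infty}$ is obtained by a normalisation argument: at $p=\infty$ the defining inequality controls a supremum rather than a weighted $\ell_p$ sum, so an arbitrary finite sequence may be partitioned dyadically according to $\|x_i\|$, the equal norm estimate applied on each block, and the pieces reassembled with only a loss absorbed by the supremum.

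The three outer strict separations are witnessed by renorming-level examples. For $\mathfrak{D}_\xi\subsetneq \mathfrak{T}_{\xi,\infty}$ I take a transfinite Tsirelson-type space of order $\xi$ modelled on $c_0$: by construction it is $\xi$-asymptotically uniformly flat, yet its Szlenk index is $\omega^{\xi+1}$, so it escapes $\mathfrak{D}_\xi$. For $\mathfrak{P}_{\xi,\infty}\subsetneq \mathfrak{T}_{\xi,p}$ with $1<p<\infty$, the analogous space modelled on $\ell_p$ is $\xi$-$p$-smoothable but fails $\xi$-$r$-smoothability for every $r>p$, hence lies outside $\mathfrak{P}_{\xi,\infty}$. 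For $\mathfrak{P}_{\xi,p}\subsetneq \mathfrak{D}_{\xi+1}$ the same sort of example, now tuned so that $\textsf{p}_\xi(A)=q=p/(p-1)$ exactly, lies in $\mathfrak{D}_{\xi+1}$ but fails to be $\xi$-$r$-smoothable for any $r<p$.

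The intermediate strict separations under countable cofinality constitute the main obstacle. I plan to adapt the classical Pisier example separating $p$-smoothability from Haar type $p$ and the classical Tzafriri example separating Haar type from equal norm Haar type to the transfinite asymptotic setting by a direct sum along a cofinal $\omega$-sequence $\xi_n\nearrow\xi$. Concretely, one chooses classical counterexamples $Y_n$ with deteriorating quantitative constants and assembles them into a direct sum whose level-$\xi_n$ asymptotic structure faithfully reproduces that of $Y_n$ on the $n$-th summand. The hard technical step is computing the $\xi$-asymptotic constants of the sum from the base constants of the $Y_n$; this reduces to a pigeonhole argument on the hierarchies defining level-$\xi$ asymptotic structure, decomposed along the cofinal sequence, and the countable cofinality hypothesis is exactly what keeps the resulting Szlenk index at $\omega^{\xi+1}$ so that the desired separation actually occurs inside $\mathfrak{D}_{\xi+1}$. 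The $p=\infty$ case omits the $\mathfrak{A}\subsetneq \mathfrak{N}$ level because of the equality already established.
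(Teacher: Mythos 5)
Your high-level plan (inclusions from the definitions plus the renorming theory, separations by explicit examples) matches the paper's, and your treatment of the inclusions and of $\mathfrak{A}_{\xi,\infty}=\mathfrak{N}_{\xi,\infty}$ is essentially sound, if more roundabout than the paper's one-line convexity/unconditionality argument. However, two of your separation arguments contain genuine gaps.

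First, your witness for $\mathfrak{P}_{\xi,p}\subsetneq \mathfrak{D}_{\xi+1}$ is wrong as stated. You propose an operator ``tuned so that $\textsf{p}_\xi(A)=q$ exactly.'' But $\mathfrak{P}_{\xi,p}$ is defined by $\textsf{p}_\xi(A)\leqslant q$, so $\textsf{p}_\xi(A)=q$ places $A$ \emph{inside} $\mathfrak{P}_{\xi,p}$, not outside it. To separate you need $\textsf{p}_\xi(A)>q$ (or $\textsf{p}_\xi(A)=\infty$) together with $Sz(A)\leqslant\omega^{\xi+1}$. The paper produces, for $\xi>0$, a space $X\in\mathfrak{D}_{\xi+1}$ with $Sz_\xi(X,\ee)$ admitting no power-type bound at all (so $\textsf{p}_\xi(X)=\infty$), and for $\xi=0$ a direct sum operator $A$ on $(\oplus\ell_{1+1/n})_{\ell_2}$ with $\theta_{0,n}(A)\gtrsim 1/\log_2(n+1)$. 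This is not a tuning of $\textsf{p}_\xi$ to a single finite value.

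Second, and more seriously, your strategy for the intermediate strict separations diverges from what actually works, and you have not supplied the idea that makes the construction go through. You propose a direct sum of ``classical counterexamples $Y_n$ with deteriorating quantitative constants'' along a cofinal sequence $\xi_n\nearrow\xi$, and then a ``pigeonhole argument on the hierarchies.'' The paper instead takes a direct sum $(\oplus_{\gamma}X_\gamma)_E$ where the \emph{single fixed} outer space $E$ (one of $\ell_p$, $T_q^*$, $\mathfrak{Y}_q(\Gamma)$, $\mathfrak{X}_q(\Gamma)$, i.e.\ spaces built from $\ell_p$, Tsirelson, Tzafriri, and modified Schlumprecht constructions) carries the entire level-zero separation, while the inner spaces $X_\gamma$ merely attain the required $\xi$-level tree structure with $\theta_{\gamma,1}(X_\gamma)=1$ and $Sz(X_\gamma)\leqslant\omega^\xi$. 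The engine is Lemma \ref{envelope}, which gives the one-sided transfer $\|\cdot\|_{\xi,(\oplus X_\gamma)_E}\leqslant \|\cdot\|_{0,E}$, so that membership of $E$ in the larger class at $\xi=0$ propagates to membership of the sum at level $\xi$; the complementary lower bound comes from Lemma \ref{ravenous}, which produces weakly null $\Gamma_{\xi,n}$- and $\Gamma_{\xi,\infty}$-indexed trees whose branch sums are equivalent to subsequences of the $E$-basis. No deteriorating constants enter: every $X_\gamma$ is used with the same quantitative bound, and the separation is entirely supplied by $E$. Your ``deteriorating constants'' heuristic leaves the crucial upper estimate unaddressed; without something like Lemma \ref{envelope} you have no way to control $\|\cdot\|_{\xi,\cdot}$ of the direct sum from above, and the proposal cannot be completed as written. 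Also note the countable cofinality hypothesis is not a Szlenk-index issue as you suggest, but is required precisely because $T_q^*$ has only a countable Schauder basis (unlike the symmetric constructions $\mathfrak{Y}_q(\Gamma)$, $\mathfrak{X}_q(\Gamma)$), which is why only the $\mathfrak{T}\subsetneq\mathfrak{A}$ separation needs that hypothesis; $\mathfrak{T}\subsetneq\mathfrak{N}\subsetneq\mathfrak{P}$ hold for all ordinals. Finally, the paper's witness for $\mathfrak{D}_\xi\subsetneq\mathfrak{T}_{\xi,\infty}$ is $C_0(\omega^{\omega^\xi})$, a $C(K)$ space, not a transfinite Tsirelson-type space; the key facts are that it is $\xi$-AUF while Brooker computed $Sz(C_0(\omega^{\omega^\xi}))=\omega^{\xi+1}$.
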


We also provide a characterization of the transfinite, asymptotic equal norm type notions in terms of equivalent norms on the space. This result extends the $\xi=0$ spatial case proved in \cite{GKL} to all ordinals and operators. In what follows, for an ordinal $\xi$, $\sigma>0$, and an operator $A:X\to Y$, $\varrho_\xi(\sigma;A:X\to Y)$ denotes the $\xi$-modulus of asymptotic uniform smoothness.

\begin{theorem} Let $\xi$ be an ordinal,  $p\in (1,\infty)$, and $A:X\to Y$ be an operator.  \begin{enumerate}[(i)]\item $A\in \mathfrak{N}_{\xi,p}$ if and only if there exist constants $C,b\geqslant 1$  such that for any $\sigma>0$, there exists an equivalent norm $|\cdot|$ on $Y$ such that $$b^{-1}B_Y^{|\cdot|} \subset B_Y\subset b B_Y^{|\cdot|}$$ and $\varrho_\xi(\sigma;A:X\to (Y, |\cdot|))\leqslant C\sigma^p$. \item $A\in \mathfrak{N}_{\xi,\infty}$ if and only if there exist $b\geqslant 1$ and  $\sigma>0$ such that for any $\ee>0$, there exists an equivalent norm $|\cdot|$ on $Y$ such that $$b^{-1}B_Y^{|\cdot|}\subset B_Y\subset bB_Y^{|\cdot|}$$ and $$\varrho_\xi(\sigma;A:X\to (Y, |\cdot))\leqslant \ee.$$   \end{enumerate}

\end{theorem}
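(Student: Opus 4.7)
The plan is to prove both directions of (i) and (ii) separately, with sufficiency being a direct unwinding of the definitions and necessity requiring a scale-dependent renorming construction extending the $\xi = 0$ spatial result of \cite{GKL} to operators and arbitrary ordinals via the transfinite Szlenk machinery of \cite{C3}. The conceptual point is that the equal norm hypothesis defining $\mathfrak{N}_{\xi,p}$, being weaker than full asymptotic type $p$, should correspond exactly to a family of scale-dependent renormings with uniform equivalence constants, in the same way that full type $p$ (i.e., $\mathfrak{T}_{\xi,p}$-membership) corresponds to a single renorming valid at all scales simultaneously. For sufficiency in (i), given a family $\{|\cdot|_\sigma\}_{\sigma > 0}$ with $\varrho_\xi(\sigma; A:X\to(Y,|\cdot|_\sigma)) \leqslant C\sigma^p$ and equivalence constant $b$, one verifies the defining inequality of $\mathfrak{N}_{\xi,p}$ by taking an arbitrary equal norm test configuration of common norm $s$, choosing $\sigma = s$, applying the modulus bound in $|\cdot|_\sigma$, and converting back to $\|\cdot\|$ at cost of a factor depending only on $b$; this factor is absorbed into the equal norm type constant. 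Sufficiency in (ii) is analogous, using the single $\sigma$ and arbitrarily small $\ee$.

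For necessity, following the scheme of \cite{C3}, I would fix $\sigma > 0$ and define $|\cdot|_\sigma$ as the Minkowski functional of
$$B_\sigma := \overline{\mathrm{conv}}\bigl(B_Y \cup \sigma^{-1} E_\sigma\bigr),$$
where $E_\sigma$ collects $\sigma$-scale evaluations along weakly null trees of order $\omega^\xi$ associated with $A$. The equal norm inequality defining $\mathfrak{N}_{\xi,p}$ controls precisely these $\sigma$-scale extractions, forcing $E_\sigma \subseteq c\sigma B_Y$ for an absolute constant $c$, whence $|\cdot|_\sigma$ is uniformly equivalent to $\|\cdot\|$ with constants independent of $\sigma$. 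The modulus bound $\varrho_\xi(\sigma; A:X\to(Y,|\cdot|_\sigma)) \leqslant C\sigma^p$ then follows because $B_\sigma$ has been enlarged just enough to absorb $\sigma$-scale oscillations at the $p$-th power rate, via the transfinite asymptotic smoothness computation already developed in \cite{C3}. Case (ii) runs the same construction with the $p = \infty$ variant of $E_\sigma$, giving arbitrarily small modulus at a fixed small scale $\sigma$.

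The main obstacle will be maintaining uniform two-sided equivalence as $\sigma$ varies. For the stronger hypothesis $\mathfrak{T}_{\xi,p}$, one combines the bodies $B_\sigma$ into a single $\sigma$-independent body by averaging (or an $\ell_p$-type integration) over $\sigma$; the equal norm hypothesis appears to forbid this mixing, and it is precisely this obstruction that forces one to settle for scale-dependent renormings rather than a single uniformly smooth norm. A secondary technical point is that the transfinite Szlenk tree selection defining $E_\sigma$ must be shown to interact correctly with the equal norm inequality, which should reduce to an induction on $\xi$ following the framework already in place in \cite{C3}.
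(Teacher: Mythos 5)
Your proposal takes a genuinely different route from the paper, and the two halves of it fare differently.

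The paper does not argue the two implications directly. Instead it introduces the optimized moduli $\phi^b_\xi(\cdot;A)=\inf_{|\cdot|\in\mathcal{Y}_b}\varrho_\xi(\cdot;A:X\to(Y,|\cdot|))$ and $\psi^b_\xi(\cdot;A)=\sup_{|\cdot|\in\mathcal{Y}_b}\delta^{\text{weak}^*}_\xi(\cdot;A:X\to(Y,|\cdot|))$, proves they are Young duals of each other up to $\approx$ (Lemma \ref{kc}), and then closes a chain $\phi^b_\xi\approx G\approx H^*\approx(\psi^b_\xi)^*$ in Theorem \ref{big theorem}, where $G$ encodes the sequence $n\mapsto n\theta_{\xi,n}(A)$ (hence $\mathfrak{N}_{\xi,p}$ membership) and $H$ is the reciprocal of the convex Szlenk quantity $Cz_\xi$. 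The corollary you are proving drops out of $\phi^b_\xi\approx G$. The one genuinely new renorming statement (Proposition \ref{renorm1}) is imported from \cite{TAMS}, not reproved; it is a dual construction on $Y^*$ built from Szlenk derivations. Your proposal replaces the entire duality chain with two direct arguments, which is a legitimately different strategy and worth comparing.

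Your ``sufficiency'' direction (a uniformly $b$-equivalent family $\{|\cdot|_\sigma\}$ with $\varrho_\xi(\sigma;A:X\to(Y,|\cdot|_\sigma))\leqslant C\sigma^p$ implies $\sup_n n^{1/q}\theta_{\xi,n}(A)<\infty$) is sound in outline and bypasses the Young duality machinery entirely. One sets $\sigma=n^{-1/p}$, iterates the modulus inequality level by level through $\Gamma_{\xi,n}$, and uses star-shapedness of $\varrho_\xi(\cdot;A:X\to(Y,|\cdot|_\sigma))$ to propagate the bound at scales $\sigma'<\sigma$, while the Lipschitz bound $\varrho_\xi(\sigma')\leqslant b\|A\|\sigma'$ covers the initial regime where the running norm is below $n^{-1/q}$. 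Carrying this out gives $\theta_{\xi,n}(A)\lesssim b(1+b\|A\|+C)n^{-1/q}$; the factor depending on $b$ is indeed just absorbed. This is a more elementary proof of this implication than the paper's route through $H$ and $Cz_\xi$, and I think it would stand on its own if written out. The $p=\infty$ case (ii) is analogous, using the $\xi$-AUF characterization $\varrho_\xi(\sigma;\cdot)=0$.

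Your ``necessity'' direction has a real gap. You propose a primal Minkowski-functional renorming: $B_\sigma=\overline{\mathrm{conv}}(B_Y\cup\sigma^{-1}E_\sigma)$ with $E_\sigma$ ``collecting $\sigma$-scale evaluations along weakly null trees.'' This is not defined precisely enough to verify, and on closer inspection it does not look like it can work as stated. The modulus $\varrho_\xi(\sigma;A:X\to(Y,|\cdot|_\sigma))$ quantifies over all $y$ in the new unit ball $B_\sigma$ and all weakly null trees, with the infimum taken over branches; enlarging $B_\sigma$ by a fixed set of tree evaluations does not obviously reduce the supremum over branches, because the perturbations $\sigma A\tilde{x}$ that the modulus must absorb are not a fixed compact set but the full class of weakly null tree sums, whose branches escape to weak limit $0$. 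The renorming that actually achieves $\varrho_\xi(\sigma;A:X\to(Y,|\cdot|))\leqslant 1/n$ under the hypothesis $G_\xi(\sigma;A)\geqslant n$ is a dual construction on $Y^*$ using the transfinite Szlenk derivations $s^{\omega^\xi}_\ee$ (this is Proposition \ref{renorm1}, citing \cite{TAMS}); the primal modulus estimate then follows by duality, via the machinery connecting $\varrho_\xi$ and $\delta^{\text{weak}^*}_\xi$. Nothing in your sketch explains how the equal norm hypothesis forces $E_\sigma\subset c\sigma B_Y$, nor how the resulting norm achieves the $p$-th power modulus bound. If you want to keep a direct necessity argument rather than cite the renorming theorem, you should at least move the construction to $Y^*$, define the renormed dual ball in terms of the sets $s^{\omega^\xi}_\ee(A^*B_{Y^*})$, and verify the dual estimate $\delta^{\text{weak}^*}_\xi(\tau;A:X\to(Y,|\cdot|))\gtrsim\tau^q$; at that point you will essentially be reproducing the \cite{TAMS} renorming, and the Young duality in Lemma \ref{kc} becomes hard to avoid.

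A small remark: the citation for the renorming scheme should be \cite{TAMS} rather than \cite{C3}; the latter defines $\textsf{p}_\xi(A)$ but the existence of the equivalent norm with $\varrho_\xi(\sigma;A:X\to(Y,|\cdot|))\leqslant 1/n$ is the main renorming theorem of \cite{TAMS}.
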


We also quantitatively extend a result of Lancien and Raja regarding Lipschitz embeddings of certain metric  spaces $G_k$ into quasi-reflexive Banach spaces with nice asymptotic smoothability properties.   To that end, we have the following.

\begin{theorem} If $1<p\leqslant \infty$, $X$ is a quasi-reflexive Banach space, and $I_X\in \mathfrak{N}_{0,p}$, then there exists a constant $C$ such that for any $k\in\nn$ and any Lipschitz map $f:G_k\to X$, there exists an infinite subset $M$ of $\nn$ such that for any $m_1<n_1<\ldots <m_k<n_k$, $m_i, n_i\in M$, $$\|f(m_1, m_2, \ldots, m_k)-f(n_1, n_2, \ldots, n_k)\|\leqslant C\text{\emph{Lip}}(f) k^{1/p}.$$

\end{theorem}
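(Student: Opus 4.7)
The plan is to generalize the argument of Lancien and Raja for the case $I_X\in \mathfrak{T}_{0,p}$ by substituting for their single $p$-AUS renorming the family of near-$p$-AUS renormings supplied by Theorem 1.2(i). Since the weak topology on $X$ is unaffected by equivalent renormings, the stabilization/Ramsey part of their argument transfers unchanged; only the quantitative $\ell_p$-type step must be adapted to the relevant scale.

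First I would normalize so that $\textrm{Lip}(f)=1$ and invoke Theorem 1.2(i) applied to $I_X\in \mathfrak{N}_{0,p}$: there exist constants $b,C_0\geqslant 1$ such that for every $\sigma>0$ there is an equivalent norm $|\cdot|_\sigma$ on $X$ satisfying $b^{-1}|\cdot|_\sigma \leqslant \|\cdot\|\leqslant b|\cdot|_\sigma$ and $\varrho_0(\sigma;I_X:X\to (X,|\cdot|_\sigma))\leqslant C_0\sigma^p$. For fixed $k$ I would choose $\sigma:=k^{-1/p}$ and carry out the remainder of the argument in $(X,|\cdot|_\sigma)$, in which $f$ is $b$-Lipschitz. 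Since the quasi-reflexivity of $X$ and its weak topology are unaffected by the renorming, every step that depends only on weak-topological information survives this substitution.

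Next I would run the Lancien--Raja stabilization. By quasi-reflexivity together with iterated weak-sequential compactness (equivalently, a Ramsey-style selection applied coordinate by coordinate), there is an infinite $M\subset \nn$ such that for every pair of interlaced tuples $m_1<n_1<\cdots<m_k<n_k$ from $M$, the difference $f(m_1,\ldots,m_k)-f(n_1,\ldots,n_k)$ decomposes as a telescoping sum of $2k$ single-coordinate increments, each of which is, up to a negligible error, weakly null in the coordinate along which it is varying and each of $|\cdot|_\sigma$-norm at most $b$. The inequality $\varrho_0(\sigma;I_X:X\to (X,|\cdot|_\sigma))\leqslant C_0\sigma^p$ applies to such weakly null increments after rescaling to the scale $\sigma$, and iterating it through the $2k$ pieces in the standard fashion yields a bound of the form $|f(\bar m)-f(\bar n)|_\sigma \lesssim k^{1/p}$ once the choice $\sigma=k^{-1/p}$ is substituted. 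Converting back from $|\cdot|_\sigma$ to $\|\cdot\|$ costs another factor of $b$, producing the required estimate with a constant $C=C(b,C_0,p)$ independent of $k$ and $f$.

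The principal obstacle is the scale matching: Theorem 1.2(i) controls the $p$-power modulus only at the prescribed scale $\sigma$, not uniformly over all scales, so the iterated stabilization must be arranged so that every telescoping increment has $|\cdot|_\sigma$-norm comparable to $\sigma$. In the classical $p$-AUS setting of \cite{GKL} the modulus is controlled at all scales simultaneously, and no such matching is needed. Here one must commit to $\sigma=k^{-1/p}$ before invoking the renorming theorem, and then verify that truncation and renormalization of the increments (standard maneuvers in the Kalton-graph literature) make them compatible with the $p$-AUS inequality at scale $\sigma$. Assuming this compatibility check goes through, the constants remain uniform in $k$, which is exactly what the statement demands.
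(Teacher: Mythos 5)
Your proposal takes a genuinely different route from the paper's. The paper never invokes a renorming theorem: it works directly with the seminorm $\|\cdot\|_{0,A}$ and a two-player game in which Player I plays weak neighborhoods of $0$ and Player II plays vectors of norm at most $a_i$; the quantity $\|\sum_{i=1}^k a_ie_i\|_{0,A}$ is precisely the optimal target for Player I. The technical core of Section 5 is that for $1$-Lipschitz $f:G_k^w\to X$ with $X$ quasi-reflexive and any $\vartheta>2\|\sum_{i=1}^k w_ie_i\|_{0,A}$, a Ramsey extraction gives $M$ with $\|Af(m_1,\ldots,m_k)-Af(n_1,\ldots,n_k)\|\leqslant\vartheta$ for interlaced tuples from $M$; this is proven by building the telescoping increments via iterated ultrafilter limits in $X^{**}$, perturbing them into $X$ with small error using quasi-reflexivity, and then playing them against a fixed winning strategy for Player I. The statement in question follows at once from $\|\sum_{i=1}^k e_i\|_{0,A}=k\theta_{0,k}(I_X)\leqslant N_{0,p}(I_X)k^{1/p}$, which is the definition of $\mathfrak{N}_{0,p}$. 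This yields the sharp constant $2N_{0,p}(I_X)$, avoids all scale-matching, and holds for arbitrary weights $w$ and arbitrary operators $A:X\to Y$, none of which your route recovers directly.

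The scale-matching obstacle you flag is a genuine gap, and your suggested fix---renormalizing the increments so that they have $|\cdot|$-norm comparable to $\sigma$---is not the right mechanism. After renorming, each increment $g_i$ has $|\cdot|$-norm of order $1$, and in the iterated modulus inequality the relevant scale is $\sigma$ divided by the running norm, which a priori ranges over all of $(0,1]$, most of which you do not control. The correct device is to fix $y$ with $|y|=1$, set $R_j=\max\bigl(1,|y+\sigma\sum_{i\leqslant j}g_i|\bigr)$, and observe that $R_{j+1}\leqslant R_j\bigl(1+\varrho_0(\sigma/R_j;I_X:X\to(X,|\cdot|))\bigr)$. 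Because $R_j\geqslant1$ and $\varrho_0(\cdot;I_X:X\to(X,|\cdot|))$ is non-decreasing (it lies in the class $S$), one has $\varrho_0(\sigma/R_j;\cdot)\leqslant\varrho_0(\sigma;\cdot)\leqslant C_0\sigma^p=C_0/k$, so $R_k\leqslant(1+C_0/k)^k\leqslant e^{C_0}$ and $|\sum_{i=1}^k g_i|\leqslant(e^{C_0}+1)k^{1/p}$. Without the $\max(1,\cdot)$ floor and the monotonicity of $\varrho_0$, the iteration escapes the scale you controlled and the estimate fails, so this is not a routine compatibility check that can be assumed away. Even once resolved, the resulting constant $b(e^{C_0}+1)$ is weaker than the paper's, so the direct game argument is strictly preferable here.
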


This result extends the theorem of Lancien and Raja, who prove the preceding theorem under the assumption that is quasi-reflexive and $I_X\in \mathfrak{T}_{0,p}$, which is a proper subideal of $\mathfrak{N}_{0,p}$.   Combining the theorem of Lancien and Raja with the renorming theorem from \cite{GKL} would allow one to deduce that if $X$ is quasi-reflexive and $I_X\in \mathfrak{N}_{0,p}$, then for any $1<r<p$, there exists a constant $C_r$ such that for any $k\in\nn$ and any Lipschitz map $f:G_k\to X$, there exists an infinite subset $M$ of $\nn$ such that for any $m_1<n_1<\ldots <m_k<n_k$, $m_i, n_i\in M$, $$\|f(m_1, m_2, \ldots, m_k)-f(n_1, n_2, \ldots, n_k)\|\leqslant C_r\text{Lip}(f) k^{1/r}.$$  This constant $C_r$ depends upon two other quantities: A constant $b_r$ such that there exists a $b_r$-equivalent norm $|\cdot|$ on $X$ which is $r$-asymptotically uniformly smooth, and a constant $a_r$ such that $\varrho_X(\sigma) \leqslant a_r\sigma^r$ for all $\sigma>0$, where $\varrho_X$ is the modulus of asymptotic uniform smoothness of $X$. If $X$ is a Banach space such that $I_X\in \mathfrak{N}_{0,p}\setminus \mathfrak{T}_{0,p}$, the constant $C_r$ cannot be taken to be bounded, and so that argument does not yield that the $C_r k^{1/r}$ can be replaced by $Ck^{1/p}$.   To motivate this quantitative improvement, we conclude by modifying a construction of Lindenstrauss \cite{Lindenstrauss} to provide for each $1<p\leqslant \infty$ and any finite dimensional Banach space $F$ some Banach space $Z_0$ such that $I_{Z_0}\in \mathfrak{N}_{0,p}\setminus \mathfrak{T}_{0,p}$ and $Z^{**}_0/Z_0$ is $2$-isomorphic to $ F$.

\section{Definitions}

\subsection{The trees $\Gamma_{\xi,n}$}

Given a set $\Lambda$, we let $\Lambda^{<\nn}=\cup_{n=1}^\infty \Lambda^n$.   We order $\Lambda^{<\nn}$ by letting $s<t$ if $s$ is a proper initial segment of $t$.  We let $\Lambda^{\leqslant n}=\cup_{i=1}^n \Lambda^i$. Given a subset $T$ of $\Lambda^{<\nn}$, we let $MAX(T)$ denote the members of $T$ which are maximal with respect to the initial segment ordering.     Given $n\in\nn$,   $t\in \Lambda^n$, and $1\leqslant i\leqslant n$, we let $t|_i$ denote the initial segment of $t$ having length $i$.     We let $\varnothing$ denote the empty sequence and agree that $\varnothing<s$ for all non-empty sequences $s$. Given two sequences $s,t\in \{\varnothing\}\cup \Lambda^{<\nn}$, we let $s\smallfrown t$ denote the concatenation of $s$ and $t$.  Given a non-empty, finite sequence $s$, we let $s^-$ denote the maximal, proper initial segment of $s$. 

Given a tree $T\subset \Lambda^{<\nn}$, we let $T'=T\setminus MAX(T)$, where $MAX(T)$ denotes the set of maximal members of $T$ with respect to the initial segment ordering.    We then define by transfinite induction $$T^0=T,$$ $$T^{\xi+1}=(T^\xi)',$$ and if $\xi$ is a limit ordinal, $$T^\xi=\bigcap_{\zeta<\xi}T^\zeta.$$  We say $T$ is \emph{well-founded} if there exists an ordinal $\xi$ such that $T^\xi=\varnothing$, and in this case we let $o(T)$ be the minimum such $\xi$.

Given a set $\Lambda$, a \emph{tree on} $\Lambda$ is a subset $T$ of $\Lambda^{<\nn}$ such that if $\varnothing<s\leqslant t\in T$, $s\in T$.   Given a set $D$ and a tree $D$, we let $$T.D=\{(\zeta_i, u_i)_{i=1}^n: n\in\nn, (\zeta_i)_{i=1}^n\in T, u_i\in D\}.$$   Given a Banach space $X$,  a tree $T$, and a directed set $D$, we say a collection $(x_t)_{t\in T.D}\subset X$ is \emph{weakly null} provided that for any $t\in \{\varnothing\}\cup T.D$ and $\zeta$ such that $t\smallfrown (\zeta, u)\in T.D$ for some (equivalently, every) $u\in D$, then $(x_{t\smallfrown (\zeta, u)})_{u\in D}$ is a weakly null net.   

The previous definition of a weakly null tree is necessary to define certain technical notions, such as eventual and inevitable sets. However, in \cite{CD}, a different definition was given which is less cumbersome but which does not afford some of the benefits of the stronger definition above.   We define this notion as well.   Given a tree $T$ and a collection $(x_t)_{t\in T}$ in some Banach space $X$, we say $(x_t)_{t\in T}$ is $\circ$-weakly null if for every $t\in \{\varnothing\}\setminus T'$, $$0\in \overline{\{x_s: s\in T, s^-=t\}}^\text{weak}.$$   

Given a set $\Lambda$ and a subset $T$ of $\Lambda^{<\nn}\cup \{\varnothing\}$, we say $T$ is a \emph{rooted tree} if $\varnothing\in T$ and $T\setminus\{\varnothing\}$ is a tree.  In this case, we let $o(T)=o(T\setminus\{\varnothing\})+1$.   Given an operator $A:X\to Y$, $\ee>0$, a tree $T$, and a collection $(z^*_t)_{t\in T}\subset Y^*$, we say $(z^*_t)_{t\in T}$ is \begin{enumerate}[(i)]\item $(\ee, A^*)$-\emph{large} if $\|A^*z^*_t\|\geqslant \ee$ for all $t\in T$, \item \emph{weak}$^*$-\emph{null} if for any $t\in \{\varnothing\}\cup T'$, $$0\in \overline{\{z^*_s: s\in T, s^-=t\}}^{\text{weak}^*}.$$  \end{enumerate}     Given a rooted tree $T$ and a collection $(y^*_t)_{t\in T}$, we say $(y^*_t)_{t\in T}$ is \begin{enumerate}[(i)]\item $(\ee, A^*)$-\emph{separated} if for any $t\in T\setminus \{\varnothing\}$, $$\|A^*y^*_t-A^*y^*_{t^-}\|\geqslant \ee,$$ \item \emph{weak}$^*$-\emph{closed} if for any $t\in T'$, $$y^*_t\in \overline{\{y^*_s: s\in T, s^-=t\}}^{\text{weak}^*}.$$ \end{enumerate}

If $\zeta $ is an ordinal and $t=(\zeta_i)_{i=1}^k$ is a sequence of ordinals, we let $\zeta+t=(\zeta+\zeta_i)_{i=1}^k$.  If $G$ is a collection of sequences of ordinals, we let $\zeta+G=\{\zeta+t: t\in G\}$.   

We now define some important trees.  For each ordinal $\xi$ and each $n\in\nn$, we will define a tree $\Gamma_{\xi,n}$ which consists of strictly decreasing sequences of ordinals in the interval $[0, \omega^\xi n)$. 

We let $\Gamma_{0,1}=\{(0)\}$, a tree with a single member.

Next, assume that for a limit ordinal $\xi$, $\Gamma_{\zeta+1, 1}$ has been defined for every $\zeta<\xi$. We then let $$\Gamma_{\xi,1}=\bigcup_{\zeta<\xi} (\omega^\zeta+\Gamma_{\zeta+1, 1}).$$   We note that $\omega^\zeta+\Gamma_{\zeta+1, 1}$ is canonically order isomorphic to $\Gamma_{\zeta+1, 1}$ via the map $\Gamma_{\zeta+1, 1}\ni t\mapsto \omega^\zeta+t\in \omega^\zeta+\Gamma_{\zeta+1, 1}$.  We also note that since $\Gamma_{\zeta+1, 1}$ consists of sequences of ordinals in the  interval $[0, \omega^{\zeta+1})$,  $\omega^\zeta+\Gamma_{\zeta+1, 1}$ consists of sequences of ordinals in the interval $[\omega^\zeta+0, \omega^\zeta+\omega^{\zeta+1})=[\omega^\zeta, \omega^{\zeta+1})$.  From this it follows that the union $\cup_{\zeta<\xi}\omega^\zeta+\Gamma_{\zeta+1, 1}$ is a totally incomparable union. Thus we think of $\Gamma_{\xi,1}$ as essentially just a totally incomparable union of the previous trees.    The purpose of using $\omega^\zeta+\Gamma_{\zeta+1, 1}$ in place of $\Gamma_{\zeta+1, 1}$ is to make the union totally incomparable.

Now suppose that for an ordinal $\xi$ and each $n\in\nn$, a tree $\Gamma_{\xi,n}$ has been defined.  Furthermore, suppose that for each $n\in\nn$, the first member of any sequence in $\Gamma_{\xi,n}$ is an ordinal in $[\omega^\xi (n-1), \omega^\xi n)$.  We then let $$\Gamma_{\xi+1, 1}=\bigcup_{n=1}^\infty \Gamma_{\xi,n}.$$  We note that this is a totally incomparable union.

Finally, suppose that for some ordinal $\xi$ and some $n\in\nn$, $\Gamma_{\xi,1}$ and $\Gamma_{\xi,n}$ have been defined.  We then let $$\Gamma_{\xi,n+1}= \{\omega^\xi n+s: s\in \Gamma_{\xi,1}\}\cup \{(\omega^\xi n+t)\smallfrown u: t\in MAX(\Gamma_{\xi,1}), u\in \Gamma_{\xi,n}\}.$$   Roughly, this is the tree obtained by taking an order isomoprhic copy of $\Gamma_{\xi,1}$ (this copy is the subset $\{\omega^\xi n+s: s\in \Gamma_{\xi,1}\}$ of $\Gamma_{\xi,n+1}$) and placing  ``above'' each maximal member of this set an order isomorphic copy of the tree $\Gamma_{\xi,n}$.

Heuristically, the tree $\Gamma_{\xi,n}$ consists of $n$ levels, each level of which consists of order isomorphic copies of $\Gamma_{\xi,1}$.  More precisely, each member $t$ of $\Gamma_{\xi,n}$ can be uniquely written as a concatenation $$t=(\omega^\xi(n-1)+t_1)\smallfrown (\omega^\xi(n-2)+t_2)\smallfrown \ldots \smallfrown (\omega^\xi (n-i)+t_i),$$ where $1\leqslant i\leqslant n$, $t_i\in \Gamma_{\xi,1}$ for each $1\leqslant i\leqslant m$, and $t_i\in MAX(\Gamma_{\xi,1})$ for each $1\leqslant i<m$.  We refer to this representation of $t$ as a concatenation as the \emph{canonical form} of $t$.   For $1\leqslant i\leqslant n$, we let $\Lambda_{\xi,n,i}$ denote those $t\in\Gamma_{\xi,n}$ such that if $(\omega^\xi(n-1)+t_1)\smallfrown \ldots \smallfrown (\omega^\xi(n-m)+t_m)$ is the canonical form of $t$, $m=i$.    We refer to the sets $\Lambda_{\xi,n,1}$, $\ldots$, $\Lambda_{\xi,n,n}$ as the \emph{levels} of $\Gamma_{\xi,n}$.    We observe that $\Lambda_{\xi,n,1}$ is canonically identifiable with $\Gamma_{\xi,1}$ via $$\Gamma_{\xi,1}\ni t \mapsto \omega^\xi(n-1)+t\in \Lambda_{\xi,n,1}.$$   Furthermore, if  $t\in MAX(\Lambda_{\xi,n,1})$, we may write $t=(\omega^\xi(n-1)+t_1)$ for some $t_1\in MAX(\Gamma_{\xi,1})$.   Then  if $n>1$, the set $$C_t:=\{s\in \Gamma_{\xi,n}: t<s\}$$ is canonically identifiable with $\Gamma_{\xi,n-1}$ via a canonical identification.    Indeed, if $t<s$, then the canonical form of $s$ must be $$(\omega^\xi (n-1)+t_1)\smallfrown (\omega^\xi(n-2)+t_2)\smallfrown \ldots (\omega^\xi(n-i)+t_i)$$ for some $t_2, \ldots, t_i$, $i>1$, and we may define the map $j_t:C_t\to \Gamma_{\xi, n-1}$ by $$j_t(s)= (\omega^\xi(n-2)+t_2)\smallfrown \ldots (\omega^\xi(n-i)+t_i).$$  Also, if $1\leqslant i<n$ and $t\in MAX(\Lambda_{\xi,n,i})$, then $E_t:=\{s\in \Lambda_{\xi,n,i+1}: t<s\}$ is canonically identifiable with $\Gamma_{\xi,1}$ via the map $$\Gamma_{\xi,1}\ni u\mapsto t\smallfrown (\omega^\xi(n-i-1)+u)\in E_t.$$    We will use these canonical identifications throughout.

We will also be interested in constructing a tree which consists of infinitely many levels, each level of which consists of order isomorphic copies of $\Gamma_{\xi,1}$.  To serve this purpose, we define the tree $\Gamma_{\xi, \infty}$ as the set of all sequences $t$ which admit a representation of the form $$t=t_1\smallfrown (\omega^\xi+t_2)\smallfrown \ldots \smallfrown (\omega^\xi (k-1)+t_k),$$ where $t_i\in \Gamma_{\xi,1}$ for all $1\leqslant i\leqslant k$ and $t_i\in MAX(\Gamma_{\xi,1})$ for each $1\leqslant i<k$.  Note that any such representation is unique.   We call this representation of $t$ the canonical form of $t$, and define $\Lambda_{\xi, \infty, i}$ to be the set of all $t\in \Gamma_{\xi, \infty}$ such that if $t_1\smallfrown (\omega^\xi +t_2)\smallfrown \ldots \smallfrown (\omega^\xi(k-1)+t_k)$ is the canonical form of $t$,  then $k=i$.  Similarly to the previous paragraph, $\Lambda_{\xi,\infty, 1}$ is canonically identifiable with (and actually equal to) $\Gamma_{\xi,1}$, and for each $i\in\nn$ and $t\in MAX(\Lambda_{\xi, \infty, i})$, $\{s\in \Lambda_{\xi, \infty, i+1}: t<s\}$ is canonically identifiable with $\Gamma_{\xi,1}$.

We also define functions $\mathbb{P}_{\xi,n}:\Gamma_{\xi,n}\to [0,1]$ and $\mathbb{P}_{\xi, \infty}:\Gamma_{\xi, \infty}\to [0,1]$.      We let $\mathbb{P}_{0,1}((0))=1$.

Suppose $\xi$ is a limit ordinal and $\mathbb{P}_{\zeta+1, 1}$ has been defined for each $\zeta<\xi$. For each $\zeta<\xi$,  we let $j_\zeta:\omega^\zeta+\Gamma_{\zeta+1, 1}\to \Gamma_{\zeta+1,1}$ be the canonical identification $j_\zeta(\omega^\zeta+t)=t$ described above.  We then let $\mathbb{P}_{\xi,1}|_{\omega^\zeta+\Gamma_{\zeta+1,1}}=\mathbb{P}_{\zeta+1, 1}\circ j_\zeta$.

We define $\mathbb{P}_{\xi+1, 1}$ by letting $\mathbb{P}_{\xi+1, 1}|_{\Gamma_{\xi,n}}=n^{-1} \mathbb{P}_{\xi, n}$.  

Assume that for some ordinal $\xi$ and $n\in\nn$, $\mathbb{P}_{\xi,n}$ and $\mathbb{P}_{\xi,1}$ have been defined.    We then define $\mathbb{P}_{\xi,n+1}$ by letting $$\mathbb{P}_{\xi, n+1}(\omega^\xi n+s)=\mathbb{P}_{\xi,1}(s)$$ for $s\in \Gamma_{\xi,1}$ and $$\mathbb{P}_{\xi, n+1}((\omega^\xi n+t)\smallfrown u)=\mathbb{P}_{\xi,n}(u)$$ for $t\in MAX(\Gamma_{\xi,1})$ and $u\in \Gamma_{\xi,n}$.   

Now if $t\in \Gamma_{\xi, \infty}$ and $$t=t_1\smallfrown (\omega^\xi+t_2)\smallfrown \ldots \smallfrown (\omega^\xi(i-1)+t_i)$$ is the canonical form  of $t$, let $\mathbb{P}_{\xi, \infty}(t)=\mathbb{P}_{\xi,1}(t_i)$.

We note that the functions $\mathbb{P}_{\xi,n}$ and $\mathbb{P}_{\xi, \infty}$ respect all of the canonical identifications described above.  For example, if $t\overset{j}{\mapsto} \omega^\xi(n-1)+t$ is the canonical identification of $\Gamma_{\xi,1}$ with $\Lambda_{\xi,n,1}$, then $$\mathbb{P}_{\xi,n}|_{\Lambda_{\xi,n,1}}=\mathbb{P}_{\xi,1}\circ j.$$   We will use this fact often.

By an abuse of notation, for a directed set $D$, we also use the symbol $\mathbb{P}_{\xi,n}$ to denote the function from $\Gamma_{\xi,n}.D$ to $[0,1]$ and $\mathbb{P}_{\xi, \infty}$ to denote the function from $\Gamma_{\xi, \infty}.D$ into $[0,1]$ given by $$\mathbb{P}_{\xi,n}((\zeta_i, u_i)_{i=1}^k)=\mathbb{P}_{\xi,n}((\zeta_i)_{i=1}^k),$$ where $(\zeta_i, u_i)_{i=1}^k$ is such that $(\zeta_i)_{i=1}^k\in \Gamma_{\xi,n}$ and $u_i\in D$ for all $1\leqslant i\leqslant k$.   We define $$\mathbb{P}_{\xi, \infty}((\zeta_i, u_i)_{i=1}^k)=\mathbb{P}_{\xi, \infty}((\zeta_i)_{i=1}^k).$$   We also define the levels of $\Gamma_{\xi,n}.D$ and $\Gamma_{\xi,\infty}.D$ in the obvious way.  We let $\Lambda_{\xi,n,i}.D$ denote the set of those $(\zeta_i, u_i)_{i=1}^k\in \Gamma_{\xi,n}.D$ such that $(\zeta_i)_{i=1}^k\in \Lambda_{\xi,n,i}$ and $\Lambda_{\xi,\infty, i}.D$ denote the set of those $(\zeta_i, u_i)_{i=1}^k\in \Gamma_{\xi,\infty}.D$ such that $(\zeta_i)_{i=1}^k\in \Lambda_{\xi, \infty, i}$.   We note that the analogous canonical identifications hold for subsets of $\Gamma_{\xi,n}.D$ as they do for $\Gamma_{\xi,n}$.  For example, $\Gamma_{\xi,1}.D$ is identifiable with $\Lambda_{\xi,n,1}.D$ via $$\Gamma_{\xi,1}.D\ni (\zeta_i, u_i)_{i=1}^k\mapsto (\omega^\xi (n-1)+\zeta_i, u_i)_{i=1}^k.$$   Furthermore, the functions $\mathbb{P}_{\xi,n}$ and $\mathbb{P}_{\xi,\infty}$ respect these canonical identifications.

In \cite{C4}, given a directed set $D$, an ordinal $\xi$, and $n\in\nn$, a subset $\Omega_{\xi,n}.D$ of the power set of $MAX(\Gamma_{\xi,n}.D)$ was defined.  There the notation $\Omega_{\xi,n}$ was used in place of $\Omega_{\xi,n}.D$, because $D$ was held fixed and unreferenced.      The members of set this set were called \emph{cofinal}.  The rough idea is that the confinal sets are those which are non-negligible.   In \cite{C4}, a subset $\mathcal{E}$ of $MAX(\Gamma_{\xi,n}.D)$ was called \emph{eventual} if $MAX(\Gamma_{\xi,n}.D)\setminus \mathcal{E}$ is not cofinal.  It was shown there that if $\mathcal{E}_1, \ldots, \mathcal{E}_k\subset MAX(\Gamma_{\xi,n}.D)$ are eventual, so is $\cap_{i=1}^k \mathcal{E}_i$, which is a fact we will need in the sequel.  It was also shown there that any superset of an eventual set is eventual.    

Also in \cite{C4}, a subset $I$ of $\cup_{i=1}^\infty MAX(\Lambda_{\xi,\infty, i}.D)$ is called \emph{inevitable} if \begin{enumerate}[(i)]\item $\{t\in MAX(\Lambda_{\xi, \infty, 1}.D): t\in I\}$ is an eventual subset of $MAX(\Gamma_{\xi,1}.D)$, and \item for any $n\in\nn$ and $t\in I\cap MAX(\Lambda_{\xi,\infty, n}.D)$, $j_t(\{s\in MAX(\Lambda_{\xi, \infty, n+1}.D): t<s\})$ is an eventual subset of $MAX(\Gamma_{\xi,1}.D)$, where  $j_t:\{s\in \Lambda_{\xi, \infty, n+1}.D: t<s\}\to \Gamma_{\xi,1}.D$ is the canonical identification.     \end{enumerate}

We say $B\subset \cup_{i=1}^\infty MAX(\Lambda_{\xi, \infty, i}.D)$ is \emph{big} if it has an inevitable subset.

\subsection{Moduli and ideals}

For an ordinal $\xi$, $n\in\nn$, and a Banach space $X$,  we let $D_X$ denote the directed set of all weakly open sets in $X$ which contain $0$, directed by reverse inclusion, and let $\mathcal{B}^X_{\xi,n}$ denote the set of all weakly null collections $(x_t)_{t\in \Gamma_{\xi,n}.D_X}\subset B_X$.    For an operator $A:X\to Y$, $s\geqslant 0$, and $y\in Y$, we let $$\varrho_\xi(s;y;A)= \sup\Big\{ \inf  \bigl\{\|y+sA\sum_{\varnothing<s\leqslant t} \mathbb{P}_{\xi,1}(s) x_s\|-1: t\in MAX(\Gamma_{\xi,1}.D)\}: (x_t)_{t\in \Gamma_{\xi,1}.D}\in \mathcal{B}_{\xi,1}^X\Bigr\}.$$ If $Y=\{0\}$, we let $\varrho_\xi(s;A)=0$ for all $s\geqslant 0$, and otherwise we let $$\varrho_\xi(s;A)= \sup \{\varrho_\xi(s;y;A): y\in B_Y\}.$$   

For $y^*\in Y^*$, $\tau\geqslant 0$, we let $$\delta^{\text{weak}^*}_\xi(\tau;y^*;A)=\inf\Bigl\{ \sup_{t\in T}\|y^*+\tau\sum_{\varnothing<s\leqslant t} z^*_s\|-1: o(T)=\omega^\xi, (z^*_t)_{t\in T}\subset Y^*\text{weak$^*$-null and\ }(1,A^*)-\text{separated}\Bigr\}.$$   If $Y=\{0\}$, we let $\delta^{\text{weak}^*}_\xi(\tau;A)=\infty$ for all $\tau>0$, $\delta^{\text{weak}^*}_\xi(0;A)=0$. Otherwise for each $\tau\geqslant 0$, we let $\delta^{\text{weak}^*}_\xi(\tau;A)=\inf_{y^*\in S_{Y^*}} \delta^{\text{weak}^*}_\xi(\tau;y^*;A)$.      We note that in \cite{CD}, the definition of $\delta^{\text{weak}^*}_\xi(\tau;A)$ involving taking the infimum of $\delta^{\text{weak}^*}_\xi(\tau;y^*;A)$ over all $y^*\in Y^*$ with $\|y^*\|\geqslant 1$ rather than simply $y^*\in S_{Y^*}$.  However, a close examination of the proofs from \cite{CD} shows that all results hold with either definition.

For $1<p<\infty$, an ordinal $\xi$, and an operator $A:X\to Y$, we say $A$ is \begin{enumerate}[(i)]\item $\xi$-\emph{asymptotically uniformly smooth} ($\xi$-\emph{AUS} for short) if $\inf_{s>0} \varrho_\xi(s;A)/s=0$,   \item $\xi$-$p$-\emph{asymptotically uniformly smooth} ($\xi$-$p$-\emph{AUS}) if $\sup_{s>0} \varrho_\xi(s;A)/s^p<\infty$, \item $\xi$-\emph{asymptotically uniformly flat} ($\xi$-\emph{AUF}) (or $\xi$-$\infty$-\emph{asymptotically uniformly smooth}, $\xi$-$\infty$-\emph{AUS}) if there exists $s>0$ such that $\varrho_\xi(s;A)=0$. \end{enumerate}  

We note that each of these properties is retained under passing to equivalent norms on $X$, but not necessarily by passing to equivalent norms on $Y$.

We say $A:X\to Y$ is $\xi$-\emph{asymptotically uniformly smoothable} if there exists an equivalent norm $|\cdot|$ on $Y$ such that $A:X\to (Y, |\cdot|)$ is $\xi$-AUS.   We define $\xi$-$p$-\emph{asymptotically uniformly smoothable} and $\xi$-\emph{asymptotically uniformly flattenable} similarly.    

For an operator $A:X\to Y$ and an ordinal $\xi$, we define $\|\cdot\|_{\xi,A}$ on $c_{00}$ by letting $$\|\sum_{i=1}^\infty a_ie_i\|_{\xi,A}= \sup\Bigl\{ \inf \{\|A\sum_{i=1}^n a_i\sum_{\Lambda_{\xi,n,i}.D_X\ni s\leqslant t}  \mathbb{P}_{\xi,n}(s) x_s\|: t\in MAX(\Gamma_{\xi,n}.D)\}: n\in\nn, (x_t)_{t\in \Gamma_{\xi,n}.D}\in \mathcal{B}^X_{\xi,n}\Bigr\}.$$

\begin{rem}\upshape It is implicitly contained in the proof of \cite[Theorem $4.3$]{C5} that $\|\sum_{i=1}^n a_ie_i\|_{\xi,A}$ is the infimum of those $C>0$ such that for every $(x_t)_{t\in \Gamma_{\xi,n}.D_X}\in \mathcal{B}^X_{\xi,n}$, $$\{t\in MAX(\Gamma_{\xi,n}.D_X): \|A\sum_{i=1}^n a_i \sum_{\Lambda_{\xi,n,i}.D_X\ni s\leqslant t} \mathbb{P}_{\xi,n}(s)x_s\|\leqslant C\}$$ is eventual.     

\end{rem}

\begin{proposition} The function $\|\cdot\|_{\xi,A}$ is a seminorm on $c_{00}$ making the canonical basis $1$-unconditional.     

\end{proposition}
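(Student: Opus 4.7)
The plan is to verify the three seminorm axioms together with $1$-unconditionality directly from the definition, with the eventual-set characterization from the preceding remark playing the central role in the triangle inequality. Non-negativity is clear (the expression is a supremum of infima of norms), and positive homogeneity follows by pulling a scalar $c$ through the inner sum, through $A$, and out of the norm: this scales the inner expression by $|c|$, so both the inner infimum and the outer supremum scale by $|c|$.

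For $1$-unconditionality, given signs $(\ee_i)\in\{-1,1\}^\nn$ and any $(x_t)_{t\in \Gamma_{\xi,n}.D_X}\in \mathcal{B}^X_{\xi,n}$, I would set $y_t:=\ee_i x_t$ whenever $t\in \Lambda_{\xi,n,i}.D_X$. Since $B_X$ is symmetric and weak nullness is preserved under multiplication by $\pm 1$, $(y_t)\in \mathcal{B}^X_{\xi,n}$, and $(x_t)\mapsto (y_t)$ is an involution of $\mathcal{B}^X_{\xi,n}$. Pulling the $\ee_i$ out of the inner sum and absorbing it onto the coefficient $a_i$ gives
$$A\sum_{i=1}^n a_i \sum_{\Lambda_{\xi,n,i}.D_X\ni s\leqslant t}\mathbb{P}_{\xi,n}(s)y_s = A\sum_{i=1}^n (\ee_i a_i)\sum_{\Lambda_{\xi,n,i}.D_X\ni s\leqslant t}\mathbb{P}_{\xi,n}(s)x_s,$$
so the inner infima over $t$ agree; taking sup over $n$ and collections yields $\|\sum a_ie_i\|_{\xi,A}=\|\sum \ee_i a_ie_i\|_{\xi,A}$.

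For the triangle inequality, write $\nu_n(v):=\sup_{(x_t)\in \mathcal{B}^X_{\xi,n}}\inf_t \|A\sum_{i=1}^n v_i\sum_s \mathbb{P}_{\xi,n}(s)x_s\|_t$, so $\|v\|_{\xi,A}=\sup_n \nu_n(v)$. Fix $a,b\in c_{00}$, $\ee>0$, $n\in\nn$, and $(x_t)\in \mathcal{B}^X_{\xi,n}$. By the preceding remark,
$$\mathcal{E}_a:=\{t\in MAX(\Gamma_{\xi,n}.D_X): \|A\sum_{i=1}^n a_i\sum_s \mathbb{P}_{\xi,n}(s)x_s\|_t \leqslant \nu_n(a)+\ee\}$$
and the analogously defined $\mathcal{E}_b$ are eventual. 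Since finite intersections of eventual sets are eventual, $\mathcal{E}_a\cap \mathcal{E}_b$ is eventual, in particular non-empty; at any $t$ in the intersection the triangle inequality in $Y$ gives
$$\|A\sum_{i=1}^n(a_i+b_i)\sum_s\mathbb{P}_{\xi,n}(s)x_s\|_t \leqslant \nu_n(a)+\nu_n(b)+2\ee.$$
Since $\inf_t\|\cdots\|_t$ is bounded by the value at this particular $t$, taking sup over $(x_t)$ and then over $n$, and using $\nu_n(v)\leqslant \|v\|_{\xi,A}$, yields $\|a+b\|_{\xi,A}\leqslant \|a\|_{\xi,A}+\|b\|_{\xi,A}+2\ee$; letting $\ee\to 0$ closes the argument.

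The main obstacle is precisely the triangle inequality: a naive pointwise bound inside the norm is useless on its own, since $\inf_t(f+g)$ need not be dominated by $\inf_t f+\inf_t g$. The eventual-set reformulation of $\nu_n$ from the remark, combined with closure of eventual sets under finite intersection, circumvents this by producing a single $t$ at which both summands are simultaneously near-optimal, where the triangle inequality in $Y$ then suffices.
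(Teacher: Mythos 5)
Your approach is essentially the paper's: you use the eventual-set characterization from the preceding remark and closure of eventual sets under finite intersections to produce a single $t$ controlling both summands simultaneously, and you pull unimodular factors across the inner sum (exploiting the stability of $\mathcal{B}^X_{\xi,n}$ under such reweightings) for $1$-unconditionality.

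There is one genuine, though small, gap in your triangle inequality. You define $\mathcal{E}_a$ with threshold $\nu_n(a)+\ee$ and assert it is eventual ``by the preceding remark.'' But the remark only guarantees that $\{t\in MAX(\Gamma_{\xi,n}.D_X): \|A\sum_{i=1}^n a_i\sum_{\Lambda_{\xi,n,i}.D_X\ni s\leqslant t}\mathbb{P}_{\xi,n}(s)x_s\|\leqslant C\}$ is eventual when $C>\|\sum_{i=1}^n a_ie_i\|_{\xi,A}$. Since $\nu_n(a)\leqslant \|\sum_{i=1}^n a_ie_i\|_{\xi,A}$, and this could in principle be a strict inequality, the quantity $\nu_n(a)+\ee$ need not sit above the threshold the remark requires, so eventuality of $\mathcal{E}_a$ does not follow from what you cite. (Non-emptiness of $\mathcal{E}_a$ would follow from the definition of $\nu_n(a)$, but non-emptiness of each of two sets does not give non-emptiness of their intersection, which is exactly the obstacle you identified.) The fix is immediate: take the thresholds to be $\|\sum a_ie_i\|_{\xi,A}+\ee$ and $\|\sum b_ie_i\|_{\xi,A}+\ee$ (the paper uses arbitrary $\alpha>\|\sum a_ie_i\|_{\xi,A}$, $\beta>\|\sum b_ie_i\|_{\xi,A}$), after which your intersection argument and the final sup over $(x_t)$ and $n$ go through unchanged. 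A smaller point: you work only with signs $(\ee_i)\in\{-1,1\}^\nn$, which settles the real case but not the complex one; the paper's version with arbitrary unimodular scalars $\ee_1,\ldots,\ee_n$ and the substitution $x_s\mapsto \ee_i^{-1}x_s$ (still in $\mathcal{B}^X_{\xi,n}$ since $B_X$ is balanced and weak nullness is preserved) covers both at no extra cost.
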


\begin{proof} We need only show the triangle inequality and $1$-unconditionality.    Fix $(a_i)_{i=1}^\infty, (b_i)_{i=1}^\infty\in c_{00}$.   Fix $n\in\nn$ such that $a_i=b_i=0$ for all $i>n$.  Fix $(x_t)_{t\in \Gamma_{\xi,n}.D_X}\in \mathcal{B}^X_{\xi,n}$, $\alpha>\|\sum_{i=1}^n a_ie_i\|_{\xi,A}$, $\beta>\|\sum_{i=1}^n b_ie_i\|_{\xi,A}$. Then by our previous remark, $$\mathcal{E}:=\{t\in MAX(\Gamma_{\xi,n}.D_X): \|A\sum_{i=1}^n a_i \sum_{\Lambda_{\xi,n,i}.D_X\ni s\leqslant t}\mathbb{P}_{\xi,n}(s) x_s\|\leqslant \alpha\}$$ and $$\mathcal{F}:=\{t\in MAX(\Gamma_{\xi,n}.D_X): \|A\sum_{i=1}^n b_i \sum_{\Lambda_{\xi,n,i}.D_X\ni s\leqslant t} \mathbb{P}_{\xi,n}(s)x_s\|\leqslant \beta\}$$ are eventual, as is $\mathcal{E}\cap \mathcal{F}$.   But since $$\mathcal{E}\cap \mathcal{F} \subset \{t\in MAX(\Gamma_{\xi,n}.D_X): \|A\sum_{i=1}^n (a_i+b_i) \sum_{\Lambda_{\xi,n,i}.D_X\ni s\leqslant t} x_s\|\leqslant \alpha+\beta\},$$ another appeal to our previous remark yields the triangle inequality.  Here we are using the previously recalled fact that finite intersections of eventual sets are eventual, as are supersets of eventual sets.

Now if we fix $n\in\nn$, $(a_i)_{i=1}^\infty\in c_{00}$, $(x_t)_{t\in \Gamma_{\xi,n}.D_X}\in \mathcal{B}^X_{\xi,n}$, and unimodular scalars $\ee_1, \ldots, \ee_n$, $$\inf_{t\in MAX(\Gamma_{\xi,n}.D_X)} \|A\sum_{i=1}^n a_i \sum_{\Lambda_{\xi,n,i}.D_X\ni s\leqslant t} x_s\| = \inf_{t\in MAX(\Gamma_{\xi,n}.D_X)} \|A\sum_{i=1}^n \ee_i a_i \sum_{\Lambda_{\xi,n,i}.D_X\ni s\leqslant t} \ee_i^{-1}x_s\|\leqslant \|\sum_{i=1}^n \ee_ia_i e_i\|_{\xi,A}.$$  For the last inequality, we are using that $(\ee_{\lambda(t)} x_t)_{t\in \Gamma_{\xi,n}.D_X}\in \mathcal{B}^X_{\xi,n}$, where $\lambda(t)$ is the $1\leqslant i\leqslant n$ such that $t\in \Lambda_{\xi,n,i}.D_X$.     This yields that $\|\sum_{i=1}^n a_ie_i\|_{\xi,A}=\|\sum_{i=1}^n \ee_i a_i e_i\|_{\xi,A}$.

\end{proof}

For $1\leqslant p\leqslant \infty$, an ordinal $\xi$,   an operator $A:X\to Y$, and $n\in\nn$, we let $$\alpha_{\xi,p,n}(A)=\sup \{\|\sum_{i=1}^n a_ie_i\|_{\xi,A}: (a_i)_{i=1}^n\in B_{\ell_p^n}\},$$ $$\alpha_{\xi,p}(A)=\sup_{n\in\nn} \alpha_{\xi,p,n}(A),$$ and  $$\theta_{\xi,n}(A)= \|n^{-1}\sum_{i=1}^n e_i\|_{\xi,A}.$$

If $X$ is a Banach space, $K\subset X^*$ is weak$^*$-compact, and $\ee>0$, we let $s_\ee(K)$ denote the set of $x^*\in K$ such that for every weak$^*$-neighborhood $V$ of $x^*$, $\text{diam}(K\cap V)>\ee$.  We define $s_0(K)=K$.  We recursively define $$s_\ee^0(K)=K,$$ $$s^{\xi+1}_\ee(K)=s_\ee(s^\xi_\ee(K)),$$ and if $\xi$ is a limit ordinal, $$s^\xi_\ee(K)=\bigcap_{\zeta<\xi} s^\zeta_\ee(K).$$    If there exists an ordinal $\xi$ such that $s_\ee^\xi(K)=\varnothing$, we let $Sz(K, \ee)$ be the minimum ordinal $\xi$ such that $s_\ee^\xi(K)=\varnothing$, and otherwise we write $Sz(K, \ee)=\infty$.  For an ordinal $\xi$, if there exists an ordinal $\zeta$ such that $s^\zeta_\ee(K)=\varnothing$, then we let $Sz_\xi(K, \ee)$ be the minimum ordinal $\zeta$ such that $s^{\omega^\xi \zeta}_\ee(K)=\varnothing$, and otherwise we write $Sz_\xi(K, \ee)=\varnothing$.   If $Sz(K, \ee)=\infty$ for some $\ee>0$, then we write $Sz(K)=\infty$.   If $Sz(K, \ee)$ is an ordinal for every $\ee>0$, we let $Sz(K)=\sup_{\ee>0} Sz(K, \ee)$.    For an ordinal $\xi$, we let $$c_{\xi,\ee}(K)= \overline{\text{co}(s_\ee^{\omega^\xi}(K))}^{\text{weak}^*},$$ and we define again by transfinite induction $$c_{\xi,\ee}^0(K)=K,$$ $$c_{\xi, \ee}^{\zeta+1}(K)=c_{\xi,\ee}(c_{\xi,\ee}^\zeta(K)),$$ and if $\zeta$ is a limit ordinal, $$c_{\xi,\ee}^\zeta(K)=\bigcap_{\gamma<\zeta} c_{\xi, \ee}^\gamma(K).$$   If there exists an ordinal $\zeta$ such that $c_{\xi, \ee}^\zeta(K)=\varnothing$, we let $Cz_\xi(K, \ee)$ be the minimum ordinal $\zeta$ such that $c^\zeta_{\xi, \ee}(K)=\varnothing$, and otherwise we write $Cz_\xi(K, \ee)=\infty$. We note that by weak$^*$-compactness, $Sz(K, \ee)$, $Sz_\xi(K, \ee)$, and $Cz_\xi(K, \ee)$ cannot be a limit ordinal.    If $A:X\to Y$ is an operator, we define $Sz(A, \ee)=Sz(A^*B_{Y^*}, \ee)$, $Sz_\xi(A, \ee)=Sz_\xi(A^*B_{Y^*}, \ee)$, etc.    If $X$ is a Banach space, we let $Sz(X, \ee)=Sz(I_X, \ee)$, etc.   We define $$\textsf{p}_\xi(A)= \underset{\ee\to 0^+}{\lim\sup} \frac{\log Sz_\xi(A, \ee)}{|\log(\ee)|},$$ where we agree to the convention that $\log Sz_\xi(A, \ee)=\infty$ provided that $Sz_\xi(A, \ee)$ is infinite.

  We collect here some important facts regarding these quantities and classes.     In what follows, we let $\mathfrak{D}_\xi$ denote the class of operators $A$ with $Sz(A)\leqslant \omega^\xi$.

\begin{theorem}\cite{Brooker} For every ordinal $\xi$, $\mathfrak{D}_\xi$ is a closed operator ideal.

\end{theorem}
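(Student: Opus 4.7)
The plan is to verify the three defining conditions of a closed operator ideal for $\mathfrak{D}_\xi$: it contains every finite-rank operator; it is closed under sums and under composition on either side with bounded operators; and it is closed in the operator norm topology. The finite-rank and composition parts are essentially bookkeeping. If $A:X\to Y$ has finite rank then $A^*B_{Y^*}$ is bounded in a finite-dimensional subspace of $X^*$ where norm and weak$^*$ topologies coincide, giving $s_\varepsilon(A^*B_{Y^*})=\varnothing$ for every $\varepsilon>0$ and hence $Sz(A)\leqslant 1\leqslant \omega^\xi$. For composition, the monotonicity facts $Sz(\lambda K,\lambda\varepsilon)=Sz(K,\varepsilon)$ and $Sz(T^*K,\varepsilon)\leqslant Sz(K,\varepsilon/\|T\|)$ for weak$^*$-compact $K\subset X^*$ and bounded $T$ (the latter from weak$^*$-to-weak$^*$ continuity and the $\|T\|$-Lipschitz character of $T^*$) combined with $(CAB)^*B_{Z^*}\subseteq \|C\|\, B^*(A^*B_{Y^*})$ yield $Sz(CAB)\leqslant Sz(A)\leqslant\omega^\xi$ whenever $A\in \mathfrak{D}_\xi$.

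The central technical input is a subadditivity estimate of the form $Sz(K_1+K_2,\varepsilon_1+\varepsilon_2)\leqslant Sz(K_1,\varepsilon_1)+Sz(K_2,\varepsilon_2)$ for weak$^*$-compact sets $K_1,K_2\subset Y^*$. One proves this via the pointwise inclusion $s^{\alpha+\beta}_{\varepsilon_1+\varepsilon_2}(K_1+K_2)\subseteq s^{\alpha}_{\varepsilon_1}(K_1)+s^{\beta}_{\varepsilon_2}(K_2)$, established by transfinite induction on $\alpha+\beta$: limit steps follow immediately from the definitions, and the successor step uses weak$^*$-compactness to extract, from a point $x^*+y^*$ whose every weak$^*$-neighborhood exhibits diameter exceeding $\varepsilon_1+\varepsilon_2$ in the sum, matching diameter witnesses in at least one coordinate and a convergent net in the other. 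Applying the estimate to $(A+B)^*B_{Y^*}\subseteq A^*B_{Y^*}+B^*B_{Y^*}$ with $\varepsilon_1=\varepsilon_2=\varepsilon/2$, and using that $\omega^\xi$ is additively indecomposable for $\xi\geqslant 1$ (while $\mathfrak{D}_0$ coincides with the compact operators, to be handled directly), gives $A+B\in \mathfrak{D}_\xi$ whenever $A,B\in \mathfrak{D}_\xi$.

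For norm closure, given $A_n\to A$ with each $A_n\in \mathfrak{D}_\xi$ and any $\varepsilon>0$, pick $n$ with $\|A-A_n\|<\varepsilon/3$; then $A^*B_{Y^*}\subseteq A_n^*B_{Y^*}+(\varepsilon/3)B_{X^*}$, and since the second summand has Szlenk index $1$ at level $\varepsilon/3$, the subadditivity estimate yields $Sz(A,\varepsilon)\leqslant Sz(A_n,2\varepsilon/3)+1<\omega^\xi$. The main obstacle is the subadditivity lemma itself: its technical core is the careful weak$^*$-compactness extraction at the successor step of the transfinite induction, turning local diameter witnesses in $K_1+K_2$ into compatible witnesses in the two summands while respecting the ordinal bookkeeping. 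Once that lemma is in hand, the additive indecomposability of $\omega^\xi$ closes every ideal axiom uniformly in $\xi$.
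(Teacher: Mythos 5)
The paper cites this theorem to Brooker and does not reproduce its proof, so I can only assess your argument against the known literature rather than against a proof in the text. Your overall strategy (verify the ideal axioms directly from the Szlenk derivation) is the standard one, the finite-rank and composition steps are sound, and splitting off $\xi=0$ (where $\mathfrak{D}_0$ is the compact operators) is the right move. However, the pointwise inclusion you name as the technical core,
$$s^{\alpha+\beta}_{\varepsilon_1+\varepsilon_2}(K_1+K_2)\subseteq s^{\alpha}_{\varepsilon_1}(K_1)+s^{\beta}_{\varepsilon_2}(K_2),$$
is false when read, as your transfinite induction requires, as a statement holding for every decomposition $\gamma=\alpha+\beta$. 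Take $K_1=\{0\}$, so $s_{\varepsilon_1}(K_1)=\varnothing$ for every $\varepsilon_1>0$; then with $\alpha=1$ the right-hand side is empty while the left-hand side equals $s^{1+\beta}_{\varepsilon_1+\varepsilon_2}(K_2)$, nonempty whenever $Sz(K_2,\varepsilon_1+\varepsilon_2)>1+\beta$. Since the induction on $\alpha+\beta$ has, at stage $\gamma$, to cover every splitting of $\gamma$, including those with $\alpha$ large and $\beta$ small, it cannot get off the ground.

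Any correct pointwise form must let the ordinal splitting depend on the point, something like $s^{\gamma}_{\varepsilon_1+\varepsilon_2}(K_1+K_2)\subseteq\bigcup\bigl\{s^{\alpha}_{\varepsilon_1}(K_1)+s^{\beta}_{\varepsilon_2}(K_2):\alpha+\beta\geqslant\gamma\bigr\}$, and even then the successor step is delicate: the ``at least one coordinate'' extraction you sketch must contend with the case where the two coordinate witness nets converge weak$^*$ to distinct limits, and then neither limit is exhibited to lie in $s_{\varepsilon_1}(K_1)$. There are two clean routes around the Minkowski-sum difficulty. One is to factor $A+B$ through the diagonal $X\to X\oplus X$ followed by $A\oplus B$ followed by the sum map $Y\oplus Y\to Y$, so that closure under sums reduces to closure under composition plus control of the Szlenk index of the Cartesian product $A^*B_{Y^*}\times B^*B_{Y^*}$, which is far better behaved than the Minkowski sum. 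The other, and the one closest to the present paper's machinery, is to use the characterization quoted from \cite{C5}: $A\in\mathfrak{D}_\xi$ if and only if $\theta_{\xi,1}(A)=0$, and since $\theta_{\xi,1}$ is a norm-continuous ideal seminorm, the closed-ideal property of its zero set is essentially immediate. Finally, a small slip in your closure step: $(\varepsilon/3)B_{X^*}$ has diameter $2\varepsilon/3$, so its Szlenk index is $1$ at level $2\varepsilon/3$, not at level $\varepsilon/3$; the split $\varepsilon=\varepsilon/3+2\varepsilon/3$ with the larger piece assigned to the ball gives the inequality you want.
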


\begin{theorem}\cite{C5} Let $\xi$ be an ordinal. \begin{enumerate}[(i)]\item $A\in \mathfrak{D}_\xi$  if and only if for any directed set $D$ and any weakly null $(x_t)_{t\in \Gamma_{\xi,1}.D}\subset B_X$, $$\inf\|\sum_{s\leqslant t} \mathbb{P}_{\xi,1}(s)x_s\|=0$$ if and only if $\theta_{\xi,1}(A)=0$.    \item $\theta_{\xi,1}(A)\leqslant \|A\|$. \item If there exist a directed set $D$ and a weakly null collection $(x_t)_{t\in \Gamma_{\xi,1}.D}\subset B_X$ such that $$\inf_{t\in MAX(\Gamma_{\xi,1}.D)} \|A\sum_{s\leqslant t}\mathbb{P}_{\xi,1}(s)x_s\| \geqslant \ee,$$ then  $\theta_{\xi,1}(A)\geqslant \ee$.    \item The map $\omega^\xi n\mapsto \theta_{\xi,n}(A)$ is non-increasing and continuous from the class $\{\omega^\xi n:\xi \in \textbf{\emph{Ord}}, n\in\nn\}$ into $[0,\|A\|]$.       \end{enumerate}

\label{wh}

\end{theorem}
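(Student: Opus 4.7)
The proof separates into four parts, with (ii) and (iii) serving as preliminaries for the substantive content of (i) and (iv). For (ii), I would first establish by transfinite induction on $\xi$ that $\sum_{s \leqslant t} \mathbb{P}_{\xi,1}(s) = 1$ for every $t \in MAX(\Gamma_{\xi,1})$, together with the auxiliary identity $\sum_{s \leqslant t} \mathbb{P}_{\xi,n}(s) = n$ for $t \in MAX(\Gamma_{\xi,n})$. The base case uses $\mathbb{P}_{0,1}((0))=1$, the limit step follows from the canonical identifications on each piece $\omega^\zeta + \Gamma_{\zeta+1,1}$, and the successor step combines an induction on $n$ with the rescaling $\mathbb{P}_{\xi+1,1}|_{\Gamma_{\xi,n}} = n^{-1}\mathbb{P}_{\xi,n}$. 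Given this, for any weakly null $(x_t)\subset B_X$ the vector $\sum_{s \leqslant t} \mathbb{P}_{\xi,1}(s) x_s$ is a convex combination of elements of $B_X$, hence has norm at most $1$; applying $A$ and taking sup-inf yields (ii). For (iii), the bound is immediate from the definition: $\theta_{\xi,1}(A) = \|e_1\|_{\xi,A}$, and one is free to pick $n = 1$ together with the given collection in the supremum defining the norm.

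For (i), my plan is a transfinite induction on $\xi$ translating between weakly null trees in $B_X$ and weak$^*$-null, $(\ee, A^*)$-separated trees in $A^*B_{Y^*}$. For the forward direction, suppose $A \in \mathfrak{D}_\xi$ but some weakly null $(x_t)_{t \in \Gamma_{\xi,1}.D_X}\subset B_X$ satisfies $\inf_t \|A\sum_{s \leqslant t} \mathbb{P}_{\xi,1}(s) x_s\| \geqslant \ee$. For each maximal $t$ pick a norming $y^*_t \in B_{Y^*}$; by iteratively passing to weak$^*$-convergent subnets down the recursive levels of $\Gamma_{\xi,1}$ and considering differences of limits, one builds inside $A^*B_{Y^*}$ a weak$^*$-null, $(\ee',A^*)$-separated tree of order $\omega^\xi$ for some $\ee' > 0$, contradicting $Sz(A) \leqslant \omega^\xi$. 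The reverse direction uses the standard tree characterization of the Szlenk index: if $Sz(A) > \omega^\xi$, produce a weak$^*$-null, $(\ee, A^*)$-separated tree in $A^*B_{Y^*}$ of order $\omega^\xi$, then apply Hahn--Banach in sync with the canonical identifications between successive levels of $\Gamma_{\xi,1}$ to assemble a weakly null $(x_t)_{t \in \Gamma_{\xi,1}.D_X}$ in $B_X$ witnessing $\inf \geqslant \ee/2$. The equivalence with $\theta_{\xi,1}(A)=0$ is then tautological: taking $n=1$ in the supremum defining $\|e_1\|_{\xi,A}$ recovers exactly the sup-over-inf in question, so (iii) gives one implication and the sup definition gives the other.

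For (iv), both assertions are combinatorial consequences of the recursive structure of the trees $\Gamma_{\xi,n}$. To see $\theta_{\xi,n+1}(A) \leqslant \theta_{\xi,n}(A)$, given a witness $(x_t)_{t \in \Gamma_{\xi,n+1}.D_X}$ at value $c$, restrict to a suitable subtree above the top-level cutoff and use $1$-unconditionality of $(e_i)$ together with the decomposition $\Gamma_{\xi,n+1} = \{\omega^\xi n + s\} \cup \{(\omega^\xi n + t) \smallfrown u\}$ to extract a witness on $\Gamma_{\xi,n}.D_X$ at value at least $c$. The comparison $\theta_{\xi+1,1}(A) \leqslant \theta_{\xi,n}(A)$, valid for every $n$, then follows from $\Gamma_{\xi+1,1} = \bigcup_n \Gamma_{\xi,n}$ by restricting any witness to a single $\Gamma_{\xi,n}$ piece. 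For continuity at a limit $\omega^\xi$ with $\xi$ a limit ordinal, the representation $\Gamma_{\xi,1} = \bigcup_{\zeta < \xi}(\omega^\zeta + \Gamma_{\zeta+1,1})$ shows that any witnessing weakly null tree concentrates, up to arbitrarily small loss, on a single $\omega^\zeta + \Gamma_{\zeta+1,1}$ piece, which by the canonical identification with $\Gamma_{\zeta+1,1}$ yields $\theta_{\xi,1}(A) = \sup_{\zeta < \xi} \theta_{\zeta+1,1}(A)$.

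The main obstacle is the bidirectional translation in (i). The subtle point is that the weights $\mathbb{P}_{\xi,1}$ rescale by $n^{-1}$ at each successor stage, so the separation constants must be tracked precisely through the induction; a careless estimate at the successor step can swallow the separation constant entirely. Making this work depends essentially on the fact established in (ii) that $\mathbb{P}_{\xi,1}$ gives a probability distribution along any maximal chain, so that the $n^{-1}$ rescaling on each $\Gamma_{\xi,n}$ piece exactly compensates the growth of the sums and the induction closes with the separation constant preserved up to a fixed constant factor.
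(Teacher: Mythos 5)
A procedural point first: Theorem \ref{wh} is stated in this paper as a citation from \cite{C5}; no proof appears here, so there is nothing internal to compare your attempt against. Judged on its own terms, your architecture is the natural one and is consistent with the machinery of Section~2: convexity of the $\mathbb{P}_{\xi,1}$-weights for (ii), reduction to the definition of $\|e_1\|_{\xi,A}$ for (iii), a bidirectional translation between weakly null trees in $B_X$ and weak$^*$-null $(\ee,A^*)$-separated trees of order $\omega^\xi$ for (i), and the totally incomparable recursive decompositions of $\Gamma_{\xi,n}$ for (iv). However, several steps either contain errors or gloss over the substantive work.

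In (iv), you wrote $\theta_{\xi,1}(A)=\sup_{\zeta<\xi}\theta_{\zeta+1,1}(A)$, but since the map is non-increasing in the ordinal argument you must mean $\inf$, and the nontrivial direction $\theta_{\xi,1}(A)\geqslant\inf_{\zeta<\xi}\theta_{\zeta+1,1}(A)$ is obtained by \emph{gluing} witnesses from each piece of the totally incomparable union into one collection on $\Gamma_{\xi,1}.D_X$ — not by ``concentration,'' which only reproves the easy inequality. The inner monotonicity $\theta_{\xi,n+1}(A)\leqslant\theta_{\xi,n}(A)$ is more delicate than ``restrict to a subtree and use $1$-unconditionality'': the weight $\mathbb{P}_{\xi,n+1}$ does restrict to $\mathbb{P}_{\xi,n}$ on the copy of $\Gamma_{\xi,n}$ above a first-level leaf, but the barycentric factor changes from $(n+1)^{-1}$ to $n^{-1}$ and a further first-level contribution $y_1$ appears in the norm; your sketch does not explain how to absorb these. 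In (iii), the statement quantifies over an arbitrary directed set $D$ while $\theta_{\xi,1}(A)=\|e_1\|_{\xi,A}$ is a supremum only over collections in $\mathcal{B}^X_{\xi,1}$, indexed specifically by $D_X$; you need (and do not mention) the transfer from $D$ to $D_X$. Finally, in (i), ``iteratively passing to weak$^*$-convergent subnets down the recursive levels'' and ``apply Hahn--Banach in sync with the canonical identifications'' name the desired output rather than carrying out the induction. The technical heart — showing by transfinite induction that $Sz(A)\leqslant\omega^\xi$ is equivalent to the nonexistence, for every $\ee>0$, of weak$^*$-null $(\ee,A^*)$-separated trees of order $\omega^\xi$, and then aligning the recursive construction of $\Gamma_{\xi,1}$ and its weight $\mathbb{P}_{\xi,1}$ with these Szlenk derivations so that the separation constants survive the $n^{-1}$ rescalings — is acknowledged as ``the main obstacle'' but is left essentially unaddressed.
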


\section{Discussion of the properties}

Fix $1< p\leqslant \infty$ and an ordinal $\xi$. Let $1/p+1/q=1$.   We  let  \begin{enumerate}[(i)]\item $\mathfrak{T}_{\xi,p}$ denote the class of $\xi$-$p$-AUS-able operators, \item $\mathfrak{A}_{\xi,p}$ denote the class of operators $A$ such that $\alpha_{\xi,p}(A)<\infty$, \item $\mathfrak{N}_{\xi,p}$ denote the class of operators $A$ such that $\sup_n \theta_{\xi,n}(A)n^{1/q}<\infty$, \item $\mathfrak{P}_{\xi,p}$ denote the class of operators $A$ such that $\textsf{p}_\xi(A)\leqslant q$. \end{enumerate}   We let $\textsf{T}_{\xi,p}$ denote the class of Banach spaces $X$ such that $I_X\in \mathfrak{T}_{\xi,p}$, and $\textsf{A}_{\xi,p}$, $\textsf{N}_{\xi,p}$, and $\textsf{P}_{\xi,p}$ are defined similarly.

\begin{rem}\upshape It is the main result of \cite{C4} that $A\in \mathfrak{T}_{\xi,p}$ if and only if there exists a constant $C$ such that for any  weakly null collection $(x_t)_{t\in \Gamma_{\xi,\infty}.D_X}\subset B_X$, $$\bigcup_{n=1}^\infty \bigl\{t\in MAX(\Lambda_{\xi,\infty, n}.D_X): (\forall (a_i)_{i=1}^n\in B_{\ell_p^n})(\|A\sum_{i=1}^n a_i \sum_{\Lambda_{\xi, \infty, n}\ni s} \mathbb{P}_{\xi, \infty}(s)x_s\|\leqslant C)\bigr\}$$   is big (that is, has an inevitable subset).    We let $T_{\xi,p}(A)$ denote the infimum of $C$ having the previous property and let $\mathfrak{t}_{\xi,p}(A)=\|A\|+T_{\xi,p}(A)$.   It was shown in \cite{C4} that $(\mathfrak{T}_{\xi,p}, \mathfrak{t}_{\xi,p})$ is a Banach ideal.

It follows from the definition of $\mathfrak{A}_{\xi,p}$ that $A\in \mathfrak{A}_{\xi,p}$ if and only if the formal identity $I:(c_{00}, \|\cdot\|_{\ell_p})\to (c_{00}, \|\cdot\|_{\xi,A})$ is bounded.     In \cite{C5}, this was shown to be equivalent to: There exists $C>0$ such that for any $n\in \nn$ and $\ee_1, \ldots, \ee_n>0$ such that if $s^{\omega^\xi}_{\ee_1} \ldots s^{\omega^\xi}_{\ee_n}(A^*B_{Y^*})\neq \varnothing,$ $\sum_{i=1}^n \ee_i^q \leqslant C^q$, where $1/p+1/q=1$.    In this case, we say $A$ has $C$-$\xi$-$q$-\emph{summable Szlenk index}.    Moreover, there exist constants $a,b>0$ (independent of $\xi$ and $p$) such that if $C_1$ is the infimum of those $C>0$ such that $A$ has $C$-$\xi$-$q$-summable Szlenk index and if $C_2$ is the norm of the formal inclusion $I:(c_{00}, \|\cdot\|_{\ell_p})\to (c_{00}, \|\cdot\|_{\xi,A})$, $aC_1\leqslant C_2\leqslant bC_1$.   Furthermore, it was shown there that with $\mathfrak{a}_{\xi,p}(A)=\|A\|+\alpha_{\xi,p}(A)$, $(\mathfrak{A}_{\xi,p}, \mathfrak{a}_{\xi,p})$ is a Banach ideal.

It was shown in \cite{TAMS} that $\mathfrak{P}_{\xi,p}=\cap_{1<r<p} \mathfrak{T}_{\xi,r}$, and $\mathfrak{P}_{\xi,p}$ is therefore an operator ideal, since it is the intersection of ideals.

\end{rem}

We collect the following information regarding what is already known about the relationships among these classes.

\begin{theorem} For any ordinal $\xi$ and any $1<p\leqslant \infty$, $$\mathfrak{D}_\xi\subset \mathfrak{T}_{\xi,p}\subset \mathfrak{A}_{\xi,p}\subset \mathfrak{N}_{\xi,p}\subset \mathfrak{P}_{\xi,p}=\bigcap_{1<r<p} \mathfrak{T}_{\xi,r}\subset \mathfrak{D}_{\xi+1}.$$

\end{theorem}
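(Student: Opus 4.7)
The plan is to establish this chain by handling the easy inclusions quickly, taking the equality $\mathfrak{P}_{\xi,p}=\bigcap_{1<r<p}\mathfrak{T}_{\xi,r}$ from the preceding remark (which cites \cite{TAMS}), and focusing on the two substantive containments $\mathfrak{T}_{\xi,p}\subset \mathfrak{A}_{\xi,p}$ and $\mathfrak{N}_{\xi,p}\subset \mathfrak{P}_{\xi,p}$.

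For the three trivial links: $\mathfrak{D}_\xi \subset \mathfrak{T}_{\xi,p}$ follows from Theorem \ref{wh}(i), which yields $\theta_{\xi,1}(A)=0$ for $A\in \mathfrak{D}_\xi$; then for any weakly null $(x_t)_{t\in \Gamma_{\xi,1}.D_X}\subset B_X$ and any $y\in B_Y$, $\inf_t\|A\sum_{s\leqslant t}\mathbb{P}_{\xi,1}(s)x_s\|=0$ forces $\inf_t\|y+sA\sum \mathbb{P}_{\xi,1}(s)x_s\|-1\leqslant 0$, so $\varrho_\xi(s;A)=0$ for all $s>0$ and $A$ is already $\xi$-AUF. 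The inclusion $\mathfrak{A}_{\xi,p}\subset \mathfrak{N}_{\xi,p}$ follows by testing $\alpha_{\xi,p,n}$ at $n^{1/q}\cdot n^{-1}(1,\ldots,1)\in B_{\ell_p^n}$, giving $n^{1/q}\theta_{\xi,n}(A)\leqslant \alpha_{\xi,p}(A)$. For $\mathfrak{P}_{\xi,p}\subset \mathfrak{D}_{\xi+1}$: if $\textsf{p}_\xi(A)\leqslant q<\infty$, then $Sz_\xi(A,\ee)<\infty$ for every $\ee>0$, so $s_\ee^{\omega^\xi n}(A^*B_{Y^*})=\varnothing$ for some $n=n(\ee)$, hence $Sz(A,\ee)\leqslant \omega^\xi n<\omega^{\xi+1}$, and taking the supremum over $\ee$ gives $Sz(A)\leqslant \omega^{\xi+1}$.

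For $\mathfrak{T}_{\xi,p}\subset \mathfrak{A}_{\xi,p}$, pick an equivalent norm $|\cdot|$ on $Y$ with $\varrho_\xi(s;A:X\to(Y,|\cdot|))\leqslant Cs^p$, and fix $(x_t)_{t\in \Gamma_{\xi,n}.D_X}\in \mathcal{B}^X_{\xi,n}$ together with $(a_i)_{i=1}^n\in B_{\ell_p^n}$. Using the canonical identification of each level $\Lambda_{\xi,n,i}.D_X$ with $\Gamma_{\xi,1}.D_X$, we apply the scaled $\xi$-$p$-AUS estimate level by level: writing $y_i$ for the partial sum accumulated through levels $1,\ldots,i$, the homogeneity of the modulus together with a standard power-type convexity estimate gives, on an eventual subset of $MAX(\Gamma_{\xi,n}.D_X)$, the telescoping bound $|y_{i+1}|^p\leqslant |y_i|^p+C'|a_{i+1}|^p$ (after an initial normalization to avoid $|y_0|=0$). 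Since finite intersections of eventual sets are eventual, we select a single maximal $t$ on which all $n$ bounds hold, yielding the uniform estimate $|A\sum_{i=1}^n a_i\sum_{\Lambda_{\xi,n,i}.D_X\ni s\leqslant t}\mathbb{P}_{\xi,n}(s)x_s|^p\leqslant C'\sum|a_i|^p\leqslant C'$.

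The main obstacle is $\mathfrak{N}_{\xi,p}\subset \mathfrak{P}_{\xi,p}$, because the hypothesis $\sup_n\theta_{\xi,n}(A)n^{1/q}=K<\infty$ only controls the diagonal ``equal weight'' seminorm, whereas $\textsf{p}_\xi(A)$ involves arbitrary iterated Szlenk derivations. The key lemma, which I expect to extract from the tree--Szlenk duality developed in \cite{C5}, is: there is a universal $c>0$ such that $s_\ee^{\omega^\xi n}(A^*B_{Y^*})\neq \varnothing$ implies $\theta_{\xi,n}(A)\geqslant c\ee$. The case $n=1$ is essentially Theorem \ref{wh}(iii), once one extracts a weakly null tree witnessing non-emptiness of the single Szlenk derivative; the induction step grafts such trees across the $n$ levels of $\Gamma_{\xi,n}$ via the canonical identifications. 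Granting this lemma, $Sz_\xi(A,\ee)>n$ forces $c\ee\leqslant \theta_{\xi,n}(A)\leqslant Kn^{-1/q}$, hence $n\leqslant (K/c\ee)^q$, so $Sz_\xi(A,\ee)\leqslant 1+(K/c\ee)^q$ and therefore $\textsf{p}_\xi(A)\leqslant q$.
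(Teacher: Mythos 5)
Your handling of the outer inclusions is sound, and your test for $\mathfrak{A}_{\xi,p}\subset\mathfrak{N}_{\xi,p}$ coincides with the paper's. Your argument for $\mathfrak{D}_\xi\subset\mathfrak{T}_{\xi,p}$ (via $\theta_{\xi,1}(A)=0\Rightarrow\varrho_\xi\equiv 0$) is a more direct route than the paper's, which deduces $T_{\xi,p}(A)=0$ from the inevitable-set characterization of \cite{C4}; both are fine. For $\mathfrak{T}_{\xi,p}\subset\mathfrak{A}_{\xi,p}$, you propose a power-type telescoping estimate on partial sums, while the paper instead invokes the combinatorial quantity $T_{\xi,p}(A)$ from \cite{C4}, extends $(x_t)_{t\in\Gamma_{\xi,n}.D_X}$ by zero to $\Gamma_{\xi,\infty}.D_X$, and extracts a dominated branch. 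Your telescoping route can be made to work, but the sketch conceals real work: you need either to first pass to an equivalent norm satisfying the exact fundamental inequality $\inf|y+x|^p\leqslant|y|^p+C|x|^p$, or to bootstrap an a priori bound $M$ on the partial sums so the MVT constant $p(2M)^{p-1}$ stabilizes; the ``initial normalization'' you mention does not by itself fix this. The paper's black-box use of \cite{C4} avoids all of this.

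The genuine gap is in $\mathfrak{N}_{\xi,p}\subset\mathfrak{P}_{\xi,p}$. You reduce it to the claim that $s_\ee^{\omega^\xi n}(A^*B_{Y^*})\neq\varnothing$ implies $\theta_{\xi,n}(A)\geqslant c\ee$ for a universal $c$, and you candidly label this an expectation rather than a theorem. The cited material does not hand you this lemma: Theorem \ref{wh}(iii) and Proposition \ref{tams} go in the \emph{other} direction (from $\theta_{\xi,n}$ large to Szlenk derivations nonempty), and the reverse implication requires dualizing a weak$^*$-null, $(\ee,A^*)$-separated tree of height $\omega^\xi n$ in $Y^*$ into a weakly null collection indexed by $\Gamma_{\xi,n}.D_X$ in $X$, with careful control of cross terms via a stabilization game. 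That is essentially the same hard duality underlying the renorming theorem of \cite{TAMS}. The paper instead proves this inclusion by citing \cite{TAMS} directly: $A\in\mathfrak{N}_{\xi,p}$ yields, for each $\sigma>0$, a uniformly $C$-equivalent norm $|\cdot|_\sigma$ on $Y$ with $\varrho_\xi(\sigma;A:X\to(Y,|\cdot|_\sigma))\leqslant C\sigma^p$; gluing these produces a single norm that is $\xi$-$r$-AUS for each $r<p$, so $A\in\bigcap_{1<r<p}\mathfrak{T}_{\xi,r}=\mathfrak{P}_{\xi,p}$. Your route, if the lemma were supplied, is more direct (it lands on the definition $\textsf{p}_\xi(A)\leqslant q$ without any renorming), and a version of the needed duality does appear in Section 4's proof of Theorem \ref{big theorem} (via the equivalence $G_\xi\approx H^*$ with $H=1/Cz_\xi$), but that comes later in the paper and is not available at this point; as written the step is unsupported.
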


It follows from Theorem \ref{wh}$(i)$ that $T_{\xi,p}(A)=0$ if $Sz(A)\leqslant \omega^\xi$.  Indeed, the condition $Sz(A)\leqslant \omega^\xi$ yields that for any $\ee>0$ and any weakly null $(x_t)_{t\in \Gamma_{\xi,1}.D_X}\in \mathcal{B}^X_{\xi,1}$, $$\{t\in MAX(\Gamma_{\xi,1}.D_X): \|A\sum_{s\leqslant t} \mathbb{P}_{\xi,1}(s)x_s\|\leqslant \ee\}$$ is eventual.    From this it follows that if  $\ee>0$ and $(x_t)_{t\in \Gamma_{\xi, \infty}.D_X}\subset B_X$ is weakly null,  then $$\bigcup_{n=1}^\infty \Bigl\{t\in MAX(\Lambda_{\xi,\infty,n}.D_X): (\forall 1\leqslant i\leqslant n) \Bigl(\|A\sum_{t_{i-1}<s\leqslant t_i} \mathbb{P}_{\xi,\infty}(s)x_s\| \leqslant \ee 2^{-i}\Bigr)\}$$ is inevitable. Here, as usual, $\varnothing=t_0<\ldots <t_n=t$ are such that $t_i\in MAX(\Lambda_{\xi,\infty, i}.D_X)$.        Since this set is a subset of $$\bigcup_{n=1}^\infty \Bigl\{t\in MAX(\Lambda_{\xi,\infty,n}.D_X): (\forall (a_i)_{i=1}^n\in B_{\ell_p^n}) \Bigl(\|A\sum_{i=1}^n a_i\sum_{t_{i-1}<s\leqslant t_i} \mathbb{P}_{\xi,\infty}(s)x_s\| \leqslant \ee \Bigr)\},$$ the latter set is big.    This yields that if $Sz(A)\leqslant \omega^\xi$, $T_{\xi,p}(A)=0$ and $A\in \mathfrak{T}_{\xi,p}$.

Now we note that $\alpha_{\xi,p}(A)\leqslant T_{\xi,p}(A)$.  Indeed, if $(x_t)_{t\in \Gamma_{\xi,n}.D_X}\in \mathcal{B}^X_{\xi,n}$, we can identify $\Gamma_{\xi,n}.D_X$ with the first $n$ levels of $\Gamma_{\xi,\infty}.D_X$ in the canonical way and fill out the rest of a weakly null collection $(x_t)_{t\in \Gamma_{\xi,\infty}.D_X}\subset B_X$ by letting $x_t=0$ for all $t\in \cup_{i=n+1}^\infty \Lambda_{\xi, \infty, i}.D_X$.   For any $\ee>0$, there exist $\varnothing=t_0<t_1<\ldots$ such that, with $y_i=\sum_{t_{i-1}<s\leqslant t_i} \mathbb{P}_{\xi,\infty}(s)x_s$, $(y_i)_{i=1}^\infty$ is $T_{\xi,p}(A)+\ee$-dominated by the $\ell_p$ (resp. $c_0$) basis.    Since the canonical identification of $\Gamma_{\xi,n}.D_X$ with the first $n$ levels of $\Gamma_{\xi, \infty}.D_X$ also preserves the values of the coefficients $\mathbb{P}_{\xi,n}$ and $\mathbb{P}_{\xi,\infty}$, for any scalar sequence $(a_i)_{i=1}^n\in B_{\ell_p^n}$, we have found $t_n \in MAX(\Gamma_{\xi,n}.D_X)$ such that $$\|A\sum_{i=1}^n a_i\sum_{t_{i-1}<s\leqslant t_i} \mathbb{P}_{\xi,n}(s)x_s\|\leqslant T_{\xi,p}(A)+\ee.$$  This yields that $\|\sum_{i=1}^n a_i e_i\|_{\xi,A} \leqslant T_{\xi,p}(A)+\ee$.  Since $\ee>0$ was arbitrary, we see that $\alpha_{\xi,p}(A)\leqslant T_{\xi,p}(A)$.    This yields that $\mathfrak{T}_{\xi,p}\subset \mathfrak{A}_{\xi,p,}$.

Next note that   $$\theta_{\xi,n}(A)n^{1/q}=\|\frac{1}{n^{1/p}}\sum_{i=1}^n e_i\|_{\xi,A} \leqslant \alpha_{\xi,p,n}(A),$$ so $\mathfrak{A}_{\xi,p}\subset \mathfrak{N}_{\xi,p}$.    

It is the main renorming theorem of \cite{TAMS} that $\mathfrak{P}_{\xi,p}=\bigcap_{1<r<p} \mathfrak{T}_{\xi,r}$, and it is implicit in the proof of that theorem that $\mathfrak{N}_{\xi,p}\subset \mathfrak{P}_{\xi,p}$, although the class $\mathfrak{N}_{\xi,p}$ was not specifically isolated or given a name there.  The actual method of proof of the main theorem of \cite{TAMS} was to argue in three stages.  One stage was to argue that if $A\in \mathfrak{N}_{\xi,s}$ for some $1<s< \infty$ and $A$ has norm not more than $1$, then there exists a constant $C$ such that for every $\sigma>0$, there exists a $C$-equivalent norm $|\cdot|$ (which depends on $\sigma$) on $Y$ such that $\varrho_\xi(\sigma;A:X\to (Y, |\cdot|))\leqslant C \sigma^s$.   The next stage was to show that for any $1<r<s$, one can ``glue'' these norms together to produce a single, equivalent norm $|\cdot|$ on $Y$ such that $\sup_{\sigma>0} \varrho_\xi(\sigma;A:X\to (Y, |\cdot|))/\sigma^r<\infty$, and therefore $A\in \mathfrak{T}_{\xi,r}$ for any $1<r<s$.    Finally, the last stage was to show that if $A\in \mathfrak{P}_{\xi,p}$, then $A\in \mathfrak{N}_{\xi,s}$ for any $1<s<p$.  Therefore if we have the \emph{a fortiori} fact that $A\in \mathfrak{N}_{\xi,p}$ (instead of $A\in \mathfrak{P}_{\xi,p}$), we can omit the third stage of this argument to deduce that $A\in \bigcap_{1<r<p}\mathfrak{T}_{\xi,r}=\mathfrak{P}_{\xi,p}$.  This yields that  $\mathfrak{N}_{\xi,p}\subset \mathfrak{P}_{\xi,p}$.

In the final section, we discuss the distinctness of these classes.

\begin{proposition} For every ordinal $\xi$ and every $1<p\leqslant \infty$, there exists an ideal norm $\mathfrak{n}_{\xi,p}$ on $\mathfrak{N}_{\xi,p}$ such that $(\mathfrak{N}_{\xi,p}, \mathfrak{n}_{\xi,p})$ is a Banach ideal.

\end{proposition}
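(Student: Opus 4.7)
The plan is to set
$$\mathfrak{n}_{\xi,p}(A)=\|A\|+\sup_{n\in\nn}n^{1/q}\theta_{\xi,n}(A),$$
where $1/p+1/q=1$ (with the convention $q=1$ when $p=\infty$), and then to verify in turn that (a) $\mathfrak{n}_{\xi,p}$ is a norm on $\mathfrak{N}_{\xi,p}$, (b) the ideal inequality $\mathfrak{n}_{\xi,p}(BAC)\leqslant \|B\|\,\mathfrak{n}_{\xi,p}(A)\,\|C\|$ holds, with $\mathfrak{n}_{\xi,p}(x^{*}\otimes y)=\|x^{*}\|\,\|y\|$ on rank-one operators, and (c) the resulting normed space is complete.

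For (a), positive homogeneity of each $\theta_{\xi,n}$ is immediate. Subadditivity of $\theta_{\xi,n}$ in $A$ is obtained by replicating the eventual-set argument used to establish the triangle inequality for $\|\cdot\|_{\xi,A}$ in the proposition above: for $(x_t)\in \mathcal{B}^X_{\xi,n}$ and $\alpha>\theta_{\xi,n}(A)$, $\beta>\theta_{\xi,n}(B)$, the sets on which $\|A\sum_{\Lambda_{\xi,n,i}.D_X\ni s\leqslant t}\mathbb{P}_{\xi,n}(s)x_s\|\leqslant\alpha$ (respectively, the analogous bound with $B$ and $\beta$) are each eventual, hence so is their intersection, and on that intersection the triangle inequality for the norm on $Y$ yields the corresponding $(\alpha+\beta)$-bound for $A+B$. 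This gives $\theta_{\xi,n}(A+B)\leqslant \theta_{\xi,n}(A)+\theta_{\xi,n}(B)$ and hence subadditivity of $\mathfrak{n}_{\xi,p}$.

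For (b), given $C:W\to X$, $A:X\to Y$, $B:Y\to Z$, the operator-norm factor is routine. Left composition gives $\theta_{\xi,n}(BA)\leqslant \|B\|\theta_{\xi,n}(A)$ immediately from $\|Bv\|\leqslant \|B\|\|v\|$. Right composition is the main subtlety: any $(w_t)\in \mathcal{B}^W_{\xi,n}$ pushes forward under $C$ to a weakly null collection in $\|C\|B_X$ indexed by $\Gamma_{\xi,n}.D_W$ rather than $\Gamma_{\xi,n}.D_X$. I would invoke Theorem \ref{wh}$(iii)$ together with its straightforward extension to $\theta_{\xi,n}$ for $n\geqslant 2$, which guarantees that a weakly null collection over any directed set still produces a valid lower bound for $\theta_{\xi,n}(A)$; rescaling by $\|C\|$ then gives $\theta_{\xi,n}(AC)\leqslant \|C\|\theta_{\xi,n}(A)$, and composing the two estimates yields the ideal inequality. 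For rank-one $A=x^{*}\otimes y$, weak continuity of $x^{*}$ forces $x^{*}\bigl(\sum_{s}\mathbb{P}_{\xi,n}(s)x_s\bigr)\to 0$ along any weakly null net, so $\theta_{\xi,n}(A)=0$ for every $n$ and $\mathfrak{n}_{\xi,p}(x^{*}\otimes y)=\|x^{*}\|\,\|y\|$.

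For (c), suppose $(A_k)$ is $\mathfrak{n}_{\xi,p}$-Cauchy. It is operator-norm Cauchy and therefore converges to some $A:X\to Y$ in norm. For each fixed $(x_t)$ and scalar vector $(a_i)$, the map $T\mapsto \inf_t\|T\sum a_i\sum \mathbb{P}_{\xi,n}(s)x_s\|$ is $1$-Lipschitz in the operator norm; passing to the supremum over $(x_t)$ shows that $T\mapsto \theta_{\xi,n}(T)$ is lower semicontinuous under operator-norm convergence. Applying this to $A_k-A_l$ and letting $l\to\infty$ yields $\theta_{\xi,n}(A_k-A)\leqslant \liminf_l \theta_{\xi,n}(A_k-A_l)$, whence $\mathfrak{n}_{\xi,p}(A_k-A)\to 0$ and in particular $A\in \mathfrak{N}_{\xi,p}$. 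The hardest step is the right-composition inequality in (b), which rests on the fact that $\theta_{\xi,n}$ is computed equally well over any directed set, not just $D_X$; this is precisely what Theorem \ref{wh}$(iii)$ (and its extension to $n\geqslant 2$) supplies.
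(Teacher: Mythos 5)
Your proposal is correct and takes essentially the same approach as the paper: both define $\mathfrak{n}_{\xi,p}(A)=\|A\|+\sup_n n^{1/q}\theta_{\xi,n}(A)$, both reduce the ideal-norm property to the corresponding property of $\theta_{\xi,n}$ (which the paper simply cites from [C5] while you rederive it via eventual sets and an extension of Theorem \ref{wh}$(iii)$), and both establish completeness via Lipschitz continuity of $\theta_{\xi,n}$ in the operator norm.
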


\begin{proof} It was shown in \cite{C5} that $\theta_{\xi,n}(A)$ is an ideal norm for all ordinals $\xi$ and $n\in\nn$.    We then let $$N_{\xi,p,n}(A)= n^{1/q} \theta_{\xi,n}(A),$$ $$N_{\xi,p}(A)=\sup_n N_{\xi,p,n}(A),$$ and $$\textsf{n}_{\xi,p}(A)=\|A\|+N_{\xi,p}(A).$$    It easily follows that $\textsf{n}_{\xi,p}$ is an ideal norm and  $\mathfrak{N}_{\xi,p}$ is the class of all $A$ such that $\mathfrak{n}_{\xi,p}(A)<\infty$. It follows from standard arguments that $(\mathfrak{N}_{\xi,p}, \textsf{n}_{\xi,p})$ is a Banach ideal.    More precisely, since $N_{\xi,p,n}:\mathfrak{L}(X,Y)\to [0, \infty)$ is continuous for each $n\in\nn$ with respect to the norm topology on $\mathfrak{L}(X,Y)$, then for any $C\geqslant 0$, $\{A\in \mathfrak{L}(X,Y): N_{\xi,p}(A)\leqslant C\}$ is closed in $\mathfrak{L}(X,Y)$. 

\end{proof}

\begin{proposition} For any ordinal $\xi$, $\mathfrak{A}_{\xi,\infty}=\mathfrak{N}_{\xi,\infty}$.   

\end{proposition}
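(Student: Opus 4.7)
The plan is to establish the sharper identity $\alpha_{\xi,\infty,n}(A)=n\theta_{\xi,n}(A)$ for every $n\in\nn$. Since $p=\infty$ corresponds to $q=1$, this gives $N_{\xi,\infty,n}(A)=n\theta_{\xi,n}(A)=\alpha_{\xi,\infty,n}(A)$; taking the supremum over $n$ yields $\alpha_{\xi,\infty}(A)=N_{\xi,\infty}(A)$, so finiteness of one side is equivalent to finiteness of the other and $\mathfrak{A}_{\xi,\infty}=\mathfrak{N}_{\xi,\infty}$ follows.

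One direction is immediate: since $(1,1,\ldots,1)\in B_{\ell_\infty^n}$, positive homogeneity of the seminorm $\|\cdot\|_{\xi,A}$ gives $\alpha_{\xi,\infty,n}(A)\geqslant \|\sum_{i=1}^n e_i\|_{\xi,A}=n\theta_{\xi,n}(A)$. This also recovers the containment $\mathfrak{A}_{\xi,\infty}\subset \mathfrak{N}_{\xi,\infty}$ that is already noted in the excerpt.

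For the reverse inequality, I would invoke the $1$-unconditionality of the canonical basis under $\|\cdot\|_{\xi,A}$ established in the preceding proposition, from which I need the standard monotonicity principle: whenever $|a_i|\leqslant 1$ for $1\leqslant i\leqslant n$, $\|\sum_{i=1}^n a_ie_i\|_{\xi,A}\leqslant \|\sum_{i=1}^n e_i\|_{\xi,A}$. To deduce this from the unimodular-scalar version stated in the preceding proposition, I would first factor $a_i=|a_i|\ee_i$ with $\ee_i$ unimodular (taking $\ee_i=1$ when $a_i=0$), reducing to coefficients $r_i:=|a_i|\in [0,1]$; then write $r_i=\tfrac{1+r_i}{2}\cdot 1+\tfrac{1-r_i}{2}\cdot (-1)$ and expand the product to express $(r_1,\ldots,r_n)$ as a convex combination of the $2^n$ sign patterns in $\{-1,1\}^n$. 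Because each such sign pattern yields the same seminorm as $\sum_{i=1}^n e_i$ by $1$-unconditionality, the triangle inequality delivers $\|\sum r_i e_i\|_{\xi,A}\leqslant \|\sum e_i\|_{\xi,A}$. Taking the supremum over $(a_i)\in B_{\ell_\infty^n}$ then gives $\alpha_{\xi,\infty,n}(A)\leqslant n\theta_{\xi,n}(A)$, completing the identity.

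I expect no serious obstacle here: the entire argument reduces to the already-established $1$-unconditionality, and the substantive content is simply the observation that the $\ell_\infty$-type supremum defining $\alpha_{\xi,\infty,n}(A)$ is attained (in fact, identically) at the all-ones vector, which is a general property of $1$-unconditional bases rather than anything specific to the seminorm $\|\cdot\|_{\xi,A}$.
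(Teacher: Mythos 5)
Your proposal is correct and follows essentially the same route as the paper, which writes compactly that ``by $1$-unconditionality and convexity'' the supremum defining $\alpha_{\xi,\infty}(A)$ is attained at the all-ones vectors; you have simply unpacked the convexity step by writing each $(r_i)_{i=1}^n\in [0,1]^n$ as a convex combination of sign patterns in $\{-1,1\}^n$. No issues.
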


\begin{proof} By $1$-unconditionality and convexity, $$\alpha_{\xi, \infty}(A)=\sup\{\|\sum_{i=1}^\infty a_ie_i\|_{\xi,A}: (a_i)_{i=1}^\infty\in c_{00}\cap B_{\ell_\infty}\} = \sup \{\|\sum_{i=1}^n e_i\|_{\xi,A}: n\in\nn\}=N_{\xi,\infty}(A).$$   Then $A\in \mathfrak{A}_{\xi,\infty}$ if and only if $\alpha_{\xi, \infty}(A)<\infty$ if and only if $N_{\xi,\infty}(A)<\infty$ if and only if $A\in \mathfrak{N}_{\xi, \infty}$.

\end{proof}

\section{The ideal $\mathfrak{N}_{\xi,p}$}

The majority of this section is a generalization of results from \cite{GKL} from the $\xi=0$ case to the general case, and from the spatial case to the operator case.

Given a Banach space $Y$ and $b\geqslant 1$, let $\mathcal{Y}_b$ denote the set of all norms $|\cdot|$ on $Y$ such that $$b^{-1}B_Y^{|\cdot|}\subset B_Y\subset bB_Y^{|\cdot|}.$$  Given an operator $A:X\to Y$, $b\geqslant 1$,  an ordinal $\xi$, and $\sigma>0$, let $$\phi_\xi^b(\sigma;A)=\inf \{\varrho_\xi(\sigma;A:X\to (Y, |\cdot|)): |\cdot|\in \mathcal{Y}_b\}.$$   For $\tau\geqslant 0$, let $$\psi_\xi^b(\tau;A)=\sup \{\delta^{\text{weak}^*}_\xi(\tau;A:X\to (Y, |\cdot|)): |\cdot|\in \mathcal{Y}_b\}.$$

We note that $\varrho_\xi(\cdot;A)\equiv 0$ if and only for every $b\geqslant 1$, $\phi^b_\xi(\cdot;A)\equiv 0$ if and only if   $\delta^{\text{weak}^*}_\xi(\cdot;A)\equiv \infty$ if and only if for every $b\geqslant 1$, $\psi^b_\xi(\cdot;A)\equiv \infty$ if and only if  $Sz(A)\leqslant \omega^\xi$.   Thus we eliminate this trivial case.

Let $S$ denote the class of functions $f:[0,\infty)\to [0, \infty)$ which are continuous, $f(0)=0$, and the function $f(t)/t$ is non-decreasing on $(0, \infty)$.   A function which has the last of these properties we will refer to as \emph{star shaped}. 

\begin{proposition} Let $\xi$ be an ordinal,  $A:X\to Y$  an operator with $Sz(A)>\omega^\xi$, and $b\geqslant 1$.  Then $\varrho_\xi(\cdot;A), \delta^{\text{\emph{weak}}^*}_\xi(\cdot;A), \phi^b_\xi(\cdot;A), \psi^b_\xi(\cdot;A)\in S$.

\end{proposition}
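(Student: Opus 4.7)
The plan is to verify, in turn, the three defining properties of $S$ for each of the four functions: vanishing at $0$, star-shapedness, and continuity on $[0, \infty)$. Vanishing at $0$ is immediate for all four: substituting $s = 0$ (respectively $\tau = 0$) in each definition collapses the integrand to $\|y\| - 1$ (respectively $\|y^*\| - 1$), whose relevant sup or inf over the unit sphere of $Y$ (or $Y^*$) is $0$.

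Star-shapedness hinges on the following elementary estimate. For $y$ in the closed unit ball of any Banach space, any vector $w$, and $0 < \lambda < 1$, the decomposition $y + \lambda s w = \lambda(y + sw) + (1-\lambda)y$ together with the triangle inequality gives
\[
\|y + \lambda s w\| - 1 \leq \lambda(\|y + sw\| - 1) + (1-\lambda)(\|y\| - 1) \leq \lambda(\|y + sw\| - 1),
\]
so $s \mapsto (\|y + sw\| - 1)/s$ is non-decreasing on $(0, \infty)$; the identical argument in $Y^*$ handles $\|y^*\| = 1$. Since pointwise infima and suprema of families of non-decreasing functions of $s$ remain non-decreasing, and since the parameters indexing the inner operations in each of the four definitions—weakly null collections in $\mathcal{B}^X_{\xi,1}$, weak$^*$-null $(1, A^*)$-separated collections on trees $T$ with $o(T) = \omega^\xi$, and equivalent norms $|\cdot| \in \mathcal{Y}_b$—are independent of $s$ and $\tau$, star-shapedness propagates through all four moduli.

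Continuity of $\varrho_\xi(\cdot; A)$ and $\phi^b_\xi(\cdot; A)$ follows from a uniform Lipschitz bound: along any maximal branch $t$, $\mathbb{P}_{\xi, 1}$ sums to $1$ and each $x_s \in B_X$, so $A \sum \mathbb{P}_{\xi, 1}(s) x_s$ has norm at most $\|A\|$ (respectively $b\|A\|$ after a renorming by $|\cdot| \in \mathcal{Y}_b$). The integrand is therefore uniformly $\|A\|$- (respectively $b\|A\|$-) Lipschitz in $s$, and pointwise sup and inf of uniformly Lipschitz functions retain the same constant.

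The main obstacle is the continuity of $\delta^{\text{weak}^*}_\xi(\cdot; A)$ and $\psi^b_\xi(\cdot; A)$, where the sums $\sum z^*_s$ are not a priori norm-bounded and the integrand is only convex in $\tau$. My plan is to combine three ingredients: (i) monotonicity of $\delta^{\text{weak}^*}_\xi(\cdot; A)$, established via the rescaling trick that for $\tau_1 \leq \tau_2$ and a valid $(z^*_t)$, the collection $(\tau_2/\tau_1)(z^*_t)$ is again weak$^*$-null and $(1, A^*)$-separated—the factor $\tau_2/\tau_1 \geq 1$ strengthens the separation—while yielding the same supremum at $\tau_1$ as the original at $\tau_2$, giving $\delta^{\text{weak}^*}_\xi(\tau_1; A) \leq \delta^{\text{weak}^*}_\xi(\tau_2; A)$; (ii) upper semi-continuity of $\delta^{\text{weak}^*}_\xi(\cdot; A)$ as a pointwise infimum of convex (hence continuous) functions of $\tau$; and (iii) the star-shaped property above. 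Monotonicity plus usc immediately yield right-continuity at every $\tau_0 > 0$, while left-continuity is the genuinely delicate step: I would obtain it either by a diagonal extraction, approximating a near-optimal configuration at $\tau_0$ by configurations valid at $\tau < \tau_0$ using the convexity of the integrand, or by appealing to the corresponding continuity result in earlier work on asymptotic moduli. The same scheme handles $\psi^b_\xi(\cdot; A)$, since the sup over $\mathcal{Y}_b$ preserves both monotonicity and star-shapedness.
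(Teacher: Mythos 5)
The easy parts of your proposal — vanishing at $0$, star-shapedness, and the uniform Lipschitz bound giving continuity of $\varrho_\xi(\cdot;A)$ and $\phi^b_\xi(\cdot;A)$ — match the paper's argument. You also correctly isolate the genuine difficulty: continuity of $\delta^{\text{weak}^*}_\xi(\cdot;A)$ and $\psi^b_\xi(\cdot;A)$, where the vectors $\sum_{s\leqslant t}z^*_s$ are not a priori bounded.

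However, your plan for that step does not close the gap, for two reasons. First, the upper semi-continuity claim is not secure: each inner function $\tau\mapsto\sup_{t\in T}\|y^*+\tau\sum_{s\leqslant t}z^*_s\|-1$ is a supremum of continuous functions and hence only lower semi-continuous; when $\sup_t\|\sum_{s\leqslant t}z^*_s\|=\infty$ it is discontinuous at $0$ and in particular not upper semi-continuous, so a pointwise infimum of such functions need not be u.s.c. (To discard those configurations one already needs to know that some configuration has uniformly bounded branch sums, which is not established.) Second, and more fundamentally, left-continuity is not a consequence of monotonicity, star-shapedness and u.s.c. — the function equal to $0$ on $[0,1)$ and to $t$ on $[1,\infty)$ is non-decreasing, star-shaped and u.s.c. but not left-continuous. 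Your fallback of exploiting convexity of the integrand does not rescue this: convexity of $g_t(\tau)=\|y^*+\tau v_t\|-1$ with $g_t(0)=0$ yields only $g_t(\tau)\leqslant(\tau/\tau_0)g_t(\tau_0)$ for $\tau<\tau_0$, which after passing to supremum and infimum merely recovers the star-shapedness inequality and does not produce the inequality $f(\tau_0)\leqslant f(\tau)+o(1)$ that left-continuity requires.

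The paper's proof supplies exactly the ingredient you are missing. It first uses $Sz(A)>\omega^\xi$ to exhibit a specific tree that gives finiteness of $\delta^{\text{weak}^*}_\xi$, together with an explicit bound $M=\sup_{|\cdot|\in\mathcal{Y}_b}\delta^{\text{weak}^*}_\xi(1;A:X\to(Y,|\cdot|))<\infty$. It then shows that any configuration $(y^*_t)$ which is within $\delta$ of the infimum at $\tau$ necessarily has $\sup_{t}\|\sum_{s\leqslant t}y^*_s\|\leqslant 3+\tau(3+M)$, by comparing it with the configuration witnessing $M$; this is the crucial boundedness lemma. With that bound in hand, a straightforward perturbation in $\tau$ gives a local Lipschitz estimate $|\delta^{\text{weak}^*}_\xi(\tau_1;\cdot)-\delta^{\text{weak}^*}_\xi(\tau;\cdot)|\leqslant|\tau_1-\tau|(3+\tau(3+M))$, hence continuity, and the same constant works uniformly over $\mathcal{Y}_b$ to give continuity of $\psi^b_\xi$. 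Without establishing boundedness of near-optimal branch sums, your scheme cannot be completed.
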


\begin{proof} We note that since $Sz(A)>\omega^\xi$, there exist $\ee>0$, a tree $T$ with $o(T)=\omega^\xi+1$, and a weak$^*$-closed, $(\ee,A^*)$-separated collection $(y^*_t)_{t\in T}\subset B_{Y^*}$ such that $y^*_\varnothing=0$.  Let $z^*_t=y^*_t-y^*_{t^-}$ for $t\in T\setminus \{\varnothing\}$.    Fix $z^*\in S_{Y^*}$ and note that for any $\tau>0$, $$\sup_{t\in T} \|z^*+\frac{\tau}{\ee} \sum_{s\leqslant t} z^*_s\|\leqslant 1+\frac{\tau}{\ee}<\infty.$$  This yields that $\delta^{\text{weak}^*}_\xi(\tau)<\infty$ for all $\tau>0$.   Similarly, considering $(b^{-1}y_t^*)_{t\in T}\subset B_{Y^*}^{|\cdot|}$ for any $|\cdot|\in \mathcal{Y}_b$ yields that $\psi^b_\xi(\tau;A)\leqslant 1+\frac{\tau b}{\ee }$ for all $b\geqslant 1$ and $\tau>0$.

By definition, each of the four functions is non-negative and vanishing at $0$. Fix $0<\sigma$ and $0<\alpha<1$.    Fix $y\in B_Y$, a tree $T$ with $o(T)=\omega^\xi$, and a $\circ$-weakly null collection $(x_t)_{t\in T}\subset B_X$.  Then if $C=\{x: t\in T, x\in \text{co}(x_s: s\leqslant t)\}$, \begin{align*} \inf_{x\in C} \|y+\alpha \sigma Ax\|-1 & = \inf_{x\in C} \|(1-\alpha) y + \alpha y + \alpha \sigma Ax\| - \alpha -(1-\alpha) \\ & \leqslant (1-\alpha)[\|y\|-1] + \alpha\inf_{x\in C}\|y+\sigma Ax\|-1 \\ & \leqslant \alpha \varrho_\xi(\sigma;A). \end{align*} This yields that $\varrho_\xi(\cdot;A)$ is star shaped.  Since the same estimate holds for any $|\cdot|\in \mathcal{Y}_b$, we deduce that $\phi^b_\xi(\cdot;A)$ is star shaped.    

Now for any $y^*\in S_{Y^*}$, any tree $T$ with $o(T)=\omega^\xi$, and any weak$^*$-null, $(1, A^*)$-separated $(y^*_t)_{t\in T}$, \begin{align*} \delta^{\text{weak}^*}_\xi(\alpha \sigma;A) & \sup_{t\in T} \|y^*+\alpha \sigma\sum_{s\leqslant t}y^*_s\| -1  = \sup_{t\in T} \|(1-\alpha) y^*+\alpha y^*+\alpha \sigma \sum_{s\leqslant t}y^*_s\| - \alpha - (1-\alpha) \\ & \leqslant (1-\alpha)[\|y^*\|-1] +\alpha\Bigl( \sup_{t\in T} \|y^*+\sigma\sum_{s\leqslant t} y^*_s\|-1 \Bigr)\\ & \leqslant \alpha\sup_{t\in T}\|y^*+\sigma \sum_{s\leqslant t}y^*_s\|-1.   \end{align*} Taking the infimum over such $y^*\in S_{Y^*}$, $T$, and $(y^*_t)_{t\in T}$, we can replace the last quantity with $\alpha \delta^{\text{weak}^*}_\xi(\sigma;A)$.  This yields that $\delta^{\text{weak}^*}_\xi(\cdot;A)$ is star shaped, and similar considerations yield that $\psi^b_\xi(\cdot;A)$ is star shaped.

	It is evident that $\varrho_\xi(\cdot;A)$ is Lipschitz continuous with Lipschitz constant not more than $\|A:X\to Y\|$, whence $\phi^b_\xi(\cdot;A)$ is Lipschitz continuous with Lipschitz constant not more than $$\sup \{\|A:X\to (Y, |\cdot|)\|: |\cdot|\in \mathcal{Y}_b\}= b\|A:X\to Y\|.$$

Let $$M=\sup \{\delta^{\text{weak}^*}_\xi(1;A:X\to (Y, |\cdot|)): |\cdot|\in \mathcal{Y}_b\} \leqslant 1+ b/\ee .$$      It follows from the first paragraph that $\delta^{\text{weak}^*}_\xi(\cdot;A)$ and $\psi^b_\xi(\cdot;A)$ are continuous at $0$.   Fix $\tau>0$, $0<\delta<1$, $|\cdot|\in \mathcal{Y}_b$, $y^*\in S_{Y^*}^{|\cdot|}$, a tree $T$ with $o(T)=\omega^\xi$, and a weak$^*$-null, $(1, A^*)$-large collection $(y^*_t)_{t\in T}$ such that $$\sup_{t\in T} |y^*+\tau\sum_{s\leqslant t} y^*_s|\leqslant 1+\delta+\delta^{\text{weak}^*}_\xi(\tau;A:X\to (Y, |\cdot|)).$$  We claim that $$\sup_{t\in T} |\sum_{s\leqslant t}y^*_s|\leqslant 3+\tau(3+M).$$   Indeed, suppose $$\sup_{t\in T} |\sum_{s\leqslant t}y^*_s|> 3+\tau(3+M).$$   By the definition of $M$, there exists some $z^*\in S_{Y^*}^{|\cdot|}$, a tree $S$ with $o(S)=\omega^\xi$, and a weak$^*$-null, $(1,A^*)$-large collection $(z^*_t)_{t\in S}$ such that $$\sup_{t\in S} |z^*+\sum_{s\leqslant t}z^*_s| \leqslant 1+\delta+M,$$ so that $$\sup_{t\in S}|\sum_{s\leqslant t}z^*_s|\leqslant 2+\delta+M\leqslant 3+M,$$ $$\sup_{t\in S} |\tau \sum_{s\leqslant t}z^*_s|  \leqslant \tau(3+M),$$ and \begin{align*} 1+\delta^{\text{weak}^*}_\xi(\tau;A:X\to (Y, |\cdot|)) & \leqslant \sup_{t\in S} |z^*+\tau\sum_{s\leqslant t} z^*_s|  \leqslant 1+\tau(3+M) \\ & \leqslant \sup_{t\in T}|\sum_{s\leqslant t} y^*_s|-2\leqslant \sup_{t\in T}|y^*+\sum_{s\leqslant t}y^*_s| -1 \\ & \leqslant 1+\delta +\delta^{\text{weak}^*}_\xi(\tau;A:X\to (Y, |\cdot|))-1 \\ & <1+\delta^{\text{weak}^*}_\xi(\tau;A:X\to (Y, |\cdot|)),\end{align*} a contradiction.    This shows that $\sup_{t\in T}|y^*+\sum_{s\leqslant t} y^*_s|\leqslant 3+\tau(3+M)$, as claimed.  Now for any $\tau_1>0$, \begin{align*} 1+\delta^{\text{weak}^*}_\xi(\tau_1;A:X\to (Y, |\cdot|)) & \leqslant \sup_{t\in T}|y^*+\tau_1\sum_{s\leqslant t}y^*_s| \\ & \leqslant 1+\delta +\delta^{\text{weak}^*}_\xi(\tau;A:X\to (Y, |\cdot|)) +|\tau_1-\tau|\sup_{t\in T}|\sum_{s\leqslant t} y^*_s| \\ & \leqslant 1+\delta +\delta^{\text{weak}^*}_\xi(\tau;A:X\to (Y, |\cdot|)) +|\tau_1-\tau|(3+\tau(3+M)). \end{align*}    Since $0<\delta<1$ was arbitrary, we deduce that $$|\delta^{\text{weak}^*}_\xi(\tau_1;A:X\to (Y, |\cdot|))- \delta^{\text{weak}^*}_\xi(\tau;A:X\to (Y, |\cdot|))|\leqslant |\tau_1-\tau| (3+\tau(3+M)).$$  Since this estimate does not depend on the choice of norm $|\cdot|\in\mathcal{Y}_b$, we deduce from this continuity of both $\delta^{\text{weak}^*}_\xi(\cdot;A)$ and $\psi^b_\xi(\cdot;A)$.

\end{proof}

\begin{rem}\upshape The previous result is stated for operators.   That is, $\mathcal{Y}_b$ and $\phi^b_\xi(\cdot;A)$, $\psi^b_\xi(\cdot;A)$ are defined in terms of equivalent norms on $Y$, while the norm on $X$ is held fixed.    However, in the spatial case, we are interested in considering $I_X:(X, |\cdot|)\to (X, |\cdot|)$ as $|\cdot|$-ranges over the appropriate set of norms $\mathcal{X}_b$.  For that, we would be interested in  $$\widehat{\phi}^b_\xi(\sigma;X)=\inf\{\varrho_\xi(\sigma;I_X:(X, |\cdot|)\to (X, |\cdot|)): |\cdot|\in \mathcal{X}_b\}$$ and $$\widehat{\psi}_\xi^b(\sigma;X)=\sup\{\delta_\xi^{\text{weak}^*};I_X:(X, |\cdot|)\to (X, |\cdot|)\}$$ rather than $\phi^b_\xi(\cdot;I_X)$ and $\psi^b_\xi(\cdot;I_X)$.     The proofs above that $\phi^b_\xi(\cdot;A)$ and $\psi^b_\xi(\cdot;A)$ can be modified to show that $\widehat{\phi}^b_\xi(\cdot;X)$ and $\psi^b_\xi(\cdot;X)$ are star shaped, but they do not carry over directly. This is because the previous proof refers to Szlenk derivatives of a particular set, or trees which are $(\ee, A^*)$-separated or $(\ee, A^*)$-large. These notions  depend implicitly on the norm of $X^*$, which is held fixed in the previous proof.    However, for the spatial case, as the norms $|\cdot|$ range through $\mathcal{X}_b$, the norm on $X^*$ with respect to which we are taking Szlenk derivations will also vary. However, the inclusion of an additional factor of $b$ in the appropriate places will take care of this.

\end{rem}

Let $F$ denote the class of functions $f:[0,1]\to [0, \infty)$ which are non-decreasing  and $f(0)=0$.  We recall that the \emph{Young dual} of $f$, denoted by $f^*$, is defined by $$f^*(t)=\sup\{st-f(s): s\in [0,1]\}.$$  We recall that $f^*\in F$, and $f^*$ is convex.   Given $f,g\in F$, we write $f\preceq g$ to mean there exists $c\geqslant 1$ such that $f(t/c)\leqslant g(t)$ for all $t\in [0,1]$.   We write $f\approx g$ if $f\preceq g$ and $g\preceq f$.     We also note that if $f\preceq g$, then $g^*\preceq f^*$, and if $f$ is convex, $f^{**}=f$.

Given an operator $A:X\to Y$, an ordinal $\xi$, and $\sigma>0$, let $G_\xi(\sigma;A)$ denote the minimum $n\in\nn$, provided such an $n$ exists, such that $\sigma \theta_{\xi,n+1}>\frac{1}{n+1}$.      If no such $n$ exists, we let $G_\xi(\sigma;A)=\infty$.

\begin{proposition} If $b\geqslant 2\max\{1, \|A\|\}$ and $G_\xi(\sigma;A)\geqslant n$, then $\phi^b_\xi(\sigma;A)\leqslant 1/n$.

\label{renorm1}
\end{proposition}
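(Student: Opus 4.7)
The plan is to construct an explicit equivalent norm $|\cdot|\in\mathcal{Y}_b$ on $Y$ witnessing $\varrho_\xi(\sigma;A:X\to(Y,|\cdot|))\leqslant 1/n$, extending the Godefroy--Kalton--Lancien renorming technique (originally the $\xi=0$ spatial case of \cite{GKL}) to the asymptotic, operator, ordinal-indexed setting. The first step is to reformulate the hypothesis. By the definition of $G_\xi$ together with the identity $k\theta_{\xi,k}(A)=\|\sum_{i=1}^k e_i\|_{\xi,A}$, the condition $G_\xi(\sigma;A)\geqslant n$ is equivalent to
$$\sigma\,\bigl\|\sum_{i=1}^k e_i\bigr\|_{\xi,A}\leqslant 1 \qquad (1\leqslant k\leqslant n).$$

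The candidate norm is built from the tree functionals
$$F_k(y):=\sup\Bigl\{\inf_{t\in MAX(\Gamma_{\xi,k}.D_X)}\bigl\|y+\sigma A\sum_{i=1}^k\sum_{\Lambda_{\xi,k,i}.D_X\ni s\leqslant t}\mathbb{P}_{\xi,k}(s)x_s\bigr\|:(x_t)\in\mathcal{B}^X_{\xi,k}\Bigr\}$$
for $1\leqslant k\leqslant n$, together with $F_0(y):=\|y\|$. Since each $F_k$ is a modulus rather than a norm (it fails positive homogeneity), I would take $|\cdot|$ to be the Minkowski functional of a convex symmetric set such as $\{y:\max_{0\leqslant k\leqslant n}F_k(y)\leqslant c\}$ for a suitable constant $c$, where the $1$-unconditionality observed for $\|\cdot\|_{\xi,A}$ supplies symmetry and the eventual-set arguments from that proof supply convexity. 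The triangle inequality for $|\cdot|$ then follows from the definition of the Minkowski functional together with closure of eventual subsets of $MAX(\Gamma_{\xi,k}.D_X)$ under finite intersections.

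For $|\cdot|\in\mathcal{Y}_b$, the lower bound $\|y\|\leqslant|y|$ is immediate from $F_0=\|\cdot\|$. For the upper bound, the key estimate is that for any $(x_t)\in\mathcal{B}^X_{\xi,k}$ and any $y$, using the same $t$ inside and outside,
$$\inf_t\|y+\sigma A w_k(t)\|\leqslant\|y\|+\sigma\inf_t\|Aw_k(t)\|,$$
which together with the reformulated hypothesis yields $F_k(y)\leqslant\|y\|+\sigma k\theta_{\xi,k}(A)\leqslant\|y\|+1$. Converting this additive estimate into the multiplicative bound $|y|\leqslant b\|y\|$ uses the assumption $b\geqslant 2\max\{1,\|A\|\}$, with the factor $\|A\|$ entering when controlling the $\sigma A w_k(t)$ correction against $y$ for small $\|y\|$ (splitting cases $\|y\|\geqslant 1/(b-1)$ and $\|y\|<1/(b-1)$).

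The main technical step, and the principal obstacle, is the modulus estimate. Fix $y\in B_Y^{|\cdot|}$ and a weakly null $(\tilde x_s)\in\mathcal{B}^X_{\xi,1}$; one must show $\inf_t|y+\sigma A\tilde w(t)|\leqslant 1+1/n$, where $\tilde w(t):=\sum_{s\leqslant t}\mathbb{P}_{\xi,1}(s)\tilde x_s$. The decisive structural tool is the recursive identity from the definitions,
$$\Gamma_{\xi,k+1}=\{\omega^\xi k+s:s\in\Gamma_{\xi,1}\}\cup\{(\omega^\xi k+t)\smallfrown u:t\in MAX(\Gamma_{\xi,1}),u\in\Gamma_{\xi,k}\},$$
together with the matching probability weights $\mathbb{P}_{\xi,k+1}|_{\text{bottom}}=\mathbb{P}_{\xi,1}$ and $\mathbb{P}_{\xi,k+1}|_{\text{upper}}=\mathbb{P}_{\xi,k}$. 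Concatenating the bottom-level tree $(\tilde x_s)$ with sup-approximating $k$-level trees above each bottom maximum produces a genuine weakly null $\Gamma_{\xi,k+1}$-tree, whence $\inf_t F_k(y+\sigma A\tilde w(t))$ is bounded by an $F_{k+1}$-style quantity. The hard case is $k=n$: here the resulting $F_{n+1}$-style term involves $\sigma(n+1)\theta_{\xi,n+1}(A)$, which is \emph{not} controlled by the hypothesis (it may exceed $1$ when $G_\xi(\sigma;A)=n$ exactly). Extracting the sharp increment $1/n$ rather than a cruder bound requires a careful redistribution of this $(n+1)$-th-level contribution across the $n$ levels present in the definition of $|\cdot|$; either a precise choice of the constant $c$ in the Minkowski functional, or a refinement of the norm to $\max_{0\leqslant k\leqslant n}\lambda_k F_k$ with $\lambda_k$ tuned so that the added $(n+1)$-th-level cost is absorbed at exactly $1/n$, is the delicate bookkeeping at the heart of the argument.
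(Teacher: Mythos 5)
Your proposal attempts to reconstruct the entire renorming from scratch, but the paper's proof is a one-liner: it passes to the scaled norm $[\cdot]=a^{-1}\|\cdot\|$ with $a=\max\{1,\|A\|\}$ so that $\|A:X\to(Y,[\cdot])\|\leqslant 1$, notes that $[\cdot]\leqslant\|\cdot\|$ implies $G_\xi(\sigma;A:X\to(Y,[\cdot]))\geqslant n$, and then \emph{cites} the renorming theorem from \cite{TAMS} to produce a $2$-equivalent norm $|\cdot|$ (relative to $[\cdot]$) with $\varrho_\xi(\sigma;A:X\to(Y,|\cdot|))\leqslant 1/n$. The composite equivalence constant is $2a=2\max\{1,\|A\|\}$, which is exactly where the hypothesis on $b$ is used. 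The content that you are trying to rebuild is precisely the cited result.

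More importantly, your sketch has a genuine gap, and you say so yourself: in the ``main technical step,'' you observe that concatenating the bottom-level tree with sup-approximating $\Gamma_{\xi,k}$-trees pushes $F_k$ into an $F_{k+1}$-style bound, but that for $k=n$ the resulting quantity involves $\sigma(n+1)\theta_{\xi,n+1}(A)$, which is not controlled when $G_\xi(\sigma;A)=n$ exactly. You then propose that some ``careful redistribution'' or tuning of weights $\lambda_k$ in $\max_{0\leqslant k\leqslant n}\lambda_k F_k$ would absorb this, but you do not carry it out. That unfinished step is not a formality; it is the entire force of the proposition. As written, the proposal establishes neither a specific norm $|\cdot|\in\mathcal{Y}_b$ nor the modulus bound $\varrho_\xi(\sigma;A:X\to(Y,|\cdot|))\leqslant 1/n$, so the argument is incomplete. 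There are also unverified technical points (e.g., that the Minkowski functional of $\{y:\max_k F_k(y)\leqslant c\}$ yields a genuine norm in $\mathcal{Y}_b$ despite $F_k$ failing homogeneity, and the convexity of that level set), but the unresolved $k=n$ case is the decisive issue. If you want to avoid redeveloping the TAMS machinery, the correct move is the paper's: rescale so $\|A\|\leqslant 1$ and then invoke \cite{TAMS} directly.
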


\begin{proof} Let $a=\max\{1, \|A\|\}$ and let $[\cdot]$ be the norm on $Y$ given by $[y]=a^{-1}\|y\|$.  Then $\|A:X\to (Y, [\cdot])\|\leqslant 1$. Note that since $[\cdot]\leqslant \|\cdot\|$, $\theta_{\xi,n}(A:X\to (Y, [\cdot]))\leqslant \theta_{\xi,n}(A)$, and $G_\xi(\sigma;A:X\to (Y, [\cdot]))\geqslant n$.      By \cite{TAMS}, there exists a norm $|\cdot|$ on $Y$ such that $$\frac{1}{2}B_Y^{|\cdot|}\subset B_Y^{[\cdot]}\subset 2 B_Y^{|\cdot|}$$ and $\varrho_\xi(\sigma;A:X\to (Y, |\cdot|))\leqslant 1/n$.  Since $|\cdot|\in \mathcal{Y}_{2a}\subset \mathcal{Y}_b$, we are done.

\end{proof}

\begin{rem}\upshape Again, we mention the appropriate modification for the spatial case.   In this case, $a=\max\{1, \|I_X\|\}=1$, so we only need $b\geqslant 1$.  The initial passage to a norm $[\cdot]$ in the previous proof is unnecessary, and we deduce the existence of a norm $|\cdot|$ on $X$ such that $$\frac{1}{2}B_X\subset B_X^{|\cdot|}\subset 2B_X$$ such that $\varrho_\xi(\sigma;I_X:X\to (X, |\cdot|)) \leqslant 1/n$.    Then $\varrho_\xi(\sigma/2:I_X:(X, |\cdot|)\to (X, |\cdot))\leqslant 1/n$.   Therefore in the spatial case, we deduce that for any $b\geqslant 2$, if $G_\xi(\sigma;X)\geqslant 1/n$, then $\widehat{\phi}^b_\xi(\sigma/2;X)\leqslant 1/n$.

\end{rem}

\begin{proposition} If $0<\delta^{\text{\emph{weak}}^*}_\xi(\tau;A)$, then $Cz_\xi(A,2\tau) \leqslant 1+1/\delta^{\text{\emph{weak}}^*}_\xi(\tau;A).$ Therefore for any $b\geqslant 1$, $Cz(A,2\tau b) \leqslant 1+1/\psi^b_\xi(\tau;A)$.

\label{renorm2}
\end{proposition}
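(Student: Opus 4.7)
Set $\delta := \delta^{\text{weak}^*}_\xi(\tau;A) > 0$ and write $K_j := c^j_{\xi,2\tau}(A^*B_{Y^*})$ for each $j \in \nn$. The plan is to prove by induction on $j$ the statement
\[
(\star_j)\text{: every } y^* \in K_j \text{ admits } \tilde y^* \in (1-j\delta) B_{Y^*} \text{ with } A^* \tilde y^* = y^*.
\]
From $(\star_j)$, positivity of norms forces $K_j = \varnothing$ whenever $j\delta > 1$, yielding $Cz_\xi(A,2\tau) \leq \lceil 1/\delta \rceil \leq 1 + 1/\delta$. The second assertion follows by applying the first to $A : X \to (Y, |\cdot|)$ for each $|\cdot| \in \mathcal{Y}_b$: from $b^{-1} B_{Y^*} \subset B_{Y^*}^{|\cdot|_*} \subset b B_{Y^*}$, monotonicity of the Szlenk derivative, and the scaling $s_\ee^{\omega^\xi}(\lambda L) = \lambda s_{\ee/\lambda}^{\omega^\xi}(L)$, one obtains $c^j_{\xi,2\tau b}(A^*B_{Y^*}) \subset b \, c^j_{\xi,2\tau}(A^*B_{Y^*}^{|\cdot|_*})$, so $Cz_\xi(A, 2\tau b) \leq Cz_\xi(A : X \to (Y, |\cdot|), 2\tau)$; applying the first bound and taking the supremum over $|\cdot| \in \mathcal{Y}_b$ yields the claim.

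The base case $(\star_0)$ is the definition of $A^* B_{Y^*}$. For the inductive step $(\star_{j-1}) \Rightarrow (\star_j)$, first handle \emph{convex extension} from $s^{\omega^\xi}_{2\tau}(K_{j-1})$ to $K_j = \overline{\text{co}}^{w^*}(s^{\omega^\xi}_{2\tau}(K_{j-1}))$: convex combinations of lifts in $(1-j\delta) B_{Y^*}$ remain in the ball, and, by weak$^*$-compactness of $(1-j\delta) B_{Y^*}$, any weak$^*$-cluster point of such lifts of an approximating net of convex combinations of members of $s^{\omega^\xi}_{2\tau}(K_{j-1})$ is itself a lift of $y^*$. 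It therefore suffices to establish $(\star_j)$ for $y^* \in s^{\omega^\xi}_{2\tau}(K_{j-1})$.

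This is the \emph{Szlenk step}, the core of the argument. Invoke the standard tree characterization of the Szlenk derivative (as used in \cite{C5, CD}) to produce a weak$^*$-continuous collection $(v^*_t)_{t \in T \cup \{\varnothing\}} \subset K_{j-1}$ with $o(T) = \omega^\xi$, $v^*_\varnothing = y^*$, $v^*_t$ a weak$^*$-cluster point of $(v^*_{t \smallfrown \zeta})_\zeta$ at every node, and $\|v^*_t - v^*_{t^-}\| \geq 2\tau - \eta$ for any prescribed $\eta > 0$. By $(\star_{j-1})$, lift each $v^*_t$ to $\tilde v^*_t \in (1-(j-1)\delta) B_{Y^*}$; by iterated subnet extraction along the tree (exploiting weak$^*$-compactness of $(1-(j-1)\delta)B_{Y^*}$ at each node), arrange that $\tilde v^*_t$ is also a weak$^*$-cluster point of $(\tilde v^*_{t \smallfrown \zeta})_\zeta$. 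Set $z^*_t := (\tilde v^*_t - \tilde v^*_{t^-})/\tau$ on $T$; this collection is weak$^*$-null, and the partial sums telescope via $\sum_{\varnothing < s \leq t} z^*_s = (\tilde v^*_t - \tilde v^*_\varnothing)/\tau$, with successive $A^*$-image differences equal to $\|v^*_t - v^*_{t^-}\|/\tau \geq 2 - \eta/\tau \geq 1$ for small $\eta$, so $(z^*_t)$ is $(1, A^*)$-separated in the sense of the definition of $\delta^{\text{weak}^*}_\xi$. Choosing $\hat y := \tilde v^*_\varnothing / \|\tilde v^*_\varnothing\| \in S_{Y^*}$ (one may assume $\tilde v^*_\varnothing \neq 0$, else $(\star_j)$ is trivial for this $y^*$), the definition of $\delta^{\text{weak}^*}_\xi(\tau;A)$ gives
\[
1 + \delta \leq \sup_{t \in T} \|\hat y + \tilde v^*_t - \tilde v^*_\varnothing\| \leq (1 - \|\tilde v^*_\varnothing\|) + (1 - (j-1)\delta),
\]
hence $\|\tilde v^*_\varnothing\| \leq 1 - j\delta$, completing $(\star_j)$.

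The main obstacle is the iterated subnet extraction at transfinite levels of $T$, which requires a delicate diagonal argument leveraging the canonical identifications of $\Gamma_{\xi,\infty}$ and weak$^*$-compactness in $Y^*$; a secondary subtlety is reconciling $(1, A^*)$-separatedness with the telescoping structure of $(z^*_t)$, addressed above by measuring separation through successive partial-sum differences.
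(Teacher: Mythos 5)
Your proof has the right skeleton (an induction showing each element of $c^j_{\xi,2\tau}(A^*B_{Y^*})$ lifts to $(1-j\delta)B_{Y^*}$), and your final estimate at the root is actually a small simplification of the paper's: by choosing the reference functional $\hat y = \tilde v^*_\varnothing/\|\tilde v^*_\varnothing\|$ and splitting $\|\hat y + \tilde v^*_t - \tilde v^*_\varnothing\| \leq (1-\|\tilde v^*_\varnothing\|) + \|\tilde v^*_t\|$, you avoid the rescaling-by-$\|z^*\|$ argument and hence do not need the star-shapedness of $\delta^{\text{weak}^*}_\xi(\cdot;A)$, which the paper's version invokes. That is a genuine (if minor) improvement.

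However, the ``Szlenk step'' as you set it up has a real gap, and you correctly flag it yourself. You apply the tree characterization to the complicated set $K_{j-1}\subset X^*$, obtaining a tree $(v^*_t)\subset X^*$, and then try to lift each node to $Y^*$ via $(\star_{j-1})$ while repairing the weak$^*$-continuity by ``iterated subnet extraction along the tree.'' This step is not merely delicate — it is not obviously doable as stated. A weak$^*$-cluster point of arbitrary lifts $\tilde v^*_{t\smallfrown\zeta}$ projects under $A^*$ to \emph{a} weak$^*$-cluster point of $(v^*_{t\smallfrown\zeta})_\zeta$, but not necessarily to the prescribed $v^*_t$; fixing this uniformly across a well-founded tree of order $\omega^\xi$ while preserving the tree's order and its separation structure requires a substantial pruning/diagonalization argument that you do not supply. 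The paper sidesteps the entire issue with a monotonicity observation that your argument is missing: once $(\star_{j-1})$ gives $K_{j-1}\subset (1-(j-1)\delta)A^*B_{Y^*}$, you immediately have $s^{\omega^\xi}_{2\tau}(K_{j-1})\subset s^{\omega^\xi}_{2\tau}\bigl((1-(j-1)\delta)A^*B_{Y^*}\bigr)$, so you only ever need the tree characterization for a scaled set $rA^*B_{Y^*}$, where the $Y^*$-tree with nodes in $rB_{Y^*}$ comes directly from the standard characterization and no lifting or subnet repair is needed. Substituting this monotonicity step for your subnet-extraction step closes the gap and recovers (and in fact slightly streamlines, given your final estimate) the paper's proof. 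The second assertion, via the scaling of Szlenk derivations and the comparison of dual balls for $|\cdot|\in\mathcal{Y}_b$, is handled correctly.
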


\begin{proof} We prove the first part by contradiction.    If $Cz_\xi(A, 2\tau)>1+1/\delta^{\text{weak}^*}_\xi(\tau;A)$, then there exists a number $0<\delta<\delta^{\text{weak}^*}_\xi(\tau;A)$ such that $Cz_\xi(A,2\tau)>1+1/\delta$.

First fix $0<r\leqslant 1$.  We will show that if $r\leqslant  \delta$, then $s^{\omega^\xi}_{2\tau}(r A^*B_{Y^*})=\varnothing$.   We show this by contradiction.  To that end, assume $x^*\in s^{\omega^\xi}_{2\tau}(rA^*B_{Y^*})$.  There exists a rooted tree $T$ with $o(T)=\omega^\xi+1$, and a weak$^*$-closed, $(\tau, A^*)$-separated collection $(y^*_t)_{t\in T}\subset rB_{Y^*}$ such that $A^*y^*_\varnothing=x^*$.  Let $z^*=y^*_\varnothing$, $S=T\setminus\{\varnothing\}$, and $z^*_t=y^*_t-y^*_{t^-}$ for $t\in S$.   Then $(z^*_t)_{t\in S}$ is weak$^*$-closed, $(\tau, A^*)$-separated, and $o(S)=\omega^\xi$. 

 First suppose that $y^*_\varnothing\neq 0$.  Then $0<\|y^*_\varnothing\|\leqslant r$, and $(\|z^*\|^{-1}z^*_t)_{t\in S}$ is $\tau/\|z^*\|$-separated.    Since $\delta^{\text{weak}^*}_\xi(\cdot;A)$ is star shaped, $$\delta^{\text{weak}^*}_\xi(\tau/\|z^*\|;A)>\delta/\|z^*\|.$$   From this it follows that $$\sup_{t\in S} \bigl\|\frac{z^*}{\|z^*\|}+\frac{1}{\|z^*\|}\sum_{s\leqslant t} z^*_s\bigr\| \geqslant 1+\delta/\|z^*\|.$$  However, since $z^*+\sum_{s\leqslant t}z^*_s=y^*_t\in rB_{Y^*}$, $$\sup_{t\in S} \bigl\|\frac{z^*}{\|z^*\|}+\frac{1}{\|z^*\|}\sum_{s\leqslant t} z^*_s\bigr\| \leqslant r/\|z^*\|.$$   From this we deduce that $$1+\delta/\|z^*\| \leqslant r/\|z^*\|.$$  Rearranging yields that $\|z^*\|\leqslant r-\delta$, which contradicts the fact that $r-\delta\leqslant 0$, while $\|z^*\|=\|y^*_\varnothing\|>0$.  This yields that if $r\leqslant \delta$, then $y^*_\varnothing=0$.    If $Y=\{0\}$, then $A$ is the zero operator and $s^{\omega^\xi}_{2\tau}(rA^*B_{Y^*})=\varnothing$. If $Y\neq \{0\}$, we may let $S=T\setminus \{\varnothing\}$ and $z^*_t=y^*_t-y^*_{t^-}$ as before, but now fix any  $z^*\in S_{Y^*}$.  Then  $$1+\delta< \sup_{t\in T} \|z^*+\sum_{s\leqslant t}z^*_s\|\leqslant  1+r,$$ a contradiction, since $r\leqslant \delta$.   This finishes the claim from the beginning of the second paragraph.

We next claim that if $r>\delta$, then $$s^{\omega^\xi}_{2\tau}(rA^*B_{Y^*})\subset (r-\delta)A^*B_{Y^*}.$$   Since $r-\delta>0$, $0=A^*0\in (r-\delta)A^*B_{Y^*}$, and we only need to prove that if $0\neq x^*\in s^{\omega^\xi}_{2\tau}(rA^*B_{Y^*})$, then there exists $z^*\in (r-\delta)B_{Y^*}$ such that $A^*z^*=x^*$.  To that end, we may fix $z^*$, $S$, $(z^*_t)_{t\in S}$ such that $0<\|z^*\|\leqslant r\leqslant 1$, $A^*z^*=x^*$,  $o(S)=\omega^\xi$, and $(z^*_t)_{t\in S}$ is weak$^*$-null and $(\tau,A^*)$-separated as in the previous paragraph.  The same computation as in that paragraph yields that $$1+\delta/\|z^*\|\leqslant r/\|z^*\|,$$ and rearranging yields that $\|z^*\|\leqslant r-\delta$.

Applying the previous claim inductively, we see  that for each $m\in\nn\cup \{0\}$ such that $1-\delta m>0$,  $c^m_{\xi, 2\tau}(A^*B_{Y^*})\subset (1-m\delta)A^*B_{Y^*}$.   If $m=0$, this is clear, while if $1-\delta(m+1)>0$ and $c^m_{\xi,2\tau}(A^*B_{Y^*})\subset (1-\delta m)A^*B_{Y^*}$, our second claim above with $r=1-\delta m$ yields that $$s^{\omega^\xi}_{2\tau}(c^m_{\xi,2\tau}(A^*B_{Y^*}))\subset  s^{\omega^\xi}_{2\tau}((1-\delta m)A^*B_{Y^*})\subset (1-\delta(m+1))A^*B_{Y^*}$$ and $$c^{m+1}_{\xi, 2\tau}(A^*B_{Y^*}) = \overline{\text{co}}^{\text{weak}^*}(s^{\omega^\xi}_{2\tau}(c^m_{\xi,2\tau}(A^*B_{Y^*})))\subset (1-\delta(m+1))A^*B_{Y^*}.$$   Finally, if $n\in\nn$ is the minimum of those $m\in\nn$ such that $\delta m\geqslant 1$, applying the first claim of the proof with $r=1-\delta (n-1)\in (0,\delta]$ yields that $$c^n_{\xi, 2\tau}(A^*B_{Y^*})= \overline{\text{co}}^{\text{weak}^*}(s^{\omega^\xi}_{2\tau}(c^{n-1}_{\xi,2\tau}(A^*B_{Y^*}))) \subset \overline{\text{co}}^{\text{weak}^*}(s^{\omega^\xi}_{2\tau}(r A^*B_{Y^*}))= \overline{\text{co}}^{\text{weak}^*}(\varnothing)=\varnothing.$$   Therefore $Cz_\xi(A, 2\tau)\leqslant n$, and $$Cz_\xi(A, 2\tau)\leqslant \min \{n\in \nn: \delta n\geqslant 1\}\leqslant 1+1/\delta.$$  This contradiction finishes the proof of the first statement.

For the second statement, if $b\geqslant 1$ and $0<\delta<\psi^b_\xi(\tau;A)$ are  such that $Cz_\xi(A;2\tau b)>1+1/\delta$, we may fix $|\cdot|\in \mathcal{Y}_b$ such that $\delta<\delta^{\text{weak}^*}_\xi(\tau;A:X\to (Y, |\cdot|))$.   It is easy to see that for any $m\in\nn$, $$Cz_\xi(A:X\to Y,2\tau b)\leqslant Cz_\xi(A:X\to (Y, |\cdot|), 2\tau)\leqslant 1+1/\delta.$$

\end{proof}

\begin{rem}\upshape For the spatial case, we can replace $\psi^b_\xi(\cdot;I_X)$ with $\widehat{\psi}^b_\xi(\cdot;X)$ as long as we replace $Cz_\xi(A:X\to Y, 2 \tau b)$ with $Cz_\xi(X,2\tau b^2)$.

\end{rem}

The next proposition was shown in \cite{TAMS}.

\begin{proposition} Let $\xi$ is an ordinal, $n\in\nn$, $a>0$, and $A:X\to Y$ are such that $n\theta_{\xi,n}(A)>a$, then there exist  $\ee_1, \ldots, \ee_n\geqslant 0$ such that $$\varnothing\neq s_{\ee_1}^{\omega^\xi}\ldots s^{\omega^\xi}_{\ee_n}(A^*B_{Y^*})$$ and $\sum_{i=1}^n \ee_i>a$.   
\label{tams}

\end{proposition}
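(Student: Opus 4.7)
The plan is to unpack the seminorm definition and convert it into a nested dual statement about iterated Szlenk derivation. Since $n\theta_{\xi,n}(A) = \|\sum_{i=1}^n e_i\|_{\xi,A} > a$, there is a weakly null collection $(x_t)_{t\in \Gamma_{\xi,n}.D_X}\in \mathcal{B}^X_{\xi,n}$ with
$$\inf_{t\in MAX(\Gamma_{\xi,n}.D_X)} \Big\|A\sum_{i=1}^n y_i(t)\Big\| > a, \qquad y_i(t):=\sum_{\Lambda_{\xi,n,i}.D_X\ni s\leqslant t} \mathbb{P}_{\xi,n}(s)x_s.$$
First I would apply Hahn--Banach to pick, for each maximal $t$, a functional $y^*_t\in B_{Y^*}$ with $\sum_{i=1}^n \langle A^*y^*_t, y_i(t)\rangle > a$.

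Next I would argue by induction on $n$. The base case $n=1$ is essentially Theorem \ref{wh}$(iii)$ together with the standard duality between a weakly null primal tree witnessing $\theta_{\xi,1}(A)>a$ and a weak$^*$-null, $(\varepsilon,A^*)$-separated collection of order $\omega^\xi$ inside $A^*B_{Y^*}$, whose root lies in $s^{\omega^\xi}_{\varepsilon}(A^*B_{Y^*})$; here one takes $\varepsilon_1$ slightly less than $a$. For the inductive step, a Ramsey-type pigeonhole argument combined with weak$^*$-compactness of $A^*B_{Y^*}$ is used to pass to a subtree on which the first-level contribution $\langle A^*y^*_t, y_1(t)\rangle$ stabilizes at some value $\eta_1\geqslant 0$, while, above each maximal node of $\Lambda_{\xi,n,1}.D_X$ (canonically identified via $j_t$ with $\Gamma_{\xi,n-1}.D_X$), the remaining sum $\sum_{i=2}^n \langle A^*y^*_t, y_i(t)\rangle$ is bounded below by a value arbitrarily close to $a-\eta_1$. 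The inductive hypothesis, applied inside $\Gamma_{\xi,n-1}$, then supplies $\varepsilon_2,\ldots,\varepsilon_n\geqslant 0$ with $\sum_{i=2}^n \varepsilon_i > a-\eta_1$ and
$$s^{\omega^\xi}_{\varepsilon_2}\cdots s^{\omega^\xi}_{\varepsilon_n}(A^*B_{Y^*})\neq \varnothing.$$

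Setting $\varepsilon_1$ slightly less than $\eta_1$ yields $\sum_{i=1}^n \varepsilon_i > a$. The closing step is to combine the outer first-level dual tree with this interior nonemptiness: the first-level functionals constitute a weak$^*$-null, $(\varepsilon_1,A^*)$-separated tree of order $\omega^\xi$ whose root can be arranged to lie in $s^{\omega^\xi}_{\varepsilon_2}\cdots s^{\omega^\xi}_{\varepsilon_n}(A^*B_{Y^*})$, placing that root in $s^{\omega^\xi}_{\varepsilon_1}s^{\omega^\xi}_{\varepsilon_2}\cdots s^{\omega^\xi}_{\varepsilon_n}(A^*B_{Y^*})$. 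The main obstacle I anticipate is executing the extraction coherently: one must simultaneously stabilize the first-level contribution \emph{and}, for a cofinal family of maximal nodes of $\Lambda_{\xi,n,1}.D_X$, retain a dual tree structure inside the corresponding $C_t$ that still supports the inductive hypothesis. This requires a diagonal argument that interleaves weak$^*$-compactness on $A^*B_{Y^*}$ with the directedness of $D_X$, exploiting the precise form of $\mathbb{P}_{\xi,n}$ and the canonical identifications of the levels $\Lambda_{\xi,n,i}$ with copies of $\Gamma_{\xi,1}$ and $\Gamma_{\xi,n-1}$, so that no loss larger than a prescribed $\delta>0$ occurs at any level and the final sum $\sum_i \varepsilon_i$ still exceeds $a$.
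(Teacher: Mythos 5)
The paper does not provide its own proof of this proposition --- it is cited from \cite{TAMS} --- so I can only assess your sketch on its internal coherence. Your skeleton (unpack $\|\cdot\|_{\xi,A}$ to obtain a weakly null witness tree, norm with Hahn--Banach, then induct on $n$ while stabilizing the first level's contribution) is a plausible starting template, and the stabilization via weak$^*$-compactness on $A^*B_{Y^*}$ is the right instinct. A small slip in the base case: for $n=1$ you need $\ee_1>a$, so ``slightly less than $a$'' should presumably read ``slightly less than $\theta_{\xi,1}(A)$''.

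The closing combination step, however, has a genuine gap. Your inductive hypothesis produces only the non-emptiness of $K_1:=s^{\omega^\xi}_{\ee_2}\cdots s^{\omega^\xi}_{\ee_n}(A^*B_{Y^*})$. To conclude that $s^{\omega^\xi}_{\ee_1}(K_1)\neq\varnothing$ you must exhibit a weak$^*$-compact, $\ee_1$-separated collection of order $\omega^\xi+1$ \emph{all of whose members} --- not merely a distinguished root --- lie in $K_1$. The $(\ee_1,A^*)$-separated structure you propose to extract from the first-level primal tree and the Hahn--Banach functionals $y^*_t$ naturally lives in $A^*B_{Y^*}$, and since $K_1\subset A^*B_{Y^*}$ the monotonicity $s_\ee(K_1)\subset s_\ee(A^*B_{Y^*})$ runs in the wrong direction to force its nodes into $K_1$. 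Saying the root ``can be arranged to lie in'' $K_1$ conflates bare non-emptiness of $K_1$ with the much stronger claim that specific weak$^*$-accumulation points of your $y^*_t$ belong to it; the induction as stated carries no information about \emph{which} functionals populate $K_1$, so no such arrangement is available. Moreover the stabilized first-level value $\eta_1$ and the parameters $\ee_2,\dots,\ee_n$ returned by the inductive hypothesis can in principle vary with the first-level maximal node $t_1\in MAX(\Lambda_{\xi,n,1}.D_X)$, and these must be made coherent across a cofinal family of such $t_1$. You flag the difficulty as a diagonal/extraction problem, but the real issue is structural: the inductive hypothesis must be strengthened to a localized form --- one asserting that a suitable family of accumulation points of the branch functionals already lies in the iterated Szlenk set --- before the first-level derivation can be stacked on top of it. Until that strengthening is formulated and proved, the argument does not close.
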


\begin{proposition} Suppose $\xi$ is an ordinal, $K\subset X^*$ is weak$^*$-compact and convex,  $\ee\geqslant 0$, and $m\in\nn$. Then  $c_{\xi, m\ee}(K) \subset c^m_{\xi,\ee}(K)$.

In particular, if for $\ee>0$ and $m,n\in\nn$, $Cz_\xi(A, \ee m)\geqslant n+1$, then $Cz_\xi(A, \ee)\geqslant nm+1$. 

\label{jbg}
\end{proposition}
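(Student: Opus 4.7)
The plan is to prove the inclusion $c_{\xi, m\ee}(K) \subset c^m_{\xi, \ee}(K)$ by induction on $m$, and then deduce the ``in particular'' statement by iterating. The base case $m = 1$ is a tautology since $c^1_{\xi, \ee}(K) = c_{\xi, \ee}(K)$.

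For the inductive step, I would first establish the following sub-additivity lemma, which I expect to be the technical heart of the argument: for any weak$^*$-compact convex $L \subset X^*$ and $\ee_1, \ee_2 \geq 0$,
$$c_{\xi, \ee_1 + \ee_2}(L) \subset c_{\xi, \ee_1}(c_{\xi, \ee_2}(L)).$$
Applying this with $\ee_1 = m\ee$ and $\ee_2 = \ee$, then using the induction hypothesis on the weak$^*$-compact convex set $L := c_{\xi, \ee}(K) = c^1_{\xi, \ee}(K)$, I would obtain
$$c_{\xi, (m+1)\ee}(K) \subset c_{\xi, m\ee}(c_{\xi, \ee}(K)) \subset c^m_{\xi, \ee}(c_{\xi, \ee}(K)) = c^{m+1}_{\xi, \ee}(K),$$
closing the induction. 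Note that $c_{\xi, \ee}(K)$ is weak$^*$-compact and convex by definition, so the induction hypothesis does apply.

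The main obstacle will be the sub-additivity lemma. I would reduce it to the pointwise containment $s^{\omega^\xi}_{\ee_1 + \ee_2}(L) \subset c_{\xi, \ee_1}(c_{\xi, \ee_2}(L))$ and then take weak$^*$-closed convex hulls (which preserves the inclusion since the right-hand side is already weak$^*$-closed and convex by construction). For the pointwise containment, the natural attack is Hahn--Banach: if some $x^* \in s^{\omega^\xi}_{\ee_1 + \ee_2}(L)$ fails to lie in $c_{\xi, \ee_1}(c_{\xi, \ee_2}(L))$, a separating weak$^*$-continuous functional $v \in X$ produces a half-space strictly isolating $x^*$ from $c_{\xi, \ee_2}(L)$. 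Convexity of $L$ then allows one to decompose the $(\ee_1 + \ee_2)$-diameter condition at $x^*$: the portion of the diameter transverse to $v$ must realize a point of $c_{\xi, \ee_2}(L)$ within $\ee_1$ of the separator, which violates the strict separation. For $\xi > 0$, a secondary transfinite induction on $\xi$ handles successor steps by reducing to the $\xi = 0$ case (applied $\omega$ times, using that $s^{\omega^{\xi+1}}_\ee$ is $s^{\omega^\xi}_\ee$ iterated $\omega$ times) and limit steps via weak$^*$-compactness.

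For the ``in particular'' statement, I would iterate the main inclusion: by induction on $k$, applying it to the weak$^*$-compact convex set $c^{(k-1)m}_{\xi, \ee}(K)$ (together with monotonicity of $c_{\xi, m\ee}$), one obtains $c^k_{\xi, m\ee}(K) \subset c^{km}_{\xi, \ee}(K)$. Taking $K = A^* B_{Y^*}$: if $Cz_\xi(A, m\ee) \geq n+1$, then $c^n_{\xi, m\ee}(A^*B_{Y^*}) \neq \varnothing$, hence $c^{nm}_{\xi, \ee}(A^*B_{Y^*}) \neq \varnothing$, giving $Cz_\xi(A, \ee) \geq nm + 1$.
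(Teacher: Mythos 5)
Your overall strategy — induction on $m$ after establishing the sub-additivity $c_{\xi,\ee_1+\ee_2}(L)\subset c_{\xi,\ee_1}(c_{\xi,\ee_2}(L))$ — is genuinely different from the paper's, and the sub-additivity lemma does in fact hold, so the architecture is viable. But your proof of that lemma, which you yourself flag as the technical heart, has a real gap. First, the Hahn--Banach separation as you state it is impossible: if $x^*\in s^{\omega^\xi}_{\ee_1+\ee_2}(L)$ then a fortiori $x^*\in s^{\omega^\xi}_{\ee_2}(L)\subset c_{\xi,\ee_2}(L)$, so no functional strictly separates $x^*$ from $c_{\xi,\ee_2}(L)$; the separation would have to be from the smaller set $c_{\xi,\ee_1}(c_{\xi,\ee_2}(L))$, after which the proposed ``decompose the diameter transverse to $v$'' step gives no visible contradiction. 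Second, the promised transfinite reduction ``applying the $\xi=0$ case $\omega$ times'' does not go through: $s^{\omega^{\xi+1}}_\ee$ is indeed $\omega$ iterates of $s^{\omega^\xi}_\ee$, but $c_{\xi+1,\ee}(K)=\overline{\text{co}}(s^{\omega^{\xi+1}}_\ee(K))$ is \emph{not} an $\omega$-fold iterate of $c_{\xi,\ee}$ — the intermediate closed convex hulls do not commute with the intersection and the derivations.

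What makes both a correct proof of your lemma and the paper's direct proof work are two elementary facts, proved once by transfinite induction on $\zeta$: for weak$^*$-compact $L,M$ and $a>0$, $s_\ee^\zeta(L)+M\subset s_\ee^\zeta(L+M)$ and $s_{a\ee}^\zeta(aK)=a\,s_\ee^\zeta(K)$, combined with $a\,\overline{\text{co}}(B)+C\subset\overline{\text{co}}(aB+C)$. None of these appear in your proposal. Using them, one can in fact prove your sub-additivity by writing $c_{\xi,\ee_1+\ee_2}(L)=\mu\, c_{\xi,\ee_1+\ee_2}(L)+(1-\mu)\,c_{\xi,\ee_1+\ee_2}(L)$ with $\mu=\frac{\ee_2}{\ee_1+\ee_2}$, expanding one factor as a closed convex hull, and pushing one Szlenk derivation through the Minkowski sum; this lands in $c_{\xi,\ee_1}$ of a set one shows (by the same trick) is inside $c_{\xi,\ee_2}(L)$. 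The paper instead proves, by induction on $k$, the single stronger inclusion $\frac{k}{m}c_{\xi,\ee m}(K)+\frac{m-k}{m}K\subset c^k_{\xi,\ee}(K)$, which at $k=m$ gives the result in one sweep; your route factors the same computation through a reusable two-$\ee$ lemma, at the cost of a second layer of induction. Either works, but only once the Minkowski-sum and scaling identities for $s_\ee^\zeta$ are in hand; your separation-argument sketch does not supply a substitute for them.
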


\begin{proof} We claim that for any $0\leqslant k\leqslant m$, $$\frac{k}{m}c_{\xi,\ee m}(K)+ \frac{m-k}{m}K \subset c^k_{\xi, \ee}(K),$$ which we prove by induction.  The $k=0$ case is equality.   Suppose $0<m\leqslant k$ and $$\frac{k-1}{m}c_{\xi, \ee m}(K)+\frac{m-k+1}{m} K \subset c^{k-1}_{\xi, \ee}(K).$$   Then \begin{align*} \frac{k}{m} c_{\xi, \ee m}(K) + \frac{m-k}{m}K & = \frac{k-1}{m}c_{\xi, \ee m}(K) + \frac{1}{m}\overline{\text{co}}^{\text{weak}^*}(s^{\omega^\xi}_{\ee m}(K)) + \frac{m-k}{m}K  \\ & \subset \overline{\text{co}}^{\text{weak}^*}\Bigl(\frac{k-1}{m}c_{\xi, \ee m}(K) + \frac{1}{m}s^{\omega^\xi}_{\ee m}(K) + \frac{m-k}{m}K \Bigr) \\ & = \overline{\text{co}}^{\text{weak}^*}\Bigl(\frac{k-1}{m}c_{\xi, \ee m}(K) + s^{\omega^\xi}_\ee(\frac{1}{m}K) + \frac{m-k}{m}K \Bigr) \\ & \subset \overline{\text{co}}^{\text{weak}^*}\Bigl(s^{\omega^\xi}_\ee\bigl(\frac{k-1}{m}c_{\xi, \ee m}(K) + \frac{1}{m}K + \frac{m-k}{m}K\bigr) \Bigr) \\ & = c_{\xi, \ee}\Bigl(\frac{k-1}{m}c_{\xi, \ee m}(K) + \frac{m-k+1}{m}K\Bigr) \\ & \subset c_{\xi, \ee}(c^{k-1}_{\xi, \ee}(K)) = c^k_{\xi, \ee}(K).  \end{align*}

Here we have used the obvious facts that if $L,M\subset X^*$ are weak$^*$-compact and $\ee_1, a>0$, $s_{\ee_1}(L)+M\subset s_{\ee_1}(L+M)$ and $s_{\ee_1 a}(aK)=a s_{\ee_1}(K)$.   From these obvious facts and an easy induction one can prove that for any ordinal $\zeta$, any $L,\subset X^*$ weak$^*$-compact, and $\ee_1, a>0$, $s_{\ee_1}^\zeta(L)+M\subset s_{\ee_1}^\zeta(L+M)$ and $s_{\ee_1 a}^\zeta(aK)=as_{\ee_1}^\zeta(K)$.

For the last statement, suppose that $Cz_\xi(A, \ee m)\geqslant n+1$.   Then applying the first part iteratively, we deduce that $$\varnothing\neq c_{\xi,  \ee m}^n(A^*B_{Y^*})\subset c_{\xi, m\ee}^{n-1} c^m_{\xi, \ee}(A^*B_{Y^*}) \subset c_{\xi, m\ee}^{n-2}c_{\xi, \ee}^m c_{\xi, \ee}^m(A^*B_{Y^*})= c_{\xi, \ee m}^{n-2}c^{2m}_{\xi, \ee}(A^*B_{Y^*}) \subset \ldots \subset c_{\xi, \ee}^{nm}(A^*B_{Y^*}).$$  
 
\end{proof}

\begin{proposition} If $n\in\nn$, $\ee_1, \ldots, \ee_n\geqslant 0$, $s=\sum_{i=1}^n \ee_i$,  $0<t\leqslant \frac{s}{2n}$,  $K\subset X^*$ is weak$^*$-compact and convex, and $s^{\omega^\xi}_{\ee_1}\ldots s^{\omega^\xi}_{\ee_n}(K)\neq \varnothing$, then $Cz_\xi(K, t)-1 \geqslant s/4t$.  
\label{npr}
\end{proposition}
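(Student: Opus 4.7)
The plan is to pass from the iterated Szlenk-derivation hypothesis to an iterated convex-derivation statement, drop the indices with $\ee_i<t$, and then convert each remaining $c_{\xi,\ee_i}$ into $k_i=\lfloor\ee_i/t\rfloor$ copies of $c_{\xi,t}$ via Proposition \ref{jbg}.

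First, I would observe that since $c_{\xi,\ee}(L) = \overline{\text{co}}^{\text{weak}^*}(s^{\omega^\xi}_\ee(L)) \supset s^{\omega^\xi}_\ee(L)$ for every weak$^*$-compact $L$, a short induction using monotonicity of each $c_{\xi,\ee_i}$ in its argument yields
$$c_{\xi,\ee_1}\circ c_{\xi,\ee_2}\circ\cdots\circ c_{\xi,\ee_n}(K) \supset s^{\omega^\xi}_{\ee_1}\circ\cdots\circ s^{\omega^\xi}_{\ee_n}(K) \neq \varnothing,$$
and every partial composition is weak$^*$-compact and convex.

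Next, I would set $J=\{i:\ee_i\geqslant t\}$ and enumerate $J=\{i_1<\cdots<i_m\}$. The hypothesis $t\leqslant s/(2n)$ gives $\sum_{i\notin J}\ee_i < nt\leqslant s/2$, so $\sum_{i\in J}\ee_i>s/2$. Since $c_{\xi,\ee_j}(L)\subset L$ for every weak$^*$-compact convex $L$, deleting from the composition the operators $c_{\xi,\ee_j}$ with $j\notin J$ only enlarges the resulting set, so
$$c_{\xi,\ee_{i_1}}\circ\cdots\circ c_{\xi,\ee_{i_m}}(K)\neq\varnothing.$$
For $i\in J$ I set $k_i=\lfloor\ee_i/t\rfloor\geqslant 1$. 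Monotonicity of $c_{\xi,\cdot}$ in its subscript gives $c_{\xi,\ee_i}(L)\subset c_{\xi,k_it}(L)$ (since $\ee_i\geqslant k_it$), and Proposition \ref{jbg} then yields $c_{\xi,k_it}(L)\subset c^{k_i}_{\xi,t}(L)$ whenever $L$ is weak$^*$-compact convex. Applying these inclusions stage by stage, using monotonicity of $c^{k_i}_{\xi,t}$ in its argument together with the fact that every intermediate set encountered is weak$^*$-compact convex, produces
$$c^{\sum_{i\in J}k_i}_{\xi,t}(K) \supset c_{\xi,\ee_{i_1}}\circ\cdots\circ c_{\xi,\ee_{i_m}}(K) \neq\varnothing,$$
so $Cz_\xi(K,t)\geqslant 1+\sum_{i\in J}k_i$.

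Finally, a short case check separately for $t\leqslant\ee_i<2t$ (where $k_i=1\geqslant\ee_i/(2t)$) and for $\ee_i\geqslant 2t$ (where $k_i>\ee_i/t-1\geqslant\ee_i/(2t)$) gives $k_i\geqslant\ee_i/(2t)$ for every $i\in J$, whence
$$\sum_{i\in J}k_i \geqslant \frac{1}{2t}\sum_{i\in J}\ee_i > \frac{s}{4t},$$
which gives the conclusion. The step I expect to require the most care is the third, where one must verify that each intermediate set remains weak$^*$-compact convex so that Proposition \ref{jbg} can be invoked at every stage; this is automatic because $c_{\xi,\ee}(\cdot)$ always produces a weak$^*$-closed convex set.
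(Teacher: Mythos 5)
Your proof is correct and follows the same route as the paper: both pass from the iterated $s^{\omega^\xi}_{\ee_i}$ hypothesis to iterated $c_{\xi,\cdot}$-derivations, convert $c_{\xi,\ee_i}$ into $\lfloor\ee_i/t\rfloor$ applications of $c_{\xi,t}$ via Proposition~\ref{jbg}, and bound $\sum\lfloor\ee_i/t\rfloor$ from below using $nt\leqslant s/2$ and $\lfloor\ee_i/t\rfloor\geqslant\ee_i/2t$ when $\ee_i\geqslant t$. The only cosmetic difference is that you explicitly discard the indices with $\ee_i<t$, whereas the paper lets them contribute $m_i=0$ factors; the content is identical.
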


\begin{proof} Let $m_i=\lfloor \ee_i/t\rfloor$ and note that, by Proposition \ref{jbg}, $$\varnothing\neq c^{\omega^\xi}_{\ee_1}\ldots c_{\ee_n}^{\omega^\xi}(K)\subset c^{\omega^\xi {\sum_{i=1}^n m_i}}_t(K).$$  Since $$\sum_{i: \ee_i<t} \ee_i \leqslant n t\leqslant s/2,$$   we see that  \begin{align*} \sum_{i=1}^n m_i & = \sum_{i: \ee_i\geqslant t} \lfloor \ee_i/t\rfloor  \geqslant \frac{1}{2t}\sum_{i: \ee_i\geqslant t} \ee_i\geqslant \frac{1}{2t}(s-s/2)=s/4t. \end{align*}

\end{proof}

\begin{fact} Suppose $f,g\in S$ are such that for each $0\leqslant \sigma, \tau\leqslant 1$, \begin{enumerate}[(i)]\item $1+\sigma\tau/2 \leqslant (1+f(\sigma))(1+g(\tau))$, and \item if $g(\tau)\geqslant \sigma\tau$, then $f(\sigma)\leqslant \sigma\tau$. \end{enumerate}

Then $f\approx g^*$ and $g\approx f^*$. 
\label{basics}
\end{fact}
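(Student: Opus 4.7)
The plan is to prove the two halves $f\approx g^*$ and $g\approx f^*$ separately, with $(i)$ driving one direction and $(ii)$ driving the other. Throughout, I would use that $g^{*}$ (as a supremum of linear functions in $\sigma$) is itself star shaped, so that a bound $g^{*}(\sigma/a)\leqslant g^{*}(\sigma)/a$ holds for $a\geqslant 1$, and similarly for $f^{*}$.

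First I would prove $g^{*}\preceq f$. Rearranging $(i)$ gives $\sigma\tau/2-g(\tau)\leqslant f(\sigma)(1+g(\tau))$, and since $g$ is star shaped and continuous on $[0,1]$ it is bounded by $g(1)$, so $\sigma\tau/2-g(\tau)\leqslant (1+g(1))f(\sigma)$ for every $\sigma,\tau\in[0,1]$. Taking the supremum over $\tau\in[0,1]$ yields $g^{*}(\sigma/2)\leqslant (1+g(1))f(\sigma)$, and star-shape of $g^{*}$ upgrades this to $g^{*}(\sigma/c)\leqslant f(\sigma)$ with $c=2(1+g(1))$. The identical computation with the roles of $\sigma$ and $\tau$ interchanged yields $f^{*}\preceq g$ with $c=2(1+f(1))$.

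Next I would prove $f\preceq g^{*}$ by a case analysis on $\sigma\in[0,1]$, keyed to $\tau_{\sigma}:=\inf\{\tau\in(0,1]:g(\tau)/\tau\geqslant\sigma\}$. If $\sigma\leqslant g(1)$, then $\tau_{\sigma}\leqslant 1$. When $\tau_{\sigma}>0$, continuity and star shape of $g$ give $g(\tau_{\sigma})=\sigma\tau_{\sigma}$, so $(ii)$ yields $f(\sigma)\leqslant\sigma\tau_{\sigma}$, while evaluating $g^{*}(2\sigma)$ at $\tau=\tau_{\sigma}$ gives $g^{*}(2\sigma)\geqslant 2\sigma\tau_{\sigma}-g(\tau_{\sigma})=\sigma\tau_{\sigma}\geqslant f(\sigma)$. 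When $\tau_{\sigma}=0$, star-shape forces $g(\tau)/\tau\geqslant\sigma$ for all $\tau>0$, so $(ii)$ applied with $\tau\to 0^{+}$ collapses to $f(\sigma)=0$. If instead $\sigma>g(1)$ then $(ii)$ is vacuous, but the crude star-shape bound $f(\sigma)\leqslant\sigma f(1)$ combined with $g^{*}(\sigma)\geqslant\sigma-g(1)\geqslant\sigma/2$ for $\sigma\geqslant 2g(1)$ gives $f(\sigma)\leqslant 2f(1)g^{*}(\sigma)$, which (again by star shape of $f$) rescales to $f(\sigma/c)\leqslant g^{*}(\sigma)$ for a constant $c$ built from $f(1)$ and $g(1)$. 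Piecing the regimes together yields $f\preceq g^{*}$.

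The proof of $g\preceq f^{*}$ is entirely symmetric: the contrapositive of $(ii)$ reads $f(\sigma)>\sigma\tau\Rightarrow g(\tau)<\sigma\tau$, and choosing $\sigma=f(\tau)\cdot\text{(appropriate scaling)}$ so that $f(\sigma)=\sigma\tau$ (possible in the analogue of Case A via $F(\sigma):=f(\sigma)/\sigma$) gives $g(\tau)\leqslant\sigma\tau$, whence $f^{*}(2\tau)\geqslant 2\sigma\tau-f(\sigma)=\sigma\tau\geqslant g(\tau)$. The "$\tau$ large" regime is handled as before using the star-shape upper bound $g(\tau)\leqslant\tau g(1)$ and the linear lower bound on $f^{*}$. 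The main obstacle throughout is the regime where $(ii)$ (or its contrapositive) gives no direct information; the key is that in that regime the star-shape upper bound on $f$ (resp.\ $g$) and the trivial linear lower bound on $g^{*}$ (resp.\ $f^{*}$) are both of size comparable to $\sigma$ (resp.\ $\tau$), so they can be reconciled by enlarging the rescaling constant $c$.
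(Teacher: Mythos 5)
Your overall structure differs from the paper's: you try to prove all four inequalities directly, using $(i)$ for $g^*\preceq f$ and $f^*\preceq g$, and $(ii)$ for $f\preceq g^*$ and $g\preceq f^*$. The paper instead proves only $f^*\preceq g$ (from $(i)$) and $g\preceq f^*$ (from $(ii)$), then gets $f\approx g^*$ for free by introducing the convexification $F(\sigma)=\int_0^\sigma f(x)/x\,dx$: one checks $F\leqslant f\leqslant F(2\cdot)$, so $f\approx F$, and since $F$ is convex $F^{**}=F$, giving $g^*\approx f^{**}\approx F^{**}=F\approx f$. This trick lets the paper avoid your case analysis entirely. Your two inequalities from $(i)$ are fine, and your case analysis for $f\preceq g^*$ via $\tau_\sigma=\inf\{\tau:g(\tau)/\tau\geqslant\sigma\}$ is essentially workable (modulo some sloppiness: $g^*(2\sigma)$ only makes sense for $\sigma\leqslant 1/2$, and the band $g(1)<\sigma<2g(1)$ is not explicitly handled, though both can be patched by enlarging $c$ and noting that $t>cg(1)$ forces $t>1$ once $c\geqslant 1/g(1)$).

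The real gap is the last paragraph. Hypothesis $(ii)$ is \emph{not} symmetric in $f$ and $g$, so the argument for $g\preceq f^*$ cannot be a mirror image of the one for $f\preceq g^*$. The first half applies $(ii)$ directly at the point $\tau_\sigma$ where $g(\tau_\sigma)=\sigma\tau_\sigma$, which is legitimate because $(ii)$ has a non-strict hypothesis $g(\tau)\geqslant\sigma\tau$. In the second half you must use the \emph{contrapositive} $f(\sigma)>\sigma\tau\Rightarrow g(\tau)<\sigma\tau$, whose hypothesis is strict, so arranging $f(\sigma)=\sigma\tau$ (as you propose) yields nothing; the step ``$f(\sigma)=\sigma\tau$ gives $g(\tau)\leqslant\sigma\tau$'' is unjustified. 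To make this work you would need to take $\sigma_\tau:=\sup\{\sigma:f(\sigma)/\sigma\leqslant\tau\}$ (a sup, not the inf that ``symmetry'' suggests), observe $f(\sigma)/\sigma>\tau$ strictly for $\sigma>\sigma_\tau$, apply the contrapositive there, and pass to the limit $\sigma\downarrow\sigma_\tau$; there are also extra edge cases ($\sigma_\tau=1$, i.e.\ $\tau\geqslant f(1)$). Your written choice ``$\sigma=f(\tau)\cdot(\text{appropriate scaling})$'' is also not what is needed (the choice should invert $\sigma\mapsto f(\sigma)/\sigma$, not evaluate $f$ at $\tau$). The paper sidesteps all of this by proving $g\preceq f^*$ with a direct application of $(ii)$ at the explicit, legal point $\sigma=g(\tau/2)/(M\tau/2)$ with $M=\max\{1,g(1)\}$, which satisfies the non-strict hypothesis by construction.
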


\begin{proof} Let $C=1+f(1)$.   Then for any $0\leqslant \sigma, \tau\leqslant 1$, $$\frac{\sigma\tau}{2}\leqslant f(\sigma)+g(\tau)+f(\sigma)g(\tau) \leqslant f(\sigma)+ C g(\tau).$$  From this it follows that for any $0\leqslant  \tau\leqslant 1$, with $\tau'=\tau/C$, \begin{align*} f^*(\tau/2C) & = \max_{0\leqslant \sigma\leqslant 1} \frac{\sigma\tau}{2C} - f(\sigma) \leqslant Cg(\tau')\leqslant g(\tau).  \end{align*} Therefore $f^*\preceq g$.

Let $M=\max\{1, g(1)\}$.   Fix $0<\tau\leqslant 1$ and let $\sigma= \frac{g(\tau/2)}{M\tau/2}\in [0,1]$.   Then since $$g(\tau/2)=M \frac{\sigma\tau}{2} \geqslant \frac{\sigma\tau}{2},$$ $f(\sigma) \leqslant \sigma\tau/2$ by $(ii)$.    Therefore $$f^*(\tau) \geqslant \sigma\tau -f(\sigma) \geqslant \frac{\sigma\tau}{2} = \frac{g(\tau/2)}{M} \geqslant g(\tau/2M).$$   From this it follows that $g\preceq f^*$, and $g\approx f^*$.

Now since $f\in S$, $$F(\sigma):=\int_0^\sigma f(x)/x dx$$ defines a function $F$ on $[0,1]$ which is convex, continuous, non-decreasing, and     $F(0)=0$. Since $F$ is convex, $F=F^{**}$.  It is easily verified that $$F(\sigma)\leqslant f(\sigma)\leqslant F(2\sigma),$$ whence $f\approx F$, and $$g^*\approx f^{**}\approx F^{**}=F\approx f.$$

\end{proof}

\begin{lemma} For any $b\geqslant 1$, any ordinal $\xi$, and an operator $A:X\to Y$ with $Sz(A)>\omega^\xi$,  $$\phi^b_\xi(\cdot;A)^*\approx \psi_\xi^b(\cdot;A)$$ and $$\psi^b_\xi(\cdot;A)^*\approx \phi^b_\xi(\cdot;A).$$

\label{kc}
\end{lemma}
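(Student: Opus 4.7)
The plan is to reduce the lemma to an application of Fact \ref{basics}, taking $f=\phi^b_\xi(\cdot;A)$ and $g=\psi^b_\xi(\cdot;A)$ (restricted to $[0,1]$, with a single global rescaling absorbing norming constants). The preceding proposition already places $f,g\in S$, so the task is to verify conditions (i) and (ii) of Fact \ref{basics} up to universal constants that may be absorbed into the $\approx$ relation.

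For condition (i), I would establish the pointwise Young-type duality
\[
1+\sigma\tau/2 \leqslant \bigl(1+\varrho_\xi(\sigma;A:X\to(Y,|\cdot|))\bigr)\bigl(1+\delta^{\text{weak}^*}_\xi(\tau;A:X\to(Y,|\cdot|))\bigr)
\]
for every $|\cdot|\in\mathcal{Y}_b$, and then take the infimum in the first factor and the supremum in the second factor over $|\cdot|\in\mathcal{Y}_b$ to conclude $1+\sigma\tau/2\leqslant (1+f(\sigma))(1+g(\tau))$. The pointwise duality follows from a standard pairing argument: given a nearly optimal tuple $(y^*, T, (y^*_t)_{t\in T})$ witnessing $\delta^{\text{weak}^*}_\xi(\tau;A:X\to(Y,|\cdot|))$, use $(1,A^*)$-separation to select $x_t\in B_X$ with $\langle A^*y^*_t-A^*y^*_{t^-},x_t\rangle>1/2$, let $y\in B_{(Y,|\cdot|)}$ be dual to $y^*$, and then pair $y^*+\tau\sum_{s\leqslant t}y^*_s$ with $y+\sigma A\sum_{\varnothing<s\leqslant t}\mathbb{P}_{\xi,1}(s)x_s$ at a level $t$ realizing the sup on the functional side.

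For condition (ii), the plan is to prove that $\psi^b_\xi(\tau;A)\geqslant 8b\sigma\tau$ implies $\phi^b_\xi(\sigma/(8b);A)\leqslant \sigma\tau/2$. The chain is: Proposition \ref{renorm2} converts the hypothesis into $Cz_\xi(A,2\tau b)\leqslant 1+1/(8b\sigma\tau)$. If by contradiction $k\theta_{\xi,k}(A)>8b/\sigma$ for some $k\leqslant 2/(\sigma\tau)$, then Proposition \ref{tams} provides $\ee_1,\ldots,\ee_k\geqslant 0$ with $s:=\sum\ee_i>8b/\sigma$ and $s^{\omega^\xi}_{\ee_1}\cdots s^{\omega^\xi}_{\ee_k}(A^*B_{Y^*})\neq\varnothing$; the range condition $2\tau b\leqslant 4b/(\sigma k)\leqslant s/(2k)$ holds under $k\leqslant 2/(\sigma\tau)$, so Proposition \ref{npr} yields $Cz_\xi(A,2\tau b)\geqslant 1+s/(8\tau b)>1+1/(\sigma\tau)$, contradicting the upper bound. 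Hence $k\theta_{\xi,k}(A)\leqslant 8b/\sigma$ for every $k\leqslant 2/(\sigma\tau)$, which by the definition of $G_\xi$ means $G_\xi(\sigma/(8b);A)\geqslant 2/(\sigma\tau)$, and then Proposition \ref{renorm1} (after enlarging $b$ if necessary so that $b\geqslant 2\max\{1,\|A\|\}$) delivers $\phi^b_\xi(\sigma/(8b);A)\leqslant \sigma\tau/2$. Rewriting with $\sigma'=\sigma/(8b)$ puts this in the form $g(\tau)\geqslant c_1\sigma'\tau\Rightarrow f(\sigma')\leqslant c_2\sigma'\tau$, which is condition (ii) modulo a change of multiplicative constants.

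The main obstacle I anticipate is bookkeeping: Propositions \ref{renorm1} and \ref{renorm2} involve factors of $b$ on both the argument of $\phi^b$ and inside $Cz_\xi$, and a dependence on $\|A\|$ that must be normalized away, while Propositions \ref{tams} and \ref{npr} require the auxiliary interval condition $t\leqslant s/2n$; one has to choose $k$, $n$, and the scale $t=2\tau b$ so that all the inequalities in the contradiction step chain correctly. Once conditions (i) and (ii) are verified with these explicit (but harmless) constants, Fact \ref{basics} applies to yield $f\approx g^*$ and $g\approx f^*$ on $[0,1]$; since both $\phi^b_\xi(\cdot;A)$ and $\psi^b_\xi(\cdot;A)$ are star-shaped and continuous, a rescaling of the variable extends the $\approx$ relations to the full half-line, giving the statement of the lemma.
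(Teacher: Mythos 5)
Your overall plan — reduce to Fact \ref{basics} with $f=\phi^b_\xi(\cdot;A)$, $g=\psi^b_\xi(\cdot;A)$ — matches the paper, and your treatment of condition (i) is essentially the paper's (the paper fixes a near-optimal $|\cdot|$ rather than proving the pointwise bound for all $|\cdot|$ and then pushing the quantifiers through, but both work because the supremum in $g$ and infimum in $f$ point the right way). Where you diverge is condition (ii): the paper simply cites \cite[Proposition~3.2(ii)]{CD}, which asserts exactly ``$\delta^{\text{weak}^*}_\xi(\tau;A)\geqslant\sigma\tau\Rightarrow\varrho_\xi(\sigma;A)\leqslant\sigma\tau$,'' applies it to $A:X\to(Y,|\cdot|)$ for each $|\cdot|\in\mathcal{Y}_b$, and then carries out the same quantifier manipulation as in (i). You instead re-derive a version of (ii) from scratch via Propositions \ref{renorm1}, \ref{renorm2}, \ref{tams}, \ref{npr}, and this route has two genuine gaps.

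First, Proposition \ref{renorm1} requires $b\geqslant 2\max\{1,\|A\|\}$, whereas the lemma is asserted for \emph{all} $b\geqslant 1$. ``Enlarging $b$ if necessary'' does not repair this: since $\mathcal{Y}_b\subset\mathcal{Y}_{b'}$ for $b\leqslant b'$, one has $\phi^b_\xi\geqslant\phi^{b'}_\xi$, so an upper bound on $\phi^{b'}_\xi$ for a larger $b'$ says nothing about $\phi^b_\xi$. Your argument therefore does not reach the lemma for $b<2\max\{1,\|A\|\}$ (in particular $b=1$, which is the classical $\varrho_\xi$/$\delta^{\text{weak}^*}_\xi$ duality). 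Second, what you derive, after the $\sigma'=\sigma/(8b)$ substitution, is $g(\tau)\geqslant 64b^2\sigma'\tau\Rightarrow f(\sigma')\leqslant 4b\sigma'\tau$. Fact \ref{basics}(ii) is not robust to a multiplicative constant greater than $2$ on the conclusion side: its proof fixes $\sigma=g(\tau/2)/(M\tau/2)$, invokes (ii) to get $f(\sigma)\leqslant\sigma\tau/2$, and then uses $f^*(\tau)\geqslant\sigma\tau-f(\sigma)\geqslant\sigma\tau/2$, which becomes vacuous (indeed negative) if the bound on $f(\sigma)$ is $4b\sigma\tau/2\geqslant 2\sigma\tau$. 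One can salvage a constant-loaded version of Fact \ref{basics} by applying (ii) at a smaller scale $\tau/(2c_2)$ and invoking a Lipschitz bound on $g$ to keep $\sigma\leqslant 1$, but that is a non-trivial modification that you assert without verification. The paper's citation of the $b$-free CD inequality sidesteps both issues at once.
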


\begin{proof} We will argue that with $f=\phi^b_\xi(\cdot;A)$ and $g=\psi^b_\xi(\cdot;A)$, the hypotheses of Fact \ref{basics} are satisfied. 

Arguing as in the proof of Proposition $3.2$ in \cite{CD}, for $\mu>0$ and $0\leqslant \sigma,\tau\leqslant 1$, we may fix $|\cdot|\in \mathcal{Y}_b$ and $y^*\in S_{Y^*}^{|\cdot|}$, a tree $T$ with $o(T)=\omega^\xi$, a weak$^*$-null and $(\tau, A^*)$-large collection $(y^*_t)_{t\in T}$ with $\sup_{t\in T}|y^*+\sum_{s\leqslant t}y^*_s|\leqslant 1+\psi^b_\xi(\tau;A)+\mu$, $y\in S_Y^{|\cdot|}$, and a weakly null $(x_t)_{t\in T}\subset \sigma B_Y^{|\cdot|}$ such that for any $t\in T$ and any convex combination $x$ of $(x_s: s\leqslant t)$, $$\text{Re\ }(y^*+\sum_{s\leqslant t}y^*_s)(y+ Ax)\geqslant 1+\sigma\tau/2-\mu.$$  Taking the infimum over $t\in T$ and convex combinations $x\in (x_s: s\leqslant t)$, we deduce that $1+\sigma\tau/2 -\mu\leqslant (1+\phi^b_\xi(\sigma;A))(1+\psi^b_\xi(\tau;A)+\mu)$. Since $\mu>0$ was arbitrary, we obtain $(i)$ of Fact \ref{basics}.  For $(ii)$ of Fact \ref{basics}, we cite \cite[Proposition $3.2$$(ii)$]{CD}.   This is for $\rho_\xi$ and $\delta^{\text{weak}^*}_\xi$, but using this inequality for each $|\cdot|\in \mathcal{Y}_b$ gives the result for $\phi^b_\xi$ and $\psi^b_\xi$.

\end{proof}

\begin{theorem} Let $\xi$ be an ordinal and $A:X\to Y$ be an operator with $Sz(A)>\omega^\xi$.  Let $H(t)=Cz_\xi(A,t)^{-1}$, $G(t)=G_\xi(t;A)^{-1}$ for $0<t\leqslant 1$ and $H(0)=G(0)=0$.        Then for any $b\geqslant 2\max\{1, \|A\|\}$, $$G \approx \phi^b_\xi(\cdot;A)\approx \psi^b_\xi(\cdot;A)^*\approx H^*$$ and $$G^*\approx \phi^b_\xi(\cdot;A)^*\approx \psi^b_\xi(\cdot;A)\approx H.$$

\label{big theorem}
\end{theorem}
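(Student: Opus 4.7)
The plan is to assemble the theorem from three ingredients: (a) the direct renorming bounds $\phi^b_\xi(\cdot;A)\preceq G$ and $\psi^b_\xi(\cdot;A)\preceq H$ coming from Propositions \ref{renorm1} and \ref{renorm2}; (b) the duality $\phi^b_\xi(\cdot;A)\approx\psi^b_\xi(\cdot;A)^*$ and $\psi^b_\xi(\cdot;A)\approx\phi^b_\xi(\cdot;A)^*$ already supplied by Lemma \ref{kc}; and (c) the converse inequality $H\preceq G^*$ extracted from Propositions \ref{tams} and \ref{npr}. Since $f\approx f^{**}$ for every $f\in S$ (as shown in the final lines of the proof of Fact \ref{basics}), these three items close both four-term chains, because $\approx$ passes through the $*$ operation.

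First I would record the two easy ``upper'' bounds. Taking $n=G_\xi(\sigma;A)$ in Proposition \ref{renorm1} gives $\phi^b_\xi(\sigma;A)\le 1/n=G(\sigma)$ pointwise, and hence $\phi^b_\xi(\cdot;A)\preceq G$ with constant $1$. Proposition \ref{renorm2}, rearranged, reads $\psi^b_\xi(\tau;A)\le H(2b\tau)/\bigl(1-H(2b\tau)\bigr)$; in the regime $H(2b\tau)\le 1/2$ this simplifies to $\psi^b_\xi(\tau;A)\le 2H(2b\tau)$, and the star-shape of $\psi^b_\xi(\cdot;A)$ absorbs both the factor $2$ and the argument shift to produce $\psi^b_\xi(\tau/(4b);A)\le H(\tau)$, i.e.\ $\psi^b_\xi(\cdot;A)\preceq H$. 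Dualizing through Lemma \ref{kc} then gives $G^*\preceq \phi^b_\xi(\cdot;A)^*\approx \psi^b_\xi(\cdot;A)\preceq H$ and $H^*\preceq \psi^b_\xi(\cdot;A)^*\approx \phi^b_\xi(\cdot;A)\preceq G$, so both $G^*\preceq H$ and $H^*\preceq G$ are in hand.

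The remaining inequality $H\preceq G^*$ (equivalent by biduality to $G\preceq H^*$) is the main technical step. Given $\sigma>0$, set $n=G_\xi(\sigma;A)+1$, so that $n\theta_{\xi,n}(A)>1/\sigma$ by definition of $G_\xi$. Proposition \ref{tams} produces $\ee_1,\dots,\ee_n\ge 0$ with $\sum\ee_i>1/\sigma$ and $s^{\omega^\xi}_{\ee_1}\cdots s^{\omega^\xi}_{\ee_n}(A^*B_{Y^*})\ne\varnothing$, and Proposition \ref{npr} applied at any $t\le 1/(2n\sigma)$ yields $Cz_\xi(A,t)\ge 1+1/(4\sigma t)$, that is, $H(t)\le 4\sigma t/(1+4\sigma t)\le 4\sigma t$. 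Since $n\approx 1/G(\sigma)$, the admissible range for $t$ translates to $4\sigma t\lesssim G(\sigma)$. For each fixed $t$ I would then choose $\sigma$ just shy of the supremum of those $\sigma$ with $G(\sigma)\ge 4\sigma t$; this $\sigma$ simultaneously satisfies $H(t)\le 4\sigma t$ and $G^*(ct)\ge c\sigma t - G(\sigma)\ge (c-C)\sigma t$, where the constant $C$ absorbs the jump of $G$ at the chosen parameter. Taking $c$ sufficiently larger than $C$ yields $H(t)\le c'G^*(ct)$, and a final appeal to the star-shape of $H$ converts this bound to $H\preceq G^*$.

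The main obstacle I anticipate is precisely this final parameter-selection step. Because $G$ takes only the discrete values $\{0\}\cup\{1/k:k\in\nn\}$, the boundary equation $4\sigma t=G(\sigma)$ typically has no solution, so one must instead work with $\sigma$ approaching a jump of $G$ from the left and carefully quantify how much the jump can cost in the multiplicative $\preceq$-constant; fortunately $\preceq$ is tolerant of such multiplicative losses. Once $H\preceq G^*$ is established, we read off the two transitive chains $H^*\preceq \phi^b_\xi(\cdot;A)\preceq G\preceq H^*$ and $G^*\preceq \psi^b_\xi(\cdot;A)\preceq H\preceq G^*$, which combined with Lemma \ref{kc} deliver the two four-term equivalence classes $\{G,\,\phi^b_\xi(\cdot;A),\,\psi^b_\xi(\cdot;A)^*,\,H^*\}$ and $\{G^*,\,\phi^b_\xi(\cdot;A)^*,\,\psi^b_\xi(\cdot;A),\,H\}$ asserted by the theorem.
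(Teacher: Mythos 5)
Your decomposition — (a) the easy renorming bounds $\phi^b_\xi(\cdot;A)\preceq G$ and $\psi^b_\xi(\cdot;A)\preceq H$ from Propositions \ref{renorm1} and \ref{renorm2}, (b) duality via Lemma \ref{kc}, and (c) one converse inequality from Propositions \ref{tams} and \ref{npr} — correctly mirrors the structure of the paper's proof. But the claim that ``$H\preceq G^*$ (equivalent by biduality to $G\preceq H^*$)'' is false as stated, and this is where the argument breaks. Dualizing $H\preceq G^*$ gives only $G^{**}\preceq H^*$; since $G^{**}\le G$ is automatic for Young duals but $G\le G^{**}$ fails unless $G$ is essentially convex, you cannot upgrade this to $G\preceq H^*$ without first proving $G\approx G^{**}$. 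That fact is not covered by the ``$f\approx f^{**}$ for $f\in S$'' clause in Fact \ref{basics}: $G(t)=1/G_\xi(t;A)$ and $H(t)=1/Cz_\xi(A,t)$ are non-decreasing step functions taking values in $\{0\}\cup\{1/n: n\in\nn\}$, hence are not continuous, not star-shaped, and not in $S$. Concretely, with only $H\preceq G^*$ your three items close the second chain $G^*\approx\phi^*\approx\psi\approx H$, but for the first chain they yield only $G^{**}\preceq\phi\preceq G$, which is consistent but does not deliver $G\approx\phi$.

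The paper resolves this by proving the inequality in the other direction, $G\preceq H^*$, which closes the first chain directly, and by establishing $H\approx H^{**}$ with a separate argument: it introduces $k(\tau)=\sup_{0<\alpha\le 1}H(\alpha\tau/r)/\alpha$, uses the multiplicative structure of $Cz_\xi$ from Proposition \ref{jbg} to show $k\le H\le k(r\,\cdot)$ and that $k$ is nearly convex, and concludes $H\approx H^{**}$. Your sketch would require an analogous near-convexity argument for $G$ (or a proof of both $G\preceq H^*$ and $H\preceq G^*$), and neither is indicated. A second, smaller gap: your argument begins ``set $n=G_\xi(\sigma;A)+1$,'' which presupposes $G_\xi(\sigma;A)<\infty$; the paper handles separately the case $G\preceq z$ (some $G_\xi(\sigma;A)=\infty$), where the chain degenerates to $\phi\approx G\approx z$ and $\psi\approx H\approx i$. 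The parameter-selection subtlety you flag for the step-function $G$ is real, but the paper's version of the estimate (fix $\sigma$, choose $\tau=\frac{1}{4(n+1)\sigma_1}$, lower-bound $H^*(17\sigma)$) sidesteps exactly the supremum issue you anticipate, so that part is more a packaging choice than a true obstruction.
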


\begin{proof} For ease of notation in the proof, we  let $\phi=\phi^b_\xi(\cdot;A)$ and $\psi=\psi^b_\xi(\cdot;A)$. In the proof, we let $z,i:[0,1]\to [0,1]$ be given by $z(\sigma)=0$ for all $0\leqslant \sigma\leqslant 1$ and $i(\tau)=\tau$ for all $0\leqslant \tau\leqslant 1$. Note that $z,i$ are convex and lie in $S$.   Note also that $z=i^*$ and $i=z^*$.  We also note that for any $f\in S$, $f\preceq z$ if and only if there exists $0<\sigma\leqslant 1$ such that $f(\sigma)=0$.

By Proposition \ref{renorm1} we know that $\phi\preceq G$, and by Lemma \ref{kc}, $\phi\approx \psi^*$ and $\psi\approx \phi^*$.  

We will next show that $H\preceq i$.  Since $Sz(A)>\omega^\xi$, there exists $0<\ee\leqslant 1$ such that $s^{\omega^\xi}_\ee(A^*B_{Y^*})\neq \varnothing$, whence $Cz_\xi(A,\ee)\geqslant 1+1$.     Let $C=1/\ee$.   Then for any $0<\tau\leqslant 1$, if $n\in\nn$ is such that $$\frac{1}{n+1}\leqslant \tau\leqslant \frac{1}{n},$$ Proposition \ref{jbg} yields that $$Cz_\xi(A,\tau/C) = Cz_\xi(A, \ee\tau) \geqslant Cz_\xi(A, \ee/n) \geqslant n+1.$$ From this we deduce that $$H(\tau/C) \leqslant \frac{1}{n+1}\leqslant \tau =i(\tau).$$  This yields that $H\preceq i$.

Next, we will show that $\psi\preceq H$.  Let $M=1+\psi(1)$.    Then by Proposition \ref{renorm2}, $$Cz_\xi(A, 2b\tau) \leqslant 1+1/\psi(\tau)= \frac{M}{\psi(\tau)}.$$ Therefore $$\psi(\tau/2bM) \leqslant \psi(\tau/2b)/M \leqslant H(\tau)$$ for any $0<\tau\leqslant 1$. 

Next, fix  $4\leqslant r\in \nn$ such that $s^{\omega^\xi}_{1/r}(A^*B_{Y^*})\neq \varnothing$ and  define $k:[0,1]\to [0, \infty)$ by letting $$k(\tau)=\sup_{0< \alpha \leqslant 1} H(\alpha \tau/r)/\alpha.$$   We first show that $k\leqslant H$.           Fix $0<\alpha, \tau\leqslant 1$ and $n\in \nn$ such that $\frac{1}{n+1}\leqslant \alpha \leqslant \frac{1}{n}$. Let $m+1=Cz_\xi(A, \tau)$. If $m=0$, then  $$Cz_\xi(A, \alpha \tau/r)\geqslant Cz_\xi(A, 1/nr)\geqslant n+1,$$ and $$H(\alpha\tau/r)\leqslant \frac{1}{n+1} = \frac{1}{n+1}\cdot H(\tau) \leqslant \alpha H(\tau).$$   If $m>0$, then $$Cz_\xi(A, \alpha\tau/r) \geqslant Cz_\xi(A, \tau/4n) \geqslant 4nm+1,$$ and $$H(\alpha\tau/r) \leqslant \frac{1}{4nm+1} \leqslant \frac{1}{n+1}\cdot \frac{1}{m+1} \leqslant \alpha H(\tau).$$   From this it follows that $k \leqslant H$.  Of course, $H(\tau/r)\leqslant k(\tau)$, so $H\approx k$.   Now note that for any $0\leqslant \beta, \tau\leqslant 1$, $k(\beta\tau) \leqslant \beta k(\tau)$. Using this fact, one easily checks that $k\approx K$, where $K(\tau)=\int_0^\tau k(x)/xdx$, and that $K$ is convex.  From this it follows that $k^{**}\approx K^{**}=K\approx k$.   Since $k\approx H$, $$H\approx k \approx k^{**}\approx H^{**}.$$

The remainder of the proof will be split into two cases.   

Case $1$, $G\preceq z$:  This means there exists $0<\sigma\leqslant 1$ such that $G(\sigma)=0$. By Proposition \ref{renorm1}, we deduce that $$z\preceq \phi\preceq G\preceq z,$$ $$\psi\approx \phi^*\approx z^*=i,$$ and $$i\approx \psi \preceq H\preceq i.$$

Case $2$, for all $0<\sigma$, $G_\xi(\sigma;A)<\infty$:  We note that if $|\cdot|$ is any equivalent norm on $Y$, and if $G_0=G_\xi(\cdot;A:X\to (Y, |\cdot|))$ and $H_0=1/Cz_\xi(A:X\to (Y, |\cdot|))$, then $G_0\approx G$ and $H_0\approx H$.  Let us fix a norm $|\cdot|$ on $Y$ such that $A:X\to (Y, |\cdot|)$ has norm $1$. Let $G_0$, $H_0$ be as above.  We will show that $G_0\preceq H_0^*$, which will yield that $G\preceq H^*$.   Fix $0<\sigma\leqslant 1/17^2$ and let $1/n=G_0(\sigma)$.   Let $\sigma_1=2\sigma$.   Then by definition of $G_\xi$, $$(n+1)\theta_{\xi, n+1}(A:X\to (Y, |\cdot|))\geqslant 1/\sigma>1/\sigma_1.$$  Thus there exist $\ee_1, \ldots, \ee_{n+1}\geqslant 0$ such that $\sum_{i=1}^{n+1}\ee_i >1/\sigma_1$ and  $$s^{\omega^\xi}_{\ee_1}\ldots s^{\omega^\xi}_{\ee_{n+1}}(A^*B_{Y^*}^{|\cdot|})\neq \varnothing.$$   Now since $\theta_{\xi, n+1}(A:X\to (Y, |\cdot|))\leqslant 1$, it follows that $$1\leqslant \sigma (n+1)\theta_{\xi, n+1}(A:X\to (Y, |\cdot|)) \leqslant  \sigma(n+1)$$ and $$\tau:= \frac{1}{4(n+1)}\cdot \frac{1}{\sigma_1} <1.$$   Moreover, $$Cz_\xi(A:X\to (Y, |\cdot|), \tau) \geqslant \frac{\sum_{i=1}^{n+1}\ee_i}{8\tau}> \frac{1}{16\sigma \tau},$$ so that $$H_0(\tau) \leqslant 16\sigma\tau.$$  From this it follows that $$H^*_0(17\sigma) \geqslant 17\sigma\tau - H_0(\tau) \geqslant \sigma\tau>\frac{1}{17n}=G_0(\sigma)/17.$$  From this and convexity of $H^*_0$, it follows that for any $0<\sigma\leqslant 1$, $$G_0(\sigma/17^2) =17 G_0(\sigma/17^2)/17 \leqslant 17 H^*_0(\sigma/17) \leqslant H^*_0(\sigma).$$   This yields that $G\preceq H^*$.   Now we finally deduce that $$\phi\preceq G\preceq H^*\preceq \psi^*\approx\phi,$$ so that $$\phi\approx G\approx H^*\approx \psi^*$$ and $$\phi^*\approx G^*\approx H^{**}\approx H\approx \psi.$$

\end{proof}

\begin{rem}\upshape The modification for the spatial case follows as usual, with the inclusion of an additional factor of $b$ or $1/2$ in the appropriate places.

\end{rem}

\begin{corollary} Fix an ordinal $\xi$, $1<p< \infty$, and an operator $A:X\to Y$.  \begin{enumerate}\item The following are equivalent.  \begin{enumerate}[(i)]\item $A\in \mathfrak{N}_{\xi,p}$, \item With $b=2\max\{1, \|A\|\}$, $\sup_{\sigma>0}\phi^b_\xi(\sigma;A)/\sigma^p<\infty$. \item There exists $b\geqslant 1$ such that $\sup_{\sigma>0}\phi^b_\xi(\sigma;A)/\sigma^p<\infty$.   \end{enumerate}

\item The following are equivalent. \begin{enumerate}[(i)]\item $A\in \mathfrak{N}_{\xi,\infty}$. \item With $b=2\max\{1, \|A\|\}$, there exists $\sigma>0$ such that  $\phi^b_\xi(\sigma;A)=0$.  \item There exist $b\geqslant 1$ and $\sigma>0$ such that $\phi^b_\xi(\sigma;A)=0$.  \end{enumerate} \end{enumerate}

\end{corollary}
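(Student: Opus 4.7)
The corollary follows by translating the sequence-side condition defining $\mathfrak{N}_{\xi,p}$ into a power-type (or eventually-vanishing) condition on the function $G(t)=G_\xi(t;A)^{-1}$, and then invoking Theorem \ref{big theorem}, which asserts $G\approx \phi^b_\xi(\cdot;A)$ for $b\geqslant 2\max\{1,\|A\|\}$, to pass from $G$ to $\phi^b_\xi(\cdot;A)$.

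First I would unwind the definition of $G_\xi(\sigma;A)$ as the minimum $n\in\nn$ with $\sigma\theta_{\xi,n+1}(A)>1/(n+1)$: this gives that $G_\xi(\sigma;A)\geqslant m$ is equivalent to $\theta_{\xi,j}(A)\leqslant 1/(\sigma j)$ for each $1\leqslant j\leqslant m$. A direct optimization then establishes the sequence-to-function dictionary. For $1<p<\infty$ with $1/p+1/q=1$, the condition $\theta_{\xi,n}(A)\leqslant C/n^{1/q}$ that defines $A\in\mathfrak{N}_{\xi,p}$ is equivalent to $G(\sigma)\leqslant c\sigma^p$ for all sufficiently small $\sigma$. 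In the forward direction, if $j\leqslant (C\sigma)^{-p}$ then $C/j^{1/q}\leqslant 1/(\sigma j)$, so $G_\xi(\sigma;A)\geqslant \lfloor (C\sigma)^{-p}\rfloor$ and hence $G(\sigma)\leqslant 2C^p\sigma^p$ for small $\sigma$. In the reverse direction, substituting $\sigma=(cj)^{-1/p}$ into $G_\xi(\sigma;A)\geqslant 1/(c\sigma^p)=j$ reads off $\theta_{\xi,j}(A)\leqslant c^{1/p}/j^{1/q}$. For $p=\infty$ (so $q=1$), the analogous observation is that $\sup_n n\theta_{\xi,n}(A)\leqslant C$ if and only if $\theta_{\xi,n}(A)\leqslant 1/(\sigma n)$ for all $n$ (with $\sigma=1/C$), if and only if $G_\xi(\sigma;A)=\infty$, i.e. $G(\sigma)=0$.

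Second I would apply Theorem \ref{big theorem}: since $G\approx \phi^b_\xi(\cdot;A)$ for $b\geqslant 2\max\{1,\|A\|\}$, both polynomial decay of exponent $p$ and eventual vanishing of the function transfer between $G$ and $\phi^b_\xi(\cdot;A)$ (with possibly different constants but the same exponent, by definition of $\approx$). This gives (i)\,$\Leftrightarrow$\,(ii) in both (1) and (2). The supremum in (ii) of (1) taken over all $\sigma>0$ rather than $\sigma\in (0,1]$ is harmless, since the Lipschitz bound $\phi^b_\xi(\sigma;A)\leqslant b\|A\|\sigma$ together with $p>1$ makes $\phi^b_\xi(\sigma;A)/\sigma^p$ small for large $\sigma$.

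Finally, (ii)\,$\Rightarrow$\,(iii) is trivial, and for (iii)\,$\Rightarrow$\,(i) I would exploit the monotonicity $\phi^{b'}_\xi(\sigma;A)\leqslant \phi^b_\xi(\sigma;A)$ whenever $b\leqslant b'$ (which holds because $\mathcal{Y}_b\subset \mathcal{Y}_{b'}$ so the defining infimum is taken over a larger set), so that given any $b\geqslant 1$ satisfying (iii), replacing $b$ by $b_0=\max\{b,2\max\{1,\|A\|\}\}$ preserves the hypothesis and renders Theorem \ref{big theorem} applicable. I do not anticipate any substantive obstacle: the corollary reduces to a short optimization calculation coupled with an appeal to Theorem \ref{big theorem}; the only mildly subtle point is this reduction to the canonical choice of $b$ via the monotonicity of $\phi^b_\xi$ in $b$.
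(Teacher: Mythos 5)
Your proof is correct and takes essentially the same route as the paper: both reduce (ii)$\Leftrightarrow$(iii) to the monotonicity of $\phi^b_\xi$ in $b$ (so one may always pass to $b\geqslant 2\max\{1,\|A\|\}$), then invoke Theorem \ref{big theorem} to replace $\phi^b_\xi(\cdot;A)$ by $G$, and finally translate the bound $\sup_n n^{1/q}\theta_{\xi,n}(A)<\infty$ (resp.\ $\sup_n n\theta_{\xi,n}(A)<\infty$) into a power-type bound (resp.\ eventual vanishing) on $G$ directly from the definition of $G_\xi$. The only difference is that you spell out the elementary optimization dictionary between $\theta_{\xi,n}$ and $G$ that the paper declares ``straightforward,'' which is a welcome expansion rather than a departure.
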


\begin{proof} Note that $\phi^b_\xi(\cdot;A)\approx G_\xi(\cdot;A)$ whenever $b\geqslant 2\max\{1, \|A\|\}$ and  $\phi^b_\xi(\sigma;A)$ is  decreasing as a funciton of $b$.  Therefore  for $1<p<\infty$, there exists $b\geqslant 1$ such that $\sup_{\sigma>0}\phi^b_\xi(\sigma;A)/\sigma^p<\infty$ if and only if there exists $b\geqslant 2\max\{1, \|A\|\}$ such that $\sup_{\sigma>0}\phi^b_\xi(\sigma;A)/\sigma^p$ and

In light of Theorem \ref{big theorem}, it is straightforward to verify that for $1<p<\infty$ and $b\geqslant 2\max\{1, \|A\|\}$, \begin{align*} A\in \mathfrak{N}_{\xi, p}(A) & \Leftrightarrow (\exists C>0)(\forall 0<\sigma<1)(G_\xi(\sigma;A)\geqslant 1/C\sigma^p) \\ & \Leftrightarrow (\exists C>0)(\forall \sigma>0)(\phi^b_\xi(\sigma;A)\leqslant C\sigma^p)\end{align*}

and \begin{align*} A\in \mathfrak{N}_{\xi,\infty}(A) & \Leftrightarrow (\exists \sigma>0)(G_\xi(\sigma;A)=\infty) \Leftrightarrow (\exists \sigma>0)(\psi^b_\xi(\sigma;A)=0).\end{align*}

\end{proof}

We similarly deduce the following. 

\begin{corollary} Fix an ordinal $\xi$, $1<p< \infty$, and a Banach space $X$.   \begin{enumerate}\item The following are equivalent.  \begin{enumerate}[(i)]\item $X\in \textsf{\emph{N}}_{\xi,p}$, \item  $\sup_{\sigma>0}\widehat{\phi}^2_\xi(\sigma;X)/\sigma^p<\infty$. \item There exists $b\geqslant 1$ such that $\sup_{\sigma>0}\widehat{\phi}^b_\xi(\sigma;X)/\sigma^p<\infty$.   \end{enumerate}

\item The following are equivalent. \begin{enumerate}[(i)]\item $X\in \textsf{\emph{N}}_{\xi,\infty}$. \item There there exists $\sigma>0$ such that  $ \widehat{\phi}^2_\xi(\sigma;X)=0$.  \item There exist $b\geqslant 1$ and $\sigma>0$ such that $\widehat{\phi}^b_\xi(\sigma;X)=0$.  \end{enumerate} \end{enumerate}

\end{corollary}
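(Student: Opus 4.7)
The proof is a transcription of the preceding operator corollary into the spatial case, substituting at each step the spatial variants of Propositions~\ref{renorm1}, \ref{renorm2}, and Theorem~\ref{big theorem} indicated in the remarks immediately following those statements. The central input is the spatial form of Theorem~\ref{big theorem}, which asserts that for $b \geq 2$ one has $\widehat{\phi}^b_\xi(\cdot;X) \approx G_\xi(\cdot;I_X)$ up to absolute constants, where along the way the Szlenk quantity $Cz_\xi(X,\cdot)$ may pick up an extra factor of $b$ or $b^2$ in its argument.

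First I would record the monotonicity $\widehat{\phi}^{b}_\xi(\sigma;X) \geq \widehat{\phi}^{b'}_\xi(\sigma;X)$ whenever $b \leq b'$, which is immediate from the inclusion $\mathcal{X}_b \subset \mathcal{X}_{b'}$ and the fact that $\widehat{\phi}^b_\xi$ is an infimum over $\mathcal{X}_b$. Consequently, if $b \leq 2$, then any majorization (or vanishing) of $\widehat{\phi}^b_\xi(\cdot;X)$ directly yields the same for $\widehat{\phi}^2_\xi(\cdot;X)$. If $b > 2$, the spatial Theorem~\ref{big theorem} applied at both values gives $\widehat{\phi}^b_\xi(\cdot;X) \approx G_\xi(\cdot;I_X) \approx \widehat{\phi}^2_\xi(\cdot;X)$, so a bound of the form $C\sigma^p$ (respectively, vanishing at some $\sigma>0$) at level $b$ transfers, possibly with a different constant, to level $b=2$. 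This handles (ii)$\Leftrightarrow$(iii) in both items.

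Second I would translate the class memberships into statements about $G_\xi(\cdot;I_X)$. From the definition $G_\xi(\sigma;I_X) = \min\{n : \sigma\theta_{\xi,n+1}(I_X) > (n+1)^{-1}\}$, a short computation comparing $n^{1/q}\theta_{\xi,n}(I_X)$ with $G_\xi(\sigma;I_X)$ (choosing $n$ of order $\sigma^{-p}$ in one direction and $\sigma$ of order $n^{-1/p}$ in the other) shows that $X \in \textsf{N}_{\xi,p}$ is equivalent to the existence of $c>0$ with $G_\xi(\sigma;I_X) \geq c\sigma^{-p}$ for all small $\sigma>0$, and that $X \in \textsf{N}_{\xi,\infty}$ is equivalent to the existence of some $\sigma>0$ with $G_\xi(\sigma;I_X) = \infty$. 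Combining these with $\widehat{\phi}^2_\xi(\cdot;X) \approx G_\xi(\cdot;I_X)^{-1}$ immediately yields (i)$\Leftrightarrow$(ii) in each item. The only point requiring mild care is the bookkeeping of the extra factors of $b$ (or $b^2$) appearing in the spatial version of Proposition~\ref{renorm2}, which get absorbed into the $\approx$-equivalence; this is the main (but very mild) obstacle and does not affect the qualitative conclusion.
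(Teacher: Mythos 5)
Your proposal is correct and follows essentially the same approach as the paper. The paper simply states "We similarly deduce the following" after the operator corollary, and your proof spells out exactly the intended transcription: monotonicity of $\widehat{\phi}^b_\xi$ in $b$, the spatial form of Theorem~\ref{big theorem} giving $\widehat{\phi}^b_\xi(\cdot;X)\approx G_\xi(\cdot;I_X)^{-1}$ for $b\geqslant 2$, and the elementary translation of $\sup_n n^{1/q}\theta_{\xi,n}(I_X)<\infty$ (resp. $\sup_n n\theta_{\xi,n}(I_X)<\infty$) into a lower bound on $G_\xi(\sigma;I_X)$.
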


\section{An application}

Fix $n\in\nn$, $a_1, \ldots, a_n,C$ non-negative real numbers, and $A:X\to Y$ an operator.   We define a two player game (referred to as the $A, a_1, \ldots, a_n,C$-\emph{game}).  For the game, recall that $D_X$ denotes the set of all weakly open sets in $X$ containing $0$, directed by reverse inclusion.    Player I chooses $U_1\in D_X$, Player II chooses $x_1\in U_1\cap a_1 B_X$, Player I chooses $U_2\in D_X$, Player II chooses $x_2\in U_2\cap a_2 B_X$, $\ldots$, Player I chooses $U_n\in D_X$, and Player II chooses $x_n\in U_n\cap a_n B_X$.  We say that Player I wins provided that $\|A\sum_{i=1}^n x_i\|\leqslant C$, and Player II wins otherwise.

We let $$S_1=\{\varnothing\}\cup \{(x_i)_{i=1}^j: 1\leqslant j<n\}$$ and $$S_2=\{((U_j, x_j)_{j=1}^k, U):1\leqslant k<n, U_1, \ldots, U_k, U\in D_X\}.$$   A \emph{strategy for Player I in the} $A, a_1, \ldots, a_n, C$ -\emph{game} is a function $\phi:S_1\to D_X$, and a \emph{strategy for Player II in the} $A, a_1, \ldots, a_n, C$-\emph{game} is a function $\psi:S_2\to X$ such that $\psi((U_j, x_j)_{j=1}^k, U)\in U\cap a_{k+1}B_X$.    We say a strategy $\phi:S_1\to D_X$ for Player I in the $A, a_1, \ldots, a_n, C$-game is a \emph{winning strategy in the} $A, a_1, \ldots, a_n, C$-\emph{game} provided that if $(x_i)_{i=1}^n$ is any sequence such that $x_i\in \phi((x_j)_{j=1}^{i-1})\cap a_i B_X$ for each $1\leqslant i\leqslant n$, then $\|A\sum_{i=1}^n x_i\|\leqslant C$.     

Standard facts about such games yields that this game is determined. That is, either Player I or Player II has a winning strategy in the game.  Furthermore, it is evident that for any $n\in \nn$ and scalars $a_1, \ldots, a_n$, $\|\sum_{i=1}^n a_i e_i\|_{0, A}$ is the infimum of those $C$ such that Player I has a winning strategy in the $A, |a_1|, \ldots, |a_n|, C$-game.

For a natural number $k\in\nn$, a sequence $w=(w_i)_{i=1}^k$ of positive numbers, and an infinite subset $M$ of $\nn$, we let $G_k^w(M)$ denote the set of sequences $(n_i)_{i=1}^k$ such that $n_i\in M$ for each $1\leqslant i\leqslant k$.   We endow $G_k^w=G_k^w(\nn)$, and therefore the subsets $G_k^w(M)$ of $G_k^w$, with the metric $$d^w_k((m_i)_{i=1}^k, (n_i)_{i=1}^k)=\sum_{i: m_i\neq n_i} w_i.$$   We let $G_k$ denote the metric space $G_k^w$, where $w=(w_i)_{i=1}^k$ and $w_1=\ldots=w_k=1$.

We recall the following theorem from \cite{LR}, which was a generalization of a result from \cite{KR}.    

\begin{theorem} If $1<p\leqslant \infty$ and if  $X$ is a quasi-reflexive, $p$-AUS Banach space, then there exists a constant $C$ such that for any $k\in\nn$, any positive numbers $w_1, \ldots, w_k$,  any Lipschitz function $f:G_k^w\to X$, and any $\ee>0$, there exist an infinite subset $M$ of $\nn$ such that for any $m_1<n_1<\ldots <m_k<n_k$, $m_i, n_i\in M$, $$\|f(m_1, \ldots, m_k)-f(n_1, \ldots, n_k)\|\leqslant C\text{\emph{Lip}}(f) \|(w_i)_{i=1}^k\|_{\ell_p^k}.$$

\end{theorem}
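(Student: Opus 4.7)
The plan is to adapt the Kalton--Randrianarivony strategy to the quasi-reflexive setting. Normalize so $\text{Lip}(f)=1$ and fix $K$ with $\varrho_X(\sigma)\leqslant K\sigma^p$ for all $\sigma>0$. The target inequality is $\|f(\bar m)-f(\bar n)\|^p \leqslant C^p \sum_j w_j^p$ with $C$ depending only on $X$ and $p$.

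First, I would extract iterated weak$^*$-limits using quasi-reflexivity. Because $X^{**}=X\oplus E$ with $\dim E<\infty$, bounded subsets of $X$ are weak$^*$-sequentially compact in $X^{**}$. By a diagonal argument, pass to an infinite $M_0\subset\nn$ on which the iterated limits
$$L_j(n_1,\ldots,n_j) := w^*\text{-}\lim_{n_{j+1}\in M_0}\cdots w^*\text{-}\lim_{n_k\in M_0} f(n_1,\ldots,n_k)$$
exist in $X^{**}$ for every $\bar n\in [M_0]^j$ and $0\leqslant j\leqslant k$. Set $d_j(\bar n|_j) = L_j(\bar n|_j)-L_{j-1}(\bar n|_{j-1})$, so that
$$f(\bar n) - L_0 = \sum_{j=1}^k d_j(\bar n|_j).$$
The weak$^*$-lower semicontinuity of $\|\cdot\|$ combined with the Lipschitz estimate $\|f(\bar n)-f(\bar n')\|\leqslant w_j$ (when $\bar n,\bar n'$ agree off the $j$-th coordinate) gives $\|d_j(\bar n|_j)\|\leqslant w_j$, and by construction $(d_j(n_1,\ldots,n_{j-1},n_j))_{n_j}$ is weak$^*$-null in $X^{**}$ for each fixed prefix.

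Second, apply a finite chain of Ramsey extractions to refine $M_0$ to an infinite $M$ on which the $d_j$ are $\ee/k$-stable against a preselected countable weak$^*$-dense set of functionals, so that for $m_1<n_1<\ldots<m_k<n_k$ in $M$ the telescope
$$f(\bar m)-f(\bar n) = \sum_{j=1}^{k}\bigl[d_j(m_1,\ldots,m_j) - d_j(n_1,\ldots,n_j)\bigr]$$
behaves, after peeling off terms from $j=k$ down to $j=1$, like a sequence of weak$^*$-increments in $X^{**}$. Via quasi-reflexivity, a weak$^*$-null sequence in $X^{**}$ differs from a weakly null sequence in $X$ by something eventually close to the fixed subspace $E$, hence by a compact error absorbable into $\ee$. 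Combining this with the $p$-AUS inequality on $X$ in the asymptotic form $\|x+y\|^p \leqslant \|x\|^p + K'\|y\|^p$ valid for weakly null $y$ (with $K'$ depending only on $K,p$), applied iteratively along the coordinates, produces
$$\|f(\bar m)-f(\bar n)\|^p \leqslant C^p \sum_{j=1}^k \|d_j(\bar m|_j) - d_j(\bar n|_j)\|^p \leqslant (2C)^p \sum_{j=1}^k w_j^p.$$

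The principal obstacle is the last step: lifting the $p$-AUS estimate from $X$ to $X^{**}$ modulo the finite-dimensional defect $E$, and iterating it $k$ times without the constant deteriorating in $k$. The key point is that at each stage only a single coordinate is varied, so the Ramsey stabilization lets one absorb the $E$-component into the $\ee$ slack (which is precisely why the statement allows a free $\ee>0$ but produces a bound independent of $\ee$). Once this transfer is executed carefully, the recursion producing the $\ell_p^k$ bound reproduces the reflexive argument of Kalton and Randrianarivony essentially unchanged.
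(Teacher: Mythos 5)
Your proposal has the right overall shape---iterated weak$^*$-limits, a telescoping decomposition, Ramsey stabilization, and an iterated application of the $p$-AUS modulus---which is the Kalton--Randrianarivony/Lancien--Raja blueprint. But the step you label ``the principal obstacle'' is not merely an execution detail; as stated, the claim on which it rests is false. You assert that ``a weak$^*$-null sequence in $X^{**}$ differs from a weakly null sequence in $X$ by something eventually close to the fixed subspace $E$.'' Write $X^{**}=JX\oplus E$; the projection onto $E$ is not weak$^*$-continuous, because its kernel $JX$ is weak$^*$-dense in $X^{**}$ by Goldstine's theorem and hence not weak$^*$-closed when $X$ is non-reflexive. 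Therefore a bounded weak$^*$-null sequence in $X^{**}$ may have $E$-components that stay bounded away from $0$, and passing to a subsequence or thinning by Ramsey does not in itself make them vanish. In particular, there is no direct way to ``absorb the $E$-component into the $\ee$ slack'' for the individual increments $d_j(\bar n|_j)$, nor is the error ``compact'' in any usable sense.

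The repair is subtler: one must control the $X^{**}/X$-norm of the \emph{differences} $g(s\smallfrown(m))-g(t\smallfrown(n))$, not of each individual increment. This is what the paper's proof of the stronger version does: it introduces auxiliary $X^{**}/X$-valued functions $e_l$ defined as iterated ultrafilter limits (which exist in norm precisely because $X^{**}/X$ is finite-dimensional), observes that $\|e_l(s\smallfrown(n))-e_l(s)\|_{X^{**}/X}$ is eventually small along the ultrafilter, and stabilizes all such quantities by Ramsey so that, in a telescoping estimate, $\|g((m_1,\ldots,m_j))-g((n_1,\ldots,n_j))\|_{X^{**}/X}<\delta$. Only then can one invoke the separate lemma that an element of $X^{**}$ lying in a small weak$^*$-neighborhood of $0$ and with small $X^{**}/X$-norm is norm-close to an element of $X$ lying in a prescribed weak neighborhood, which is what permits applying the $p$-AUS estimate (or, in the paper's framework, a winning strategy for Player I in the $A, a_1,\ldots,a_n, C$-game) inside $X$. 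The paper also proceeds by a Ramsey partition of $[\nn]^{2k}$ and a contradiction, and works with the more general operator seminorm $\|\cdot\|_{0,A}$ rather than the scalar $\ell_p$-bound, but those are structural choices; the missing $X^{**}/X$-control is the substantive gap in your argument.
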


Our application is the following strengthening of this result.

\begin{theorem} Fix $1<p\leqslant \infty$, a quasi-reflexive Banach space $X$, and an operator $A:X\to Y$.  \begin{enumerate}[(i)]\item For any $k\in\nn$, any positive numbers $w_1, \ldots, w_k$,  any $\vartheta>2\|\sum_{i=1}^k w_ie_i\|_{0, A}$, and any non-constant Lipschitz map $f:G_k^w\to X$, there exists an infinite subset $M$ of $\nn$ such that for any $m_1<n_1<\ldots <m_k<n_k$, $m_i, n_i\in M$, $$\|Af(m_1, \ldots, m_k)-Af(n_1, \ldots, n_k)\|\leqslant \text{\emph{Lip}}(f)\vartheta.$$  \item If $A\in \mathfrak{A}_{0,p}$, then for any $\vartheta>2$, any $k\in\nn$, any positive numbers $w_1, \ldots, w_k$, and any non-constant Lipschitz map $f:G_k^w\to X$, there exists an infinite subset $M$ of $\nn$ such that for any $m_1<n_1<\ldots <m_k<n_k$, $m_i, n_i\in M$, $$\|Af(m_1, \ldots, m_k)-Af(n_1, \ldots, n_k)\|\leqslant \vartheta\text{\emph{Lip}}(f)\alpha_{0,p}(A) \|\sum_{i=1}^k w_ie_i\|_{\ell_p}.$$ \item If $A\in \mathfrak{N}_{0,p}$, then for any $\vartheta>2$, any $k\in\nn$, and any non-constant Lipschitz map $f:G_k\to X$, there exists an infinite subset $M$ of $\nn$ such that for any $m_1<n_1<\ldots <m_k<n_k$, $m_i, n_i\in M$,  $$\|Af(m_1, \ldots, m_k)-Af(n_1, \ldots, n_k)\|\leqslant \vartheta \text{\emph{Lip}}(f)N_{0,p}(A)k^{1/p}.$$\end{enumerate}

\label{app}
\end{theorem}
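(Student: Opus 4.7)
The plan is to prove (i) first and deduce (ii) and (iii) as immediate specializations. For (ii) one invokes the bound $\|\sum_{i=1}^k w_ie_i\|_{0,A}\leqslant \alpha_{0,p}(A)\|\sum_{i=1}^k w_ie_i\|_{\ell_p}$ coming directly from the definition of $\alpha_{0,p}$; for (iii) one takes $w_i\equiv 1$ and uses $\|\sum_{i=1}^k e_i\|_{0,A}=k\theta_{0,k}(A)=k^{1/p}\cdot k^{1/q}\theta_{0,k}(A)\leqslant k^{1/p}N_{0,p}(A)$. In each case the factor $2$ appearing in (i) is precisely what accommodates the hypothesis $\vartheta>2$.

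For (i), I would rescale to assume $\text{Lip}(f)=1$ and fix $C$ with $\|\sum_{i=1}^k w_ie_i\|_{0,A}<C<\vartheta/2$. By the game-theoretic characterization recalled at the start of this section, Player I has a winning strategy $\phi$ in the $A,w_1,\ldots,w_k,C$-game. The aim is then to extract an infinite $M\subset\nn$ such that, for any $m_1<n_1<\ldots<m_k<n_k$ in $M$, the telescoping differences
$$x_i=f(n_1,\ldots,n_{i-1},n_i,m_{i+1},\ldots,m_k)-f(n_1,\ldots,n_{i-1},m_i,m_{i+1},\ldots,m_k)$$
constitute a legal play of Player II against $\phi$, i.e.\ $x_i\in\phi(x_1,\ldots,x_{i-1})\cap w_iB_X$. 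The norm bound $\|x_i\|\leqslant w_i$ is automatic from the Lipschitz hypothesis, and since $\sum_{i=1}^k x_i=f(\vec n)-f(\vec m)$, Player I's winning condition delivers $\|A(f(\vec n)-f(\vec m))\|\leqslant C<\vartheta/2$; restoring the factor $\text{Lip}(f)$ finishes the argument.

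The technical heart is the construction of $M$, and this is where quasi-reflexivity enters. For any bounded sequence $(z_n)\subset X$, Banach--Alaoglu together with $\dim(X^{**}/X)<\infty$ permits extraction of a subsequence along which $z_n\to z^{**}$ weak$^*$ in $X^{**}$, whence $z_n-z_m$ tends to $0$ weak$^*$ in $X^{**}$, hence weakly in $X$. Applying this principle coordinate by coordinate, I would build $M=\{p_1<p_2<\ldots\}$ by a fusion: at stage $j$, for every consistent history $(x_1,\ldots,x_{i-1})$ arising from tuples in $\{p_1,\ldots,p_j\}$ (finitely many), one reads off the weak neighborhood $U_i=\phi(x_1,\ldots,x_{i-1})$ and chooses $p_{j+1}$ large enough that the coordinate-$i$ differences indexed by this new element fall into $U_i$.

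The main obstacle is the circularity in this scheme, since the neighborhoods $U_i$ depend on the prior differences, which themselves depend on the indices being selected. I would circumvent this by ordering the fusion so that every $U_i$ relevant to histories supported in $\{p_1,\ldots,p_j\}$ is computed before $p_{j+1}$ is chosen, and by using a Ramsey coloring on $[\nn]^{2k}$ to stabilize the finitely many branches simultaneously---differences (not individual values) of the Lipschitz map are what is weakly null under quasi-reflexivity, so it is differences that one extracts against. Iterating $k$ times yields the desired $M$.
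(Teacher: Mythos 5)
Your reduction of (ii) and (iii) to (i) is correct, and the game-theoretic starting point is the right one. The gap is in the construction of $M$, and it is not a minor technical obstacle: the telescoping differences $x_i = f(n_1,\ldots,n_i,m_{i+1},\ldots,m_k) - f(n_1,\ldots,n_{i-1},m_i,\ldots,m_k)$ depend essentially on the \emph{future} coordinates $m_{i+1},\ldots,m_k$. When you come to choose $p_{j+1}$ in your fusion, the weak neighborhood $U_i = \phi(x_1,\ldots,x_{i-1})$ into which you want the $i$-th difference to fall is itself determined by a complete $2k$-tuple, including indices you have not yet selected; no reordering of the fusion removes this circularity, and a Ramsey coloring on $[\nn]^{2k}$ only passes to an infinite monochromatic subset — it does not decouple $x_1$ from the tail or make the differences constant on the subset.

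The paper breaks this circularity by taking iterated ultrafilter limits. It defines $F:\cup_{j\leqslant k} G_j\to X^{**}$ by $F(s)=\text{weak}^*\text{-}\lim_{n_{j+1}\in\mathcal{U}}\cdots\lim_{n_k\in\mathcal{U}} f(s\smallfrown(n_{j+1},\ldots,n_k))$, so that $g(s)=F(s)-F(s^-)$ depends only on the $|s|$ coordinates of $s$ and satisfies $\|g(s)\|\leqslant w_{|s|}$. The price is that $F$ lands in $X^{**}$, and this is exactly where quasi-reflexivity is used in an essential way: the lemma preceding the theorem (via Fact \ref{needed fact}) shows that an element of $X^{**}$ in a prescribed weak$^*$-neighborhood of $0$, of controlled norm, and with small image in $X^{**}/X$, can be $3\delta$-approximated by an element of $X$ lying in a prescribed weak neighborhood — the weak-null moves of Player II are then recovered from the $X^{**}$-valued $g$'s. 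The Ramsey theorem also enters differently than you propose: the paper colors $[\nn]^{2k}$ by whether the desired inequality holds and derives a contradiction from the hypothesis that the bad color is monochromatic. Finally, the factor $2$ in (i) is not bookkeeping to accommodate $\vartheta>2$ in (ii)--(iii); it arises because the vectors Player II feeds $\phi$ are differences $g(s\smallfrown(m_j))-g(t\smallfrown(n_j))$, each term of norm at most $w_j$, so the game must be played with weights $2w_j+3\delta$. Your one-coordinate-change differences would only require weights $w_j$ and would give the sharper bound $\vartheta/2$, but you cannot produce them without the ultrafilter step.
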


\begin{rem}\upshape Suppose that for $1<p\leqslant \infty$, $X$ is the dual $T^*_q$ of the dual of the $q$-convexification of the Figiel-Johnson Tsirelson space $T_q$.  This is a reflexive Banach space with $T^*_q\in\textsf{A}_{0,p}\setminus \textsf{T}_{0,p}$, so Theorem \ref{app} produces new results applicable to $T^*_q$, as was explained in the introduction.  In particular, if $T^*$ is Tsirelson's original space, then for any $\vartheta>4$, any positive numbers $w_1, \ldots, w_k\leqslant 1$, and any non-constant Lipschitz map $f:G_k^w\to T^*$, there exists $M\in[\nn]$ such that for any $m_1<n_1<\ldots <m_k<n_k$, $$\|f(m_1, \ldots, m_k)-f(n_1, \ldots, n_k)\|\leqslant \vartheta.$$

\end{rem}

\begin{fact} If $X$ is a Banach space and $(u_\lambda)\subset X\subset X^{**}$ is weak$^*$-convergent to some $x^{**}$ with $\|x^{**}\|\leqslant \ee$, then for any $\ee_1>\ee$, there exists a convex combination $u$ of $(u_\lambda)$ such that $\|u\|\leqslant \ee_1$.    
\label{needed fact}
\end{fact}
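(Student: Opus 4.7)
The plan is to argue by contradiction using Hahn-Banach separation, exploiting the fact that weak$^*$ convergence of $(u_\lambda)$ to $x^{**}$ gives $x^*(u_\lambda)\to x^{**}(x^*)$ for every $x^*\in X^*$. Let $C$ denote the set of all finite convex combinations of $(u_\lambda)$, which is a convex subset of $X$. I would suppose, toward a contradiction, that every $u\in C$ satisfies $\|u\|>\ee_1$, so that $C$ is disjoint from the open ball $U=\{x\in X:\|x\|<\ee_1\}$.

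Next, I would apply the geometric Hahn-Banach theorem to $C$ and the open convex set $U$ to produce a nonzero $x^*\in X^*$ and a real number $a$ with
\[
\mathrm{Re}\, x^*(u)<a\leqslant \mathrm{Re}\, x^*(c)\qquad (u\in U,\ c\in C).
\]
Taking the supremum over $u\in U$ gives $a\geqslant \ee_1\|x^*\|$, so in particular $\mathrm{Re}\, x^*(u_\lambda)\geqslant \ee_1\|x^*\|$ for every $\lambda$. Passing to the weak$^*$ limit yields $\mathrm{Re}\, x^{**}(x^*)\geqslant \ee_1\|x^*\|$. On the other hand, $|x^{**}(x^*)|\leqslant \|x^{**}\|\,\|x^*\|\leqslant \ee\|x^*\|$, so $\ee_1\|x^*\|\leqslant \ee\|x^*\|$. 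Since $x^*\neq 0$ and $\ee_1>\ee$, this is the desired contradiction.

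There is no real obstacle here; the argument is essentially a textbook Hahn-Banach separation together with the defining property of weak$^*$ convergence. The only point requiring any care is to separate $C$ from the \emph{open} ball (not the closed one) so that the separation can be made strict on the $U$-side, which then forces $a\geqslant \ee_1\|x^*\|$; this is precisely what allows the strict inequality $\ee>\ee_1$ to yield a contradiction once the limit is taken.
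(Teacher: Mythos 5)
Your proof is correct and takes essentially the same route as the paper: both separate the convex combinations from a ball of radius slightly less than $\ee_1$ (or the open ball of radius $\ee_1$) via Hahn-Banach, then pass to the weak$^*$ limit to contradict $\|x^{**}\|\leqslant\ee<\ee_1$. A minor point: in your closing sentence the inequality should read $\ee_1>\ee$, not $\ee>\ee_1$; the body of the argument already uses it in the correct direction.
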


\begin{proof} Let $C$ denote the closed, convex hull of $(u_\lambda)$ in $X$. Seeking a contradiction, fix $\ee<\ee_2<\ee_1$ and assume that $C\cap  \ee_1 B_X=\varnothing$.   Then by the Hahn-Banach theorem, there exist $ x^*\in X^*$ and a real number $\alpha$ such that $$\underset{x\in \ee_2 B_X}{\sup} \text{Re\ }x^*(x) < \alpha \leqslant \underset{x\in C}{\inf} \text{Re\ }x^*(x).$$   Now since $x^*\neq 0$, by scaling $x^*$ and $\alpha$, we may assume $\|x^*\|=1$. It follows that $$\alpha > \underset{x\in \ee_2 B_X}{\sup} \text{Re\ }x^*(x)=\ee_2.$$    Now $$\ee \geqslant \|x^{**}\|\geqslant \text{Re\ }x^{**}(x^*) = \underset{\lambda}{\lim} \text{Re\ }x^*(u_\lambda)\geqslant \alpha >\ee_2,$$ a contradiction.

\end{proof}

\begin{lemma} Fix $R, \delta>0$, and a Banach space $X$.   Then for any weak$^*$-neighborhood $U$ of $0$ in $X^{**}$, there exists a weak$^*$-neighborhood $V$ of $0$ in $X^{**}$ such that for any $x^{**}\in V\cap RB_{X^{**}}$ with $\|x^{**}\|_{X^{**}/X}<\delta$, there exists $x\in U\cap (R+3\delta)B_X$ such that $\|x^{**}-x\|<3\delta$.

\end{lemma}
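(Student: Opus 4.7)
The plan is to take $V$ to be a basic weak$^*$-neighborhood of $0$ contained in $U$, and then construct $x$ as a suitable convex combination of elements of $V\cap X$ that weak$^*$-approximate $x^{**}$. The essential ingredients are Goldstine's theorem, the convexity of basic weak$^*$-neighborhoods, and Fact \ref{needed fact} applied to an appropriately translated net.

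First, I reduce to the case where $U$ has the basic form $U=\{z^{**}\in X^{**}:|z^{**}(x_i^*)|<\epsilon,\ 1\leqslant i\leqslant n\}$ for some $x_1^*,\ldots,x_n^*\in X^*$ and $\epsilon>0$, which is convex, and set $V:=U$. Given $x^{**}\in V\cap RB_{X^{**}}$ with $\|x^{**}\|_{X^{**}/X}<\delta$, I use the quotient norm to fix $y\in X$ with $\|x^{**}-y\|<\delta$, so that $\|y\|<R+\delta$, and set $w^{**}:=x^{**}-y$, which satisfies $\|w^{**}\|<\delta$.

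By Goldstine's theorem there exists a net $(y_\lambda)\subset X$ with $y_\lambda\to x^{**}$ in the weak$^*$ topology of $X^{**}$. Since $x^{**}$ lies in the weak$^*$-open set $V=U$, after passing to a subnet I may assume $y_\lambda\in U$ for every $\lambda$. The translated net $(y_\lambda-y)\subset X$ then weak$^*$-converges to $w^{**}$, and since $\|w^{**}\|<\delta<3\delta/2$, Fact \ref{needed fact} (applied with $\epsilon_1=3\delta/2$) produces a convex combination $u=\sum_j\alpha_j(y_{\lambda_j}-y)$ with $\|u\|\leqslant 3\delta/2$. I then set $x:=u+y=\sum_j\alpha_j y_{\lambda_j}$, where the last equality uses $\sum_j\alpha_j=1$. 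Because $x$ is a convex combination of points of $U$ and $U$ is convex, I obtain $x\in U$. The norm bounds follow directly: $\|x\|\leqslant \|u\|+\|y\|<3\delta/2+R+\delta<R+3\delta$, and $\|x^{**}-x\|=\|w^{**}-u\|\leqslant\|w^{**}\|+\|u\|<\delta+3\delta/2<3\delta$.

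The only real technical point is verifying that the convex combination preserves membership in $U$, which is precisely why I reduce $U$ at the outset to a basic (hence convex) weak$^*$-neighborhood. Fact \ref{needed fact} is the engine that converts the weak$^*$-approximation guaranteed by Goldstine into a genuine norm estimate on the correction term $u$, thereby turning the weak$^*$-close element $x^{**}$ into a norm-close element of $X$ without losing the weak$^*$-neighborhood constraint.
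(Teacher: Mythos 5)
Your proof is correct, but it takes a genuinely different route from the paper's. The paper argues by contradiction: it supposes that for every convex, symmetric weak$^*$-neighborhood $V$ (forming a directed set $D$) there exist $x^{**}_V\in V\cap RB_{X^{**}}$ and $u_V\in X$ with $\|x^{**}_V-u_V\|<\delta$ witnessing failure; it then passes to a subnet of $(u_V)$ that weak$^*$-converges and has all pairwise differences $u_{V_1}-u_{V_2}$ lying in $U$, applies Fact \ref{needed fact} to extract a small-norm convex combination $u$, and shows that $u_{V_1}-u$ contradicts the assumed failure at $V_1$. You instead argue directly: after shrinking $U$ to a convex basic neighborhood you simply take $V=U$, pick $y\in X$ norm-close to $x^{**}$ via the quotient-norm hypothesis, take a Goldstine net in $U$ weak$^*$-converging to $x^{**}$, translate by $-y$, apply Fact \ref{needed fact} to the translated net, and undo the translation; convexity of $U$ then keeps the resulting finite convex combination inside $U$, and the triangle inequality gives both norm estimates. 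Both proofs rest on Fact \ref{needed fact} as the engine converting weak$^*$-approximation of a small-norm element into a norm-small convex combination, and both reduce to a convex $U$, but your direct argument sidesteps the contradiction and the subnet bookkeeping with pairwise differences, which makes it noticeably shorter and more transparent. One small caveat shared with the paper's own use of the Fact: the Fact, as proved, produces an element of the \emph{closed} convex hull; to land strictly inside the open set $U$ one should take a nearby finite convex combination, which costs an arbitrarily small additive $\eta$ in the norm estimate, and your choice $\ee_1=3\delta/2$ leaves ample slack (anything below $2\delta$ works), so the argument survives this refinement without change.
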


\begin{proof} By replacing $U$ with a subset, we may assume $U$ is convex and symmetric. Let $D$ denote the set of convex, symmetric, weak$^*$ open sets in $X$ containing $0$, directed by reverse inclusion.    If the result is not true, then we could find for each $V\in D$ some $x^{**}_V\in V\cap RB_{X^{**}}$ and $u_V\in X$ such that $\|x^{**}_V-u_V\|<\delta$, but such that for each $V\in D$, there does not exist $x\in U\cap (R+3\delta)B_X$ such that $\|x^{**}_V-x\|<3\delta$.  By passing to a subnet $(u_V)_{V\in D_1}$, we may assume $(u_V)_{V\in D_1}$ is weak$^*$-convergent to some $x^{**}\in (R+\delta)B_{X^{**}}$ and $u_{V_1}-u_{V_2}\in U$ for all $V_1, V_2\in D_1$.  Note that $$\|x^{**}\| \leqslant \underset{V\in D_1}{\lim\inf} \|u_V-x^{**}_V\|\leqslant \delta.$$  By Fact \ref{needed fact}, there exists a convex combination $u$ of $(u_V)_{V\in D_1}$ such that $\|u\|<2\delta$.  Fix any $V_1\in D_1$ and note that $u_{V_1}-u\in U\cap (R+3\delta)B_X$ and $\|x^{**}_{V_1}-(u_{V_1}-u)\| < 3\delta$.  This contradiction finishes the proof.

\end{proof}

\begin{theorem} Fix $k\in \nn$,  a sequence  $w=(w_i)_{i=1}^k$ of positive numbers, a quasi-reflexive Banach space $X$, and an operator $A:X\to Y$.   If $f:G_k^w\to X$ is $1$-Lipschitz, then  for any $\vartheta> 2\|\sum_{i=1}^k w_ie_i\|_{0,A}$, there exists $M\in [\nn]$ such that for any $m_1<n_1<\ldots <m_k<n_k$, $m_i,n_i\in M$, $$\|f(m_1, \ldots, m_k)-f(n_1, \ldots, n_k)\|\leqslant \vartheta.$$   

\end{theorem}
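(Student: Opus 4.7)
The plan is to convert the problem to a game argument using the characterization (stated in the Remark in Section 3) that $\|\sum_{i=1}^k w_i e_i\|_{0,A}$ is the infimum of $C$ for which Player I has a winning strategy in the $A, w_1, \ldots, w_k, C$-game. By $1$-homogeneity of $\|\cdot\|_{0,A}$, fix $C$ with $\|\sum_{i=1}^k w_i e_i\|_{0,A} < C < \vartheta/2$ and a winning strategy $\phi$ for Player I in the $A, 2w_1, \ldots, 2w_k, 2C$-game. The objective is to produce $M\in[\nn]$ so that, for any interleaving $m_1<n_1<\cdots<m_k<n_k$ in $M$, the telescoping decomposition
$$f(n_1,\ldots,n_k) - f(m_1,\ldots,m_k) = \sum_{i=1}^k d_i, \qquad d_i := f(n_1,\ldots,n_i, m_{i+1},\ldots,m_k) - f(n_1,\ldots,n_{i-1}, m_i,\ldots,m_k),$$
yields, after subtracting $X$-approximants $x_i$ of weak$^*$-limits, a legal sequence of Player II moves against $\phi$.

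The technical core is an iterated Ramsey extraction combined with weak$^*$-compactness of $w_i B_{X^{**}}$. The $1$-Lipschitz hypothesis gives $\|d_i\|\leqslant w_i$, since the two $G_k^w$-arguments of $d_i$ are at distance $w_i$. By nesting Ramsey's theorem $k$ times, pass to an infinite $M\subset\nn$ along which, for each $i$, $d_i$ weak$^*$-converges to some $d_i^{**}\in w_iB_{X^{**}}$ as $n_i$ is pushed to infinity in $M$ (with the earlier indices fixed and tail indices handled by the earlier stages of the nesting). Because $X^{**}/X$ is finite-dimensional, a further Ramsey refinement forces the cosets $[d_i^{**}]$ to stabilise in a compact subset of $X^{**}/X$; fixing a finite-dimensional complement of $X$ in $X^{**}$, write $d_i^{**} = x_i + e_i^{**}$ with $x_i \in X$ of norm at most $w_i + \delta$ and $\|e_i^{**}\|_{X^{**}/X}$ arbitrarily small, so the preceding lemma supplies, in any prescribed weak neighbourhood of $0$ in $X$, an $X$-representative of $e_i^{**}$ whose remaining $3\delta$-correction is absorbed into the slack $\vartheta - 2C$.

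With these pieces in place, simulate a run of $\phi$ inductively: at round $i$, Player I plays $U_i = \phi((d_j-x_j)_{j<i})$, and for $n_i \in M$ chosen sufficiently deep within the $M$-tail, the weak$^*$-convergence together with the lemma ensures $d_i - x_i \in U_i \cap 2w_iB_X$, a legal Player II move. The winning property of $\phi$ then gives $\|A\sum_{i=1}^k(d_i-x_i)\|\leqslant 2C$. A second Ramsey refinement, running in parallel inside the finite-dimensional quotient $X^{**}/X$, arranges that the adjustment $\sum_{i=1}^k x_i$ has norm small enough that $\|A\sum_i x_i\|\leqslant \vartheta-2C$, and the triangle inequality yields $\|A(f(n_1,\ldots,n_k)-f(m_1,\ldots,m_k))\| \leqslant \vartheta$. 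The principal obstacle is the delicate bookkeeping needed to ensure simultaneously that each $d_i^{**}$ depends only on the game history before round $i$ and that the quotient parts $[d_i^{**}]$ are negligible relative to the weak neighbourhoods $U_i$ chosen by $\phi$; this coordination exploits quasi-reflexivity in an essential way, since without finite-dimensionality of $X^{**}/X$ one cannot simultaneously compactify the tail data across all $k$ rounds.
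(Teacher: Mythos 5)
Your proof proposal shares the high‐level ingredients with the paper's argument — extracting an infinite $M$ by Ramsey, using a winning strategy for Player I in an asymptotic game, weak$^*$-compactness in the bidual, quasi-reflexivity to make a lemma about lifting weak$^*$-approximate members of $X^{**}$ to weak-approximate members of $X$ — but the particular decomposition you propose has a causality problem that the paper's decomposition is specifically designed to avoid, and I do not see how to repair it along the lines you sketch.

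You decompose $f(n_1,\ldots,n_k)-f(m_1,\ldots,m_k)=\sum_i d_i$ with $d_i=f(n_1,\ldots,n_i,m_{i+1},\ldots,m_k)-f(n_1,\ldots,n_{i-1},m_i,\ldots,m_k)$, and want to realize the moves $d_i-x_i$ as Player II's plays against $\phi$. But $d_j$, for every $j<i$, depends on the indices $m_j,m_{j+1},\ldots,m_k$, which includes all the \emph{future} indices $m_{i+1},\ldots,m_k$ that have not yet been selected when Player I reveals $U_i=\phi((d_1-x_1,\ldots,d_{i-1}-x_{i-1}))$. Thus $U_i$ is not determined by the indices chosen through stage $i$, and the inductive selection of $n_i$ (or $m_i,n_i$) to force $d_i-x_i\in U_i$ is circular: choosing later indices retroactively changes the earlier moves and hence the neighborhood $U_i$ you are aiming for. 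This is not ``delicate bookkeeping'' that can be patched by a Ramsey refinement; no finite coloring of $[\nn]^{<\omega}$ will make $d_j$ independent of $m_{j+1},\ldots,m_k$. The paper circumvents this by telescoping not $f$ itself but the ultrafilter extension $F:\cup_j G_j\to X^{**}$, setting $g(s)=F(s)-F(s^-)$, and using $f(m)-f(n)=\sum_{j=1}^k\bigl(g(m|_j)-g(n|_j)\bigr)$; the $j$-th summand depends \emph{only} on $m_1,n_1,\ldots,m_j,n_j$, which is exactly the causality needed for the game.

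There is a second, related gap. Your move $d_i-x_i$ converges weak$^*$ (as $n_i\to\infty$ along $M$) to $d_i^{**}-x_i=e_i^{**}$, which is a generally nonzero element of the chosen complement of $X$ in $X^{**}$; in particular $d_i-x_i$ need not enter a prescribed weak neighborhood of $0$ in $X$ no matter how large $n_i$ is. In the paper, the quantity approximated by the move $x_j$ is $g((m_1,\ldots,m_j))-g((n_1,\ldots,n_j))$, which does weak$^*$-converge to $0$ (since $\text{weak}^*\text{-}\lim_{n\in\mathcal U} g(s\smallfrown(n))=0$) and also has small $X^{**}/X$-quotient norm; only then does the lifting lemma produce an $X$-representative in the prescribed weak neighborhood. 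Finally, your concluding bound $\|A\sum_i x_i\|\le\vartheta-2C$ is not justified: the $x_i$ are the $X$-parts of the $d_i^{**}$ and have norm comparable to $w_i$, and a Ramsey argument in the finite-dimensional quotient cannot force cancellation in $\sum_i x_i\in X$. In the paper there is no such residual term: the $x_j$ are norm-$3\delta$ approximants of the very summands $g(m|_j)-g(n|_j)$, so $\|A\sum_j x_j\|$ differs from $\|Af(m)-Af(n)\|$ by at most $3kb\delta$ and is also bounded by $\vartheta-3kb\delta$ via the game, which is what delivers the contradiction.
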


\begin{proof} Fix $b>\max\{1,\|A\|\}$.    Fix $\vartheta> 2\|\sum_{i=1}^k w_ie_i\|_{0,A}$. Let $\mathcal{V}$ denote the set of those $(m_1, n_1, \ldots, m_k, n_k)\in [\nn]^{2k}$ such that $\|Af(m_1, \ldots, m_k)-Af(n_1, \ldots, n_k)\|\leqslant \vartheta$.   By the finite Ramsey theorem, there exists $M\in [\nn]$ such that either $[M]^{2k}\subset \mathcal{V}$ or $[M]^{2k}\cap \mathcal{V}=\varnothing$.     We will show that the latter cannot hold, which will finish the proof.  Seeking a contradiction, assume that $[M]^{2k}\cap \mathcal{V}=\varnothing$.    By relabeling, we may assume $M=\nn$.

Now fix $\delta>0$ such that $2\|\sum_{i=1}^k w_i e_i\|_{0,A}+6kb\delta<\vartheta$.   Note that since the formal identity $I:(c_{00}, \ell_1)\to (c_{00}, \|\cdot\|_{0,A})$ has norm at most $\|A\|$, it follows that $$\|\sum_{i=1}^k (2w_i+3\delta)e_i\|_{0,A}+ 3kb\delta \leqslant 2\|\sum_{i=1}^k w_ie_i\|_{0,A} + 6kb\delta <\vartheta.$$      Fix a winning strategy $\phi$ for Player I in the $A, 2w_1+3\delta, \ldots, 2w_k+3\delta, \vartheta-3kb\delta$-game.  We may do this, since $\|\sum_{i=1}^k (2w_i+3\delta)w_i\|_{0,A}<\vartheta-3kb\delta$.       Fix a free ultrafilter $\mathcal{U}$ on $\nn$ and define $F:\cup_{i=0}^k G_i\to X^{**}$ by  \begin{displaymath}
   F(s) = \left\{
     \begin{array}{lr}
       f(s) & : |s|=k\\
       \underset{n_{j+1}\in \mathcal{U}}{\lim}\ldots \underset{n_k\in \mathcal{U}}{\lim} f(s\smallfrown (n_{j+1}, \ldots, n_k)) & : |s|=j<k,
     \end{array}
   \right.
\end{displaymath} 

where all limits are taken with respect to the weak$^*$-topology on $X^{**}$.  Now define $g:\cup_{i=1}^k G_i\to X^{**}$ by $g(s)=F(s)-F(s^-)$ and for $i=1, \ldots, k$, let $e_i:\cup_{j=0}^i G_j\to X^{**}/X$ be given by \begin{displaymath}
   e_i(s)= \left\{
     \begin{array}{lr}
       g(s) & : |s|=j\\
       \underset{n_{i+1}\in \mathcal{U}}{\lim}\ldots \underset{n_k\in \mathcal{U}}{\lim} g(s\smallfrown (n_{i+1}, \ldots, n_k))+X & : |s|=i<j,
     \end{array}
   \right.
\end{displaymath} 

We claim that for all $1\leqslant j\leqslant k$ and $s\in [\nn]^j$, $\|g(s)\|\leqslant w_j$.  Indeed let $s=(n_1, \ldots, n_j)$ and note that \begin{align*} \|g(s)\| & = \|F(s)-F(s^-)\| \\ & \leqslant \underset{m\in \mathcal{U}}{\lim} \underset{n_{j+1}\in \mathcal{U}}{\lim} \ldots \underset{n_k\in \mathcal{U}}{\lim} \|f(n_1, \ldots,n_{j-1}, n_j, n_{j+1}, \ldots, n_k)- f(n_1, \ldots, n_{j-1}, m, n_{j+1}, \ldots, n_k)\| \\ & \leqslant w_j. \end{align*}

Now fix weak$^*$-neighborhoods $W_1, V_1$ of $0$ in $X^{**}$ such that $W_1-W_1\subset V_1$ and if $x^{**}\in V_1\cap 2w_1 B_{X^{**}}$ and $\|x^{**}\|_{X^{**}/X}<\delta$, there exists $x\in \phi(\varnothing) \cap (2w_1+3\delta)B_X$ such that $\|x^{**}-x\|<3\delta$. Since $$\text{weak}^*\text{-}\underset{n\in \mathcal{U}}{\lim} g((n))=0$$ and for each $1\leqslant j\leqslant k$, $$\underset{n\in \mathcal{U}}{\lim} \|e_j(\varnothing)-e_j((n))\|_{X^{**}/X}=0,$$ we may fix $N_1\in \mathcal{U}$ such that for any $n\in N_1$, $g((n))\in W_1$ and $\|e_j(\varnothing)-e_j((n))\|_{X^{**}/X}<\delta/2k$.  Now fix any $m_1<n_1$, $m_1, n_1\in N_1$.   Note that $$g((m_1))-g((n_1))\in (W_1-W_1)\cap 2w_1 B_{X^{**}}\subset V_1 \cap 2w_1B_{X^{**}}$$ and \begin{align*} \|g((m_1))-g((n_1))\|_{X^{**}/X} & = \|e_1((m_1))-e_1((n_1))\|_{X^{**}/X} \\ & \leqslant \|e_1((m_1))-e_1(\varnothing)\|_{X^{**}/X}+\|e_1(\varnothing)-e_1((n_1))\|_{X^{**}/X}  <\delta,\end{align*} whence there exists $x_1\in \phi(\varnothing)\cap (2w_1+3\delta)B_X$ such that $\|g((m_1))-g((n_1))-x_1\|<3\delta$.

Now suppose that $m_1<n_1<\ldots <m_{j-1}<n_{j-1}$,  $N_1\supset N_{j-1}\in \mathcal{U}$, $x_1, \ldots, x_{j-1}\in X$ have been chosen such that for each $1\leqslant i<j$,  \begin{enumerate}[(i)]\item $\|x_i\|\leqslant 2w_i+3\delta$, \item $x_i\in \phi((x_1, \ldots, x_{i-1}))$, \item $m_i, n_i\in N_i$, \item if $s=(m_1, \ldots, m_{i-1})$ and $t=(n_1, \ldots, n_{i-1})$, then for any $n\in N_i$, and $i\leqslant l\leqslant k$, $$\|e_l(s\smallfrown (n))-e_l(s)\|_{X^{**}/X}, \|e_l(t\smallfrown (n))-e_l(t)\|_{X^{**}/X}<\delta/2k.$$  \end{enumerate}

Now fix weak$^*$-neighborhoods $W_j, V_j$ of $0$ in $X^{**}$ such that $W_j-W_j\subset V_j$ and if $x^{**}\in V_j \cap 2w_j B_{X^{**}}$ and $\|x^{**}\|_{X^{**}/X}<\delta$, then there exists $x\in \phi((x_1, \ldots, x_{j-1}))\cap (2w_j+3\delta)B_X$ such that $\|x^{**}-x\|<3\delta$.  Let $s=(m_1, \ldots, m_{j-1})$ and $t=(n_1, \ldots, n_{j-1})$.     Since $$\text{weak}^*\text{-}\underset{n\in \mathcal{U}}{\lim} g(s\smallfrown (n))= \text{weak}^*\text{-}\underset{n\in \mathcal{U}}{\lim} g(t\smallfrown (n))=0$$ and for each $j\leqslant l\leqslant k$, $$\underset{n\in \mathcal{U}}{\lim} \|e_l(s\smallfrown (n))-e_l(s)\|_{X^{**}/X}= \underset{n\in \mathcal{U}}{\lim} \|e_l(t\smallfrown (n))-e_l(t)\|_{X^{**}/X}=0,$$ there exists $N'_j\in \mathcal{U}$ such that for every $n\in N_j'$, $g(s\smallfrown (n)), g(t\smallfrown (n))\in W_j$ and for each $n\in N_j'$ and $j\leqslant l\leqslant k$, $\|e_l(s\smallfrown (n))-e_l(s)\|_{X^{**}/X}, \|e_l(t\smallfrown (n))-e_l(t)\|_{X^{**}/X}<\delta/2k$.    Now let $N_j=N_j'\cap N_{j-1}\cap (n_{j-1}, \infty)\in \mathcal{U}$.   Fix $m_j<n_j$, $m_j, n_j\in N_j$.    Now $$g(s\smallfrown (m_j))-g(t\smallfrown (n_j))\in (W_j-W_j)\cap 2w_j B_{X^{**}}\subset V_j\cap 2w_j B_{X^{**}}$$ and \begin{align*} \|g(s\smallfrown (m_j))-g(t \smallfrown (n_j))\|_{X^{**}/X}  & \leqslant \sum_{l=1}^j \|e_l((m_1, \ldots, m_l))-e_l((m_1, \ldots, m_{l-1}))\|_{X^{**}/X}  \\ & + \sum_{l=1}^j  \|e_l((n_1, \ldots, n_l))-e_l((n_1, \ldots, n_{l-1}))\|_{X^{**}/X} \\ &  < 2j \cdot \frac{\delta}{2k}\leqslant \delta,\end{align*} whence there exists $x_j\in \phi((x_1, \ldots, x_{j-1}))\cap (2w_j+3\delta)\cap B_X$ such that $\|g((m_1, \ldots, m_j))-g((n_1, \ldots, n_j))-x_j\|<3\delta$.   This completes the recursive construction.

Now we note that \begin{align*} \vartheta & < \|Af((m_1, \ldots, m_k))-Af((n_1, \ldots, n_k))\| = \|A\sum_{j=1}^k g((m_1, \ldots, m_j))-g((n_1, \ldots, n_j))\| \\ & \leqslant \|A\sum_{i=1}^k x_i\| + 3 kb\delta \leqslant  (\vartheta - 3 kb \delta)+3 kb \delta= \vartheta. \end{align*}

This contradiction finishes the proof.

\end{proof}

\section{Distinctness of the classes}

We already know that if $\xi$ is any ordinal and $1<p\leqslant \infty$, $$\mathfrak{T}_{\xi,p}\subset \mathfrak{A}_{\xi,p}\subset \mathfrak{N}_{\xi,p}\subset \mathfrak{P}_{\xi,p}.$$   Furthermore, for any ordinals $\xi<\zeta$ and any $1<p\leqslant \infty$, $$\mathfrak{P}_{\xi,p}\subset \mathfrak{D}_{\xi+1}\subset \mathfrak{D}_\zeta\subset \mathfrak{T}_{\zeta, \infty},$$ and for any ordinal $\xi$ and $1<p<r\leqslant \infty$, $$\mathfrak{P}_{\xi,r}\subset \mathfrak{T}_{\xi,p}.$$   We will also show the following, which completely solves the question of containment and distinctness of these classes in the case of different indices.

\begin{theorem} Let $(\xi, p), (\zeta, r)\in \textbf{\emph{Ord}}\times (1, \infty]$ be distinct.  If $\mathfrak{I}$ is one of the four classes $\mathfrak{T}_{\xi,p}, \mathfrak{A}_{\xi,p}, \mathfrak{N}_{\xi,p}, \mathfrak{P}_{\xi,p}$ and if $\mathfrak{J}$ is one of the four classes $\mathfrak{T}_{\zeta, r}, \mathfrak{A}_{\zeta, r}, \mathfrak{N}_{\zeta, r}, \mathfrak{P}_{\zeta, r}$, then $\mathfrak{I}\subsetneq \mathfrak{J}$ if and only if $(\xi, 1/p)<(\zeta,1/r)$ in the lexicographical ordering.   

\end{theorem}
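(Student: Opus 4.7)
The plan is to prove the nontrivial implication: whenever $(\xi,1/p)<(\zeta,1/r)$ in the lexicographic order, $\mathfrak{I}\subsetneq\mathfrak{J}$. The converse then follows by symmetry: if $(\zeta,1/r)<(\xi,1/p)$, applying the forward direction with the two pairs swapped (and taking the class at $(\zeta,r)$ to match $\mathfrak{J}$, the class at $(\xi,p)$ to match $\mathfrak{I}$) gives $\mathfrak{J}\subsetneq\mathfrak{I}$, which rules out $\mathfrak{I}\subsetneq\mathfrak{J}$. Since $\mathfrak{I}\subset\mathfrak{P}_{\xi,p}$ and $\mathfrak{T}_{\zeta,r}\subset\mathfrak{J}$ hold for every one of the four choices of $\mathfrak{I}$ and $\mathfrak{J}$, the whole statement reduces to proving $\mathfrak{P}_{\xi,p}\subsetneq\mathfrak{T}_{\zeta,r}$ in each of the two cases $\xi=\zeta$ with $p>r$, or $\xi<\zeta$ (with $r$ arbitrary).

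For mere containment, no new work is needed. When $\xi=\zeta$ and $p>r$, the inclusion $\mathfrak{P}_{\xi,p}\subset\mathfrak{T}_{\xi,r}$ is the relation already recorded at the start of this section. When $\xi<\zeta$, I chain $\mathfrak{P}_{\xi,p}\subset\mathfrak{D}_{\xi+1}\subset\mathfrak{D}_\zeta\subset\mathfrak{T}_{\zeta,\infty}\subset\mathfrak{T}_{\zeta,r}$; the first inclusion is from Theorem~1.1, the second is monotonicity of the Szlenk classes, the third follows from the fact that $Sz(A)\leqslant\omega^\zeta$ forces $\varrho_\zeta(\cdot;A)\equiv 0$ so that $A$ is $\zeta$-AUF in its original norm, and the fourth is that $\zeta$-AUF trivially implies $\zeta$-$r$-AUS for every finite $r$.

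For proper containment I again split on the two cases. When $\xi<\zeta$, the proper inclusion $\mathfrak{P}_{\xi,p}\subsetneq\mathfrak{D}_{\xi+1}$ from Theorem~1.1 supplies an operator $A\in\mathfrak{D}_{\xi+1}\setminus\mathfrak{P}_{\xi,p}$, and by the containment chain above this $A$ automatically lies in $\mathfrak{T}_{\zeta,r}$, finishing the case. When $\xi=\zeta$ and $p>r$, I would exhibit a space $X$ that is $\xi$-$r$-AUS-able (so $I_X\in\mathfrak{T}_{\xi,r}$) but has $\textsf{p}_\xi(I_X)$ equal to $r'=r/(r-1)$ exactly, so that $I_X\notin\mathfrak{P}_{\xi,p}$ for every $p>r$ (since then $p'<r'$). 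For $\xi=0$ the classical $\ell_r$ witnesses this: it is $r$-AUS and has $\textsf{p}_0(\ell_r)=r'$. For general $\xi$ I would use a transfinite $\ell_r$-tree construction modelled on the inductive definition of $\Gamma_{\xi,1}$ from Section~2 (iterating $\ell_r$-sums at successors and taking totally incomparable sums at limits, in parallel with the recursive description of $\Gamma_{\xi,1}$), of the type appearing in the author's earlier papers in this series.

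The principal obstacle is the witness construction in this second case for transfinite $\xi$: the recursion must simultaneously preserve $\xi$-$r$-AUS-ability and pin $\textsf{p}_\xi$ at exactly $r'$ rather than letting it drop to some smaller value. Calibrating this through the transfinite recursion, using the eventual-and-inevitable-set machinery and the computation of $\|\cdot\|_{\xi,A}$ via the trees $\Gamma_{\xi,n}$ recalled in Section~2, is where the genuine work lies; once the witness is in hand, the rest of the theorem reduces to assembling inclusions already in place.
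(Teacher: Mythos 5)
Your reduction is correct, and it is exactly what the paper does implicitly: the theorem is deduced from the already-stated containment chains together with the distinctness results of Theorems~\ref{tm1} and~\ref{tm2}. In particular, your case split into $\xi<\zeta$ (where $\mathfrak{P}_{\xi,p}\subsetneq\mathfrak{D}_{\xi+1}\subset\mathfrak{T}_{\zeta,r}$ handles everything) and $\xi=\zeta$, $p>r$ (where you need a witness in $\mathfrak{T}_{\xi,r}\setminus\bigcup_{r<s<\infty}\mathfrak{T}_{\xi,s}$) matches the paper, and your derivation of the converse by antisymmetry is sound.

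However, you stop at the genuinely hard part, and the gap is real. The paper's witness for the $\xi=\zeta$, $p>r$ case is not an abstract ``transfinite $\ell_r$-tree'': it is $(\oplus_{n=1}^\infty X_{\zeta,n})_{\ell_r}$ for $\xi=\zeta+1$, or $(\oplus_{\gamma<\xi}X_{\gamma,1})_{\ell_r(\xi)}$ at a limit, where the spaces $X_{\zeta,n}$ are built in Fact~\ref{good} by a transfinite recursion using $\ell_2$-direct sums and $\ell_1^n$-sums (not $\ell_r$-sums as you guess); the $\ell_r$-sum only enters at the outer layer. Membership in $\mathsf{T}_{\xi,r}$ is established by the moduli estimate in Corollary~\ref{one side}(i), and non-membership in $\bigcup_{r<s<\infty}\mathfrak{T}_{\xi,s}$ is established in Propositions~\ref{p1}(i)/\ref{p2}(i) by a direct lower bound $n\theta_{\xi,n}\geqslant n^{1/r}/2$, which shows the space escapes $\mathfrak{N}_{\xi,s}$ (hence $\mathfrak{T}_{\xi,s}$) for all $s>r$. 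This is cleaner than your plan of pinning $\textsf{p}_\xi$ at exactly $r'=r/(r-1)$; you would also need to separately verify $\xi$-$r$-AUS-ability, since $\textsf{p}_\xi=r'$ by itself only places the space in $\mathfrak{P}_{\xi,r}$, not in $\mathfrak{T}_{\xi,r}$. Without this witness, your proof is a reduction, not a proof; as you acknowledge, the calibration through the transfinite recursion is where the work lies, and you have not supplied it.
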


Beyond this result, it still remains to decide upon the distinctness of the four classes $\mathfrak{T}_{\xi,p}$, $\mathfrak{A}_{\xi,p}$, $\mathfrak{N}_{\xi,p}$, and $\mathfrak{P}_{\xi,p}$ for $\xi$ and $p$ fixed.   To that end, we have the following.

\begin{theorem} For any $1<p<\infty$ and any ordinal $\xi$, we have the following containments. \begin{align*} \mathfrak{D}_\xi & \subsetneq \bigcup_{p<r<\infty} \mathfrak{T}_{\xi,r} = \bigcup_{p<r<\infty} \mathfrak{P}_{\xi,r} \subsetneq \mathfrak{T}_{\xi,p} \subsetneq \mathfrak{N}_{\xi,p} \subsetneq \mathfrak{P}_{\xi,p} = \bigcap_{1<r<p} \mathfrak{T}_{\xi,r} = \bigcap_{1<r<p} \mathfrak{P}_{\xi,r} \subsetneq \mathfrak{D}_{\xi+1}. \end{align*}  

If $\xi$ has countable cofinality, and in particular if $\xi$ is countable, we also have the following containments. $$\mathfrak{T}_{\xi,p}\subsetneq \mathfrak{A}_{\xi,p} \subsetneq \mathfrak{N}_{\xi,p} .$$   

\label{tm1}
\end{theorem}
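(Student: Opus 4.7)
The plan is to dispatch the equalities and soft containments first and then produce witnesses for each strict inclusion. For the two sets of equalities, apply the renorming theorem $\mathfrak{P}_{\xi,r}=\bigcap_{1<s<r}\mathfrak{T}_{\xi,s}$ from \cite{TAMS} recalled in Section~3. The inclusion $\bigcup_{p<r<\infty}\mathfrak{T}_{\xi,r}\subset\bigcup_{p<r<\infty}\mathfrak{P}_{\xi,r}$ is trivial from $\mathfrak{T}_{\xi,r}\subset\mathfrak{P}_{\xi,r}$; for the reverse direction, any $A\in\mathfrak{P}_{\xi,r_0}$ with $r_0>p$ lies in $\mathfrak{T}_{\xi,r}$ for every $r\in(p,r_0)$. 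The same trick gives $\bigcap_{1<r<p}\mathfrak{T}_{\xi,r}=\bigcap_{1<r<p}\mathfrak{P}_{\xi,r}$. The weak chain $\mathfrak{D}_\xi\subset\mathfrak{T}_{\xi,p}$, $\mathfrak{T}_{\xi,p}\subset\mathfrak{A}_{\xi,p}\subset\mathfrak{N}_{\xi,p}\subset\mathfrak{P}_{\xi,p}$, and $\mathfrak{P}_{\xi,p}\subset\mathfrak{D}_{\xi+1}$ is entirely contained in Section~3, and $\bigcup_{p<r<\infty}\mathfrak{T}_{\xi,r}\subset\mathfrak{T}_{\xi,p}$ is immediate from the definition of $\xi$-$p$-AUS-ability.

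For the strict containments that do not involve $\mathfrak{A}$ or $\mathfrak{N}$, the plan is to exhibit transfinite analogues of the standard finite-ordinal witnesses:
\begin{enumerate}[(a)]
\item For $\mathfrak{D}_\xi\subsetneq\bigcup_{p<r<\infty}\mathfrak{T}_{\xi,r}$, a $\xi$-asymptotic $\ell_\infty$-Tsirelson variant with Szlenk index $\omega^{\xi+1}$ that is $\xi$-AUF.
\item For $\bigcup_{p<r<\infty}\mathfrak{P}_{\xi,r}\subsetneq\mathfrak{T}_{\xi,p}$, the $\xi$-asymptotic $\ell_p$-analogue of the same construction, which is $\xi$-$p$-AUS-able but not $\xi$-$r$-AUS-able for any $r>p$.
\item For $\mathfrak{P}_{\xi,p}\subsetneq\mathfrak{D}_{\xi+1}$, a $(\xi{+}1)$-level Schreier space, whose Szlenk index is exactly $\omega^{\xi+1}$ but whose $\xi$-Szlenk power type is $\infty$, so it fails to be in $\mathfrak{P}_{\xi,p}$ for any finite $p$.
\end{enumerate}

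The core technical difficulty is the strict inclusion $\mathfrak{T}_{\xi,p}\subsetneq\mathfrak{N}_{\xi,p}$ for arbitrary $\xi$, and the finer $\mathfrak{T}_{\xi,p}\subsetneq\mathfrak{A}_{\xi,p}\subsetneq\mathfrak{N}_{\xi,p}$ under countable cofinality. For $\xi=0$ the first separation is classical and is realized, as remarked after Theorem \ref{app}, by $T_q^*$, which lies in $\mathfrak{A}_{0,p}\setminus\mathfrak{T}_{0,p}$; the second separation requires a Tzafriri-type \cite{Tzafriri} construction producing an operator with equal-norm Haar-type $p$ but without Haar-type $p$, and then transferring from the Rademacher/Haar type setting to the asymptotic setting. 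For general countable-cofinality $\xi$, fix an increasing sequence $\xi_n\uparrow\xi$ and form an $\ell_\infty$- or $c_0$-sum of appropriately rescaled witnessing spaces $X_n$ for the $\xi_n$-level assertions, arranging the summands so that the Szlenk index of the sum is bounded by $\omega^\xi$ while the failure of $\mathfrak{T}_{\xi,p}$ (respectively $\mathfrak{A}_{\xi,p}$) is inherited as a $\xi$-limit of the failures at each $\xi_n$.

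The main obstacle will be quantitative bookkeeping: one must track the ideal norms $\mathfrak{t}_{\xi,p}$, $\mathfrak{a}_{\xi,p}$, and $\mathsf{n}_{\xi,p}$ simultaneously through both the summing operation and the Szlenk-index-lifting operation, ensuring that each summand $X_n$ contributes its failure to precisely the correct ideal at level $\xi$ without losing membership in the larger ideal. The countable cofinality hypothesis enters exactly here: the sequential limit construction $\xi_n\uparrow\xi$ cannot reach an ordinal of uncountable cofinality, which is why the $\mathfrak{T}\neq\mathfrak{A}\neq\mathfrak{N}$ distinctions are stated only in that regime. A final technical point is to promote the spatial witnesses to operator witnesses; for this the fact (proved in Proposition~$3.3$ and its analogues in \cite{C4,C5}) that each of the four classes is a Banach ideal is used, so that restricting the identity $I_X$ to appropriate subspaces or composing with a projection yields operator examples with the same separation properties.
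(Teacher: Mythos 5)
Your treatment of the equalities (via $\mathfrak{P}_{\xi,r}=\bigcap_{1<s<r}\mathfrak{T}_{\xi,s}$) and of the non-strict chain matches the paper. The strict containments, however, have two genuine problems. The first is the case of limit ordinals of uncountable cofinality: the chain $\bigcup_{p<r<\infty}\mathfrak{T}_{\xi,r}\subsetneq\mathfrak{T}_{\xi,p}\subsetneq\mathfrak{N}_{\xi,p}\subsetneq\mathfrak{P}_{\xi,p}$ is asserted for \emph{all} $\xi$, yet your only device for producing witnesses---a sequence $\xi_n\uparrow\xi$---cannot reach such $\xi$, and you give no substitute. The paper handles this by forming direct sums $(\oplus_{\gamma<\xi}X_{\gamma,1})_E$ over the full (possibly uncountable) index set $[0,\xi)$ with $E=\ell_p(\xi)$, $\mathfrak{Y}_q(\xi)$, or $\mathfrak{X}_q(\xi)$; these outer spaces are built over arbitrary infinite $\Gamma$ precisely because the underlying Tzafriri and modified Schlumprecht norms are $1$-symmetric, and Lemma~\ref{ravenous}$(ii)$ together with Proposition~\ref{p2} requires only $\sup\Gamma=\xi$ and $\theta_{\gamma,1}(X_\gamma)=1$, with no countability hypothesis. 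This is also why the $\mathfrak{T}_{\xi,p}\subsetneq\mathfrak{A}_{\xi,p}$ separation---whose witness uses a $T^*_q$ outer norm, a Schauder basis with no uncountable analogue---is the only one confined to countable cofinality in the theorem.

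The second problem is the proposal to take an $\ell_\infty$- or $c_0$-sum ``so that the Szlenk index of the sum is bounded by $\omega^\xi$ while the failure of $\mathfrak{T}_{\xi,p}$ is inherited.'' This is internally inconsistent: since $\mathfrak{D}_\xi\subset\mathfrak{T}_{\xi,p}$, a space with $Sz\leqslant\omega^\xi$ cannot fail $\mathfrak{T}_{\xi,p}$; the witnesses must have Szlenk index exactly $\omega^{\xi+1}$. Moreover a $c_0$ outer norm produces a $\xi$-AUF sum, landing in $\mathfrak{T}_{\xi,\infty}\subset\mathfrak{T}_{\xi,p}$ for every $p$---exactly the wrong side of the separation. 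The actual mechanism is Lemma~\ref{envelope}: when $Sz(X_\gamma)\leqslant\omega^\xi$ for each $\gamma$, one has $\|\cdot\|_{\xi,(\oplus X_\gamma)_E}\leqslant\|\cdot\|_{0,E}$, so the level-$\xi$ asymptotic geometry of the sum is controlled from above by the level-$0$ geometry of $E$. Thus $E$ must itself be a $\xi=0$ witness of the desired separation ($\ell_p$, $T^*_q$, $\mathfrak{Y}_q$, or $\mathfrak{X}_q$, as the case may be), while the summands $X_\gamma$ carry maximal $\theta$-entropy (Fact~\ref{good}) so that Propositions~\ref{p1} and \ref{p2} can push the sum out of the smaller class. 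The detour through Rademacher/Haar type is neither used nor needed: the Tzafriri, Schlumprecht, and Tsirelson spaces are analyzed directly through their disjoint and block summability properties (Propositions~\ref{1994} and \ref{blocks}).
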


\begin{theorem} For any ordinal $\xi$,   we have the following containments. $$\mathfrak{D}_\xi  \subsetneq \mathfrak{T}_{\xi,\infty}\subsetneq  \mathfrak{P}_{\xi, \infty}  = \bigcap_{1<r<\infty} \mathfrak{T}_{\xi,r} = \bigcap_{1<r<\infty} \mathfrak{P}_{\xi,r} \subsetneq \mathfrak{D}_{\xi+1}.  $$

If $\xi$ has countable cofinality, and in particular if $\xi$ is countable, we also have the following containments.  \begin{align*}  \mathfrak{T}_{\xi,\infty}\subsetneq \mathfrak{A}_{\xi,\infty}= \mathfrak{N}_{\xi,\infty}\subsetneq \mathfrak{P}_{\xi, \infty}. \end{align*}

\label{tm2}

\end{theorem}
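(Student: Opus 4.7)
The plan is to dispatch the nonstrict containments and the intersection equalities by appealing to material already developed, then treat the four strict inclusion claims by adapting the constructions used for Theorem~\ref{tm1} to the $p=\infty$ endpoint. The bulk of the work will be the last two strictness assertions in the countable cofinality case.

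The nonstrict chain $\mathfrak{D}_\xi\subset\mathfrak{T}_{\xi,\infty}\subset\mathfrak{A}_{\xi,\infty}=\mathfrak{N}_{\xi,\infty}\subset\mathfrak{P}_{\xi,\infty}\subset\mathfrak{D}_{\xi+1}$ is already in hand, combining the general containment theorem of Section 3 with the Proposition identifying $\mathfrak{A}_{\xi,\infty}$ with $\mathfrak{N}_{\xi,\infty}$. The equality $\mathfrak{P}_{\xi,\infty}=\bigcap_{1<r<\infty}\mathfrak{T}_{\xi,r}$ is the identity $\mathfrak{P}_{\xi,p}=\bigcap_{1<r<p}\mathfrak{T}_{\xi,r}$ of \cite{TAMS} at the endpoint $p=\infty$. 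For the remaining equality $\bigcap_{1<r<\infty}\mathfrak{T}_{\xi,r}=\bigcap_{1<r<\infty}\mathfrak{P}_{\xi,r}$, the $\subset$ direction is immediate from $\mathfrak{T}_{\xi,r}\subset\mathfrak{P}_{\xi,r}$; conversely, if $A$ lies in every $\mathfrak{P}_{\xi,r}$ then for each $s>1$, choosing any $r>s$ yields $A\in\mathfrak{P}_{\xi,r}=\bigcap_{1<t<r}\mathfrak{T}_{\xi,t}\subset\mathfrak{T}_{\xi,s}$.

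The strictness $\mathfrak{D}_\xi\subsetneq\mathfrak{T}_{\xi,\infty}$ is witnessed by a space with Szlenk index exceeding $\omega^\xi$ that admits an equivalent $\xi$-AUF norm; for $\xi=0$ the space $c_0$ suffices, and for general $\xi$ the standard transfinite Tsirelson-type iteration driven by the Schreier family $\mathcal{S}_\xi$ provides such a space, matching the examples used in the corresponding statement of Theorem~\ref{tm1}. The strictness $\mathfrak{P}_{\xi,\infty}\subsetneq\mathfrak{D}_{\xi+1}$ is witnessed by an operator with $Sz=\omega^{\xi+1}$ and Szlenk power type strictly greater than $1$; at $\xi=0$ this is a classical construction with finite summability exponent, and for general $\xi$ it is obtained by the transfinite lifting of the same construction at level $\xi+1$.

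Now assume $\xi$ has countable cofinality. The strictness $\mathfrak{T}_{\xi,\infty}\subsetneq\mathfrak{N}_{\xi,\infty}$ is witnessed by a Banach space with finite $N_{\xi,\infty}$ but no equivalent $\xi$-AUF norm; for $\xi=0$ the example comes from Tzafriri-style equal-norm constructions, and the cofinality assumption allows the $\xi$-version to be assembled as an $\ell_\infty$-direct sum indexed by a cofinal sequence $\xi_n\uparrow\xi$. The principal obstacle is the strictness $\mathfrak{N}_{\xi,\infty}\subsetneq\mathfrak{P}_{\xi,\infty}$: one must produce a space $X\in\bigcap_{1<r<\infty}\mathfrak{T}_{\xi,r}$ with $\sup_n n\,\theta_{\xi,n}(X)=\infty$. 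My strategy is to form a suitable direct sum of spaces $X_n\in\mathfrak{T}_{\xi,r_n}$ with $r_n\downarrow 1$, tuning the equivalent-norm constants $b_n$ and the AUS moduli so that the Young-duality formalism of Theorem~\ref{big theorem} together with the multiplicativity of $Cz_\xi$ from Proposition~\ref{jbg} forces $\textsf{p}_\xi(X)=1$ while the equal-norm constants $N_{\xi,\infty,n}(X)$ diverge. The countable cofinality of $\xi$ is used both to arrange the $\xi$-indexed constituents and to form the sum coherently at level $\xi$; the main technical difficulty is ensuring that the Szlenk power type of the sum is preserved as the equal-norm constants are allowed to blow up.
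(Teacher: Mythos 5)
Your handling of the soft part of the theorem is fine: the nonstrict chain, the identity $\mathfrak{P}_{\xi,\infty}=\bigcap_{1<r<\infty}\mathfrak{T}_{\xi,r}$, and the elementary argument that $\bigcap_{1<r<\infty}\mathfrak{T}_{\xi,r}=\bigcap_{1<r<\infty}\mathfrak{P}_{\xi,r}$ all check out, and the example $C_0(\omega^{\omega^\xi})$ (or a Schreier-type analogue) witnesses $\mathfrak{D}_\xi\subsetneq\mathfrak{T}_{\xi,\infty}$. Beyond that, though, the proposal has several genuine gaps.

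First, for $\mathfrak{P}_{\xi,\infty}\subsetneq\mathfrak{D}_{\xi+1}$ at $\xi=0$ there is no \emph{space} example: Lancien's submultiplicativity inequality $Sz(X,\delta\ee)\leqslant Sz(X,\delta)Sz(X,\ee)$ forces every Banach space with $Sz(X)\leqslant\omega$ into $\bigcup_{1<p<\infty}\mathfrak{P}_{0,p}$, so one must produce an \emph{operator} $A\in\mathfrak{D}_1\setminus\bigcup_{1<p<\infty}\mathfrak{P}_{0,p}$. The paper does this with $A:(\oplus_n\ell_{1+1/n})_{\ell_2}\to(\oplus_n\ell_{1+1/n})_{\ell_2}$ scaled by $1/\log_2(n+1)$ on the $n$-th summand. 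Your sketch (``a classical construction with finite summability exponent'') misses this operator-versus-space distinction entirely.

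Second, for $\mathfrak{T}_{\xi,\infty}\subsetneq\mathfrak{A}_{\xi,\infty}=\mathfrak{N}_{\xi,\infty}$ under countable cofinality, an $\ell_\infty$- or $c_0$-direct sum indexed by $\xi_n\uparrow\xi$ cannot work: by Corollary~\ref{one side} any $c_0$-direct sum of spaces of Szlenk index $\leqslant\omega^\xi$ already lies in $\textsf{T}_{\xi,\infty}$, so it witnesses nothing. What is needed is the $T^*$-direct sum (Tsirelson dual, $q=1$): the sum inherits $\alpha_{\xi,\infty}<\infty$ from Lemma~\ref{envelope} and the asymptotic-$c_0$ estimate in $T^*$, while Lemma~\ref{ravenous} together with the strict singularity of $\ell_1\to T$ rules out membership in $\mathfrak{T}_{\xi,\infty}$. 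Also note the $\xi=0$ example is $T^*$ itself, not a Tzafriri-type space — the Tzafriri construction $V_q$ separates $\mathfrak{N}$ from $\mathfrak{A}$, a gap that collapses at $p=\infty$.

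Third, the crucial strictness $\mathfrak{N}_{\xi,\infty}\subsetneq\mathfrak{P}_{\xi,\infty}$ (equivalently $\mathfrak{T}_{\xi,\infty}\subsetneq\mathfrak{P}_{\xi,\infty}$) is where your proposal is most at risk. You propose an abstract ``tuning'' of $X_n\in\mathfrak{T}_{\xi,r_n}$ with $r_n\downarrow 1$, but you never explain what the outer norm of the direct sum should be, and a generic choice breaks one side or the other: a $c_0$-sum is $\xi$-AUF, an $\ell_p$-sum destroys the power-type bound, and an unrestricted Banach-lattice sum need not keep the $\theta_{\xi,n}$ estimates. The paper instead uses an explicit summand — the symmetrized Schlumprecht dual $\mathfrak{X}_1(\Gamma)\subset SM^*$ — whose canonical-vector norms grow as $n\log_2(n+1)$, so that $\theta_{\xi,n}$ has exactly logarithmic (sub-power-type) growth, while the $\ell_r$-upper-block estimates of $SM^*$ combined with Lemma~\ref{tech}$(iii)$ furnish $\xi$-$r$-AUS renormings for every $1<r<\infty$. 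This is not a matter of tuning constants; it is a delicate choice of a single concrete lattice. Moreover, you claim countable cofinality is needed here, but it is not: the paper's $\mathfrak{X}_1(\Gamma)$ works for any index set $\Gamma$, and indeed the theorem asserts $\mathfrak{T}_{\xi,\infty}\subsetneq\mathfrak{P}_{\xi,\infty}$ for \emph{all} ordinals. The countable cofinality hypothesis is used only for the $\mathfrak{T}_{\xi,\infty}\subsetneq\mathfrak{A}_{\xi,\infty}$ link, because the $T^*$-sum requires a countable cofinal sequence.
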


We have already shown all of the containments in Theorems \ref{tm1} and \ref{tm2}. It remains only to show that the indicated sets are not equal. We first argue the outermost differences.   It was argued in \cite{C4} that $C_0(\omega^{\omega^\xi})$, the space of all continuous, scalar-valued funtions $f$ on $\omega^{\omega^\xi}+=[0, \omega^{\omega^\xi}]$ such that $f(\omega^{\omega^\xi})=0$, is $\xi$-AUF. Brooker \cite{Brooker2} showed that  $Sz(C_0(\omega^{\omega^\xi}+))=\omega^{\xi+1}$, so $\mathfrak{D}_\xi\neq \mathfrak{T}_{\xi, \infty},$ and therefore $\mathfrak{D}_\xi$ does not contain any of the other classes mentioned in Theorems \ref{tm1} or \ref{tm2}.

Now for any $0<\xi$, in \cite{C3} it was shown that for any sequence $1> \vartheta_1>\vartheta_2\ldots$ with $\lim_n \vartheta_n=0$ and any sequence $(m_n)_{n=1}^\infty$ of natural numbers with $\sup_n m_n=\omega$, there exists a Banach space $X$ with $Sz(X)=\omega^{\xi+1}$, while for all $n\in\nn$, $Sz(X, \vartheta_n)>\omega^\xi m_n$.   An appropriate choice of $(\vartheta_n)_{n=1}^\infty$ and $(m_n)_{n=1}^\infty$, for example $\vartheta_n=\frac{1}{n+1}$ and $m_n=2^n$, we arrive at a Banach space in $\mathfrak{D}_{\xi+1}$ such that $Sz_\xi(X, \ee)$ has no non-trivial power type bound on its growth as $\ee\to 0$.  Thus for any $0<\xi$, we can exhibit some Banach space $X$ which lies in $\mathfrak{D}_{\xi+1}$ but not in any of the other classes in Theorems \ref{tm1} or \ref{tm2}.

This is in contrast to the $\xi=0$ case.  Lancien \cite{Lancien} showed that $Sz(X, \delta\ee)\leqslant Sz(X ,\delta)Sz(X, \ee)$ for any Banach space and any $0<\delta, \ee<1$, and therefore any Banach space with Szlenk index not more than $\omega$ lies in $\cup_{1<p<\infty} \mathfrak{P}_{0,p}$.   However, one can easily construct an operator which lies in $\mathfrak{D}_1$ but not $\cup_{1<p<\infty} \mathfrak{P}_{0,p}$ by taking the operator $A:(\oplus_{n=1}^\infty \ell_{1+1/n})_{\ell_2}\to (\oplus_{n=1}^\infty \ell_{1+1/n})_{\ell_2}$ such that $A|_{\ell_{1+1/n}}\equiv \frac{1}{\log_2(n+1)}I_{\ell_{1+1/n}}$.  Since $\ell_{1+1/n}\in \textsf{D}_1$ and $\mathfrak{D}_1$ is a closed ideal, we  deduce that $A\in \mathfrak{D}_1$. Moreover, for each $n\in\nn$,  $$\theta_{0, n}(\ell_{1+1/n})=n^{-\frac{1}{n+1}}\underset{n\to \infty}{\to}1.$$  From this we deduce the existence of some $c\in (0,1)$ such that $\theta_{0,n}(A) \geqslant c/\log_2(n+1)$ for all $n\in\nn$, and $A$ does not lie in $\cup_{1<p<\infty}\mathfrak{P}_{0,p}$.

We will recall some important Banach spaces for our examples below, but first we must recall some notation. We let $(e_n)_{n=1}^\infty$ and $(e_n^*)_{n=1}^\infty$ denote the canonical $c_{00}$ basis and the corresponding coordinate functionals, respectively.  Given $x\in c_{00}$ and $E\subset \nn$, $Ex$ is the sequence in $c_{00}$ such that $e^*_n(Ex)= 1_E(n)e^*_n(x)$.  Given two subsets $E,F$ of $\nn$ and $n\in\nn$, we write $n\leqslant E$ to mean $E= \varnothing$ or $n\leqslant \min E$, and we write   $E<F$ to mean that either $E=\varnothing$, $F=\varnothing$, or $\max E<\min F$.  Let $\text{supp}(x)=\{n\in\nn: e^*_n(x)\neq 0\}$ and $x<y$ will denote that $\text{supp}(x)<\text{supp}(y)$.  For $n\in\nn$ and $x\in c_{00}$, $n\leqslant x$ will denote that $n\leqslant \text{supp}(x)$.

We say that a (possibly uncountable, unordered) collection $(e_\gamma:\gamma\in \Gamma)$ in a Banach space $E$ is a $1$-\emph{unconditional basis} for $E$ if $(e_\gamma:\gamma\in \Gamma)$ has dense span in $E$ and for any finite subset $F$ of $\Gamma$, any scalars $(a_\gamma)_{\gamma\in F}$, and any unimomdular scalars $(\ee_\gamma)_{\gamma\in \Gamma}$, $$\|\sum_{\gamma\in F} a_\gamma e_\gamma\|= \|\sum_{\gamma\in F} \ee_\gamma a_\gamma e_\gamma\|.$$   We recall that for a Banach space $E$ with $1$-unconditional basis $(e_\gamma:\gamma\in \Gamma)$ and for a collection of Banach spaces $X_\gamma$, $\gamma\in \Gamma$, the direct sum $(\oplus_{\gamma\in\Gamma} X_\gamma)_E$ is the Banach space consisting of all $(x_\gamma)_{\gamma\in \Gamma}\in \prod_{\gamma\in \Gamma} X_\gamma$ such that $\sum_{\gamma\in \Gamma}\|x_\gamma\|_{X_\gamma}e_\gamma\in E$, endowed with the norm $$\|(x_\gamma)_{\gamma\in \Gamma}\|_{(\oplus_{\gamma\in \Gamma}X_\gamma)_E}=\|\sum_{\gamma\in \Gamma}\|x_\gamma\|_{X_\gamma}e_\gamma\|_E.$$   For $1<r<\infty$, we say $E$ satisfies an upper $\ell_r$ estimate provided there exists a constant $C$ such that for any disjointly supported vectors $x_1, \ldots, x_n\in E$, $$\|\sum_{i=1}^n x_i\|_E\leqslant C\bigl(\sum_{i=1}^r \|x_i\|_E^r\bigr)^{1/r}.$$    Formally speaking, this terminology should reference the particular basis rather than simply the space $E$, but each of the spaces we consider below will have a specific, canonical basis, so no confusion will arise.

 If $(e_i)_{i=1}^\infty$ is a Schauder basis for the Banach space $E$, we say $E$ satisfies an $\ell_r$-\emph{upper block estimate} provided that there exists a constant $C$ such that  for any successively supported vectors $x_1, \ldots, x_n$ in $E$, $$\|\sum_{i=1}^n x_i\|\leqslant C\bigl(\sum_{i=1}^n \|x_i\|^r\bigr)^{1/r}.$$   As in the last paragraph, this terminology should reference the specific basis of $E$, but this terminology will cause no confusion.   We recall that for $1\leqslant p\leqslant \infty$, a Schauder basis $(e_i)_{i=1}^\infty$ is said to be \emph{asymptotic} $\ell_p$ (resp. $c_0$ if $p=\infty$) in $E$ provided that there exists a constant $C\geqslant 1$ such that for any $n\in\nn$ and $n\leqslant x_1<\ldots <x_n$, $$C^{-1}\|\sum_{i=1}^n \|x_i\|_Ee_i\|_{\ell_p^n} \leqslant \|\sum_{i=1}^n x_i\|_E \leqslant C\|\sum_{i=1}^n \|x_i\|_E e_i\|_{\ell_p^n}.$$  Once more, we will say a Banach space $E$ is asymptotic $\ell_p$ (or $c_0$) provided that the canonical basis of that space is asymptotic $\ell_p$ (resp. $c_0$) in $E$. 

We recall that a basis (either an unordered, possibly uncountable $1$-unconditional basis or a Schauder basis) is called \emph{shrinking} if any bounded, coordinate-wise null net is weakly null.  This is equivalent to the coordinate functionals having dense span in the dual space.

\begin{lemma} \begin{enumerate}[(i)]\item Suppose $1<r,s<\infty$ are such that $1/r+1/s=1$, $(m_n)_{n=1}^\infty$ is a sequence of natural numbers such that $\sum_{j=1}^\infty 2^{-j} m_j^{1/s}<\infty$,  $E$ is a Banach space with $1$-unconditional basis such that for any $n\in\nn$, if $x_1, \ldots, x_k\subset B_E$ are disjointly supported and $x^*\in B_{E^*}$ is such that $\text{\emph{Re\ }}x^*(x_i)\geqslant 2^{-n}$ for all $1\leqslant i\leqslant k$, then $k\leqslant m_n$.  Then $E$ satisfies an upper $\ell_r$  estimate. \item Suppose $1<r,s<\infty$ are such that $1/r+1/s=1$, $(m_n)_{n=1}^\infty$ is a sequence of natural numbers such that $\sum_{j=1}^\infty 2^{-j} m_j^{1/s}<\infty$,  $E$ is a Banach space with $1$-unconditional basis $(e_i)_{i=1}^\infty$ such that for any $n\in\nn$, if $x_1, \ldots, x_k\subset B_E$ are successively supported, and $x^*\in B_{E^*}$ is such that $\text{\emph{Re\ }}x^*(x_i)\geqslant 2^{-n}$ for all $1\leqslant i\leqslant k$, then $k\leqslant m_n$.  Then $E$ satisfies an  $\ell_r$ upper block  estimate.  \item If $1<r<\infty$ and $E$ is a Banach space with $1$-unconditional basis satisfying an upper $\ell_r$ estimate (resp. $\ell_r$ upper block estiate), then there exists an equivalent norm $|\cdot|$ on $E$ such that $(E, |\cdot|)$ is $r$-AUS and the basis of $E$ is $1$-unconditional in $(E, |\cdot|)$. \item If $m_n=\min \{k\in\nn: 2^{-n}k>\log_2(k+1)\}$, then for any $1<s<\infty$, $\sum_{n=1}^\infty 2^{-n}m_n^{1/s}<\infty$. \end{enumerate}

\label{tech}

\end{lemma}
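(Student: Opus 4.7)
The plan for parts (i) and (ii) is a level-set decomposition of a norming functional coupled with H\"older's inequality. Fix disjointly (resp.\ successively) supported vectors $x_1,\ldots,x_N \in E$ and pick $x^* \in B_{E^*}$ with $\|\sum_i x_i\|_E = \text{Re\ }x^*(\sum_i x_i)$. By $1$-unconditionality I may replace each $x_i$ with $\ee_i x_i$ for suitable unimodular $\ee_i$ so that $x^*(x_i) \geqslant 0$; set $y_i = x_i/\|x_i\|_E$ and $\alpha_i = x^*(y_i) \in [0,1]$. For $n \geqslant 1$ let $A_n = \{i : 2^{-n} \leqslant \alpha_i < 2^{-n+1}\}$. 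Since $(y_i)_{i \in A_n} \subset B_E$ remains disjointly (resp.\ successively) supported and $\text{Re\ }x^*(y_i) \geqslant 2^{-n}$, the hypothesis gives $|A_n| \leqslant m_n$, so H\"older yields $\sum_{i \in A_n}\|x_i\|_E \leqslant m_n^{1/s}(\sum_{i \in A_n}\|x_i\|_E^r)^{1/r} \leqslant m_n^{1/s}(\sum_i \|x_i\|_E^r)^{1/r}$. Summing over $n$,
\[
\|\sum_i x_i\|_E \;=\; \sum_n\sum_{i \in A_n}\alpha_i\|x_i\|_E \;\leqslant\; 2\Bigl(\sum_n 2^{-n}m_n^{1/s}\Bigr)\Bigl(\sum_i \|x_i\|_E^r\Bigr)^{1/r},
\]
which is the desired upper $\ell_r$ (resp.\ $\ell_r$ upper block) estimate, since the middle factor is finite by hypothesis.

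For part (iii) I would define $|x|$ to be the supremum of $(\sum_{j=1}^k\|E_j x\|_E^r)^{1/r}$ taken over all finite collections of pairwise disjoint subsets (resp.\ successive intervals) $E_1,\ldots,E_k$ of $\nn$. By construction $|\cdot|$ is a $1$-unconditional norm on $E$ satisfying the upper $\ell_r$ estimate with constant $1$, and $\|x\|_E \leqslant |x| \leqslant C\|x\|_E$ where $C$ is the hypothesized upper $\ell_r$ constant. The basis is shrinking: an upper $\ell_r$ estimate with $r>1$ forbids any normalized block sub-basis equivalent to the $\ell_1$-basis, so shrinkingness follows from the James characterization of shrinking unconditional bases. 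Given $x \in S_{(E,|\cdot|)}$ and a bounded weakly null net $(y_\lambda) \subset B_{(E,|\cdot|)}$, shrinkingness together with $1$-unconditionality allow me to pass to a subnet which, up to $o(1)$ in $|\cdot|$, is disjointly (resp.\ successively) supported relative to $x$; the constant-$1$ upper $\ell_r$ estimate then yields $|x + \sigma y_\lambda| \leqslant (1 + \sigma^r)^{1/r} + o(1)$, and taking the limsup in $\lambda$ and the supremum over $x$ gives $\varrho_0(\sigma; (E, |\cdot|)) \leqslant (1+\sigma^r)^{1/r} - 1 = O(\sigma^r)$. Part (iv) is a direct asymptotic: the minimality of $m_n$ with the defining inequality forces $m_n = \Theta(2^n n)$, hence $2^{-n} m_n^{1/s} = O(2^{-n(1-1/s)} n^{1/s}) = O(2^{-n/r} n^{1/s})$, which sums since $1/r > 0$.

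The main obstacle is the perturbation step in (iii): verifying that a bounded $|\cdot|$-weakly null net in $E$ can be replaced, up to arbitrarily small error in $|\cdot|$, by a net disjointly (resp.\ successively) supported from a fixed $x$. This uses the shrinkingness established above, together with the fact that tail truncations converge in $|\cdot|$ (immediate from $1$-unconditionality of the new norm). Everything else reduces to H\"older's inequality and elementary asymptotics, so the only point requiring real care is maintaining the $|\cdot|$-modulus estimate during this perturbation and ensuring the disjoint/successive structure is compatible with the sup-renorming.
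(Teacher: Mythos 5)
Parts (i), (ii), and (iv) of your proposal are essentially the paper's own argument: the dyadic level-set decomposition $A_n=\{i:2^{-n}\leqslant\alpha_i<2^{-n+1}\}$ followed by H\"older on each level is exactly the paper's decomposition (with its $T_j$ in place of your $A_n$), and your asymptotic $m_n=\Theta(2^n n)$ in (iv) is a slightly sharper version of the paper's cruder bound $m_n\leqslant 2^{(1+\alpha)n}$ for $n$ large; both are adequate.

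Part (iii), however, has a genuine gap: the supremum-based renorming $|x|=\sup\bigl\{(\sum_j\|E_jx\|_E^r)^{1/r}\bigr\}$ is the wrong object. An upper $\ell_r$ estimate gives $\|x\|_E\leqslant C(\sum_j\|E_jx\|_E^r)^{1/r}$, which is a \emph{lower} bound on the quantity you are taking the supremum of, so the inequality $|x|\leqslant C\|x\|_E$ you assert does not follow; in fact it is false. Concretely, $E=c_0$ satisfies a constant-$1$ upper $\ell_r$ estimate for every $r$, yet for $x=e_1+\cdots+e_n$ the singleton partition gives $(\sum_j\|E_jx\|_\infty^r)^{1/r}=n^{1/r}\to\infty$, so your $|\cdot|$ is not even a finite seminorm there. (And where it is finite the constant-$1$ upper $\ell_r$ estimate for $|\cdot|$ itself need not hold, since a single block $E_j$ can straddle the supports of two disjointly supported vectors and the sup over partitions inherits the original constant $C$ rather than $1$.) The paper instead takes an \emph{infimum}: $[x]=\inf\bigl\{(\sum_i\|x_i\|_E^r)^{1/r}:x=\sum_ix_i$ with the $x_i$ disjointly (resp.\ successively) supported$\bigr\}$, and then convexifies via $|x|=\inf\bigl\{\sum_i[x_i]:x=\sum_ix_i\bigr\}$. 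The infimum interacts correctly with disjoint sums (concatenating near-optimal decompositions of $x$ and $y$ gives $[x+y]^r\leqslant[x]^r+[y]^r$), and one checks that this power-type inequality survives the convexification. The later steps in your outline for (iii) — shrinkingness via James, perturbing a weakly null net so it is eventually disjointly supported from a fixed $x$, then invoking the constant-$1$ disjoint estimate — are fine, but they rest on the renorming, which needs to be replaced by the infimum version.
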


\begin{proof}$(i)$ Let $C=\sum_{j=1}^\infty 2^{1-j} m_j^{1/s}$.  Fix a sequence $(x_i)_{i=1}^n\subset B_E$ of disjointly supported vectors, scalars $(a_i)_{i=1}^n$, and $x^*\in B_{E^*}$ such that $$x^*(\sum_{i=1}^n a_i x_i)=\|\sum_{i=1}^n a_ix_i\|.$$  For each $j\in\nn$, let $$T_j=\{i\leqslant n: |x^*(x_i)|\in (2^{-j}, 2^{1-j}]\}.$$    By hypothesis, $|T_j|\leqslant m_j$ for all $j\in\nn$, whence $y^*=\sum_{j=1}^\infty \sum_{i\in T_j} 2^{1-j}e_i^*\in \ell_s$ satisfies $\|x^*\|_{\ell_s}\leqslant C$.     Moreover, \begin{align*} \|\sum_{i=1}^n a_ix_i\| & = x^*(\sum_{i=1}^n a_i x_i) \leqslant \sum_{j=1}^\infty \sum_{i\in T_j}2^{1-j}|a_i| \\ & = y^*\bigl(\sum_{i=1}^n a_i e_i\bigr) \leqslant C\bigl(\sum_{i=1}^n |a_i|^r\bigr)^{1/r}. \end{align*}    

$(ii)$ This is similar to $(i)$.

$(iii)$ Define the two quantities $[\cdot]$ and $|\cdot|$ on the span of $(e_\gamma: \gamma\in \Gamma)$ by $$[x]=\inf\Bigl\{\bigl(\sum_{i=1}^n \|x_i\|_E^r\bigr)^{1/r}: n\in\nn, x=\sum_{i=1}^n x_i, x_1, \ldots x_n\text{\ disjointly supported}\Bigr\}$$ and $$|x|=\Bigl\{\sum_{i=1}^n [x_i]: n\in\nn, x=\sum_{i=1}^n x_i\Bigr\}.$$  Then $|\cdot|$ extends to an equivalent norm on $E$ satisfying $|x+y|^r\leqslant |x|^r+|y|^r$ for any disjointly supported $x,y$, and $(e_\gamma:\gamma\in \Gamma)$ is still $1$-unconditional with respect to the norm $|\cdot|$.  More precisely, if  $C$ is such that $\|\sum_{i=1}^n x_i\|_E \leqslant C\bigl(\sum_{i=1}^n \|x_i\|_E^r\bigr)^{1/r}$ for any disjointly supported $x_1, \ldots, x_n$, then $|\cdot|\leqslant \|\cdot\|\leqslant C|\cdot|$.  It is obvious that $(e_\gamma:\gamma\in \Gamma)$ is still $1$-unconditional with respect to $|\cdot|$.  To see that $|x+y|^r \leqslant |x|^r+|y|^r$ for disjointly supported vectors, first suppose that $x,y$ have finite, disjoint supports.  Let $E,F$ be the supports of $x$ and $y$, respectively.    Write $x=\sum_{i=1}^n x_i$ for disjointly supported $x_1, \ldots, x_n$.  Note that $\|Ex_i\|\leqslant \|x_i\|$, so that $$x=Ex=\sum_{i=1}^n Ex_i$$ and $$\bigl(\sum_{i=1}^n \|Ex_i\|^r\bigr)^{1/r} \leqslant \bigl(\sum_{i=1}^n \|x_i\|^r\bigr)^{1/r}.$$   From this it follows that to compute $[x]$, it is sufficient to take the infimum of $\bigl(\sum_{i=1}^n \|x_i\|^r\bigr)^{1/r}$ for disjointly supported $x_1, \ldots, x_n$ with $\text{supp}(x_i)\subset E$.   A similar argument holds for $y$.  Furthermore, by compactness of the order interval $[0, |x|]$ in the lattice structure of $E$, $[x]$ is a minimum.  Thus if $$[x]=\bigl(\sum_{i=1}^n \|x_i\|^r\bigr)^{1/r}$$ and $$[y]=\bigl(\sum_{i=1}^m \|y_i\|^r\bigr)^{1/r}$$ for some $x_i$, $y_i$ with $\text{supp}(x_i)\subset E$ and $\text{supp}(y_i)\subset F$, then $$|x+y|^r\leqslant [x+y]^r\leqslant \sum_{i=1}^n \|x_i\|^r_E+\sum_{i=1}^m \|y_i\|_E^r\leqslant [x]^r+[y]^r.$$   This argument yields that for any finite, disjoint subsets $E,F$ of $\Gamma$ and any non-negative numbers $a,b$, $$\{u+v: u\in \text{span}(e_\gamma:\gamma\in E), [u]\leqslant a, v\in \text{span}(e_\gamma:\gamma\in F), [v]\leqslant b\} \subset  (a^r+b^r)^{1/r}B_E^{|\cdot|}.$$

Now we note that $|x|$ can similarly be computed by $$|x|=\inf\{\sum_{i=1}^n [x_i]: n\in \nn, x=\sum_{i=1}^n x_i, \text{supp}(x_i)\subset E\},$$ since for any $n\in\nn$ and $x_1, \ldots, x_n$ with $x=\sum_{i=1}^n x_i$, $x=Ex=\sum_{i=1}^n Ex_i$ and $$\sum_{i=1}^n [Ex_i] \leqslant \sum_{i=1}^n [x_i].$$   From this, the previous paragraph, and standard arguments it follows that for $x,y$ with finite, disjoint supports,  $$x+y\in \overline{\text{co}}\{u+v: u\in \text{span}(e_\gamma:\gamma\in \text{supp}(x)), [u]\leqslant |x|, v\in \text{span}(e_\gamma:\gamma\in \text{supp}(y)), [v]\leqslant |y|\} \subset  (|x|^r+|y|^r)^{1/r}B_E^{|\cdot|} .$$

The case of block estimates is similar, except we define the quantity $$[x]=\inf\Bigl\{\bigl(\sum_{i=1}^n \|x_i\|_E^r\bigr)^{1/r}:n\in\nn, x_1<\ldots <x_n, x=\sum_{i=1}^n x_i\Bigr\}.$$

$(iv)$ Fix $0<\alpha <1$ such that $\frac{1+\alpha}{s}<1$. Let $\theta=\frac{1+\alpha}{s}-1<0$. Fix $n_0\in \nn$ such that $2^{\alpha n}>(1+\alpha)n+1$ for all $n\geqslant n_0$.   Then if $n\geqslant n_0$, $$2^{-n}2^{(1+\alpha)n} = 2^{\alpha n}> (1+\alpha)n+1 \geqslant \log_2(2^{(1+\alpha)n}+1),$$ so that $m_n\leqslant 2^{(1+\alpha)n}$ for all $n\geqslant n_0$.   Then  from this it follows that $m_n^{1/s}\leqslant 2^{\frac{1+\alpha}{s}n}$ for all $n\geqslant n_0$, and $2^{-n}m_n \leqslant (2^\theta)^n$ for all $n\geqslant n_0$.  Since $\theta<0$, $\sum_{n=n_0}^\infty 2^{\theta n}<\infty$.

\end{proof}

We  recall Schlumprecht space $S$ and modified Schlumprecht space, $SM$.  The space $S$ is the completion of $c_{00}$ with respect to the norm  $$\|x\|_S=\max\Biggl\{ \|x\|_{c_0}, \max\Bigl\{\frac{1}{\log_2(l+1)}\sum_{i=1}^l \|E_ix\|_S: 1<l\in \nn, E_1<\ldots<  E_l\Bigr\}\Biggr\}.$$$SM$ is the completion of $c_{00}$ with respect to the norm $$\|x\|_{SM}=\max\Biggl\{ \|x\|_{c_0}, \max\Bigl\{\frac{1}{\log_2(l+1)}\sum_{i=1}^l \|E_ix\|_{SM}: 1<l\in \nn, E_1,\ldots,  E_l\text{\ pairwise disjoint}\Bigr\}\Biggr\}.$$   For $1\leqslant q<\infty$, we let $S_q$ and  $SM_q$ denote the completions of $c_{00}$ with respect to the norms $\|x\|_{S_q}=\||x|^q\|_S^{1/q}$ and $\|x\|_{SM_q}= \||x|^q\|_{SM}^{1/q},$ respectively.  Here $|\sum_{i=1}^\infty a_i e_i|^q=\sum_{i=1}^\infty |a_i|^qe_i$ is the usual lattice operation. Note that $(e_i)_{i=1}^\infty$ is a normalized, $1$-subsymmetric basis for $S_q$ and a normalized,   $1$-symmetric basis for $SM_q$. For an infinite set $\Gamma$ and $1\leqslant q<\infty$, we let $\mathfrak{X}_q(\Gamma)$ denote the completion of $c_{00}(\Gamma)$ with respect to the norm $$\|\sum_{\gamma\in \Gamma}a_\gamma e_\gamma\|_{\mathfrak{X}_q(\Gamma)}= \sup\Bigl\{\|\sum_{i=1}^\infty a_{f(i)} e_i\|_{SM_q^*}: f:\nn\to \Gamma\text{\ is an injection}\Bigr\}.$$  We let $\mathfrak{X}_q$ denote $\mathfrak{X}_q(\nn)=[e^*_i:i\in\nn]\subset SM_q^*$.  Furthermore, by $1$-symmetry, for any sequence of distinct elements $(\gamma_i)_{i=1}^\infty$ in $\Gamma$, $(e_{\gamma_i})_{i=1}^\infty\subset \mathfrak{X}_q(\Gamma)$ is isometrically equivalent to $(e^*_i)_{i=1}^\infty\subset SM^*_q$.

Next, we recall a space introduced by Tzafriri.   Fix  such that $1/p+1/q=1$.  We then let $V_q$ (denoted by Tzafriri as $V_{1/3^{1/p}, q}$) be the completion of $c_{00}$ with respect to the norm  $$\|x\|_{V_q}=\max\Biggl\{\|x\|_{c_0}, \max\Bigl\{\frac{1}{3^{1/p} l^{1/p}}\sum_{i=1}^l \|E_ix\|_{V_q}: 1<l\in \nn, E_1, \ldots, E_l\text{\ pairwise disjoint}\Bigr\}\Biggr\}.$$    Note that $(e_i)_{i=1}^\infty$ is a normalized, $1$-symmetric basis for $V_q$.  For an infinite set $\Gamma$, we let $\mathfrak{Y}_q(\Gamma)$ denote the completion of $c_{00}(\Gamma)$ with respect to the norm $$\|\sum_{\gamma\in \Gamma} a_\gamma e_\gamma\|_{\mathfrak{Y}_q(\Gamma)} = \sup\Bigl\{\|\sum_{i=1}^\infty a_{f(i)} e_i\|_{V_q^*}: f:\nn\to \Gamma\text{\ is an injection}\Bigr\}.$$  We let $\mathfrak{Y}_q$ denote $\mathfrak{Y}_q(\nn)=[e^*_i:i\in\nn]\subset V_q^*$. We note that for any sequence $(\gamma_i)_{i=1}^\infty$ of distinct elements of $\Gamma$, $(e_{\gamma_i})_{i=1}^\infty$ is isometrically equivalent to $(e^*_i)_{i=1}^\infty\subset V_q^*$.     We also recall the Tirilman space $\text{Ti}_q$ defined in \cite{CS}, the norm of which is given by $$\|x\|_{\text{Ti}_q}=\max\Biggl\{\|x\|_{c_0}, \max\Bigl\{\frac{1}{3^{1/p} l^{1/p}}\sum_{i=1}^l \|E_ix\|_{V_q}: 1<l\in \nn, E_1< \ldots< E_l\Bigr\}\Biggr\}.$$  

Last, we recall the Figiel-Johnson Tsirelson space, $T$, which is the dual of the space constructed by Tsirelson. The space $T$ is the completion of $c_{00}$ with respect to the norm $$\|x\|_T= \max\Biggl\{\|x\|_{c_0}, \max \Bigl\{\frac{1}{2}\sum_{i=1}^l \|E_i x\|_T: 1<l\in\nn, l\leqslant E_1<\ldots <E_l\Bigr\}\Biggr\}.$$   We let $T_q$ denote the completion of $c_{00}$ with respect to the norm $$\|x\|_{T_q}= \||x|^q\|_T^{1/q}.$$   Note that $(e_i)_{i=1}^\infty$ is a normalized, $1$-unconditional basis for $T_q$.

We  recall the essential facts regarding these spaces. 

\begin{proposition} Fix $1\leqslant q<\infty$ and let $1<p<\infty$ be such that $1/p+1/q=1$. In what follows, each assertion about $V_q$ and $\text{\emph{Ti}}_q$ will include the implicit assumption that $1<q$.  \begin{enumerate}[(i)]\item For any $n\in\nn$, $\|\sum_{i=1}^n e_i\|_S=\|\sum_{i=1}^n e_i\|_{SM}=\frac{n}{\log_2(n+1)}$.

\item If $G$ is any of the spaces $S_q, SM_q$, $\text{\emph{Ti}}_q$, $V_q$, or $T_q$, the formal identity $I:\ell_q\to G$ is bounded and strictly singular.

  \item For any $l\in\nn$ and  disjointly (resp. successively) supported $x^*_1, \ldots , x^*_l\in [e^*_i:i\in\nn]\cap B_{SM^*}$ (resp. $B_{S^*}$, $\frac{1}{\log_2(l+1)}\sum_{i=1}^l x^*_i\in B_{SM^*}$ (resp. $B_{S^*}$).

	\item For any finite, non-empty subset $E$ of $\nn$, $\|\sum_{i\in E}e_i\|_{SM_q^*}, \|\sum_{i\in E}e_i\|_{S_q^*}\geqslant |E|^{1/p}\log_2(|E|+1)^{1/q}$. 
	
	\item For any $l\in\nn$ and any disjointly (resp. successively) supported $x^*_1, \ldots , x^*_l$, $x^*_i\in B_{[e^*_j:j\in\nn]}\subset B_{V_q^*}$ (resp. $B_{\text{\emph{Ti}}_q^*}$), $\sum_{i=1}^l x^*_i\in  3^{1/p} n^{1/p}B_{V^*_q}$ (resp. $\sum_{i=1}^l x^*_i \in  3^{1/p}n^{1/p} B_{\text{\emph{Ti}}_q^*}$).

	\item For any $l\in \nn$ and $l\leqslant x^*_1<\ldots <x^*_l$, $x^*_i\in T_q^*$, $$\bigl(\sum_{i=1}^l \|x_i^*\|_{T_q^*}^p\bigr)^{1/p} \leqslant \|\sum_{i=1}^l x^*_i\|_{T^*_q} \leqslant \bigl(2^{1/q}\sum_{i=1}^l \|x_i^*\|_{T^*_q}^p\bigr)^{1/p}$$ $$\Bigl(\text{resp.\ \ }\max_{1\leqslant i\leqslant l}\|x^*_i\|_{T^*} \leqslant \|\sum_{i=1}^l x^*_t\|_{T^*} \leqslant 2\max_{1\leqslant i\leqslant l}\|x^*_i\|_{T^*}\text{ \ \ if\ }p=\infty\Bigr).$$    \end{enumerate}

\label{1994}
\end{proposition}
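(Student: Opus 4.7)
The plan is to verify each of (i)--(vi) by direct computation with the defining norm formulas; most items are standard facts about Schlumprecht- and Tsirelson-type spaces.

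For (i), I would proceed by induction on $n$: the lower bound $\|\sum_{i=1}^n e_i\|_S\geq n/\log_2(n+1)$ is immediate from the defining supremum at $l=n$ with singleton blocks, and the upper bound reduces to the classical Schlumprecht extremization showing that any partition $(E_i)_{i=1}^l$ of $[1,n]$ into nonempty intervals with $|E_i|=k_i$ satisfies $\sum_{i=1}^l k_i/\log_2(k_i+1)\leq n\log_2(l+1)/\log_2(n+1)$. The equality $\|\sum e_i\|_S=\|\sum e_i\|_{SM}$ on this vector holds since any disjoint partition of $[1,n]$ rearranges to a successive one without altering the sum. For (ii), boundedness of $I:\ell_q\to G$ reduces, for $S_q, SM_q, T_q$, to the trivial triangle bound $\|\sum b_ie_i\|_H\leq\sum|b_i|$ applied to $|x|^q$ via $\|\cdot\|_G^q=\||\cdot|^q\|_H$; for $V_q$ and $\text{Ti}_q$ it follows by induction on the norm formula, since Hölder's inequality produces a factor $l^{1/p}$ that cancels the denominator's $l^{1/p}$. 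Strict singularity follows because (i) and elementary analogues in the other spaces yield $\|\sum_{i=1}^n e_i\|_G/n^{1/q}\to 0$, so the canonical basis of $G$ is not equivalent to the $\ell_q$-basis; since every infinite-dimensional subspace of $\ell_q$ contains a copy of $\ell_q$, strict singularity of $I$ follows.

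For (iii)--(v), use duality against the defining inequality of the relevant norm. For (iii), if $x^*_1,\ldots,x^*_l\in B_{SM^*}$ are disjointly supported with supports $F_i$, then for any $x\in B_{SM}$,
\begin{equation*}
\Bigl|\Bigl\langle\frac{1}{\log_2(l+1)}\sum_{i=1}^l x^*_i,\ x\Bigr\rangle\Bigr|\leq\frac{1}{\log_2(l+1)}\sum_{i=1}^l\|F_ix\|_{SM}\leq\|x\|_{SM},
\end{equation*}
the last inequality being the definition of $\|\cdot\|_{SM}$; the $S$-case is identical. For (iv), evaluate $\sum_{i\in E}e_i$ (viewed as a functional) on $\sum_{i\in E}e_i/\|\sum_{i\in E}e_i\|_{SM_q}\in B_{SM_q}$; by (i), $\|\sum_{i\in E}e_i\|_{SM_q}=(|E|/\log_2(|E|+1))^{1/q}$ and the pairing equals $|E|$, giving the claimed lower bound. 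For (v), repeat the argument of (iii) with $1/(3^{1/p}l^{1/p})$ in place of $1/\log_2(l+1)$.

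Item (vi) requires the most bookkeeping. For the upper bound, pair $\sum x^*_i$ with $x\in B_{T_q}$ and split along $E_i=\text{supp}(x^*_i)$: Hölder gives $|\langle\sum x^*_i,x\rangle|\leq(\sum\|x^*_i\|^p)^{1/p}(\sum\|E_ix\|_{T_q}^q)^{1/q}$, and then $\sum\|E_ix\|_{T_q}^q=\sum\|E_i|x|^q\|_T\leq 2\||x|^q\|_T=2\|x\|_{T_q}^q$ by the defining inequality of $\|\cdot\|_T$ applied to $|x|^q$ (valid since $l\leq x^*_1$ forces $l\leq E_1<\cdots<E_l$), producing the factor $2^{1/q}$. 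For the lower bound, pick near-norming $y_i$ with $\text{supp}(y_i)\subseteq E_i$ and $\|y_i\|_{T_q}=1$; for any $(c_i)$ with $\sum|c_i|^q\leq 1$, the vector $y=\sum c_iy_i$ satisfies $\|y\|_{T_q}^q=\|\sum|c_i|^q|y_i|^q\|_T\leq\sum|c_i|^q$ by disjoint supports and the triangle inequality on $T$, and dualizing Hölder over the $c_i$ yields $\|\sum x^*_i\|_{T_q^*}\geq(\sum\|x^*_i\|^p)^{1/p}$. The $p=\infty$ case collapses the lower bound to $1$-unconditionality of $(e_i^*)$ in $T^*$, while the upper bound is the same pairing argument. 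The only real obstacle is the extremization in (i), which requires a specific convexity argument; the remaining items are routine unwindings of the respective norm definitions, with (vi) requiring careful tracking of Hölder exponents.
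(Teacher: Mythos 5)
Your treatments of (i), (iii), (iv), (v), and (vi) are essentially sound and track the paper's (very terse) argument, filling in some of the routine duality computations that the paper dismisses with ``follows from the definitions.''

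There is, however, a genuine gap in your argument for strict singularity in (ii). You claim that ``(i) and elementary analogues in the other spaces yield $\|\sum_{i=1}^n e_i\|_G/n^{1/q}\to 0$'' and deduce strict singularity from that. This claimed growth fact is simply \emph{false} for $G=T_q$, $V_q$, and $\text{Ti}_q$. For $V_q$ (and $\text{Ti}_q$), taking $l=n$ and $E_i=\{i\}$ in the defining formula gives $\|\sum_{i=1}^n e_i\|_{V_q}\geqslant \frac{1}{3^{1/p}n^{1/p}}\cdot n = 3^{-1/p}n^{1/q}$, so the ratio is bounded away from zero. For $T_q$, one has $\|\sum_{i=1}^n e_i\|_T\gtrsim n$ (this is the standard lower estimate for Tsirelson space — e.g. apply the defining inequality with $l=\lfloor n/2\rfloor$ and singleton blocks $E_i=\{l+i-1\}$), hence $\|\sum_{i=1}^n e_i\|_{T_q}\gtrsim n^{1/q}$. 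Indeed, the whole point of $V_q$ in this paper is that it witnesses the gap between $\textsf{N}_{0,p}$ and $\textsf{A}_{0,p}$, which requires $\theta_{0,n}(V_q^*)$ to be of order $n^{-1/p}$ — that forces $\|\sum_{i=1}^n e_i\|_{V_q}\sim n^{1/q}$. There is also a secondary logical issue even for $S_q, SM_q$: non-equivalence of the canonical basis to the $\ell_q$-basis does not immediately rule out that the image of some \emph{other} normalized block sequence of $\ell_q$ lands on an $\ell_q$-equivalent block sequence in $G$; one must argue that $G$ contains no isomorph of $\ell_q$ at all. The paper's route is different: boundedness in all cases follows from $q$-convexification (or, as you note, by a one-step Hölder for $V_q, \text{Ti}_q$); strict singularity for $S_q, SM_q$ is attributed to Schlumprecht's analysis; for $T_q$ it uses that $T$ contains no isomorph of $\ell_1$, so its $q$-convexification contains no $\ell_q$; for $V_q$ Tzafriri's theorem is cited directly; and for $\text{Ti}_q$ one observes $\|\cdot\|_{\text{Ti}_q}\leqslant\|\cdot\|_{V_q}$, so the formal inclusion $\ell_q\to\text{Ti}_q$ factors through the strictly singular $\ell_q\to V_q$ and hence is strictly singular by the ideal property. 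You need some version of these arguments; the growth-of-partial-sums route is unavailable for three of the five spaces.
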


\begin{proof}$(i)$ Schlumprecht showed that $\|\sum_{i=1}^n e_i\|_S=\frac{n}{\log_2(n+1)}$. The same proof holds for $SM$.

$(ii)$ For $S_q$, $SM_q$ and $T_q$, boundedness of $I$ follows from the fact that we have taken a $q$-convexification. Item $(i)$ yields that $I:\ell_q\to S_q$ and $I:\ell_q\to SM_q$ are strictly singular.  It is well known that $T$ contains no isomorph of $\ell_1$, so the $q$-convexification $T_q$ cannot contain an isomorph of $\ell_q$, from which strict singularity of $I:\ell_q\to T_q$ follows.    For $V_q$, boundedness and strict singularity of the formal identity $I:\ell_q\to V_q$ were shown by Tzafriri. It is evident that $\|\cdot\|_{\text{Ti}_q} \leqslant \|\cdot\|_{V_q}$, so the formal inclusion of $\ell_q$ into $\text{Ti}_q$ factors through the formal inclusion of $\ell_q$ into $V_q$, and is therefore strictly singular.

$(iii)$ This follows easily from the definition of the norms of $SM$ and $S$.

$(iv)$ It follows from $(i)$ that for any finite, non-empty subset $E$ of $\nn$ that $$\|\sum_{i\in E} e_i\|_{SM_q} = \frac{|E|^{1/q}}{\log_2(|E|+1)^{1/q}}.$$   From this it follows that $$\|\sum_{i\in E} e_i\|_{SM_q^*} \geqslant \frac{|E|\log_2(|E|+1)^{1/q}}{|E|^{1/q}} = |E|^{1/p}\log_2(|E|+1)^{1/q}.$$

$(v)$ and $(vi)$ follow immediately from the definitions.

\end{proof}

\begin{proposition} Fix $1<p\leqslant \infty$ and let $1/p+1/q=1$.  \begin{enumerate}[(i)]\item If $G$ is any of the spaces $S_q^*$, $\text{\emph{Ti}}_q^*$, $T^*_q$, then for any $1<r<p$, $G$ satisfies $\ell_r$ upper block estimates.   \item  For any $q<s<\infty$ and $1<r<p$, $[e^*_i:i\in\nn]\subset SM_q^*$ satisfies an upper $\ell_r$ estimate and $SM_q$ satisfies a lower $\ell_s$ estimate.   \end{enumerate}

\label{blocks}
\end{proposition}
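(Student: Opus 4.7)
The plan is to apply Lemma \ref{tech}(ii) to each $G \in \{S_q^*, \mathrm{Ti}_q^*, T_q^*\}$ (for part (i)) and Lemma \ref{tech}(i) to $[e_i^* : i] \subset SM_q^*$ (for part (ii)), then to deduce the lower $\ell_s$ estimate on $SM_q$ by standard duality. In every case, the reduction is to verify: for each $n \in \nn$, if $x_1, \ldots, x_k$ lie in $B_G$ and are successively (resp.\ disjointly) supported, and if some $x^* \in B_{G^*}$ satisfies $\mathrm{Re}\, x^*(x_i) \geqslant 2^{-n}$ for all $i$, then $k \leqslant m_n$ for a sequence with $\sum_n 2^{-n} m_n^{1/s} < \infty$, where $1/r + 1/s = 1$. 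The hypothesis $r < p$ is equivalent to $s > q$, which supplies the necessary exponent buffer.

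For $G = \mathrm{Ti}_q^*$, Proposition \ref{1994}(v) yields $2^{-n} k \leqslant |x^*(\sum_{i=1}^k x_i)| \leqslant 3^{1/p} k^{1/p}$, so $k \leqslant 3^{q/p} 2^{nq}$; summability holds since $s > q$. The case $G = T_q^*$ is analogous from Proposition \ref{1994}(vi), yielding a bound of the same form $k \leqslant C 2^{nq}$; the side condition $l \leqslant x_1^*$ in (vi) is handled by splitting the block sequence at the index where supports cross $k$, with the head (supported in $\{1, \ldots, k-1\}$) absorbed into the constant via a standard finite-dimensional Tsirelson estimate. For $G = S_q^*$, the naive estimate from Proposition \ref{1994}(iii) carries a logarithmic factor, so we argue via the predual. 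Using reflexivity of $S_q$ (inherited from $q$-convexification of the reflexive $S$), identify $x^* \in B_{S_q^{**}}$ with $y \in B_{S_q}$; with $E_i = \supp(x_i)$ we obtain $|x^*(x_i)| \leqslant \|E_i y\|_{S_q}$. The $q$-convexification formula $\|E_i y\|_{S_q}^q = \|E_i(|y|^q)\|_S$, together with the Schlumprecht lower block estimate $\sum_i \|E_i(|y|^q)\|_S \leqslant \log_2(k+1)\,\||y|^q\|_S \leqslant \log_2(k+1)$ (read directly from the norm of $S$) and Hölder's inequality, yields $\sum_i \|E_i y\|_{S_q} \leqslant k^{1/p} \log_2(k+1)^{1/q}$. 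Comparing with $\sum_i |x^*(x_i)| \geqslant 2^{-n} k$, we arrive at $2^{-nq} k \leqslant \log_2(k+1)$, i.e., $k \leqslant m_{nq}$ in the notation of Lemma \ref{tech}(iv); the summability $\sum_n 2^{-n} m_{nq}^{1/s} < \infty$ follows from Lemma \ref{tech}(iv) by choosing $\alpha > 0$ small enough that $q(1+\alpha)/s < 1$.

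Part (ii) proceeds in the same fashion: the upper $\ell_r$ estimate on $[e_i^* : i] \subset SM_q^*$ is obtained by the $S_q^*$ argument above, substituting Lemma \ref{tech}(i) for (ii) and the $SM$-statements in Proposition \ref{1994}(iii), (iv) that permit pairwise disjoint rather than merely successive supports, with reflexivity of $SM_q$ supplying the identification $SM_q^{**} = SM_q$. For the lower $\ell_s$ estimate on $SM_q$, given disjoint $u_1, \ldots, u_k \in SM_q$ with $a_i = \|u_i\|$, pick Hahn--Banach functionals $v_i \in SM_q^*$ with $\|v_i\| = 1$, $v_i(u_i) = a_i$, and $\supp(v_i) \subseteq \supp(u_i)$ (using $1$-unconditionality). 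Setting $y = (\sum_j a_j^s)^{-1/r} \sum_i a_i^{s-1} v_i$ and applying the upper $\ell_r$ estimate on $SM_q^*$ yields $\|y\|_{SM_q^*} \leqslant C$ and $y(\sum u_i) = (\sum a_i^s)^{1/s}$, whence $\|\sum u_i\|_{SM_q} \geqslant C^{-1}(\sum a_i^s)^{1/s}$.

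The main obstacle is the $S_q^*$ estimate (and its $SM_q^*$ twin): the logarithmic loss inherent in Schlumprecht's norm, compounded by the $q$-convexification, pushes the counting bound $m_{nq}$ to the edge of the summability threshold in Lemma \ref{tech}(iv), and the proof survives only because the strict inequality $r < p$ leaves a small margin that can be absorbed into the exponent $\alpha$.
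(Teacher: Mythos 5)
Your proposal is correct, and for $T^*_q$, $\mathrm{Ti}^*_q$ it runs essentially parallel to the paper's proof: verify the counting hypothesis of Lemma~\ref{tech}$(ii)$ directly from the averaging property in Proposition~\ref{1994}$(v)$--$(vi)$, with the shift condition $l\leqslant x^*_1$ in the $T^*_q$ case dispatched by passing to the top half of the block sequence (your ``split at the index where supports cross $k$'' and ``absorb the head'' is a muddier description of the same idea; discarding the first half and using $\min\operatorname{supp}(x^*_i)\geqslant i$ is all that is needed). Where you genuinely diverge from the paper is the hard case $G=S_q^*$ (and its $SM_q^*$ twin). The paper avoids any direct computation in the $q$-convexified space: it first verifies the Lemma~\ref{tech} hypothesis for $E=S^*$ with $m_n$ as in part $(iv)$ of that lemma (using Proposition~\ref{1994}$(iii)$), deduces $\ell_r$ upper block estimates on $S^*$ for all $1<r<\infty$, dualizes to get $\ell_s$ lower block estimates on $S$, convexifies to get $\ell_{s'}$ lower block estimates on $S_q$ for all $q<s'<\infty$, and dualizes once more. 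You instead verify the Lemma~\ref{tech} hypothesis directly in $S_q^*$: identify the norming functional with some $y\in B_{S_q}$, apply the Schlumprecht lower block estimate to $|y|^q$, and use H\"older with exponents $p,q$ to obtain $2^{-nq}k\leqslant\log_2(k+1)$, so $k\leqslant m_{\lceil nq\rceil}$, with summability supplied by Lemma~\ref{tech}$(iv)$ and the margin $s>q$. This short-circuits the paper's dualization chain at the cost of one extra H\"older step and the mild bookkeeping around $m_{nq}$ for non-integer index. Your derivation of the lower $\ell_s$ estimate on $SM_q$ from the upper $\ell_r$ estimate on $[e^*_i]$ via Hahn--Banach and the exponent identity $(s-1)r=s$ is also correct and reverses the order of deduction the paper uses. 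Two small cautions: your appeal to ``reflexivity of $SM_q$'' to identify $B_{E^*}$ with $B_{SM_q}$ is unnecessary and best avoided (bimonotonicity plus the fact that the $x_i$ are finitely supported lets you restrict $x^*$ to the relevant finite-dimensional coordinate block, where the duality is exact), and your $T^*_q$ phrasing should be tightened as noted above.
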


\begin{proof}$(i)$  First fix $1<r<p$ and let $1/r+1/s=1$.     First note that by Proposition \ref{1994}$(v)$ and $(vi)$, if $\theta= 1/3^{1/p}$ if $G=\text{Ti}_q^*$ and $\theta=1/2^{q/p}$ if $G=T^*_q$, then for any $k\in\nn$ and any $k\leqslant x^*_1<\ldots <x^*_k$, $x^*_i\in B_G$, then $\frac{\theta}{k^{1/p}}\sum_{i=1}^k x_i^*\in B_G$.    Now for each $n\in\nn$, let $m_n= 2 \cdot 2^{qn}/\theta^q$. Note that $\sum_{n=1}^\infty 2^{-n} m_n^{1/s}<\infty$.      We will show that if $x^*_1<\ldots <x^*_k$, $x^*_i\in B_G$, and if $x\in B_{G^*}$ is such that $\text{Re\ }x^*_i(x)\geqslant 1/2^n$ for each $1\leqslant i\leqslant k$, then $k\leqslant m_n$.    Indeed, suppose that $k>m_n$ and $x^*_1<\ldots <x^*_k$, $x$ are as above.    If $k$ is even, let $j=k/2$ and otherwise let $j=\frac{k-1}{2}$.  In either case, let $l=k-j>2^{qn}/\theta^q$ and note that $l\leqslant x^*_{j+1}<\ldots <x^*_{j+l}$.     From this it follows that $$\frac{\theta}{l^{1/p}}\cdot \frac{1}{2^n}\cdot l \leqslant \Bigl(\frac{\theta}{l^{1/p}}\sum_{i=1}^l x^*_{j+i}\Bigr)(x) \leqslant 1.$$   From this it follows that $$l\leqslant 2^{qn}/\theta^q,$$ a contradiction.   We now appeal to Lemma \ref{tech}$(ii)$.

Now fix $1<r<\infty$ and let $1/r+1/s=1$.   Arguing as in the previous paragraph with $m_n$ as in Lemma \ref{tech}$(iv)$, we deduce that $S$ satisfies an $\ell_r$ upper block estimate.  By duality, $S$ satisfies an $\ell_s$ lower block estimate for every $1<s<\infty$.   From this it follows that $S_q$ satisfies an $\ell_s$ lower block estimate for every $q<s<\infty$, and using duality again we deduce that $S_q^*$ satisfies an upper $\ell_r$ estimate for every $1<r<p$.

$(ii)$  This is similar to the argument in $(i)$, using Lemma \ref{tech}$(i)$ in place of Lemma \ref{tech}$(ii)$.

\end{proof}

We are now prepared to prove Theorems \ref{tm1} and \ref{tm2} in the $\xi=0$ case. 

\begin{proof}[Proof of Theorems \ref{tm1}, \ref{tm2}, $\xi=0$ case]

First note that for $1<p<\infty$, $\ell_p\in \textsf{T}_{0,p}\setminus \cup_{p<r<\infty}\textsf{T}_{0,r}$.  The fact that $\ell_p\notin \cup_{p<r<\infty}\textsf{T}_{0,r}$ follows from the fact that $n\theta_{0,n}(\ell_p)= n^{1/p}$, and so $\ell_p\notin \textsf{N}_{0,r}$ for any $p<r<\infty$. Note that $c_0\in \textsf{T}_{0,\infty}$.

Now we argue that $T_q^*\in \textsf{A}_{0,p}\setminus \textsf{T}_{0,p}$.  The fact that $T^*_q\in \textsf{A}_{0,p}$ follows from Proposition \ref{1994}$(vi)$.   Since the canonical basis of $T^*_q$ is weakly null (which also follows from Proposition \ref{1994}$(vi)$, we can consider the weakly null tree $(x_t)_{t\in \nn^{<\nn}}\subset S_{T^*_q}$ given by $x_{(n_1, \ldots, n_k)}=e_{\sum_{i=1}^k n_i}$.  Then if $T^*_q\in \textsf{T}_{0,p}$, there must exist some subsequence of the $T^*_q$ basis which is dominated by the $\ell_p$ (resp. $c_0$ if $p=\infty$) basis.  By duality, this would yield the existence of a subsequence of the $T_q$ basis which dominates the $\ell_q$ basis.  Since this subsequence is also dominated by the $\ell_q$ basis, it must be equivalent.  But this contradicts the strict singularity of the formal inclusion $I:\ell_q\to T_q$.

Now we argue that if $1<q$, $V_q^*\supset [e^*_i:i\in\nn]\in \textsf{N}_{0,p}\setminus \textsf{A}_{0,p}$.    The fact that $[e^*_i:i\in\nn]\in \textsf{N}_{0,p}$ follows from Proposition \ref{1994}$(v)$.  Seeking a contradiction, suppose that $\alpha:=\alpha_{0,p}([e^*_i:i\in\nn])$ is finite.   Since the canonical basis is weakly null (which also follows from Proposition \ref{1994}$(v)$), we can using $1$-symmetry  to deduce that for any $n\in\nn$ and any scalar sequence $(a_i)_{i=1}^n\in B_{\ell_p^n}$, $$\|\sum_{i=1}^n a_i e_i^*\| = \inf \Bigl\{\|\sum_{i=1}^n a_i e_{\sum_{j=1}^i n_j}^*\|: (n_j)_{j=1}^n\in \nn^n\Bigr\} \leqslant \alpha.$$  From this we decuce that the sequence of coordinate functionals $(e^*_i)_{i=1}^\infty \subset V_q^*$ is dominated by the $\ell_p$ basis. By duality, the $V_q$ basis dominates, and is therefore equivalent to, the $\ell_q$ basis, contradicting the strict singularity of the formal inclusion $I:\ell_q\to V_q$.

Finally we argue that $SM_q^*\supset [e_i^*:i\in\nn]\in \textsf{P}_{0,p}\setminus \textsf{N}_{0,p}$. For the remainder of this paragraph, let $E_q$ denote the closed span in $SM_q^*$ of the canonical basis $(e^*_i)_{i=1}^\infty$.    Since the basis $(e^*_i)_{i=1}^\infty\subset SM_q^*$ is weakly null, we may argue as in the previous paragraph to deduce that for any $n\in\nn$ and any $(n_i)_{i=1}^n\in \nn^n$, $$\|\sum_{i=1}^n e_{\sum_{j=1}^i n_j}\|_{SM_q^*}/n^{1/p} = \log_2(n+1)^{1/q}\underset{n\to \infty}{\to}\infty,$$ whence $E_q\notin \mathfrak{N}_{0,p}$.      By Lemma \ref{tech},  $E_1\subset SM^*$ satisfies an upper $\ell_r$ estimate for any $1<r<\infty$. If $q=1$, then Lemma \ref{tech} yields that $E_1\in \cap_{1<r<\infty}\textsf{T}_{0,r}=\textsf{P}_{0,p}$.    Suppose now that $1<p<\infty$. By duality, $SM$ satisfies a lower $\ell_s$ estimate for every $1<s<\infty$.   From this it follows that  $SM_q$ satisfies a lower $\ell_s$ estimate for every $q<s<\infty$, and $E_q$ satisfies an upper $\ell_r$ estimate  for every $1<r<p$. Then Lemma \ref{tech} yields that $E_q\in \cap_{1<r<p}\textsf{T}_{0,r}=\textsf{P}_{0,p}$.  

\end{proof}

\begin{rem}\upshape Note that we could have used $S_q$ in place of $SM_q$ and $\text{Ti}_q$ in place of $V_q$ above, using the analogous statements from Propositions \ref{1994} and \ref{blocks}.

\end{rem}

\begin{lemma} Let $E$ be a Banach space with infinite, shrinking, $1$-unconditional basis $(e_\gamma:\gamma\in \Gamma)$.  Suppose that $\xi$ is an ordinal and for every $\gamma\in \Gamma$, $X_\gamma$ is a Banach space with $Sz(X_\gamma)\leqslant \omega^\xi$.   Then with $F=(\oplus_{\gamma\in \Gamma} X_\gamma)_E$, $$\|\cdot\|_{\xi, F}\leqslant \|\cdot\|_{0,E}.$$   

\label{envelope}

\end{lemma}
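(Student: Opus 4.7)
The plan is to reduce the estimate to the game/strategy characterization of $\|\cdot\|_{0, E}$ established in the Application section, using the hypothesis $Sz(X_\gamma) \leqslant \omega^\xi$ to render the $\xi$-th asymptotic structure inside each $X_\gamma$ negligible. By the Remark following the definition of $\|\cdot\|_{\xi, A}$, it is enough to prove that for any $n \in \nn$, any $(a_i)_{i=1}^n \in c_{00}$, any $C > \|\sum_{i=1}^n a_i e_i\|_{0, E}$, and any weakly null $(x_t)_{t \in \Gamma_{\xi, n}.D_F} \subset B_F$, the set
$$\{t \in MAX(\Gamma_{\xi, n}.D_F): \|\sum_{i=1}^n a_i y_i(t)\|_F \leqslant C\}$$
is eventual, where $y_i(t) = \sum_{\Lambda_{\xi, n, i}.D_F \ni s \leqslant t} \mathbb{P}_{\xi, n}(s)x_s$.

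First I would linearize the $F$-norm via the natural lattice map $\tau: F \to E$, $\tau((x_\gamma)_\gamma) = \sum_\gamma \|x_\gamma\|_{X_\gamma} e_\gamma$, which by definition satisfies $\|\tau(x)\|_E = \|x\|_F$. The triangle inequality in each $X_\gamma$ and the $1$-unconditionality of $(e_\gamma)$ give
$$\|\sum_{i=1}^n a_i y_i(t)\|_F \leqslant \|\sum_{i=1}^n |a_i|\tau(y_i(t))\|_E.$$
Writing $z_i(t) := |a_i|\tau(y_i(t)) \in |a_i|B_E$, the goal reduces to producing $\|\sum_i z_i(t)\|_E \leqslant C$ on an eventual set.

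Next, since $C > \|\sum a_i e_i\|_{0, E}$, the game characterization fixes a winning strategy $\phi$ for Player~I in the $I_E, |a_1|, \ldots, |a_n|, C$-game. Because $(e_\gamma)$ is shrinking, $E^*$ is densely spanned by the biorthogonal functionals, so by shrinking each output if necessary I may assume $\phi$ returns weak neighborhoods of the form $V = \{e \in E: \|P_{F_0} e\|_E < \delta\}$ with $F_0 \subset \Gamma$ finite and $\delta > 0$, where $P_{F_0}$ is the coordinate projection. I then build the branch level by level: given a history $t_1 < \cdots < t_{i-1}$ yielding $z_1, \ldots, z_{i-1}$, I let $(F_0^{(i)}, \delta^{(i)})$ come from $\phi(z_1, \ldots, z_{i-1})$ and seek $t_i \in MAX(\Lambda_{\xi, n, i}.D_F)$ with $t_i > t_{i-1}$ and $\|P_\gamma y_i(t_i)\|_{X_\gamma} \leqslant \delta^{(i)}/\bigl(|a_i|\|\sum_{\gamma \in F_0^{(i)}} e_\gamma\|_E\bigr)$ for every $\gamma \in F_0^{(i)}$. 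Under the canonical identification of $\{s \in \Lambda_{\xi, n, i}.D_F: s > t_{i-1}\}$ with $\Gamma_{\xi, 1}.D_F$ (which respects the $\mathbb{P}$-weights), this becomes a statement about the size of a single $\mathbb{P}_{\xi, 1}$-weighted convex combination of a $\Gamma_{\xi, 1}.D_F$-indexed weakly null collection in $B_{X_\gamma}$. The hypothesis places $I_{X_\gamma}$ in $\mathfrak{D}_\xi$, whence $\theta_{\xi, 1}(I_{X_\gamma}) = 0$ by Theorem \ref{wh}(i), and the Remark on $\|\cdot\|_{\xi, A}$ then guarantees the eventualness of the set of valid $t_i$; finite intersection over $\gamma \in F_0^{(i)}$ preserves eventualness.

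Finally, these pointwise-eventual level-$i$ constraints must be glued into a globally eventual subset of $MAX(\Gamma_{\xi, n}.D_F)$. Any $t$ in such a global set produces a legal play $(z_i(t))_{i=1}^n$ of Player~II against Player~I's winning strategy $\phi$, forcing $\|\sum_i z_i(t)\|_E \leqslant C$ and hence $\|\sum_i a_i y_i(t)\|_F \leqslant C$ by the first step. I expect the assembly to be the main obstacle: one must verify, by induction on $n$, that if for every history the extension set is eventual in the corresponding $\Gamma_{\xi, 1}.D_F$ sub-tree, then the aggregate set of branches consistent with $\phi$ is eventual in the whole $\Gamma_{\xi, n}.D_F$. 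Closure of eventual sets under finite intersections and passage to supersets (imported from \cite{C4}), together with the recursive structure provided by the canonical identifications $j_t$, should carry this through; the proof of the Proposition immediately following the Remark (which assembles eventual sets to obtain the triangle inequality for $\|\cdot\|_{\xi, A}$) offers a clean template.
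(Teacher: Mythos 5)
Your route and the paper's are genuinely different. The paper works forward: given $\lambda<\|\sum a_ie_i\|_{\xi,F}$, it fixes a $\xi$-witness $(x_t)_{t\in\Gamma_{\xi,n}.D}$ with $\inf_t\|\sum a_i\cdots\|>\lambda$, then recursively builds a new tree indexed by $\mathcal{F}^{\leqslant n}$ (finite subsets of $\Gamma$, directed by inclusion): at each step the piece $\sum_{f(t^-)<s\leqslant f(t)}\mathbb{P}_{\xi,n}(s)x_s$ is chosen to nearly escape all previously-used coordinates (possible because $Sz(P_G)\leqslant\omega^\xi$ for any finite $G$), then cut down to a fresh finite support $T_t\setminus T_{t^-}$. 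Applying $\pi$ gives a collection $(y_t)\subset B_E$ that is $\circ$-weakly null (the basis is shrinking and $P_{S_k}y_{(S_1,\ldots,S_k)}=0$), and along every branch $\|\sum a_iy_{t|_i}\|>\lambda$, hence $\|\sum a_ie_i\|_{0,E}\geqslant\lambda$. No games and no eventualness enter. You work backward from a winning Player-I strategy for the $0$-game in $E$, which is a legitimate dual viewpoint, and your linearization $\|\sum a_iy_i(t)\|_F\leqslant\|\sum|a_i|\tau(y_i(t))\|_E$ is correct. The lemma's shrinking hypothesis is used in both arguments, but differently: the paper uses it to get weak nullity of $(y_t)$, you use it to reduce Player I's weak neighborhoods to coordinate-projection form.

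There is a genuine gap, and it is the one you flag. As written, you want the recursively constructed set of strategy-compatible branches to be eventual in $MAX(\Gamma_{\xi,n}.D_F)$, and you invoke the behavior of eventual sets at each level. But ``level-wise eventual implies globally eventual'' for the $\Omega_{\xi,n}$-based notion is nowhere established in this paper (the definition of cofinal is deferred to \cite{C4}), and the seminorm Proposition you cite as a template only handles intersections of finitely many eventual subsets of the same $MAX(\Gamma_{\xi,n}.D)$, not a set built recursively from branch-dependent constraints. This is a real missing step, not a routine closure property. It is, however, avoidable: you have over-engineered the reduction. By the sup-inf definition of $\|\cdot\|_{\xi,F}$ it suffices, for every weakly null $(x_t)$, to produce a single branch $t$ with $\|\sum a_iy_i(t)\|\leqslant C$; eventualness is not needed. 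Existence of a suitable $t_i$ at each level is exactly the fact the paper itself records (``there exist $t\in MAX(\Lambda_{\xi,n,i+1}.D)$ with $\|P_G\cdots\|<\delta$''), and then the recursion against $\phi$ terminates after $n$ steps with a valid Player-II play, giving $\|\sum z_i(t)\|_E\leqslant C$. With that downgrade your game-theoretic argument closes; as stated, the eventualness assembly is an unproven leap.
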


\begin{proof} In the proof, let us recall that if $t=(S_1, \ldots, S_k)$, $k\in\nn$, is some finite sequence, then $t^-=(S_1, \ldots, S_{k-1})$.

Let $\pi:F\to E$ be given by $\pi((x_\gamma)_{\gamma\in \Gamma})=\sum_{\gamma\in \Gamma} \|x_\gamma\|_{X_\gamma} e_\gamma$.    For each finite subset $G$ of $\Gamma$, let $P_G(x_\gamma)_{\gamma\in \Gamma}= (1_G(\gamma)x_\gamma)_{\gamma\in \Gamma}$.   Then since $Sz(X_\gamma)\leqslant \omega^\xi$, $Sz(P_G)\leqslant \omega^\xi$ for any finite subset $G$ of $\Gamma$.    From this it follows that for any directed set $D$, any $n\in\nn$, any weakly null collection $(x_t)_{t\in \Gamma_{\xi,n}.D}\subset B_F$,  any finite  subset $G$ of $\Gamma$, and any $\delta>0$, there exist $t\in MAX(\Lambda_{\xi,n,1}.D)$ (resp. for any $1\leqslant i<n$ and $u\in MAX(\Lambda_{\xi,n,i}.D)$, there exist $u<t\in MAX(\Lambda_{\xi, n, i+1}.D)$)  such that $\|P_G\sum_{s\leqslant t}\mathbb{P}_{\xi,n}(s)x_s\|<\ee$ (resp. $\|P_G\sum_{u<s\leqslant t}\mathbb{P}_{\xi,n}(s)x_s\|<\delta$).

Fix $n\in\nn$, a non-zero scalar sequence $(a_i)_{i=1}^n$, and $$\lambda<\|\sum_{i=1}^n a_i e_i\|_{\xi,F}.$$  Fix $\ee>0$ such that $$2\ee n \max_{1\leqslant i\leqslant n}|a_i|+\lambda< \|\sum_{i=1}^n a_i e_i\|_{\xi,F}.$$  Then we may fix a directed set $D$ and a weakly null collection $(x_t)_{t\in \Gamma_{\xi,n}.D}$ such that $$2\ee n \max_{1\leqslant i\leqslant n}|a_i|+ \lambda <\inf_{t\in MAX(\Gamma_{\xi,n}.D)} \|\sum_{i=1}^n a_i \sum_{t_{i-1}<s\leqslant t_i} \mathbb{P}_{\xi,n}(s)x_s\|,$$ where for $t\in MAX(\Gamma_{\xi,n}.D)$, $\varnothing=t_0<\ldots <t_n=t$ is the unique sequence such that $t_i\in MAX(\Lambda_{\xi,n,i}.D)$ for each $1\leqslant i\leqslant n$.      Let $\mathcal{F}$ denote the set of finite, non-empty subsets of $\Gamma$, directed by inclusion.   We then define a function $f:\mathcal{F}^{\leqslant n}\to \cup_{i=1}^n MAX(\Lambda_{\xi,n,i}.D)$, sets $(T_t)_{t\in \mathcal{F}^{\leqslant n}}\subset \Gamma$,   such that, with $T_\varnothing=\varnothing$ and $f(\varnothing)=\varnothing$,  \begin{enumerate}[(i)]\item if $t\in \mathcal{F}^k$, $f(t)\in MAX(\Lambda_{\xi,n,k}.D)$, \item if $s,t\in \mathcal{F}^{\leqslant n}$ and $s<t$, then $f(s)<f(t)$, \item if $t=(S_1, \ldots, S_k)\in \mathcal{F}^{\leqslant n}$, then $$\|P_{T_{t^-}\cup S_k} \sum_{f(t^-)<s\leqslant f(t)} \mathbb{P}_{\xi,n}(s) x_s\| <\ee,$$  \item if $s,t\in \mathcal{F}^{\leqslant n}$ and $s<t$, $T_s\subset T_t$, \item for any $t\in \mathcal{F}^{\leqslant n}$, $$\|P_{\Gamma\setminus T_t} \sum_{f(t^-)<s\leqslant f(t)} \mathbb{P}_{\xi,n}(s)x_s\|<\ee.$$   \end{enumerate} We define $f(t)$, $T_t$ for $t\in \mathcal{F}^{\leqslant n}$ under the assumption that $f(s), T_s$ have already been defined and have the stated properties for each $\varnothing\leqslant s<t$.    Write $t=(S_1, \ldots, S_k)\in \mathcal{F}^k$ and let $G=T_{t^-}\cup S_k$, $\delta=\ee$.    By hypothesis, $f(t^-)=\varnothing$ if $k=1$ and $f(t^-)\in MAX(\Lambda_{\xi, n, k-1}.D)$ otherwise. By the previous paragraph, we may fix $f(t)\in MAX(\Lambda_{\xi, n,k}.D)$ such that $f(t^-)<f(t)$ and $$\|P_G\sum_{f(t^-)<s\leqslant f(t)} \mathbb{P}_{\xi,n}(s)x_s\|<\delta.$$   We then fix $T_{t^-}\subset T_t\subset \Gamma$ finite such that $$\|P_{\Gamma\setminus T_t} \sum_{f(t^-)<s\leqslant f(t)} \mathbb{P}_{\xi,n}(s)x_s\|<\ee.$$  This completes the recursive construction.

Now let $y_t=\pi P_{T_t\setminus T_{t^-}}\sum_{f(t^-)<s\leqslant f(t)} \mathbb{P}_{\xi,n}(s)x_s\in B_E$.   Since the basis is shrinking and $P_{S_k}y_{(S_1, \ldots, S_k)}=0$, $(y_t)_{t\in \mathcal{F}^{\leqslant n}}$ is weakly null and for any $t\in \mathcal{F}^n$, \begin{align*} \|\sum_{i=1}^n a_iy_{t|_i} \| & = \|\sum_{i=1}^n a_iP_{T_t\setminus T_{t^-}}\sum_{f(t^-)<s\leqslant f(t)} \mathbb{P}_{\xi,n}(s) x_s\| \\ & \geqslant \|\sum_{i=1}^n a_i\sum_{f(t^-)<s\leqslant f(t)} \mathbb{P}_{\xi,n}(s) x_s\| -2\ee n \max_{1\leqslant i\leqslant n}|a_i| >\lambda. \end{align*} From this it follows that $\|\sum_{i=1}^n a_i e_i\|_{0,E}\geqslant \lambda$.

\end{proof}

\begin{corollary} Let $\xi$ be an ordinal and suppose $1\leqslant q<\infty$ and $1/p+1/q=1$.     Let $\Gamma$ be an infinite set and suppose that for every $\gamma\in \Gamma$, $X_\gamma$ is a Banach space with $Sz(X_\gamma)\leqslant \omega^\xi$.    \begin{enumerate}[(i)]\item If $p<\infty$, $(\oplus_{\gamma\in \Gamma} X_\gamma)_{\ell_p(\Gamma)}\in \textsf{\emph{T}}_{\xi,p}$, and  $(\oplus_{\gamma\in \Gamma} X_\gamma)_{c_0(\Gamma)} \in \textsf{\emph{T}}_{\xi,\infty}$. \item If $\Gamma$ is countable, say $\Gamma=(\gamma_n)_{n=1}^\infty$, then $(\oplus_{n=1}^\infty X_{\gamma_n})_{T_q^*}\in \textsf{\emph{A}}_{\xi,p}$.  \item If $1<q$, $(\oplus_{\gamma\in \Gamma}X_\gamma)_{\mathfrak{Y}_q(\Gamma)}\in \textsf{\emph{N}}_{\xi,p}$. \item $(\oplus_{\gamma\in \Gamma}X_\gamma)_{\textsf{X}_q(\Gamma)}\in \textsf{\emph{P}}_{\xi,p}$.    \end{enumerate}

\label{one side}
\end{corollary}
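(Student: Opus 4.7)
My plan is to apply Lemma \ref{envelope} as a bridge that transfers the asymptotic behavior of $F = (\oplus_{\gamma \in \Gamma} X_\gamma)_E$ at ordinal level $\xi$ to the level-$0$ behavior of the base space $E$. In each of the four parts the outer envelope space $E$ has been chosen precisely so that $I_E$ lies in the corresponding class at $\xi = 0$ (by the arguments in the Distinctness section): $\ell_p(\Gamma)$ (resp.\ $c_0(\Gamma)$) is in $\textsf{T}_{0,p}$ (resp.\ $\textsf{T}_{0,\infty}$), $T_q^*$ is in $\textsf{A}_{0,p}$, $\mathfrak{Y}_q(\Gamma)$ is in $\textsf{N}_{0,p}$, and $\mathfrak{X}_q(\Gamma)$ is in $\textsf{P}_{0,p}$. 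Each of these base spaces carries a shrinking, $1$-unconditional basis (in fact $1$-symmetric in the latter three cases), so Lemma \ref{envelope} applies and yields the key inequality $\|\cdot\|_{\xi, F} \leq \|\cdot\|_{0, E}$ on $c_{00}$.

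I would dispose of (ii), (iii), and (iv) first, since each follows from this inequality by picking the right distinguished scalar sequence. For (ii), taking the supremum over $B_{\ell_p^n}$ gives $\alpha_{\xi, p, n}(F) \leq \alpha_{0, p, n}(T_q^*) \leq \alpha_{0, p}(T_q^*) < \infty$, whence $F \in \textsf{A}_{\xi, p}$. For (iii), specializing to the sequence $(n^{-1/p})_{i=1}^n$ gives $n^{1/q}\theta_{\xi, n}(F) \leq n^{1/q}\theta_{0, n}(\mathfrak{Y}_q(\Gamma))$, and the right side is uniformly bounded in $n$ by Proposition \ref{1994}(v), so $F \in \textsf{N}_{\xi, p}$. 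For (iv), since $\mathfrak{X}_q(\Gamma) \in \textsf{T}_{0, r} \subset \textsf{A}_{0, r}$ for every $1 < r < p$, Lemma \ref{envelope} applied at exponent $r$ gives $\alpha_{\xi, r}(F) \leq \alpha_{0, r}(\mathfrak{X}_q(\Gamma)) < \infty$, so $F \in \textsf{A}_{\xi, r} \subset \textsf{P}_{\xi, r}$ for every such $r$; the identity $\mathfrak{P}_{\xi, p} = \bigcap_{1<r<p} \mathfrak{P}_{\xi, r}$ (both sides equal $\bigcap_{1<r<p} \mathfrak{T}_{\xi, r}$ after unfolding $\mathfrak{P}_{\xi, r} = \bigcap_{1<s<r} \mathfrak{T}_{\xi, s}$) then delivers $F \in \textsf{P}_{\xi, p}$.

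The main obstacle is (i), because $\mathfrak{T}_{\xi, p}$ is defined via the infinite-tower tree $\Gamma_{\xi, \infty}$, whereas Lemma \ref{envelope} controls only the finite-height trees $\Gamma_{\xi, n}$; one cannot recover $\mathfrak{T}$-membership from the mere $\mathfrak{A}$-level bound $\alpha_{\xi, p}(F) \leq 1$ that the lemma provides directly. The approach I would take is to push the proof of Lemma \ref{envelope} one level up to $\Gamma_{\xi, \infty}$: given a weakly null $(x_t)_{t \in \Gamma_{\xi, \infty}.D_F} \subset B_F$, I would inductively build, across all levels $n \in \nn$, a system of admissible level-$n$ maximal elements $u$ together with a telescoping family $(T_u)$ of finite subsets of $\Gamma$ such that, for each newly chosen $u$ above an already-chosen $v$ at the previous level, the block $\pi P_{T_u \setminus T_v} \sum_{v < s \leq u} \mathbb{P}_{\xi, \infty}(s) x_s$ lies in $B_E$ and approximates $\pi \sum_{v < s \leq u} \mathbb{P}_{\xi, \infty}(s) x_s$ within $\varepsilon 2^{-n}$. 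The Szlenk hypothesis $Sz(X_\gamma) \leq \omega^\xi$ ensures that at each stage, and for each already-fixed finite $G \subset \Gamma$, the set of continuations with arbitrarily small $P_G$-component is eventual, so the resulting set of good maximal elements constitutes an inevitable subset of $\bigcup_n MAX(\Lambda_{\xi, \infty, n}.D_F)$. Along the tower the successive blocks form a weakly null collection in $B_E$ (using shrinkingness of the outer basis), so the $T_{0, p}(I_E)$-estimate (which equals $1$ for $\ell_p(\Gamma)$ and $c_0(\Gamma)$) transfers back to yield the desired $\mathfrak{T}_{\xi, p}$-bound on $F$, with total error controlled by the summable sum $\sum_n \varepsilon 2^{-n}$. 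The principal technical hurdle is checking inevitability of the constructed set at each recursion step, but this is a direct infinite-level analogue of the cofinality-and-eventual-set bookkeeping already carried out in the proof of Lemma \ref{envelope}.
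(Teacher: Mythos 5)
Your treatment of parts (ii) and (iii) is correct and matches the paper's argument: Lemma \ref{envelope} gives $\|\cdot\|_{\xi,F}\leqslant\|\cdot\|_{0,E}$, and plugging in the appropriate scalar sequences bounds $\alpha_{\xi,p}(F)$ and $n^{1/q}\theta_{\xi,n}(F)$ by the corresponding quantities of the base space, which are finite by Proposition \ref{1994}. For (iv) you take a genuinely different and arguably cleaner route than the paper: rather than renorming $\mathfrak{X}_q(\Gamma)$ via Lemma \ref{tech}(iii) and then directly estimating the modulus of the direct sum (as the paper does), you observe $\alpha_{0,r}(\mathfrak{X}_q(\Gamma))<\infty$ for each $1<r<p$, push this through Lemma \ref{envelope} to get $F\in\textsf{A}_{\xi,r}\subset\textsf{P}_{\xi,r}$, and intersect. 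The identity $\bigcap_{1<r<p}\mathfrak{P}_{\xi,r}=\mathfrak{P}_{\xi,p}$ you invoke is correct. This avoids the renorming machinery and is a legitimate alternative, though it buys the same conclusion.

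Part (i) is where the genuine gap lies. You correctly identify that Lemma \ref{envelope} only controls the finite-height norms $\|\cdot\|_{\xi,F}$ and thus cannot deliver $\mathfrak{T}$-membership through the tree characterization on $\Gamma_{\xi,\infty}$. Your proposed fix --- re-running the proof of Lemma \ref{envelope} over the infinite-height tree while maintaining an inevitable set at every level --- is not carried out, and the step you defer is not routine. In Lemma \ref{envelope} a single tower $f:\mathcal{F}^{\leqslant n}\to\cup_i MAX(\Lambda_{\xi,n,i}.D)$ is built by finitely many applications of Theorem \ref{wh}(i); to witness membership in $\mathfrak{T}_{\xi,p}$ via the big-set characterization one must instead produce an \emph{inevitable} subset of $\cup_n MAX(\Lambda_{\xi,\infty,n}.D)$, which requires verifying eventuality of the admissible continuations at every node simultaneously, with the auxiliary finite sets $T_t\subset\Gamma$ and error tolerances varying over the branches. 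This bookkeeping is a real construction and cannot be dismissed as ``a direct analogue.'' More importantly, you miss the far simpler route the paper actually takes: for $E=\ell_p(\Gamma)$ one can bound the modulus $\varrho_\xi$ directly. Fixing finitely supported $y\in B_F$ with support in a finite $G\subset\Gamma$, and using $Sz(P_G)\leqslant\omega^\xi$ so that $\inf_t\|P_G\sum_{s\leqslant t}\mathbb{P}_{\xi,1}(s)x_s\|=0$ (Theorem \ref{wh}(i)), the $\ell_p$-additivity of the outer norm gives $\varrho_\xi(\sigma;F)\leqslant(1+\sigma^p)^{1/p}-1\leqslant\sigma^p/p$, so $F$ is already $\xi$-$p$-AUS with no renorming, and similarly $\varrho_\xi(1;F)=0$ in the $c_0(\Gamma)$ case. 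This sidesteps the tree characterization of $\mathfrak{T}_{\xi,p}$ entirely and is the proof you should give for (i).
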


\begin{proof} We note that the canonical basis of each space $\ell_p(\Gamma)$, $1<p<\infty$, $c_0(\Gamma)$, $T^*_q$, $\mathfrak{X}_q(\Gamma)$, $\mathfrak{Y}_q(\Gamma)$ is shrinking. More precisely, the basis of each space except $T_q^*$ basis has some form of $\ell_p$ or $c_0$ upper estimate on flat linear combinations of vectors which are  disjointly supported, and $T_q^*$ has an asymptotic $\ell_p$ (resp. $c_0$ if $p=\infty$) basis.    

Fix $\sigma>0$, a finite subset $G$ of $\Gamma$, and $y=(y_\gamma)_{\gamma\in \Gamma}\in (\oplus_{\gamma\in \Gamma}X_\gamma)_{\ell_p(\Gamma)}$ such that $y_\lambda=0$ for all $\lambda\in \Gamma\setminus G$ and a weakly null $(x_t)_{t\in \Gamma_{\xi,1}.D}\subset \sigma B_{(\oplus_{\gamma\in\Gamma}X_\gamma)_{\ell_p(\Gamma)}}$.   Then since $Sz(P_G)\leqslant \omega^\xi$,  $$\inf_{t\in MAX(\Gamma_{\xi,1}.D)} \|P_G\sum_{s\leqslant t}\mathbb{P}_{\xi,1}(s)x_s\|=0.$$  Therefore for $p<\infty$,  \begin{align*} \inf_{t\in MAX(\Gamma_{\xi,1}.D)} \|y+\sum_{s\leqslant t} \mathbb{P}_{\xi,1}(s)x_s\|^p & \leqslant \inf_{t\in MAX(\Gamma_{\xi,1}.D)} \|y+P_G\sum_{s\leqslant t}\mathbb{P}_{\xi,1}(s)x_s\|^p + \|P_{\nn\setminus G}\sum_{s\leqslant t}\mathbb{P}_{\xi,1}(s)x_s\|^p \\ & \leqslant \|y\|^p+\sigma^p.  \end{align*}  Since $G$ was arbitrary and the set of all $y=(y_\gamma)_{\gamma\in \Gamma}$ such that $\{\gamma: y_\gamma\neq 0\}$ is finite is dense in $(\oplus_{\gamma\in \Gamma} X_\gamma)_{\ell_p(\Gamma)}$, we deduce that $\varrho_\xi(\sigma;(\oplus_{\gamma\in \Gamma}X_\gamma)_{\ell_p(\Gamma)}) \leqslant (1+\sigma^p)^{1/p}-1$ for all $\sigma>0$.     The proof for $c_0(\Gamma)$ is similar, and we deduce that $\varrho_\xi(1;(\oplus_{\gamma\in \Gamma}X_\gamma)_{c_0(\Gamma)})=0$.

Note that by Lemma \ref{envelope} and by the properties of $T^*_q$, there exists a constant $C_q$ such that  $$\|\cdot\|_{\xi, (\oplus_{n=1}^\infty X_{\gamma_n})_{T_q^*}}\leqslant \|\cdot\|_{0, T_q^*}\leqslant C_q\|\cdot\|_{\ell_p},$$ so $(\oplus_{n=1}^\infty X_{\gamma_n})_{T_q^*}\in \mathfrak{A}_{\xi,p}$.

Similarly, if $1<q$, $$\|\sum_{i=1}^n e_i\|_{\xi, (\oplus_{\gamma\in \Gamma} X_\gamma)_{\mathfrak{Y}_q(\Gamma)}}\leqslant \|\sum_{i=1}^n e_i\|_{0, \mathfrak{Y}_q(\Gamma)} \leqslant 3^{1/p}n^{1/p},$$   whence $(\oplus_{\gamma\in \Gamma} X_\gamma)_{\mathfrak{Y}_q(\Gamma)}\in \mathfrak{N}_{\xi,p}$.   Here we are using the fact that any disjointly supported vectors $x_1, \ldots, x_n\in B_{\mathfrak{Y}_q(\Gamma)}$ satisfy $\frac{1}{3^{1/p}n^{1/p}}\sum_{i=1}^n x_i\in B_{\mathfrak{Y}_q(\Gamma)}$, because these vectors are isometrically equivalent to some disjointly supported vectors in $[e^*_i:i\in\nn]\subset V_q^*$.

Finally, by Lemma \ref{tech} and Proposition \ref{blocks}, for any $1<r<p$, there exists an equivalent norm $|\cdot|_r$ on $\mathfrak{X}_q(\Gamma)$ such that $|x+y|^r_r\leqslant |x|^r_r+|y|^r_r$ for any disjointly supported $x,y\in \mathfrak{X}_q(\Gamma)$, and such that the canonical $\mathfrak{X}_q(\Gamma)$ basis is $1$-unconditional with respect to the norm $|\cdot|_r$.  Here we are using the fact that a finite sequence of disjointly supported vectors in $\mathfrak{X}_q(\Gamma)$ is isometrically equivalent to a finite sequence of disjointly supported vectors in $[e_i^*:i\in\nn]\subset SM_q^*$, whence $\mathfrak{X}_q(\Gamma)$ satisfies an upper $\ell_r$ estimate for every $1<r<p$.   Then arguing as in the second paragraph, we deduce that, if $E=(\mathfrak{X}_q(\Gamma), |\cdot|_r)$, $$\varrho_\xi(\sigma, (\oplus_{\gamma\in \Gamma}X_\gamma)_E)\leqslant (1+\sigma^r)^{1/r}-1.$$  Since $(\oplus_{\gamma\in \Gamma}X_\gamma)_E$ is isomorphic to $(\oplus_{\gamma\in \Gamma} X_\gamma)_{\mathfrak{X}_q(\Gamma)}$ and since $1<r<p$ was arbitrary, $$(\oplus_{\gamma\in \Gamma}X_\gamma)_{\mathfrak{X}_q(\Gamma)}\in \bigcap_{1<r<p} \textsf{T}_{\xi,r}=\textsf{P}_{\xi,p}.$$

\end{proof}

In what follows, we use the notation that for any $0<\xi$, $\xi=[0, \xi)$.

\begin{lemma} Let $\xi$ be an ordinal. \begin{enumerate}[(i)]\item Suppose that $\xi=\zeta+1$ and  $(X_n)_{n=1}^\infty$ is a sequence of Banach spaces such that $\theta_{\zeta,n}(X_n)=1$ for all $n\in\nn$.  Let $E$ be a Banach space with normalized, $1$-unconditional basis and let $D$ be a weak neighborhood basis at $0$ in $F:=(\oplus_{n=1}^\infty X_n)_E$.  Then for any finite subset $J$ of $\nn$ and any $0<\vartheta<1$, there exists a weakly null collection $(x_t)_{t\in\Gamma_{\xi,1}.D}\subset B_F$ such that $$\vartheta \leqslant\inf_{t\in MAX(\Gamma_{\xi,1}.D)} \|\sum_{s\leqslant t}\mathbb{P}_{\xi,1}(s)x_s\| $$ and for any $t\in MAX(\Gamma_{\xi,1}.D)$, there exists $n_t\in \nn\setminus J$ such that $x_s\in X_{n_t}$ for all $s\leqslant t$. 

In particular, for any $0<\vartheta<1$, there exists $(x_t)_{t\in \Gamma_{\xi, \infty}.D}\subset B_F$ such that for any $\varnothing=t_0<t_1<\ldots$, $t_i\in MAX(\Lambda_{\xi, \infty,i}.D)$, there exist $n_1<n_2<\ldots$ such that $\Bigl(\sum_{t_{i-1}<s\leqslant t_i}\mathbb{P}_{\xi, \infty}(s)x_s\Bigr)_{i=1}^\infty$ is $\vartheta^{-1}$-equivalent to $(e_{n_i})_{i=1}^\infty$.   Furthermore, for any $n\in\nn$, there exists $(x_t)_{t\in \Gamma_{\xi, n}.D}\subset B_F$ such that for any $\varnothing=t_0<t_1<\ldots<t_n$, $t_i\in MAX(\Lambda_{\xi, \infty,i}.D)$, there exist $k_1<k_2<\ldots<k_n$ such that $\Bigl(\sum_{t_{i-1}<s\leqslant t_i}\mathbb{P}_{\xi, n}(s)x_s\Bigr)_{i=1}^n$ is $\vartheta^{-1}$-equivalent to $(e_{k_i})_{i=1}^n$.

\item Suppose that $\Gamma \subset \xi$ is a set with $\sup \Gamma=\xi$ such that $\theta_{\gamma, 1}(X_\gamma)=1$ for each $\gamma\in \Gamma$.   Let $E$ be a Banach space with normalized, $1$-unconditional basis $(e_\gamma: \gamma\in \Gamma)$ and let $D$ be a weak neighborhood basis at $0$ in $F:=(\oplus_{\gamma\in \Gamma} X_\gamma)_E$.    Then for any finite subset $J$ of $\xi$ and any $0<\vartheta<1$, there exists a weakly null collection $(x_t)_{t\in \Gamma_{\xi,1}.D}\subset B_F$ such that $$\vartheta\leqslant \inf_{t\in MAX(\Gamma_{\xi,1}.D)} \|\sum_{s\leqslant t}\mathbb{P}_{\xi,1}(s)x_s\|$$ and for any $t\in MAX(\Gamma_{\xi,1}.D)$, there exists $\gamma_t\in \Gamma\setminus J$ such that $x_s\in X_{\gamma_t}$ for all $s\leqslant t$.    

In particular, for any $0<\vartheta<1$, there exists $(x_t)_{t\in \Gamma_{\xi, \infty}.D}\subset B_F$ such that for any $\varnothing=t_0<t_1<\ldots$, $t_i\in MAX(\Lambda_{\xi, \infty,i}.D)$, there exists $\gamma_1<\gamma_2<\ldots$, $\gamma_i\in \Gamma$, such that $\Bigl(\sum_{t_{i-1}<s\leqslant t_i}\mathbb{P}_{\xi, \infty}(s)x_s\Bigr)_{i=1}^\infty$ is $\vartheta^{-1}$-equivalent to $(e_{\gamma_i})_{i=1}^\infty$.   Furthermore, for any $n\in\nn$, there exists $(x_t)_{t\in \Gamma_{\xi, n}.D}\subset B_F$ such that for any $\varnothing=t_0<t_1<\ldots<t_n$, $t_i\in MAX(\Lambda_{\xi, \infty,i}.D)$, there exists $\gamma_1<\gamma_2<\ldots<\gamma_n$, $\gamma_i\in\Gamma$, such that $\Bigl(\sum_{t_{i-1}<s\leqslant t_i}\mathbb{P}_{\xi, n}(s)x_s\Bigr)_{i=1}^n$ is $\vartheta^{-1}$-equivalent to $(e_{\gamma_i})_{i=1}^n$.

\end{enumerate}
\label{ravenous}
\end{lemma}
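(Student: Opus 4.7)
The unified strategy: on each of the totally incomparable subtrees that assemble $\Gamma_{\xi,1}$ (a copy of $\Gamma_{\zeta,n}$ for each $n$ in case (i); a copy of $\Gamma_{\zeta+1,1}$ for each $\zeta<\xi$ in case (ii)), populate the tree with vectors drawn from a single summand $X_m$ of $F$, chosen so that (a) $m\notin J$, (b) the relevant $\theta$-quantity of $X_m$ equals $1$, and (c) distinct branches use distinct summands. The extremality $\theta=1$ produces a branch-local weakly null family in $B_{X_m}$ nearly witnessing the desired infimum, and because the branches are totally incomparable these pieces assemble into a global weakly null collection on $\Gamma_{\xi,1}.D$ with the required inf bound.

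For (i), the monotonicity from Theorem \ref{wh}(iv), together with $\theta_{\zeta,n}(\cdot)\leqslant \|I_{X_m}\|=1$, yields $\theta_{\zeta,n}(X_m)=1$ whenever $m\geqslant n$; fix an injection $f:\nn\to\nn\setminus J$ with $f(n)\geqslant n$ (e.g.\ $f(n)=n+\max J$). Using $\mathbb{P}_{\xi,1}|_{\Gamma_{\zeta,n}}=n^{-1}\mathbb{P}_{\zeta,n}$, unpacking $\theta_{\zeta,n}(X_{f(n)})=\|n^{-1}\sum_{i=1}^n e_i\|_{\zeta,I_{X_{f(n)}}}=1$ furnishes a weakly null $(y^n_u)_{u\in\Gamma_{\zeta,n}.D_{X_{f(n)}}}\subset B_{X_{f(n)}}$ with
$$\inf_t\Bigl\|\sum_{u\leqslant t}\mathbb{P}_{\xi,1}(u)\,y^n_u\Bigr\|\geqslant\vartheta.$$
Since $X_{f(n)}$ is a closed subspace, Hahn--Banach ensures that the weak topology on $X_{f(n)}$ equals the subspace topology induced from $w(F)$; consequently $U\mapsto U\cap X_{f(n)}$ is an order-preserving, cofinal map from $D$ into the weak neighborhoods of $0$ in $X_{f(n)}$, so reindexing $(y^n_u)$ along this map yields a weakly null family indexed by $\Gamma_{\zeta,n}.D$ in $F$ with the same inf estimate. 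Pasting across the totally incomparable union $\Gamma_{\xi,1}=\bigcup_n\Gamma_{\zeta,n}$ produces the required $(x_t)$, with $n_t=f(n)$ recoverable from the first coordinate of $t$.

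The two ``in particular'' clauses of (i) follow by a level-by-level recursion on $\Gamma_{\xi,\infty}.D$ (resp.\ $\Gamma_{\xi,n}.D$), applying (i)'s main statement on the subtree $\{s:t_i<s\}\cap \Lambda_{\xi,\infty,i+1}.D$ above each maximal node $t_i$ of level $i$, using the canonical isomorphism of that subtree with $\Gamma_{\xi,1}.D$ under which $\mathbb{P}_{\xi,\infty}$ becomes $\mathbb{P}_{\xi,1}$. After levels $1,\dots,i$ have been filled along a chain $t_1<\dots<t_i$ with summand-indices $m_1,\dots,m_i$, applying (i) at the next step with the finite set $J_{i+1}=\{1,\dots,\max_j m_j\}$ forces $m_{i+1}>m_i$. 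The resulting blocks $y_j:=\sum_{t_{j-1}<s\leqslant t_j}\mathbb{P}_{\xi,\infty}(s)x_s$ then satisfy $\|y_j\|\in[\vartheta,1]$ and lie in pairwise distinct summands $X_{m_j}$, and by the definition of the $E$-norm together with $1$-unconditionality of the $E$-basis,
$$\Bigl\|\sum_j a_j y_j\Bigr\|_F=\Bigl\|\sum_j |a_j|\,\|y_j\|\,e_{m_j}\Bigr\|_E\in\Bigl[\vartheta\,\bigl\|\textstyle\sum_j a_je_{m_j}\bigr\|_E,\;\bigl\|\textstyle\sum_j a_je_{m_j}\bigr\|_E\Bigr],$$
giving the $\vartheta^{-1}$-equivalence with $(e_{m_j})$.

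Part (ii) follows the same template on $\Gamma_{\xi,1}=\bigcup_{\zeta<\xi}(\omega^\zeta+\Gamma_{\zeta+1,1})$, with $\mathbb{P}_{\xi,1}$ compatible under the canonical isomorphisms: for each $\zeta<\xi$ choose $\gamma(\zeta)\in\Gamma\setminus J$ with $\gamma(\zeta)\geqslant\zeta+1$---possible because $\sup\Gamma=\xi$ and $J$ is finite---and Theorem \ref{wh}(iv) then gives $\theta_{\zeta+1,1}(X_{\gamma(\zeta)})=1$, after which pasting proceeds exactly as in (i). For the ``in particular'' part of (ii), the main conclusion of (ii) only avoids a finite set $J\subset\xi$ and hence cannot directly enforce $\gamma_{t_{i+1}}>\gamma_{t_i}$ (there may be infinitely many $\gamma\in\Gamma$ below $\gamma_{t_i}$); so rather than invoking (ii) as a black box at the recursive step, repeat its construction with $\gamma(\zeta)\in\Gamma$ chosen to satisfy $\gamma(\zeta)\geqslant\max(\zeta+1,\gamma_{t_i}+1)$ and to avoid the finitely many indices previously used in the chain, which is possible by the cofinality of $\Gamma$ in $\xi$. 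The principal technical point to verify throughout is that the pasted family is weakly null as a $D$-indexed collection in $F$ (not merely within each summand); this reduces to the coincidence $w(X_\gamma)=w(F)|_{X_\gamma}$ and the preservation of weak limits under cofinal reindexing.
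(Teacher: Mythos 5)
Your proposal is correct and follows essentially the same route as the paper: use the monotonicity of $\theta$ to relocate the extremality to higher-indexed (or higher-ordinal) summands, build a weakly null collection on each totally incomparable subtree inside a single summand, paste across the union, and then recurse level by level to obtain the $\Gamma_{\xi,\infty}$ and $\Gamma_{\xi,n}$ versions. The only small difference is that you are more explicit than the paper about two technical points the paper glosses over -- transferring indexation from $D_{X_m}$ to $D$, and noting that in (ii) the recursion must pick $\gamma(\zeta)$ above the last-used index rather than merely outside a finite set -- but the paper's construction of the main statement already produces exactly the stronger property you invoke, so these are refinements, not deviations.
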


\begin{proof}$(i)$ Let $m=\max J$ if $J\neq \varnothing$, and otherwise let $m\in\nn$ be arbitrary.    For each $n\in\nn$, we may fix $k_n\geqslant \max\{1+m, n\}$.  We identify $X_{k_n}$ in the obvious way with a subspace of $F$.  Since $1=\theta_{\zeta,k_n}(X_{k_n})\leqslant \theta_{\zeta, n}(X_{k_n})$, there exists a weakly null collection $(x_t)_{t\in \Gamma_{\zeta, n}.D}\subset B_{X_{k_n}}\subset B_F$ such that $$\vartheta\leqslant \inf_{t\in MAX(\Gamma_{\xi,n}.D)} \|n^{-1}\sum_{s\leqslant t}\mathbb{P}_{\xi,n}(s)x_s\|.$$     Taking the union of these collections gives the first statement, since $\Gamma_{\xi,1}.D=\cup_{n=1}^\infty \Gamma_{\zeta, n}.D$.

For the second statement, we define a collection $(x_t)_{t\in \Gamma_{\xi,\infty}.D}\subset B_F$ for $t\in \Lambda_{\xi, \infty ,n}.D$ by induction on $n$.   Since $\Lambda_{\xi,\infty, 1}.D=\Gamma_{\xi,1}.D$, we use the previous paragraph with $J=\varnothing$. Now suppose that $(x_t)_{t\in \cup_{i=1}^k \Lambda_{\xi, \infty, i}.D}$ has been defined in a such a way that for each $t\in MAX(\Lambda_{\xi, \infty, k}.D)$, if $\varnothing=t_0<\ldots <t_k$, $t_i\in MAX(\Lambda_{\xi, \infty, i}.D)$, then there exist $n_1<\ldots <n_k$ such that $x_s\in X_{n_i}$ for each $1\leqslant i\leqslant k$ and $t_{i-1}<s\leqslant t_i$.    Now fix $t\in MAX(\Lambda_{\xi, \infty, k+1}.D)$.  By identifying $U_t:=\{s\in \Lambda_{\xi, \infty, k+1}: t<s\}$ with $\Gamma_{\xi, 1}.D$ and applying the previous paragraph with $J=\{1, \ldots, n_k\}$, we may select $(x_s)_{s\in U_t}\subset B_F$ such that for each $u\in MAX(\Lambda_{\xi, \infty,k+1}.D)$ with $t<u$, there exists $n_u>n_k$ such that $x_s\in X_{n_u}$ for all $t<s\leqslant u$.    This completes the definition of $(x_t)_{t\in \Gamma_{\xi, \infty}.D}\subset B_F$.

Now for any $\varnothing=t_0<t_1<\ldots$ with $t_i\in MAX(\Gamma_{\xi, \infty}.D)$, there exist $n_1<n_2<\ldots $ such that $x_s\in X_{n_i}$ for each $t_{i-1}<s\leqslant t_i$.   Let $$w_i=\|\sum_{t_{i-1}<s\leqslant t_i} \mathbb{P}_{\xi, \infty}(s)x_s\|\in [\vartheta, 1].$$   Then $\Bigl(\sum_{i=1}^\infty \sum_{t_{i-1}<s\leqslant t_i} \mathbb{P}_{\xi, \infty}(s)x_s\Bigr)_{i=1}^\infty$ is isometrically equivalent to $(w_i e_{n_i})_{i=1}^\infty$, and by $1$-unconditionality, $\vartheta^{-1}$ equivalent to $(e_{n_i})_{i=1}^\infty$. 

The proof for $\Gamma_{\xi,n}.D$ is similar.

$(ii)$ Let $\gamma=\max J<\xi$ if $J\neq \varnothing$, and otherwise let $\gamma$ be arbitrary.  Now for any $\zeta<\xi$, since $\sup \Gamma=\xi$, there exists $\beta\in \Gamma$ such that $\gamma, \zeta<\beta$.      Now since $1=\theta_{\beta, 1}(X_\beta)\leqslant \theta_{\zeta+1, 1}(X_\beta)$, using the canonical identification of $(\omega^\zeta+\Gamma_{\zeta+1, 1}).D$ with $\Gamma_{\zeta+1, 1}.D$, there exists a weakly null collection $(x_t)_{t\in (\omega^\zeta+\Gamma_{\zeta+1, 1}).D}\subset B_{X_\beta}\subset B_F$ such that $$\vartheta\leqslant \inf_{t\in MAX((\omega^\zeta+\Gamma_{\zeta+1, 1}).D} \|\sum_{s\leqslant t} \mathbb{P}_{\xi, 1}(s)x_s\|.$$    Taking the union over all $\zeta<\xi$ gives the first part.    The last two statements follow as in $(i)$.

\end{proof}

\begin{proposition} Suppose that $\xi=\zeta+1$ and that $(X_n)_{n=1}^\infty$ is a sequence of Banach spaces such that $\theta_{\zeta, n}(X_n)=1$ for all $n\in\nn$.  Fix $1\leqslant q<\infty$ and let $1/p+1/q=1$.    \begin{enumerate}[(i)]\item  If $1<q$, $(\oplus_{n=1}^\infty X_n)_{\ell_p}\notin \bigcup_{p<r<\infty}\textsf{\emph{T}}_{\xi,r}.$  \item $(\oplus_{n=1}^\infty X_n)_{T_q^*}\notin \textsf{\emph{T}}_{\xi,p}$. \item If $1<q$, $(\oplus_{n=1}^\infty X_n)_{\mathfrak{Y}_q(\omega)}\notin \textsf{\emph{A}}_{\xi,p}$. \item $(\oplus_{n=1}^\infty X_n)_{\mathfrak{X}_q(\omega)}\notin \textsf{\emph{N}}_{\xi,p}$.   \end{enumerate}

\label{p1}
\end{proposition}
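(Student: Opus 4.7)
The plan is to establish all four parts by a uniform strategy: for each case, apply Lemma \ref{ravenous}(i) to the space $F=(\oplus_{n=1}^\infty X_n)_E$ (with $E$ the relevant base space) to produce a weakly null collection whose branch sums are $\vartheta^{-1}$-equivalent to subsequences of the canonical basis of $E$. Assuming $F$ lies in the class we want to rule out yields an upper norm bound on these branch sums, which translates through the $\vartheta$-equivalence into an upper norm estimate on basis subsequences of $E$. A contradiction then follows from a lower norm estimate or embedding obstruction recorded in Propositions \ref{1994} and \ref{blocks}. Throughout, the branch sums $y_i=\sum_{t_{i-1}<s\leqslant t_i}\mathbb{P}_{\xi,\infty}(s)x_s$ lie in pairwise distinct summands $X_{k_i}$ with $\|y_i\|_{X_{k_i}}\in[\vartheta,1]$, so by 1-unconditionality of the basis of $E$ we always have $\|\sum a_iy_i\|_F\geqslant \vartheta\|\sum a_ie_{k_i}\|_E$, which is the lower bound we will exploit.

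Cases (i), (iii), and (iv) are the easier ones. In (i), we use the infinite-tree form with $E=\ell_p$, pick a branch of length $n$ in the big good set, and plug in the scalars $a_i=n^{-1/r}$; isometric invariance of $\ell_p$ under subsequences then gives $\vartheta n^{1/p-1/r}\leqslant C$, which fails for large $n$ since $p<r$. In (iv), the finite-tree form with scalars $a_i=1/n$ together with the $1$-symmetry of the $\mathfrak{X}_q(\omega)$ basis collapses the branch-dependence and Proposition \ref{1994}(iv) gives
\[
\theta_{\xi,n}(F)\geqslant \vartheta n^{-1}\|\sum_{i=1}^n e_i^*\|_{SM_q^*}\geqslant \vartheta n^{-1/q}\log_2(n+1)^{1/q},
\]
so $n^{1/q}\theta_{\xi,n}(F)\to\infty$, contradicting $F\in\mathfrak{N}_{\xi,p}$. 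In (iii), the analogous use of $1$-symmetry of the $\mathfrak{Y}_q(\omega)$ basis yields $\|\sum a_ie_i^*\|_{V_q^*}\leqslant (\alpha_{\xi,p}(F)/\vartheta)\|a\|_{\ell_p}$ for all $(a_i)\in c_{00}$; dualizing this inequality produces a lower $\ell_q$ bound on the $V_q$ basis, so the formal identity $\ell_q\to V_q$ is bounded below and hence an isomorphism onto $V_q$, contradicting the strict singularity recorded in Proposition \ref{1994}(ii).

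Case (ii) is the genuinely harder one because the $T_q^*$ basis is not symmetric and the subsequences $(n_i)$ arising from branches are not \emph{a priori} controlled. The plan is to apply Lemma \ref{ravenous}(i) separately for each $N\in\nn$ with $J=\{1,\ldots,N\}$: this forces every branch of the resulting tree to have first index exceeding $N$, hence by the recursive construction $n_1>N$ on every branch. The assumption $F\in\mathfrak{T}_{\xi,p}$ gives an upper $\ell_p$ bound on branch sums along an inevitable good set, which translates into an upper $\ell_p$ estimate on subsequences $(e_{n_i})_{i=1}^N$ of the $T_q^*$ basis. The lower block estimate in Proposition \ref{1994}(vi), valid because $N\leqslant n_1$, then combines with this upper bound to show each such subsequence is $C/\vartheta$-equivalent to the $\ell_p^N$ basis. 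Letting $N$ vary and extracting by a standard Ramsey/diagonal compactness argument a single infinite subsequence $(m_i)$ of the $T_q^*$ basis uniformly equivalent to the $\ell_p$ basis, one obtains $[e_{m_i}]_{T_q^*}\cong \ell_p$ via the natural basis correspondence. Since $T_q$ is reflexive and its basis is $1$-unconditional, dualization gives $[e_{m_i}]_{T_q}\cong \ell_q$ via the natural correspondence, so the restriction of the formal identity $I:\ell_q\to T_q$ to the closed span of $(e_{m_i})_{i=1}^\infty$ in $\ell_q$ is an isomorphism onto its image, contradicting the strict singularity in Proposition \ref{1994}(ii). The technical heart, and the step I expect to require the most care, is ensuring the finite segments of bounded equivalence constant assemble into a single infinite subsequence; the remainder of (ii) reduces to the pattern of the other three cases.
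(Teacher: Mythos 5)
Your proofs of (i), (iii), and (iv) are correct and essentially match the paper's, using the finite-tree form of Lemma \ref{ravenous}(i) (or the infinite-tree form for (i), which also works) together with the symmetry of $\mathfrak{Y}_q(\omega)$, $\mathfrak{X}_q(\omega)$ and the norm estimates in Proposition \ref{1994}. One small slip in (iii): bounded below makes $I:\ell_q\to V_q$ an isomorphism onto its range, not onto $V_q$; the contradiction with strict singularity is unchanged. Your exponent $\log_2(n+1)^{1/q}$ in (iv) is in fact the one dictated by Proposition \ref{1994}(iv).

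Case (ii) is where you have introduced a genuine gap. You propose, for each $N$, to build a tree with $J=\{1,\ldots,N\}$ so that the branch sums correspond to basis vectors $(e_{n_i^N})_{i=1}^N$ with $n_1^N>N$, upgrade the upper $\ell_p^N$ bound (from $F\in\mathfrak{T}_{\xi,p}$) to an equivalence via the asymptotic lower estimate of Proposition \ref{1994}(vi), and then ``assemble'' these finite pieces into a single infinite subsequence equivalent to $\ell_p$. That last step is not merely delicate: it fails in general for a non-subsymmetric basis. In fact, the $T_q^*$ basis already has the property that \emph{every} sequence $n\leqslant e_{n_1}<\ldots<e_{n_n}$ is $2^{1/(pq)}$-equivalent to $\ell_p^n$ by Proposition \ref{1994}(vi), with no hypothesis on $F$ at all, yet no infinite subsequence is equivalent to $\ell_p$. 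So uniformly bounded finitary equivalence constants do not stitch into an infinite equivalence, and the Ramsey/diagonal extraction you are counting on has nothing to grab onto. The added ingredient you do have, the uniform upper bound from $\mathfrak{T}_{\xi,p}$, is not finitary: an inevitable subset of the good set contains an \emph{infinite} branch $\varnothing=t_0<t_1<\cdots$, and along it every initial segment of branch sums satisfies the $\ell_p$ upper bound with a single constant $C$. Using this you get directly one infinite subsequence $(e_{n_i})_{i=1}^\infty$ of the $T_q^*$ basis with $\|\sum a_ie_{n_i}\|_{T_q^*}\leqslant(C/\vartheta)\|a\|_{\ell_p}$ for all $(a_i)\in c_{00}$. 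At that point you should abandon the lower block estimate and the $J$-control entirely: dualize to see that $(e_{n_i}^*)_{i=1}^\infty\subset T_q$ dominates $\ell_q$, note it is also dominated by $\ell_q$ since $I:\ell_q\to T_q$ is bounded, and conclude it is equivalent to the $\ell_q$ basis, contradicting strict singularity of $I:\ell_q\to T_q$ from Proposition \ref{1994}(ii). This is the paper's argument, and it sidesteps the assembly problem altogether.
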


\begin{proof} By Lemma \ref{ravenous}$(i)$,  there exists a weakly null collection $(x_t)_{t\in \Gamma_{\xi, \infty}.D}$ of norm at most $1$ vectors and for each $n\in\nn$, there exists a weakly null collection $(x_t^n)_{t\in \Gamma_{\xi,n}.D}$ of norm at most $1$ vectors such that for each $\varnothing=t_0<t_1<\ldots$, $t_i\in MAX(\Lambda_{\xi, \infty, n}.D)$ (resp. for each $n\in \nn$ and each $\varnothing=t_0<\ldots <t_n$, $t_i\in MAX(\Lambda_{\xi, n,i}.D)$), there exist $k_1<k_2<\ldots $ (resp. $k_1<\ldots <k_n$) such that $(\sum_{i=1}^\infty \sum_{t_{i-1}< s\leqslant t_i}\mathbb{P}_{\xi, \infty}(s)x_s)_{i=1}^\infty$ (resp. $(\sum_{i=1}^n \sum_{t_{i-1}<s\leqslant t_i}\mathbb{P}_{\xi,n}(s)x_s^n)_{i=1}^n$) is $2$-equivalent to $(e_{k_i})_{i=1}^\infty$ (resp. $(e_{k_i})_{i=1}^n$) in the space $E$, where $E=\ell_p$ in case $(i)$, $E=T_q^*$ in case $(ii)$, $E=\mathfrak{Y}_q(\omega)$ in case $(iii)$, and $E=\mathfrak{X}_q(\omega)$ in case $(iv)$.    

In case $(i)$, we can use the collection $(x^n_t)_{t\in \Gamma_{\xi,n}.D}$ to see that $n\vartheta_{\xi,n}((\oplus_{n=1}^\infty X_n)_{\ell_p}) \geqslant n^{1/p}/2$, whence $(\oplus_{n=1}^\infty X_n)_{\ell_p}\notin \bigcup_{p<r<\infty}\textsf{N}_{\xi,r}=\bigcup_{p<r<\infty}\textsf{T}_{\xi,r}$.

In case $(ii)$, we can use the collection $(x_t)_{t\in \Gamma_{\xi, \infty}.D}$ to see that $(\oplus_{n=1}^\infty X_n)_{T_q^*}\notin \textsf{T}_{\xi,p}$. In order to see this, note that if $(\oplus_{n=1}^\infty X_n)_{T_q^*}\in \textsf{T}_{\xi,p}$, there exists some $\varnothing=t_0<t_1<\ldots$ such that $(\sum_{i=1}^\infty \sum_{t_{i-1}<s\leqslant t_i} \mathbb{P}_{\xi, \infty}(s)x_s)_{i=1}^\infty$ is dominated by the $\ell_p$ (resp. $c_0$ if $p=\infty$) basis, which by $2$-equivalence would yield the existence of a subsequence of the $T_q^*$ basis which is dominated by the $\ell_p$ (resp. $c_0$) basis, a contradiction.

In case $(iii)$, we can use the collections $(x^n_t)_{t\in \Gamma_{\xi,n}.D}$, $n=1, 2, \ldots$ together with the $1$- symmetry of the $\mathfrak{Y}_q(\omega)$ basis and the fact that the $\mathfrak{Y}_q(\omega)$ basis is isometrically equivalent to the functionals $(e^*_i)_{i=1}^\infty\subset V_q^*$   to see that $$\alpha_{\xi,p}\Bigl((\oplus_{n=1}^\infty X_n)_{\mathfrak{Y}_q(\omega)}\Bigr) \geqslant \sup_{n\in\nn} \sup\Bigl\{\frac{1}{2}\|\sum_{i=1}^n a_ie^*_i\|_{V_q^*}: (a_i)_{i=1}^n\in B_{\ell_p^n}\Bigr\}=\infty.$$   This yields that $(\oplus_{n=1}^\infty X_n)_{\mathfrak{Y}_q(\omega)}\notin \textsf{A}_{\xi,p}$.

In case $(iv)$, we can use the collection $(x^n_t)_{t\in \Gamma_{\xi,n}.D}$ together with $1$- symmetry of the $\mathfrak{X}_q(\omega)$ basis and its equivalence to the functionals $(e^*_i)_{i=1}^\infty \subset SM_q$ to see that $$\theta_{\xi,n}\Bigl((\oplus_{n=1}^\infty X_n)_{\mathfrak{X}_q(\omega)}\Bigr)n^{1/q} \geqslant \frac{1}{2}\cdot \log_2(n+1)^{1/p},$$ so $(\oplus_{n=1}^\infty X_n)_{\mathfrak{X}_q(\omega)}\notin \textsf{N}_{\xi,p}$.

\end{proof}

The proof of the following proposition is an inessential modification of the previous proposition, using Lemma \ref{ravenous}$(ii)$ in place of Lemma \ref{ravenous}$(i)$.  We therefore omit the proof.

\begin{proposition} Suppose $\xi$ is a limit ordinal and $\Gamma\subset \xi$ is a subset with $\sup\Gamma=\xi$.   Suppose also that $(X_\gamma)_{\gamma\in \Gamma}$ is a collection of Banach spaces such that $\theta_{\gamma, 1}(X_\gamma)=1$ for each $\gamma\in \Gamma$.  Fix $1\leqslant q<\infty$ and let $1/p+1/q=1$.    \begin{enumerate}[(i)]\item  If $1<q$, $(\oplus_{\gamma\in \Gamma} X_\gamma)_{\ell_p(\Gamma)}\notin \bigcup_{p<r<\infty}\textsf{\emph{T}}_{\xi,r}.$  \item If $\Gamma=(\gamma_n)_{n=1}^\infty$, $(\oplus_{n=1}^\infty X_{\gamma_n})_{T_q^*}\notin \textsf{\emph{T}}_{\xi,p}$. \item If $1<q$, $(\oplus_{\gamma\in \Gamma} X_\gamma)_{\mathfrak{Y}_q(\Gamma)}\notin \textsf{\emph{A}}_{\xi,p}$. \item $(\oplus_{\gamma\in \Gamma} X_\gamma)_{\mathfrak{X}_q(\Gamma)}\notin \textsf{\emph{N}}_{\xi,p}$.   \end{enumerate}

\label{p2}
\end{proposition}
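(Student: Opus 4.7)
The plan is to transcribe the proof of Proposition \ref{p1} with Lemma \ref{ravenous}(ii) in place of Lemma \ref{ravenous}(i). For each of the four candidate spaces $E\in\{\ell_p(\Gamma),\,T_q^*,\,\mathfrak{Y}_q(\Gamma),\,\mathfrak{X}_q(\Gamma)\}$, Lemma \ref{ravenous}(ii) supplies, for a fixed weak neighborhood basis $D$ of $0$ in $F:=(\oplus_{\gamma\in\Gamma}X_\gamma)_E$, a weakly null collection $(x_t)_{t\in\Gamma_{\xi,\infty}.D}\subset B_F$ and, for each $n\in\nn$, a weakly null collection $(x^n_t)_{t\in\Gamma_{\xi,n}.D}\subset B_F$ with the property that along any maximal chain the block sums $\sum_{t_{i-1}<s\leqslant t_i}\mathbb{P}_{\xi,\infty}(s)x_s$ (resp.\ $\sum_{t_{i-1}<s\leqslant t_i}\mathbb{P}_{\xi,n}(s)x^n_s$) are $2$-equivalent to a sequence $(e_{\gamma_i})$ of distinct basis vectors of $E$. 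Once this is in hand, the arguments for (i)--(iv) are literally identical to those of Proposition \ref{p1}.

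For (i), reading off the finite-level collection in $\ell_p(\Gamma)$ gives $n\theta_{\xi,n}(F)\geqslant n^{1/p}/2$; for any $r>p$, writing $1/r+1/q'=1$, this forces $n^{1/q'}\theta_{\xi,n}(F)\to\infty$, so $F\notin\textsf{N}_{\xi,r}$ and therefore $F\notin\textsf{T}_{\xi,r}$. For (ii), I would argue by contradiction: if $F\in\textsf{T}_{\xi,p}$, the infinite-level characterization of $\mathfrak{T}_{\xi,p}$ recalled in the first remark of Section 3 produces a single branch of $(x_t)_{t\in\Gamma_{\xi,\infty}.D}$ whose block sums are uniformly dominated by the $\ell_p$ (or $c_0$) basis, and the $2$-equivalence transfers this to a subsequence of the $T_q^*$ basis; by duality, the $T_q$ basis then contains a subsequence equivalent to the $\ell_q$ basis, contradicting the strict singularity of $\ell_q\hookrightarrow T_q$ from Proposition \ref{1994}(ii).

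For (iii) and (iv), I would invoke the $1$-symmetry of the bases of $\mathfrak{Y}_q(\Gamma)$ and $\mathfrak{X}_q(\Gamma)$ together with their isometric identification with $(e^*_i)_{i=1}^\infty\subset V_q^*$ and $(e^*_i)_{i=1}^\infty\subset SM_q^*$, respectively. The finite-level collections then yield
$$\alpha_{\xi,p,n}(F)\geqslant \tfrac{1}{2}\sup\bigl\{\|\textstyle\sum_{i=1}^n a_ie^*_i\|_{V_q^*}:(a_i)\in B_{\ell_p^n}\bigr\}$$
in case (iii) and $n^{1/q}\theta_{\xi,n}(F)\geqslant \tfrac{1}{2}\log_2(n+1)^{1/p}$ in case (iv); by Proposition \ref{1994}(v) and (iv) respectively, both right-hand sides grow unboundedly in $n$, so $F\notin\textsf{A}_{\xi,p}$ in the former case and $F\notin\textsf{N}_{\xi,p}$ in the latter.

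The one place where this differs materially from Proposition \ref{p1} is in securing distinctness of the indices $\gamma_i$ produced by the construction; this is exactly what the clause ``there exists $\gamma_t\in\Gamma\setminus J$'' in Lemma \ref{ravenous}(ii) buys, in contrast to the successor case where distinct sizes $k_n\to\infty$ did the job automatically. Inductively on the levels of $\Gamma_{\xi,\infty}.D$ (or $\Gamma_{\xi,n}.D$), I would feed in as $J$ the finite set of indices produced on earlier levels, so that each new branch index lies above all earlier ones in $\Gamma$ and, in particular, differs from them. I expect this to be the only genuinely new verification; everything else is a direct substitution into the proof of Proposition \ref{p1}, which is why the author characterizes the modification as inessential.
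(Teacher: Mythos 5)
Your proof is correct and follows exactly the route the paper intends: the author explicitly states that Proposition~\ref{p2} is proved by substituting Lemma~\ref{ravenous}$(ii)$ for Lemma~\ref{ravenous}$(i)$ in the proof of Proposition~\ref{p1} and leaves the details to the reader, and your transcription does precisely that.

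Two small remarks. First, the ``distinctness'' verification you flag as the only genuinely new step is in fact already built into the statement of Lemma~\ref{ravenous}$(ii)$: its ``in particular'' clauses hand you, ready-made, the collections $(x_t)_{t\in\Gamma_{\xi,\infty}.D}$ and $(x_t)_{t\in\Gamma_{\xi,n}.D}$ whose branch block sums are $\vartheta^{-1}$-equivalent to increasing (hence distinct) basis elements $(e_{\gamma_i})$; the recursive feeding of previously-used indices into $J$ that you describe is internal to the lemma's proof, not something you must redo here. Second, your citation in case $(iii)$ is off: Proposition~\ref{1994}$(v)$ is an upper estimate and does not show that $\sup\{\|\sum_{i=1}^n a_ie^*_i\|_{V_q^*}:(a_i)\in B_{\ell_p^n}\}$ is unbounded. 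The unboundedness follows instead from the strict singularity of $I:\ell_q\to V_q$ in Proposition~\ref{1994}$(ii)$: if the supremum were finite, the $V_q^*$ coordinate functionals would be dominated by the $\ell_p$ basis, whence by duality the $V_q$ basis would dominate, and (being also dominated by) the $\ell_q$ basis, contradicting strict singularity. (This is exactly the duality argument the paper runs in the $\xi=0$ case.) With that citation corrected, the argument is complete.
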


\begin{fact} For every ordinal $\zeta$ and $n\in\nn$, there exists a Banach space $X_{\zeta, n}$ such that $Sz(X_{\zeta,n})=\omega^{\zeta+1}$ and $\theta_{\zeta, n}(X_{\zeta, n})=1$.    Moreover, this space $X_{\zeta,n}$ may be taken to be reflexive,  have a $1$-unconditional basis, and may be taken to be separable if $\zeta$ is countable.   Indeed, we may take $$X_{0,1}=\ell_2,$$ $$X_{\xi,n}=\ell_1^n(X_{\xi,1}),$$ $$X_{\xi+1, 1}=\bigl(\oplus_{n=1}^\infty X_{\xi,n})_{\ell_2},$$ and if $\xi$ is a limit ordinal, $$X_{\xi,1}=\bigl(\oplus_{\zeta<\xi} X_{\zeta+1,1}\bigr)_{\ell_2(\xi)}.$$

\label{good}
\end{fact}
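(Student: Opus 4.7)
The plan is to proceed by transfinite induction on $\zeta$, with a secondary induction on $n$, simultaneously verifying at each stage that $X_{\zeta,n}$ is reflexive, admits a $1$-unconditional basis, is separable whenever $\zeta$ is countable, satisfies $Sz(X_{\zeta,n}) = \omega^{\zeta+1}$, and satisfies $\theta_{\zeta,n}(X_{\zeta,n}) = 1$. The base case $X_{0,1} = \ell_2$ is immediate: reflexivity, separability, and the $1$-unconditional basis are classical, $Sz(\ell_2) = \omega$, and applying Theorem \ref{wh}(ii)--(iii) to the weakly null sequence $(e_n) \subset S_{\ell_2}$ delivers $\theta_{0,1}(\ell_2) = 1$.

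For the passage from $X_{\zeta,1}$ to $X_{\zeta,n} = \ell_1^n(X_{\zeta,1})$, finite $\ell_1$-sums preserve all of the structural properties, and $Sz(\ell_1^n(Y)) = Sz(Y)$ whenever $Sz(Y) \geq \omega$. To obtain $\theta_{\zeta,n}(X_{\zeta,n}) \geq 1 - \vartheta$, I will fix a weakly null collection $(y_t)_{t \in \Gamma_{\zeta,1}.D} \subset B_{X_{\zeta,1}}$ witnessing $\theta_{\zeta,1}(X_{\zeta,1}) > 1 - \vartheta$ (by the inductive hypothesis) and transport it into the $i$-th $\ell_1^n$-coordinate on the $i$-th level $\Lambda_{\zeta,n,i}.D$ via the canonical identifications from Section 2.1. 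The coefficient identity $\mathbb{P}_{\zeta,n}|_{\Lambda_{\zeta,n,i}.D} = \mathbb{P}_{\zeta,1} \circ j$ then guarantees that on each maximal $t$ the $i$-th $\ell_1^n$-coordinate of the resulting partial sum has norm exceeding $1 - \vartheta$ in $X_{\zeta,1}$, and additivity of $\|\cdot\|_{\ell_1^n}$ across disjoint coordinates yields the desired lower bound for $\|n^{-1}\sum_{i=1}^n e_i\|_{\zeta, X_{\zeta,n}}$.

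At the successor stage $X_{\xi+1,1} = (\oplus_{n=1}^\infty X_{\xi,n})_{\ell_2}$, the countable $\ell_2$-sum preserves reflexivity, the $1$-unconditional basis, and (in the countable case) separability. For the $\theta$-bound the key identity is $\mathbb{P}_{\xi+1,1}|_{\Gamma_{\xi,n}} = n^{-1}\mathbb{P}_{\xi,n}$, combined with the totally incomparable decomposition $\Gamma_{\xi+1,1} = \cup_n \Gamma_{\xi,n}$. For each $n$, I will place the optimal weakly null collection for $\theta_{\xi,n}(X_{\xi,n})$ on the $\Gamma_{\xi,n}.D$ branch, viewed inside $B_{X_{\xi+1,1}}$ via the natural inclusion of $X_{\xi,n}$; pasting these collections over all $n$ yields a single weakly null collection on $\Gamma_{\xi+1,1}.D$, and on each maximal $t \in MAX(\Gamma_{\xi,n}.D)$ the partial sum equals $n^{-1}\sum_{s \leq t}\mathbb{P}_{\xi,n}(s)x_s$, which has norm exceeding $1 - \vartheta$ by the inductive hypothesis. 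The limit case $X_{\xi,1} = (\oplus_{\zeta<\xi} X_{\zeta+1,1})_{\ell_2(\xi)}$ is handled identically, using $\Gamma_{\xi,1} = \cup_{\zeta<\xi}(\omega^\zeta + \Gamma_{\zeta+1,1})$ and the identity $\mathbb{P}_{\xi,1}|_{\omega^\zeta + \Gamma_{\zeta+1,1}} = \mathbb{P}_{\zeta+1,1} \circ j_\zeta$ (with no $n^{-1}$ factor needed, since each piece carries its own $\mathbb{P}_{\zeta+1,1}$ directly).

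The main obstacle will be the Szlenk index upper bound at the successor and limit stages. The lower bound $Sz(X_{\zeta+1,1}) > \omega^{\zeta+1}$, and hence $\geq \omega^{\zeta+2}$ (since $Sz$ attains only values of the form $\omega^\beta$ or $\infty$), follows directly from $\theta_{\zeta+1,1}(X_{\zeta+1,1}) = 1$ by Theorem \ref{wh}(i). The matching upper bound $Sz((\oplus_n X_{\xi,n})_{\ell_2}) \leq \omega^{\xi+2}$, and analogously $Sz((\oplus_{\zeta<\xi}X_{\zeta+1,1})_{\ell_2(\xi)}) \leq \omega^{\xi+1}$ in the limit case, will require a weak$^*$-slicing analysis in the dual: one must show that any $\ee$-separated weak$^*$-null tree in the dual of the sum, of height beyond the claimed bound, forces enough mass into a single summand to contradict the inductive hypothesis $Sz(X_{\xi,n}) = \omega^{\xi+1}$. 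This step can be adapted from the standard $\ell_p$-sum Szlenk formulas of Brooker and H\'{a}jek--Lancien, though the ordinal-indexed $\ell_2(\xi)$-sum appearing in the limit case will need additional care to handle uncountable $\xi$.
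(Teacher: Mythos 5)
The paper states this Fact without supplying a proof, so there is nothing internal to compare against; your plan supplies the missing verification along the expected transfinite-inductive lines, and it is essentially correct.

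That said, you are re-deriving machinery the paper already makes available a few paragraphs earlier, and in one place you worry unnecessarily about something the paper has already handled. For the $\theta$ lower bounds at the successor and limit stages (for $n=1$), Lemma \ref{ravenous}$(i)$ and $(ii)$ specialized to $E=\ell_2$ are exactly the pasting argument you describe: given $\theta_{\xi,n}(X_{\xi,n})=1$ for all $n$, they hand you a weakly null collection on $\Gamma_{\xi+1,1}.D$ in $(\oplus_n X_{\xi,n})_{\ell_2}$ with inf at least $\vartheta$, and likewise at limit ordinals using $\theta_{\gamma,1}$. More significantly, the Szlenk upper bound that you flag as ``the main obstacle'' and propose to settle by a fresh weak$^*$-slicing argument (adapted from Brooker or H\'ajek--Lancien, with extra care for uncountable $\ell_2(\xi)$-sums) is already Corollary \ref{one side}$(i)$: with $Sz(X_{\xi,n})\leqslant\omega^{\xi+1}$, that corollary gives $(\oplus_n X_{\xi,n})_{\ell_2}\in\textsf{T}_{\xi+1,2}\subset\mathfrak{D}_{\xi+2}$, i.e., $Sz\leqslant\omega^{\xi+2}$; the same corollary with a limit ordinal $\xi$ and index set $\Gamma=\xi$ (which it allows, with no countability restriction) gives $Sz\bigl((\oplus_{\zeta<\xi}X_{\zeta+1,1})_{\ell_2(\xi)}\bigr)\leqslant\omega^{\xi+1}$. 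So the ``additional care for uncountable $\xi$'' you anticipate is unneeded; the paper's internal tool already covers it. Two minor omissions worth noting: you only argue the lower bound $\theta_{\zeta,n}(X_{\zeta,n})\geqslant 1-\vartheta$; the matching upper bound $\theta_{\zeta,n}\leqslant\theta_{\zeta,1}\leqslant 1$ should be cited from Theorem \ref{wh}$(ii)$ and $(iv)$. And in the $\ell_1^n$ step you should say a word about why the weakly null collections for $X_{\zeta,1}$ can be reindexed over $D_{X_{\zeta,n}}$ (or, more simply, observe that by the analogue of Theorem \ref{wh}$(iii)$ for general $n$, exhibiting a weakly null collection over \emph{any} directed set $D$ already lower-bounds $\theta_{\zeta,n}$, so one can just push forward along the isometric coordinate embedding $X_{\zeta,1}\hookrightarrow\ell_1^n(X_{\zeta,1})$, which is weak-to-weak continuous).
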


\begin{proof}[Proof of Theorems \ref{tm1} and \ref{tm2}, $0<\xi$] Throughout the proof, let $X_{\zeta, n}$ be a Banach space with $Sz(X_{\zeta, n})=\omega^{\zeta+1}$ and $\theta_{\zeta, n}(X_{\zeta, n})=1$.

First suppose that $\xi$ is a successor, say $\xi=\zeta+1$.   Then for $1\leqslant q<\infty$ and $1/p+1/q=1$, combining Corollary \ref{one side} with Proposition \ref{p1}, \begin{enumerate}[(i)]\item if $1<q$, $\bigl(\oplus_{\zeta<\xi} X_{\zeta, n}\bigr)_{\ell_p}\in \textsf{T}_{\xi,p}\setminus \bigcup_{p<r<\infty}\textsf{T}_{\xi,r}$, \item $\bigl(\oplus_{n=1}^\infty X_{\zeta,n}\bigr)_{T_q^*}\in \textsf{A}_{\xi,p}\setminus \textsf{T}_{\xi,p}$, \item if $1<q$, $\bigl(\sum_{n=1}^\infty X_{\zeta, n}\bigr)_{\mathfrak{Y}_q(\omega)}\in \textsf{N}_{\xi,p}\setminus \textsf{A}_{\xi,p}$, and \item $\bigl(\oplus_{n=1}^\infty X_{\zeta, n}\bigr)_{\mathfrak{X}_q(\omega)}\in \textsf{P}_{\xi,p}\setminus \textsf{N}_{\xi,p}$.  \end{enumerate}

Now suppose that $\xi$ is a limit ordinal. Combining Corollary \ref{one side} with Proposition \ref{p2}, \begin{enumerate}[(i)]\item if $1<q$, $\bigl(\oplus_{\gamma<\xi} X_{\gamma,1}\bigr)_{\ell_p(\xi)}\in \textsf{T}_{\xi,p}\setminus \bigcup_{p<r<\infty}\textsf{T}_{\xi,r}$, \item if $1<q$, $\bigl(\sum_{\gamma<\xi} X_{\gamma, 1}\bigr)_{\mathfrak{Y}_q(\xi)}\in \textsf{N}_{\xi,p}\setminus \textsf{A}_{\xi,p}$, and \item $\bigl(\oplus_{\gamma<\xi} X_{\gamma, 1}\bigr)_{\mathfrak{X}_q(\xi)}\in \textsf{P}_{\xi,p}\setminus \textsf{N}_{\xi,p}$.  \end{enumerate}

If $\xi$ is a limit ordinal with countable cofinality, we may fix $\xi_n\uparrow \xi$ and combine Corollary \ref{one side} with Proposition \ref{p2} to deduce that $(\oplus_{n=1}^\infty X_{\xi_n, 1})_{T_q^*}\in \mathfrak{A}_{\xi,p}\setminus \mathfrak{T}_{\xi,p}$.

\end{proof}

\begin{rem}\upshape The class of limit ordinals with countable cofinality is a proper class.  Therefore Theorems \ref{tm1} and \ref{tm2} produce results for uncountable ordinals. For example, they yield the distinctness of $\mathfrak{T}_{\omega_1 \omega, p}$ and $\mathfrak{A}_{\omega_1 \omega, p}$, since $\omega_1 \omega=\lim_n \omega_1 n$.  Here $\omega_1$ denotes the first uncountable ordinal.

\end{rem}

\begin{question} Are $\mathfrak{T}_{\xi,p}$ and $\mathfrak{A}_{\xi,p}$ distinct for all ordinals? 

\end{question}

We last conclude with the promised modification of the example of Lindenstrauss.  Let $E$ be a  Banach space such that the canonical $c_{00}$ basis is a  normalized, $1$-unconditional, boundedly-complete basis for $E$.  Let $F$ be a finite dimensional, non-zero Banach space and let $(x_n)_{n=1}^\infty$ be a dense sequence in $S_F$.     Let us define the norm $\|\cdot\|_{Y^*_E}$ on $c_{00}$ by $$\|\sum_{i=1}^\infty a_ie_i\|_{Y^*_E} = \sup \Bigl\{\bigl\|\sum_{j=1}^\infty \|\sum_{i=p_{j-1}+1}^{p_j}a_ix_i\|_F e_{p_j}\|_E: 0=p_0<p_1<\ldots \Bigr\}.$$   Let $Y^*_E$ denote the completion of $c_{00}$ with respect to this norm.  When no confusion can arise, we will write $Y^*$ in place of $Y^*_E$.   The following facts are standard, and follow as in \cite{Lindenstrauss}.

\begin{proposition} \begin{enumerate}[(i)]\item The canonical $c_{00}$ basis is a normalized, bimonotone, boundedly-complete basis for $Y^*$. \item The map $Q:Y^*\to F$ given by $Q\sum_{i=1}^\infty a_ie_i=\sum_{i=1}^\infty a_ix_i$ is a well-defined quotient map with norm $1$. \end{enumerate}

\end{proposition}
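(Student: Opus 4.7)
I will handle (i) and (ii) in turn, in each case reducing the claim to properties of the partition-supremum defining $\|\cdot\|_{Y^*}$.

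For (i), I begin with normalization.  Taking the partition $p_0=0$, $p_1=k$, $p_j=k+j-1$ for $j\geqslant 2$, the only nonzero contribution comes from the block $[1,k]$ and equals $\|x_k\|_F e_k$, which has $E$-norm $1$.  Conversely, for any partition $P$, the index $k$ lies in a unique block $I_{j_0}=[p_{j_0-1}+1,p_{j_0}]$, contributing $\|x_k\|_F e_{p_{j_0}}$ (of $E$-norm $1$), and every other block contributes zero, so $\|e_k\|_{Y^*}=1$.  The Schauder-basis property for $(e_k)$ will follow from density of $c_{00}$ in $Y^*$ (which holds by definition of the completion) together with the uniform boundedness of the canonical projections $P_n$, a consequence of bimonotonicity.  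Boundedly-completeness then transfers from that of the $E$-basis: if $v_N:=\sum_{i=1}^Na_ie_i$ satisfies $\sup_N\|v_N\|_{Y^*}\leqslant M$, then for every partition $P$ the $E$-vectors $\sum_jb_j^Ne_{p_j}$ (with $b_j^N=\|\sum_{i\in I_j\cap[1,N]}a_ix_i\|_F$) stay uniformly bounded by $M$ in $E$, and boundedly-completeness of $E$ combined with bimonotonicity in $Y^*$ forces $(v_N)$ to be Cauchy in $Y^*$.

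The main step in (i) is bimonotonicity.  Fix $v\in c_{00}$, an interval $I=[m,n]$, set $w=P_Iv$, and let $P$ be any partition.  Form $P^*:=P\cup\{m-1,n\}$; every block of $P^*$ lies in exactly one of $[1,m-1]$, $[m,n]$, or $[n+1,\infty)$, and by $1$-unconditionality of the $E$-basis the $w$-contributions from blocks in $[m,n]$ match the $v$-contributions there while the other blocks contribute $0$ to $w$; hence $\|w\|_{P^*}\leqslant\|v\|_{P^*}\leqslant\|v\|_{Y^*}$.  The remaining verification is $\|w\|_P\leqslant\|w\|_{P^*}$: a block of $P$ that does not straddle $\{m-1,n\}$ contributes identically in both partitions; a block of $P$ straddling $m-1$ splits with its $w$-relevant sub-block retaining right endpoint $p_j$, so no positional change; and a block of $P$ straddling $n$ has its $w$-relevant contribution with coefficient preserved but with position shifting from $e_{p_j}$ (where $p_j>n$) to $e_n$, a shift toward smaller indices that is handled by using that the other $w$-contributions already lie at positions in $[1,n-1]$ and invoking bimonotonicity of the $E$-basis itself.

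For (ii), the bound $\|Q\|\leqslant 1$ comes from the partition $p_1=N$ for $v=\sum_{i=1}^Na_ie_i$, yielding $\|Qv\|_F=\|\|Qv\|_Fe_N\|_E\leqslant\|v\|_{Y^*}$, while $\|Q\|\geqslant 1$ follows from $\|Qe_k\|_F=1=\|e_k\|_{Y^*}$.  To see that $Q$ is a quotient map with quotient norm matching $\|\cdot\|_F$, I lift each $v\in S_F$ to an element of $Y^*$ with norm arbitrarily close to $1$.  Fix $\varepsilon\in(0,1)$ and iteratively use density of $(x_n)$ in $S_F$ to choose $i_1<i_2<\cdots$ and scalars $\lambda_k=\|v_k\|_F$ (with $\lambda_1=1$, $v_1=v$, and $v_{k+1}=v_k-\lambda_kx_{i_k}$) satisfying $\|v_{k+1}\|_F\leqslant\varepsilon\|v_k\|_F$; the geometric decay $\lambda_k\leqslant\varepsilon^{k-1}$ makes $u:=\sum_{k=1}^\infty\lambda_ke_{i_k}$ converge in $Y^*$, and $Qu=v$ by telescoping.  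For any partition $P$, the inner triangle inequality in $F$ together with the outer triangle inequality in $E$ and $\|e_{p_j}\|_E=1$ yields $\|u\|_P\leqslant\sum_k\lambda_k\leqslant 1/(1-\varepsilon)$, so $\|u\|_{Y^*}\leqslant 1/(1-\varepsilon)$.  Letting $\varepsilon\to 0^+$ shows the quotient norm of $v$ equals $\|v\|_F$.

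The main obstacle is the bimonotonicity step.  A direct comparison $\|w\|_P\leqslant\|v\|_P$ on the same partition $P$ fails, because on a block straddling $\partial I$ the coefficient $c_j^w=\|\sum_{i\in I_j\cap I}a_ix_i\|_F$ can, through cancellations in $F$, exceed $c_j^v=\|\sum_{i\in I_j}a_ix_i\|_F$.  One is therefore forced to pass to the refinement $P^*$, and the delicate point is to verify that the positional shift of the straddling-block contribution in the $E$-basis (from a position greater than $n$ down to $n$) is compatible with the sup defining $\|v\|_{Y^*}$; this requires bimonotonicity of $E$'s own basis together with the fact that the other $w$-contributions under $P$ are supported strictly below $n$.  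All other parts --- normalization, the Schauder-basis property, boundedly-completeness, and the construction of lifts for $Q$ --- reduce to routine applications of the definitions once bimonotonicity is in hand.
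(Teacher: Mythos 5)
You correctly isolate the delicate point --- the positional shift from $e_{p_j}$ to $e_n$ when the block straddling $n$ is refined --- but the assertion that this ``is handled by bimonotonicity of the $E$-basis'' is a genuine gap. Bimonotonicity (equivalently, here, $1$-unconditionality) of the $E$-basis controls only suppression of coordinates; it gives no comparison between $\bigl\|c_1 e_{q_1}+\cdots+c_{l-1}e_{q_{l-1}}+c_l e_{p_j}\bigr\|_E$ and $\bigl\|c_1 e_{q_1}+\cdots+c_{l-1}e_{q_{l-1}}+c_l e_n\bigr\|_E$ when $q_{l-1}<n<p_j$, which is exactly what $\|w\|_P\leqslant\|w\|_{P^*}$ requires. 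What is actually needed is a positional monotonicity of the $E$-basis, namely right-shift monotonicity: $\|\sum_i c_i e_{n_i}\|_E\leqslant\|\sum_i c_i e_{m_i}\|_E$ whenever $m_i\leqslant n_i$ for all $i$. This does not follow from the stated hypotheses, and without it the conclusion fails. Concretely, take $F=\mathbb{R}$, $x_n=(-1)^n$, and let $E$ be the completion of $c_{00}$ under $\|(a_n)\|_E=\max\bigl\{\sum_k|a_{2k-1}|,\ (\sum_k|a_{2k}|^2)^{1/2}\bigr\}$, which is normalized, $1$-unconditional and boundedly complete. One computes $\|e_1+e_2+\tfrac12 e_3\|_{Y^*}=\tfrac32$, whereas the partition $(0,1,3,4,\ldots)$ gives $\|e_1+e_2\|_{Y^*}\geqslant\|e_1+e_3\|_E=2$; so $P_{[1,2]}$ has norm at least $\tfrac43$ on $Y^*$ and the basis of $Y^*$ is not even monotone.

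Since your arguments for the Schauder-basis and boundedly-complete properties in (i), and hence the existence of the predual $Y$ used throughout (ii), rest on the bimonotonicity step, the gap propagates. (The normalization computation, the bound $\|Q\|\leqslant 1$, and the lifting construction with $\|u\|_{Y^*}\leqslant 1/(1-\varepsilon)$ are, taken in isolation, correct.) The repair is to add --- or make explicit --- the hypothesis that the $E$-basis is right-shift monotone; this holds for every $E$ the paper actually substitutes later ($\ell_p$, $T_q^*$, $\text{Ti}_q^*$, $S_q^*$), and under that hypothesis the chain $\|w\|_P\leqslant\|w\|_{P^*}\leqslant\|v\|_{P^*}\leqslant\|v\|_{Y^*}$ is valid and the rest of your argument goes through.
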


Since the canonical $c_{00}$ basis is a normalized, bimonotone, boundedly-complete for $Y^*$, then the coordinate functionals $(e^*_i)_{i=1}^\infty$ form a normalized, bimonotone, shrinking basis for a predual of $Y^*$, which we call $Y$ (or $Y_E$ if we wish to specify the choice of $E$).

\begin{lemma} Suppose $1<r<\infty$, $C\geqslant 1$ are such that the basis of $E$ satisfies a $C$-$\ell_r$ upper block estimate.  \begin{enumerate}[(i)]\item If $(y_i^*)_{i=1}^n$ is a block sequence in $Y^*$ and $(b_i)_{i=1}^n$ are any scalars, $$\|\sum_{i=1}^n b_iy_i^*\|_{Y^*} \leqslant \sum_{i=1}^n |b_i|\|Qy_i^*\| + 5C\bigl(\sum_{i=1}^n |b_i|^r \|y_i^*\|^r\bigr)^{1/r}.$$ \item If $1<p\leqslant \infty$ and there exists a constant $C_1$ such that for any $n\in\nn$ and any block sequence $(z_i)_{i=1}^n$ in $B_E$, then $\|\sum_{i=1}^n z_i\|_E\leqslant C_1 n^{1/p}$, then for any $n\in\nn$ and any  block sequence $(y_i^*)_{i=1}^n$ in $B_{Y^*}$, $$\|\sum_{i=1}^n y_i^*\|\leqslant \sum_{i=1}^n \|Qy_i^*\|+ 5C_1 n^{1/p}.$$   \item If $1< p\leqslant \infty$ and if the basis of $E$ is asymptotic $\ell_p$ (resp. $c_0$ if $p= \infty$), then there exists a constant $C_2$ such that for any $n\in\nn$ and any block sequence $(y_i^*)_{i=1}^n $ in $B_{Y^*}$ with $n\leqslant y_1^*$ and any scalars $(b_i)_{i=1}^n$, $$\|\sum_{i=1}^n b_iy_i^*\|_{Y^*}\leqslant \sum_{i=1}^n |b_i|\ee_i + C_2\|\sum_{i=1}^n b_i e_i\|_{\ell_p^n}.$$    \item A bounded block sequence $(y_i^*)_{i=1}^\infty $ in $Y^*$ is weakly null if and only if $(Qy_i^*)_{i=1}^\infty$ is norm null.  \item For any $y^{**}\in Y^{**}$, there exists a unique $x^*\in F^*$ with $\|x^*\|\leqslant \|y^{**}\|$ such that if $(y_i^*)_{i=1}^\infty\subset B_{Y^*}$ is a bounded block sequence such that $(Qy_i^*)_{i=1}^\infty$ is convergent to $x\in F$, then $\lim_i y^{**}(y_i^*)=\lim_i x^*(x)$.   Furthermore, $y^{**}-Q^*x^*\in JY$.    \end{enumerate}

\label{pete}

\end{lemma}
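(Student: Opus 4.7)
The plan is to prove (i) first via a careful partition-refinement argument inside the Lindenstrauss norm, to observe that (ii) and (iii) are variations of the same decomposition, to deduce (v) from (i) by a James-type construction of a limit functional in $F^*$, and finally to derive (iv) from (v). For (i), I fix the block sequence $(y_i^*)_{i=1}^n$ with supports $E_i = \supp(y_i^*)$ and write $y_i^* = \sum_{k\in E_i} a_k^{(i)}e_k$. The norm $\|\sum_i b_iy_i^*\|_{Y^*}$ is the supremum over partitions $0=p_0<p_1<\cdots$ of $\|\sum_j\|\sum_{k\in I_j}c_k x_k\|_F\, e_{p_j}\|_E$, where $c_k = b_ia_k^{(i)}$ for $k\in E_i$ and $I_j=(p_{j-1},p_j]$. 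Fixing such a partition and setting $U_{i,j}=\sum_{k\in I_j\cap E_i}a_k^{(i)}x_k\in F$, the triangle inequality in $F$ together with $1$-unconditionality of the basis of $E$ reduces the problem to bounding $\|\sum_i|b_i|v_i\|_E$ with $v_i=\sum_j\|U_{i,j}\|_F\, e_{p_j}$. I classify each index: \emph{Case A} when $E_i\subset I_j$ for a unique $j$ (then $v_i=\|Qy_i^*\|_F e_{p_j}$), or \emph{Case B} when $E_i$ is split across intervals $I_{L_i},\dots,I_{R_i}$ with $L_i<R_i$. The Case A contribution is bounded by $\sum_i|b_i|\|Qy_i^*\|$ via $\|e_{p_j}\|_E = 1$ and the triangle inequality in $E$. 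For Case B, using the partition induced on $E_i$ in the definition of $\|y_i^*\|_{Y^*}$ gives $\|v_i\|_E\leq\|y_i^*\|$; I then split $v_i = v_i^{\mathrm{left}}+v_i^{\mathrm{mid}}+v_i^{\mathrm{right}}$ (corresponding to the leftmost boundary interval, the strictly interior intervals, and the rightmost boundary interval), and observe that since successive Case B indices satisfy $L_{i+1}\geq R_i$, each of the three families is genuinely successively supported in $E$. Three applications of the $C$-$\ell_r$ upper block estimate, each controlled by $\|v_i^{\bullet}\|_E\leq\|y_i^*\|$, produce the $5C(\sum|b_i|^r\|y_i^*\|^r)^{1/r}$ term with the constant $5$ absorbing the split and additional slack.

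Parts (ii) and (iii) use exactly the same decomposition. For (ii), the flat hypothesis $\|\sum z_i\|_E\leq C_1 n^{1/p}$ for block $(z_i)\subset B_E$ replaces the $\ell_r$ upper block estimate in each of the three block applications, producing $5C_1 n^{1/p}$ in place of $5C(\sum\|y_i^*\|^r)^{1/r}$ when the $y_i^*$ are unit vectors. For (iii), the hypothesis $n\leq y_1^*$ forces the three block families to have all supports past index $n$, and the asymptotic $\ell_p$ (resp.\ $c_0$) constant $C_2$ directly produces the $C_2\|\sum b_ie_i\|_{\ell_p^n}$ (resp.\ $\ell_\infty^n$) estimate.

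For (v), given $y^{**}\in Y^{**}$, set $c_n := y^{**}(e_n)$, noting $|c_n|\leq\|y^{**}\|$. For each $x\in S_F$, I pick $n_k\uparrow\infty$ with $x_{n_k}\to x$ in $F$ and define $x^*(x):=\lim_k c_{n_k}$. To see this limit exists and is independent of the approximating sequence, I suppose toward contradiction that two subsequences yield distinct limits $\ell_1\neq\ell_2$; after interleaving and thinning, there is a bounded block sequence $(u_l^*)\subset 2B_{Y^*}$ with $\|Qu_l^*\|_F\to 0$ and $y^{**}(u_l^*)\to\ell_1-\ell_2$. Choosing signs $b_l\in\{\pm1\}$ with $b_l y^{**}(u_l^*)\geq 0$ eventually and applying (i) to $\sum_{l=M}^{M+L-1}b_lu_l^*$ yields $\|\sum b_lu_l^*\|_{Y^*}\leq L\eta(M)+10CL^{1/r}$ while $|y^{**}(\sum b_lu_l^*)|\geq L|\ell_1-\ell_2|/2$ for $M$ large, where $\eta(M)\to 0$; dividing by $L$ and sending $L\to\infty$ (legitimate because $r>1$) gives $|\ell_1-\ell_2|\leq 2\|y^{**}\|\eta(M)$, and then $M\to\infty$ forces $\ell_1=\ell_2$. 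Linearity of $x^*$ on the dense set generated by $\{x_n\}$ follows from $y^{**}(\alpha e_n+\beta e_m)=\alpha c_n+\beta c_m$, and $\|x^*\|\leq\|y^{**}\|$ is immediate. To conclude $y^{**}-Q^*x^*\in JY$, the sequence $d_n := c_n-x^*(x_n)$ satisfies $\sup_N\|\sum_{n=1}^N d_ne_n^*\|_Y<\infty$ (inherited from boundedness of $y^{**}-Q^*x^*$ on the shrinking basis $(e_n^*)$ of $Y$), so $\sum d_ne_n^*$ converges in $Y$ to some $y_0$ with $J(y_0)=y^{**}-Q^*x^*$. Part (iv) is then immediate: forward, weak convergence of $(Qy_i^*)$ in the finite-dimensional $F$ is norm convergence; conversely, given bounded block $(y_i^*)$ with $\|Qy_i^*\|\to 0$ and any $y^{**}=Q^*x^*+Jy_0$, one has $y^{**}(y_i^*)=x^*(Qy_i^*)+y_i^*(y_0)\to 0$, with the second term vanishing because $(y_i^*)$ is weak$^*$-null in $Y^*$ (the shrinking basis $(e_n^*)$ of $Y$ forces $\|P_{E_i}y_0\|_Y\to 0$ as $E_i$ moves to infinity).

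The main obstacle is the decomposition in (i): the per-$i$ vectors $v_i$ fail to be block in $E$ because a partition interval can straddle two consecutive supports $E_i,E_{i+1}$, and the three-piece split is the essential fix that restores block structure at the cost of tripling the constant in front of $(\sum|b_i|^r\|y_i^*\|^r)^{1/r}$. Once (i) is in place, (ii) and (iii) are essentially routine modifications, and (v)--(iv) follow from a limit-functional construction that uses (i) quantitatively to break the potential circularity with the weak convergence question in (iv).
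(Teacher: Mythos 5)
Your argument for (i) is essentially the paper's, with a cosmetically different handling of the boundary intervals: you split each Case-B vector $v_i$ into left/mid/right pieces and apply the $C$-$\ell_r$ upper block estimate three times (yielding $3C$, hence within the stated $5C$), whereas the paper applies it once for the interior intervals and four times for the boundary coordinates $\{p_i,q_i\}$ (yielding $1+4=5$). Both work; yours is slightly tighter. Parts (ii) and (iii) follow by the same modifications as in the paper. For (iv) and (v) you reverse the paper's logical order: the paper proves (iv) directly from (i) via a Ces\`aro average, then uses (iv) to get existence of the limits in (v); you prove existence of the limit in (v) directly from (i) by a quantitative block-sum argument (divide by $L$, let $L\to\infty$ using $1/r<1$, then $M\to\infty$), and then read (iv) off the resulting decomposition of $Y^{**}$. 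That alternative route is legitimate and self-contained for the existence part.

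However, there is a genuine gap in the final assertion of (v), the claim that $y^{**}-Q^*x^*\in JY$. You set $d_n=(y^{**}-Q^*x^*)(e_n)$, observe $\sup_N\|\sum_{n=1}^N d_n e_n^*\|_Y<\infty$, and then assert ``so $\sum d_n e_n^*$ converges in $Y$.'' That implication requires the basis $(e_n^*)$ of $Y$ to be \emph{boundedly complete}, but $(e_n^*)$ is a shrinking basis of a non-reflexive space ($Y^{**}/JY\cong F\neq\{0\}$ in general), so it is not boundedly complete. Indeed, applied to an arbitrary $y^{**}\in Y^{**}$ (without subtracting $Q^*x^*$) your inference would prove $Y$ reflexive, which is false. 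The boundedness of partial sums is exactly the characterization of $Y^{**}$, not of $JY$; you need to use the specific cancellation supplied by subtracting $Q^*x^*$. The paper closes this gap by arguing: if $y^{**}-Q^*x^*\notin\overline{\text{span}}(e_n^*)$, then there is a normalized block sequence $(y_i^*)$ in $Y^*$ with $|(y^{**}-Q^*x^*)(y_i^*)|\geq\ee$; by compactness of $B_F$, pass to a subsequence with $Qy_i^*\to x$, and then the already-established limit identity in (v) forces $(y^{**}-Q^*x^*)(y_i^*)\to x^*(x)-x^*(x)=0$, a contradiction. This step is what actually uses the first part of (v), and it is not replaceable by a boundedness-implies-convergence appeal. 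Since your derivation of (iv) relies on the decomposition $Y^{**}=JY\oplus Q^*F^*$, the gap propagates to (iv) as well, although (iv) could be restored independently via the paper's Ces\`aro-average argument from (i).
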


\begin{proof}$(i)$ Fix $n\in\nn$, a block sequence $(y^*_i)_{i=1}^n$, scalars  $(b_i)_{i=1}^n$,  and intervals $I_1<I_2<\ldots$ with $\cup_{i=1}^\infty I_i=\nn$.     We may separate $\{1, \ldots, n\}$ into two sets $S,T$ such that for each $i\in S$, there exists an interval $I_{j_i}$ such that $\text{supp}(y^*_i)\subset I_{j_i}$ and for each $i\in T$, there exist at least two values of $j$ such that $I_j\cap \text{supp}(y^*_i)\neq \varnothing$. Of course, $$\Bigl\|\sum_{j=1}^\infty \|QI_j \sum_{i\in S} b_i y^*_i\|_F e_{\max I_j}\|_E \leqslant \sum_{i\in S}|b_i|\|Qy_i^*\| \leqslant \sum_{i=1}^n |b_i|\|Qy^*_i\|.$$

Now for each $i\in T$, we may let $$p_i=\min\{j: I_j\cap \text{supp}(y^*_i)\neq \varnothing\},$$ $$q_i= \max \{j: I_j \cap \text{supp}(y^*_i)\neq \varnothing\},$$ and note that $p_i<q_i\leqslant p_{i+1}$.  Note that $p_i\geqslant \min \text{supp}(y^*_1)\geqslant n$ for all $i\in T$.    For each $i\in T$, let $$v_i=\sum_{j=p_i+1}^{q_i-1} \|QI_j y^*_i\|e_{\max I_j}\in \|y^*_i\|B_E,$$ where $v_i=0$ if $p_i+1=q_1$.    Let $M=\{p_i, q_i: i\in T\}$, so that \begin{align*} \|\sum_{j\in \nn\setminus M} \|QI_j\sum_{i\in T} b_iy^*_i\|e_{\max I_j}\|= \|\sum_{j\in \nn\setminus M}\sum_{i\in T} b_i \|QI_jy^*_i\|e_{\max I_j}\| = \|\sum_{i\in T} b_i v_i\|_E \leqslant C\bigl(\sum_{i=1}^n|b_i|^r\|y_i^*\|^r)^{1/r}.\end{align*}    Now if $T\neq \varnothing$, write  $T=(t_i)_{i=1}^{|T|}$ and $z^*_i=y^*_{t_i}$, $a_i=b_{t_i}$.     For convenience, let $z^*_0=z^*_{|T|+1}=0$ and $a_0=a_{|T|+1}=0$. Note that for each $1\leqslant l\leqslant |T|$, $$\|QI_{p_{t_l}}\sum_{i\in T}b_iy^*_i\|\leqslant |a_{l-1}|\|z^*_{l-1}\|+|a_l|\|z^*_l\|$$ and $$\|QI_{q_{t_l}}\sum_{i\in T} b_iy^*_i\|\leqslant |a_l|\|z^*_l\|+|a_{l+1}|\|z^*_{l+1}\|.$$   Then \begin{align*} \|\sum_{j\in M} \|QI_j\sum_{i\in T}b_iy^*_i\| e_{\max I_j}\| & \leqslant \|\sum_{l=1}^{|T|} \|QI_{p_{t_l}} \sum_{i\in T}b_iy^*_i\|e_{\max I_{p_{t_l}}} \| + \|\sum_{l=1}^{|T|} \|QI_{q_{t_l}}\sum_{i\in T} b_i y^*_i\|e_{\max I_{q_{t_l}}} \| \\ & \leqslant \|\sum_{l=2}^{|T|} b_{l-1}\|z^*_{l-1}\|e_{\max I_{p_{t_{l-1}}}} \| + \|\sum_{l=1}^{|T|-1} b_{l+1} \|z^*_{l+1}\| e_{\max I_{q_{t_{l+1}}}}\| \\ & + \|\sum_{l=1}^{|T|} |b_l|\|z^*_l\|e_{\max I_{p_{t_l}}}\|+\|\sum_{l=1}^{|T|}|b_l|\|z^*_l\|e_{\max I_{q_{t_l}}}\| \\ & \leqslant 4C\bigl(\sum_{l=1}^{|T|} |b_l|^r\|z^*_l\|^r\bigr)^{1/r} \leqslant 4C\bigl(\sum_{i=1}^n  |a_i|^r\|y^*_i\|^r\bigr)^{1/r}.  \end{align*}    Since this holds for any sequence of intervals $I_1<I_2<\ldots $ with $\cup_{i=1}^\infty I_i=\nn$, $$\|\sum_{i=1}^n b_i y^*_i\|\leqslant \sum_{i=1}^n |b_i|\|Qy^*_i\|+ 5C\bigl(\sum_{i=1}^n |b_i|^r\|y^*_i\|^r\bigr)^{1/r}.$$

$(ii)$  This is similar to $(i)$, noting that $|T|\leqslant n$.

$(iii)$ We fix $C_1$ such that for any $n\leqslant z_1<\ldots <z_n$, $\|\sum_{i=1}^n z_i\|_E \leqslant C_1\|\sum_{i=1}^n \|z_i\|_E e_i\|_{\ell_p^n}$.   Then we can take $C_2=5C_1$.    The argument that the desired inequality holds is similar to the argument in $(i)$, noting that, with $I_1<I_2<\ldots$, $S,T$ as defined in $(i)$, $n\leqslant \min\{p_i, q_i, \min \text{supp}(v_i): i\in T\}$.

$(iv)$ If $(y_i^*)_{i=1}^\infty$ is weakly null, then by compactness of $Q$, $(Qy_i^*)_{i=1}^\infty$ is norm null.  Seeking a contradiction, suppose $(Qy_i^*)_{i=1}^\infty$ is norm null but $(y_i^*)_{i=1}^\infty$ is not weakly null. Fix $b>\sup_i \|y_i^*\|$.    Then after passing to a subsequence, we may assume there exists $\ee>0$ such that $$\inf\{\|y^*\|: y^*\in \text{co}(y_i^*:i\in\nn)\}\geqslant \ee$$ and such that $\|Qy_i^*\|<\ee/2$ for all $i\in\nn$.   Let $1/r+1/s=1$ and note that $s<\infty$.    Fix $n\in\nn$ such that $5bC/n^{1/s}<\ee/2$ and note that by $(i)$, $$\|\frac{1}{n}\sum_{i=1}^n y_i^*\| \leqslant \frac{1}{n}\sum_{i=1}^n \|Qy_i^*\|+ 5bC/n^{1/s}<\ee,$$ a contradiction.

$(v)$    Suppose $(y_i^*)_{i=1}^\infty\subset B_{Y^*}$ is such that $Qy_i^*\to x\in B_F$ in norm. We first prove that $y^{**}(y_i^*)$ converges. If it did not, then since $(y^{**}(y_i^*))_{i=1}^\infty$ is bounded, we could, after passing to a subsequence and relabeling, assume there exist distinct scalars $a,b$ such that $y^{**}(y_{2i}^*)\to a$ and $y^{**}(y_{2i+1}^*)\to b$.    Then $(y_{2i}^*-y_{2i+1}^*)_{i=1}^\infty$ is a bounded block sequence in $Y^*$ with $Q(y_{2i}^*-y_{2i+1}^*)\to 0$ in norm.  But this means $(y_{2i}^*-y_{2i+1}^*)_{i=1}^\infty$ is weakly null by $(iv)$, but $y^{**}(y_{2i}^*-y_{2i+1}^*)\to a-b\neq 0$.   This yields that $(y^{**}(y_i^*))_{i=1}^\infty$ must be convergent.

For an arbitrary $x\in X$, we now define $x^*(x)=\lim_i y^{**}(y_i^*)$, where $(y_i^*)_{i=1}^\infty$ is a bounded block sequence in $Y^*$ such that $Qy_i^*\to x$.  We note that this definition is independent of the particular choice of $(y_i^*)_{i=1}^\infty$.  Indeed, if $(y_i^*)_{i=1}^\infty, (z_i^*)_{i=1}^\infty$ are bounded block sequences with $Qy_i^*, Qz_i^*\to x$, then we may select $r_1<s_1<r_2<s_2<\ldots$ such that $(y_{r_i}^*-z_{s_i}^*)_{i=1}^\infty$ is a block sequence in $Y^*$.  Since this sequence is bounded and $Q(y_{r_i}^*-z_{s_i}^*)\to 0$, $(y_{r_i}^*-z_{s_i}^*)_{i=1}^\infty$ is weakly null, so $$\lim_i y^{**}(y_i^*)=\lim_i y^{**}(y^*_{r_i})= \lim_i y^{**}(z^*_{s_i})= \lim_i y^{**}(z_i^*).$$     

Now for any $x\in X$, we may fix $k_1<k_2<\ldots$ such that $\|\|x\|x_{k_i}-x\|\to 0$ and let $y^*_i= \|x\|e_{k_i}$.    Then $$|x^*(x)|=\lim_i |y^{**}(y^*_i)| \leqslant \|x\|\|y^{**}\|.$$  Thus $x^*$ is continuous and $\|x^*\|\leqslant \|y^{**}\|$.    Uniqueness of $x^*$ is obvious.  

We last need to show that $y^{**}-Q^*x^*\in JY$, for which it is sufficient to prove that $y^{**}-Q^*x^*$ is in the closed span of $(e^*_i)_{i=1}^\infty$.   If $y^{**}-Q^*x^*$ is not in the closed span of $(e_i^*)_{i=1}^\infty$, there would exist $\ee>0$ and a block sequence $(y^*_i)_{i=1}^\infty \subset B_{Y^*}$ such that $|(y^{**}-Q^*x^*)(y^*_i)|\geqslant \ee$ for all $i\in\nn$.   Since $B_F$ is compact, we may pass to a subseuqence and assume $Qy_i\to x\in B_F$.    Then \begin{align*} \ee & \leqslant \underset{i}{\lim}|y^{**}(y_i^*)-Qy^*_i(x)|=0,\end{align*}  and this contradiction concludes $(v)$.

\end{proof}

\begin{corollary} $Y^{**}=JY\oplus Q^*F^*$.

\end{corollary}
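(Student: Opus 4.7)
My plan is to derive the corollary directly from Lemma \ref{pete}(v), which already encodes both the existence and the uniqueness of the desired decomposition. The existence clause will produce the algebraic sum $Y^{**} = JY + Q^*F^*$, while the uniqueness clause will force the intersection $JY \cap Q^*F^*$ to be trivial.

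For the sum: given $y^{**}\in Y^{**}$, Lemma \ref{pete}(v) supplies $x^*\in F^*$ with $y^{**}-Q^*x^*\in JY$. Since $J$ is an isometric embedding, there is a unique $y\in Y$ with $Jy = y^{**}-Q^*x^*$, so
\[
y^{**} = Jy + Q^*x^* \in JY + Q^*F^*.
\]

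For directness, suppose $z^{**}\in JY\cap Q^*F^*$, so $z^{**} = Jy = Q^*x^*$ for some $y\in Y$, $x^*\in F^*$. Apply the uniqueness assertion of Lemma \ref{pete}(v) to $z^{**}$: the element of $F^*$ satisfying ``$z^{**}-Q^*(\cdot)\in JY$'' is unique. But both the choice $x^*$ (giving $z^{**}-Q^*x^* = 0\in JY$) and the choice $0\in F^*$ (giving $z^{**}-Q^*\cdot 0 = Jy\in JY$) satisfy this property. Hence $x^* = 0$, so $z^{**} = Q^*x^* = 0$ and $y = 0$ as well, proving $JY\cap Q^*F^* = \{0\}$.

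Combining the two parts gives $Y^{**} = JY\oplus Q^*F^*$. There is no substantive obstacle here: all of the analytic content—producing the representing $x^*\in F^*$ from the values of $y^{**}$ on norm-convergent block images under $Q$, and establishing its uniqueness—was already accomplished in Lemma \ref{pete}(v), and the corollary is simply its clean restatement as a direct sum decomposition of $Y^{**}$.
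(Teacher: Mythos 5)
Your existence argument is fine, but the directness step has a genuine gap. You write that Lemma \ref{pete}$(v)$ asserts uniqueness of the element $x^*\in F^*$ satisfying ``$z^{**}-Q^*x^*\in JY$,'' and then derive $x^*=0$ from this. But that is not what the lemma says: it asserts uniqueness of the $x^*$ satisfying the \emph{block-sequence} characterization (namely that $\lim_i y^{**}(y_i^*)=x^*(x)$ for every bounded block $(y_i^*)\subset B_{Y^*}$ with $Qy_i^*\to x$), and then records, as a separate ``furthermore,'' that this particular $x^*$ satisfies $y^{**}-Q^*x^*\in JY$. The lemma does not claim that the latter condition pins down $x^*$ uniquely, and trying to prove that directly (say, from $Q^*(x_1^*-x_2^*)\in JY$ and injectivity of $Q^*$) is circular, since $JY\cap Q^*F^*=\{0\}$ is precisely what you are trying to establish.

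The gap is small and is closed by exactly the observation the paper's proof makes explicit: if $y\in Y$ and $(y_i^*)$ is a bounded block sequence in $Y^*$, then $y_i^*(y)\to 0$ (this uses that $(e^*_i)$ is a shrinking basis of $Y$, so $y$ is a norm-convergent series in $(e^*_j)$ while $y_i^*$ has support escaping to infinity). Granting this, both $x^*$ and $0$ satisfy the block-sequence characterization for your $z^{**}=Jy=Q^*x^*$: indeed $z^{**}(y_i^*)=x^*(Qy_i^*)\to x^*(x)$ on one hand, and $z^{**}(y_i^*)=y_i^*(y)\to 0$ on the other. Now the actual uniqueness clause of $(v)$ applies and gives $x^*=0$, so $z^{**}=0$. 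The paper's proof packages the same information by defining a projection $P y^{**}=Q^*x^*$, showing $JY\subset\ker P$ via this same $y_i^*(y)\to 0$ observation, and deducing the direct sum from $y^{**}=(y^{**}-Py^{**})+Py^{**}$; the two routes are essentially the same once the missing observation is supplied.
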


\begin{proof} For $y^{**}\in Y^{**}$, let $x^*$ be the functional guaranteed to exist by Lemma \ref{pete}$(v)$ and let $Py^{**}=Q^*x^*$.   Observe that if $y\in JY$, then for any bounded block sequence $(y_i^*)_{i=1}^\infty$ in $Y^*$, $y^*_i(y)\to 0$, whence $Py=0$.    Furthermore, if $y^{**}\in Y^{**}$, then $$y^{**}= y^{**}-Py^{**} + Py^{**}\in JT+Q^*Y^*.$$    From this it follows that $Y^{**}=JT\oplus Q^*Y^*$.   

\end{proof}

The next result follows without modification as in \cite[Corollary 1]{Lindenstrauss}. 

\begin{corollary} If $E$ is a Banach space with normalized, $1$-unconditional, boundedly-complete basis satisfying $\ell_r$ upper block estimates for some $1<r<\infty$, then then $Z:=\ker(Q:Y^*_E\to F)$ is such that $Z^{**}/Z$ is $2$-isomorphic to $F$. 

\label{linden}

\end{corollary}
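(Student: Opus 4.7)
The plan is to follow Lindenstrauss's original argument, leveraging the decomposition $Y^{**}=JY\oplus Q^*F^*$ furnished by the preceding corollary. The strategy is to identify $Z^{**}$ inside $Y^{***}$ as the annihilator $(Q^*F^*)^\perp$, to use restriction to $JY$ to isomorphically identify $Z^{**}$ with $Y^*$ with distortion at most $2$, and then to quotient out by the canonical copy of $Z$.

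Concretely, I would begin with the short exact sequence $0\to Z\to Y^*\to F\to 0$ coming from $Q$ and dualize it twice. The first dualization yields $0\to F^*\to Y^{**}\to Z^*\to 0$, the first arrow being $Q^*$; dualizing again gives $0\to Z^{**}\to Y^{***}\to F^{**}\to 0$. Since $F$ is finite-dimensional, $F^{**}=F$, and $Z^{**}$ is naturally identified with $(Q^*F^*)^\perp\subset Y^{***}$.

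Next, I would exploit the Banach direct-sum decomposition $Y^{**}=JY\oplus Q^*F^*$ together with the norm bounds inherited from Lemma \ref{pete}(v): the projection $P$ onto $Q^*F^*$ satisfies $\|P\|\le 1$, so the complementary projection onto $JY$ has norm at most $2$. Define $R\colon Y^{***}\to Y^*$ by $R(y^{***})(y)=y^{***}(Jy)$. I would verify that $R|_{Z^{**}}\colon Z^{**}\to Y^*$ is a surjective isomorphism: surjectivity is obtained by taking any $y^*\in Y^*$ and defining $\widetilde{y}^{***}(Jy+Q^*x^*)=y^*(y)$, which is a well-defined element of $(Q^*F^*)^\perp$; injectivity is immediate (if a functional in $(Q^*F^*)^\perp$ vanishes on $JY$ then it vanishes on $Y^{**}$); the estimates $\|R\|\le 1$ and $\|R^{-1}\|\le 2$ come directly from the projection bounds.

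Finally, I would check that the canonical inclusion $Z\hookrightarrow Z^{**}$ (via the bidual embedding $J_{Y^*}$) corresponds under $R$ to the tautological inclusion $Z\subset Y^*$: for $z\in Z$ and $y\in Y$, $R(J_{Y^*}z)(y)=(J_{Y^*}z)(Jy)=(Jy)(z)=z(y)$, so $R(J_{Y^*}z)=z$. Passing to quotients, $R$ induces an isomorphism $Z^{**}/Z\to Y^*/Z$ with distortion at most $2$; since $Q$ is a quotient map of norm $1$ with $\ker Q=Z$, $Y^*/Z$ is isometrically isomorphic to $F$, and the composition yields the desired $2$-isomorphism $Z^{**}/Z\cong F$. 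The most delicate step is controlling the constant in the third paragraph, where one must use that the projection $I-P\colon Y^{**}\to JY$ has norm at most $2$ — this is precisely the source of the factor $2$ in the statement and the only place where the hypothesis on $E$ (via the decomposition from Lemma \ref{pete}) is used in an essential way.
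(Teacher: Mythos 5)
Your proof is correct and follows the same route the paper (via its citation of Lindenstrauss's Corollary 1) intends: use the decomposition $Y^{**}=JY\oplus Q^*F^*$ with $\|P\|\leqslant 1$ to identify $Z^{**}=(Q^*F^*)^\perp\subset Y^{***}$ with $Y^*$ up to a factor of $2$, check that this identification carries the canonical copy of $Z$ to $Z\subset Y^*$, and then pass to quotients using $Y^*/Z\cong F$ isometrically. Since the paper delegates the proof entirely to Lindenstrauss, you have in effect supplied the details that the paper omits; nothing in your argument deviates from that blueprint.
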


\begin{theorem} Let $F$ be a finite dimensional Banach space and $1<p\leqslant \infty$. Then there exist Banach spaces $Z_1, Z_2, Z_3, Z_4$ such that for each $j=1,2,3,4$, $Z_j^{**}/Z_j$ is $2$-isomorphic to $F$, and   \begin{enumerate}[(i)]\item If $p<\infty$, $Z_1\in \textsf{\emph{T}}_{0, p}\setminus \bigcup_{p<r<\infty} \textsf{\emph{P}}_{0,p}$.    \item $Z_2\in \textsf{\emph{A}}_{0,p}\setminus \textsf{\emph{T}}_{0,p}$, \item if $p<\infty$, $Z_3\in \textsf{\emph{N}}_{0,p}\setminus \textsf{\emph{A}}_{0,p}$, \item $Z_4\in \textsf{\emph{P}}_{0,p}\setminus \textsf{\emph{N}}_{0,p}$. \end{enumerate}

\end{theorem}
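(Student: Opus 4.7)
For each $j=1,\dots,4$ I use the Lindenstrauss envelope construction of the preceding section with an appropriate shell space $E=E_j$, chosen to mirror the examples already used in the $\xi=0$ case of Theorems \ref{tm1} and \ref{tm2}. Specifically, take $E_1=\ell_p$ (case (i), where $p<\infty$), $E_2=T_q^*$ with $1/p+1/q=1$ (case (ii)), $E_3=\mathfrak{Y}_q(\omega)\subset V_q^*$ (case (iii), where $p<\infty$ so $q>1$), and $E_4=\mathfrak{X}_q(\omega)\subset SM_q^*$ (case (iv)). Each has a normalized, $1$-unconditional, boundedly-complete basis and satisfies an $\ell_r$ upper block estimate for some $1<r<\infty$: for $E_1$ this is immediate with $r=p$; for $E_2$ and $E_4$ it holds for every $r<p$ by Proposition \ref{blocks}; for $E_3$ it follows from $1$-symmetry together with the Lorentz-type dual estimate $\|\sum_k \alpha_k e_k^*\|_{V_q^*}\lesssim \sum_k \alpha_k^{\ast}(k^{1/p}-(k-1)^{1/p})$ and H\"older's inequality (again for any $r<p$). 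Fixing a dense sequence $(x_n)_{n=1}^{\infty}\subset S_F$, I form $Y^*_{E_j}$ with quotient map $Q_j:Y^*_{E_j}\to F$ and set $Z_j:=\ker Q_j$. By Corollary \ref{linden}, $Z_j^{**}/Z_j$ is $2$-isomorphic to $F$.

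For the upper containment of $I_{Z_j}$ in the larger class, the key observation is that $Z_j$ is complemented in $Y^*_{E_j}$ with finite-dimensional complement, so $Y^*_{E_j}\simeq Z_j\oplus F$. Since finite-dimensional identities lie in every ideal in sight, $I_{Z_j}\in\mathfrak{I}$ iff $I_{Y^*_{E_j}}\in\mathfrak{I}$, so it suffices to show that $Y^*_{E_j}$ lies in the appropriate class. The essential input is Lemma \ref{pete}: any bounded block sequence $(y_i^*)$ in $Y^*_{E_j}$ satisfies
\[
\|\sum_{i=1}^n b_i y_i^*\|_{Y^*}\;\leq\;\sum_{i=1}^n |b_i|\,\|Qy_i^*\|\;+\;C\cdot\bigl(\text{upper block estimate from }E_j\bigr),
\]
and by Lemma \ref{pete}(iv) the first term vanishes along every weakly null bounded block sequence. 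Since $\xi=0$ weakly null trees are just iterated weakly null block sequences, the invariants $\|\cdot\|_{0,I_{Y^*_{E_j}}}$, $\theta_{0,n}(Y^*_{E_j})$, and $\alpha_{0,p}(Y^*_{E_j})$ are each dominated by their $E_j$-counterparts; combining this with Corollary \ref{one side} (applied with $\xi=0$ and trivial summands) places $Y^*_{E_j}$, and hence $Z_j$, in the correct larger class in each of (i)--(iv).

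For the non-containment I transport the obstruction already present in the basis of $E_j$ to a weakly null tree inside $Z_j$. The twist is that the basis of $Y^*_{E_j}$ is not shrinking, so individual vectors $e_n$ are not weakly null; however, by Lemma \ref{pete}(iv) a bounded block sequence is weakly null exactly when its $Q$-image is norm-null, so differences $e_m-e_n$ with $\|x_m-x_n\|_F<\delta$ are weakly null in $Y^*_{E_j}$, and composition with the bounded projection of $Y^*_{E_j}$ onto $Z_j$ along the finite-dimensional complement gives weakly null vectors in $Z_j$ that agree with $e_m-e_n$ up to a perturbation of order $\delta$. Choosing $(x_n)$ so that successive $k$-tuples are $\delta$-close in $S_F$ and diagonalising across levels yields, for each $k$, a depth-$k$ weakly null tree in $Z_j$ whose branches are $(1+O(\delta))$-equivalent to length-$k$ initial segments of the basis of $E_j$. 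This transports the $\xi=0$ obstructions: in (i) an $\ell_p$-like lower estimate on every branch shows $Z_1\notin\textsf{T}_{0,s}$ for every $s>p$, and hence via $\textsf{P}_{0,r}=\bigcap_{1<s<r}\textsf{T}_{0,s}$ we obtain $Z_1\notin\textsf{P}_{0,r}$ for every $r>p$; in (ii) the strict singularity of the formal inclusion $\ell_q\hookrightarrow T_q$ forces $Z_2\notin\textsf{T}_{0,p}$; in (iii) the dual argument against $\ell_q\hookrightarrow V_q$ forbids $\alpha_{0,p}(Z_3)<\infty$; and in (iv) the lower bound $\|\sum_{i=1}^n e_i^*\|_{SM_q^*}\geq n^{1/p}\log_2(n+1)^{1/q}$ from Proposition \ref{1994}(iv) gives $\theta_{0,n}(Z_4)n^{1/q}\gtrsim\log_2(n+1)^{1/q}\to\infty$, so $Z_4\notin\textsf{N}_{0,p}$. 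The principal obstacle throughout is uniform control of the error term $\sum|b_i|\|Qy_i^*\|$ across the entire tree, which demands a careful diagonal refinement of the dense sequence $(x_n)$ together with successive passages to subsequences so that the perturbation remains negligible compared with the $E_j$-obstruction.
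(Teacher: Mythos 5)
Your proposal follows the same overall strategy as the paper: build $Y^*_E$ via the Lindenstrauss construction for a suitably chosen shell $E$, take $Z_j = \ker Q_j$, use Corollary \ref{linden} for the bidual quotient, use the upper block estimates of $E$ via Lemma \ref{pete} for the positive containment, and transport the obstruction on $E$'s basis into a weakly null tree in $Z_j$ for the negative containment. There are, however, a few genuine departures and imprecisions worth flagging.

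First, your choice of shells differs for cases (iii) and (iv): you take $E_3 = \mathfrak{Y}_q(\omega) = [e^*_i]\subset V_q^*$ and $E_4 = \mathfrak{X}_q(\omega) = [e^*_i]\subset SM_q^*$, whereas the paper uses the Tirilman dual $\text{Ti}_q^*$ and $S_q^*$ respectively. Both choices are admissible: the remark following the $\xi=0$ proof in the paper explicitly notes the interchangeability of the successive-set and disjoint-set spaces, and disjoint-support upper $\ell_r$ estimates are stronger than block estimates, so Lemma \ref{pete} still applies. That said, your derivation of the upper $\ell_r$ estimate for $\mathfrak{Y}_q$ via a Lorentz-type dual estimate and Hölder only controls coordinate vectors, not general disjointly supported ones; the clean route is the Lemma \ref{tech}(i) argument with the bound $k\leqslant 3^{q/p}2^{nq}$ coming from Proposition \ref{1994}(v), exactly as Proposition \ref{blocks}(ii) does for $\mathfrak{X}_q$.

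Second, your reduction of the positive containment to $Y^*_{E_j}$ via $Y^*_{E_j}\simeq Z_j\oplus F$ is a valid but genuinely different step. The paper works directly with weakly null trees in $B_{Z_j}$, where $Qx_{t|_i}=0$ exactly, so the error term $\sum_i |b_i|\|Qy_i^*\|$ disappears identically. Your route works with trees in $B_{Y^*_{E_j}}$ where the $Q$-images are only norm-null along branches (Lemma \ref{pete}(iv)), so one has to pass far enough along each weakly null net at each level of a depth-$n$ tree to make the error term $\varepsilon$-small uniformly; this can be done, but it is an extra epsilon-chasing step that the paper sidesteps entirely. Your framing gains nothing here, but it is not wrong. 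The ideal-property reduction ($Z_j$ complemented in $Y^*_{E_j}$ with finite-dimensional complement, both directions of membership equivalent) is correct.

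Finally, for the non-containment you claim the branches are "$(1+O(\delta))$-equivalent to length-$k$ initial segments of the basis of $E_j$." This overclaims: the paper only establishes one-sided domination (the branch $C$-dominates $(e_{p_n})$, via choosing the blocking to isolate the $p_n$-coordinates), and that is all the obstructions need. Also, in case (ii) you should speak of subsequences rather than initial segments, since the relevant structure of $T_q^*$ is asymptotic, not subsymmetric. These are not fatal gaps, but they should be tightened.
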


\begin{proof} We have already shown the $F=\{0\}$ case (with $\ell_p, T^*_q, \text{Ti}_q^*$, and $S^*_q$ for the four cases, respectively). So assume $F\neq \{0\}$.     For $(i)$, $(ii)$, $(iii)$, and $(iv)$, respectively, we let $E=\ell_p, T^*_q, \text{Ti}_q^*, S^*_q$. We let $Y^*_E$ and  $Z_j=\ker(Q:Y^*_E\to F)$ be chosen as in Corollary \ref{linden} so that $Z_j^{**}/Z_j$ is $2$-isomorphic to $F$. We remark that in each case, $E$ is reflexive with normalized, $1$-unconditional basis satisfying an $\ell_r$ upper block estimate for each $1<r<p$.

We will next show that in each case, $Z_j$ is in the appropriate class.  Fix $j\in \{1, 2,3,4\}$.     Fix a sequence $(\ee_n)_{n=1}^\infty$ of positive numbers such that $\sum_{n=1}^\infty \ee_n\leqslant 1$.      Now suppose that $E$ is a weak neighborhood basis at $0$ in $Z_j$ and suppose that  $(u_t)_{t\in D^{\leqslant n} }\subset B_{Z_j}$ is weakly null.    We may fix $t_1<\ldots <t_n \in D^n$ and a block sequence $n\leqslant z_1<\ldots <z_n$, $z_i\in B_{Z_j}$. such that for each $1\leqslant i\leqslant n$, $\|z_i-x_{t|_i}\|<\ee_i$.  Since $Qx_{t|_i}=0$ and $Q$ is norm $1$, it follows that $$\|Qz_i\|\leqslant \|Qx_{t|_i}\|+\|z_i-x_{t|_i}\|\leqslant \ee_i.$$     In the case $j=2$,  using Lemma \ref{pete} and the properties of $E=T^*_q$, we have that for any scalar sequence $(a_i)_{i=1}^n\in B_{\ell_p^n}$, \begin{align*} \|\sum_{i=1}^n a_i x_{t|_i}\| & \leqslant 1+\|\sum_{i=1}^n a_i z_i\| \leqslant 2+C_2. \end{align*}   From this it follows that $\alpha_{0,p}(Z_2)\leqslant 2+C_2<\infty$, and $Z_2\in \textsf{A}_{0,p}$.    Now in the case $j=3$, we deduce that $$\|\sum_{i=1}^n x_{t|_i}\|\leqslant 1+\|\sum_{i=1}^n z_i\|+ 2+C_2n^{1/p} \leqslant (2+C_2)n^{1/q}.$$    This yields that $Z_3\in \textsf{N}_{\xi,p}$.      

Now suppose that $(x_t)_{t\in D^{<\nn }}\subset B_X$ is weakly null.  Let $r=p$ in the case $j=1$ and let $1<r<p$ in the case $j=4$. In either case, there exists a constant $C$ such that the basis of $E$ satisfies $C$-$\ell_r$ upper block estimates.    We may fix $t_1<t_2<\ldots$ such that $|t_i|=i$ and a block sequence $(z_i)_{i=1}^\infty \subset B_{Z_j}$ such that $\|z_i-x_{t|_i}\|<\ee_i$ and $\|Qz_i\|<\ee_i$.   Then by Lemma \ref{pete}, there exists a constant $C_2$ (depending only on the constant $C$) such that for any $(a_i)_{i=1}^\infty \in c_{00}\cap B_{\ell_r}$, $$\|\sum_{i=1}^\infty a_ix_{t|_i}\|\leqslant 1+ \|\sum_{i=1}^\infty a_i z_i\|\leqslant 2+C_2.$$   This yields that $Z_j$ satisfies $\ell_r$ upper tree estimates and therefore lies in $Z_j\in \textsf{T}_{0,r}$.  This completes the case $j=1$.  For the case $j=4$, we note that since $1<r<p$ was arbitrary, $$Z_4\in \bigcap_{1<r<p}\textsf{T}_{0,r} = \textsf{P}_{0,p}.$$

We last conclude that in each case, $Z_j$ is not in the appropriate class.    Fix $j\in \{1, 2, 3, 4\}$.  Now fix any $x\in S_F$ and select $p_1<q_1<p_2<q_2<\ldots$ such that $\|x-x_{p_n}\|, \|x-x_{q_n}\|<1/2^{n+1}$.    Let $y^*_n=e_{p_n}-e_{q_n}$  and note that $\|Q(e_{p_n}-e_{q_n})\|<1/2^n$. From this it follows that there exists $v_n^*\in Y^*_E$ with $\|v_n^*\|<1/2^n$ such that $Q(y^*_n-v^*_n)=0$.  Therefore $z^*_n=y^*_n-v^*_n\in 3B_{Z_j}$, and $(z^*_n)_{n=1}^\infty$ is equivalent to $(y^*_n)_{n=1}^\infty$.   Furthermore, it is evident that $(y^*_n)_{n=1}^\infty$ $1$-dominates, and therefore $(z^*_n)_{n=1}^\infty$ $C$-dominates for some $C$, the sequence  $(e_{p_n})_{n=1}^\infty$ in $E$.   Indeed, with $q_0=0$, for any $(a_n)_{n=1}^\infty\in c_{00}$, with $b_{p_n}=b_{q_n}=a_n$ and $b_n=0$ for all $n\in \nn\setminus \{p_1, q_1, \ldots\}$,  \begin{align*} \|\sum_{n=1}^\infty a_n y_n^*\|_{Y^*_E} &  \geqslant \Bigl\|\sum_{i=1}^\infty \Bigl[\|\sum_{j=q_{i-1}+1}^{p_i} b_ix_i\|_Fe_{p_i} + \|\sum_{i=p_i+1}^{q_i} b_ix_i\|_F e_{q_i}\Bigr]\Bigr\|_E  \\ & \geqslant \Bigl\|\sum_{i=1}^\infty \|\sum_{j=q_{i-1}+1}^{p_i} b_ix_i\|_Fe_{p_i} \Bigr\|_E  = \| \sum_{i=1}^\infty a_ie_{p_i}\|_E.\end{align*}  

Now define $u_{(n_1, \ldots, n_k)}=z_{\sum_{j=1}^k n_i}^*\in 3B_{Z_j}$ and note that any  branch of the tree $(u_t)_{t\in \nn^{\leqslant n}}$ (resp. $(u_t)_{t\in \nn^{<\nn}}$) is simply a subsequence of $(z^*_n)_{n=1}^\infty$.   Now in the case $(i)$, it follows that $$\inf_{t\in \nn^n} \|\sum_{i=1}^n u_{t|_i}\|\geqslant n^{1/p}/3C,$$ yielding that $Z_1\notin \bigcup_{p<r<\infty} \textsf{P}_{0,r}$.     In the case $(ii)$, we can use $(u_t)_{t\in \nn^{<\nn}}$ to deduce that $Z_2\notin \textsf{T}_{0,p}$, since otherwise some branch of $(u_t)_{t\in \nn^{<\nn}}$, and therefore some subsequence of $(z^*_n)_{n=1}^\infty$ and therefore some subsequence of the basis of $T^*_q$, is dominated by the $\ell_p$ basis. But since no subsequence of the $T^*_q$ basis is dominated by the $\ell_p$ basis.    In case $(iii)$, we can use the collections $(u_t)_{t\in \nn^{\leqslant n}}$ to witness that $Z_3\notin \textsf{A}_{0,p}$. Indeed, seeking a contradiction, assume $\alpha_{0,p}(Z_3)<\infty$.    By subsymmetry of the $\text{Ti}_q^*$ basis, for any scalar sequence $(a_i)_{i=1}^n\in B_{\ell_p^n}$, $$\|\sum_{i=1}^n a_ie_i\|_{\text{Ti}^*_q} \leqslant C \inf_{t\in \nn^n} \|\sum_{i=1}^n a_i u_{t|_i}\|\leqslant 3C \alpha_{0,p}(Z_3),$$ yielding that the canonical  $\text{Ti}_q^*$ basis is equivalent to the $\ell_p$ basis.  But this contradicts the strict singularity of the formal inclusion $I:\ell_q\to \text{Ti}_q$.      Finally, in case $(iv)$, we can use the collections $(u_t)_{t\in \nn^{\leqslant n}}$ to see that $$\theta_{0,n}(Z_4)n^{1/q} \geqslant \log_2(n+1)^{1/q}/3C.$$

\end{proof}

\begin{rem}\upshape It is not possible to have a quasi-reflexive, infinite dimensional space $X\in \textsf{T}_{0,\infty}$. Indeed, if $X\in \textsf{T}_{0,\infty}$ has $\dim X=\infty$, then by passing to a subspace, we could assume this space is separable. But it is known that a separable space in $\textsf{T}_{0,\infty}$ is isomorphic to a subspace of $c_0$, whence this $X$ cannot be quasi-reflexive.

\end{rem}

\end{document}